\numberwithin{equation}{section}
\numberwithin{figure}{section}
\theoremstyle{plain}
\newtheorem{thm}{\protect\theoremname}[section]
  \theoremstyle{remark}
  \newtheorem{rem}[thm]{\protect\remarkname}
  \theoremstyle{definition}
  \theoremstyle{plain}
  \newtheorem{prop}[thm]{\protect\propositionname}
  \theoremstyle{plain}
  \newtheorem{cor}[thm]{\protect\corollaryname}
  \theoremstyle{plain}
  \newtheorem{lem}[thm]{\protect\lemmaname}
\newtheorem{assump}[thm]{Assumption}
  \providecommand{\corollaryname}{Corollary}
  \providecommand{\examplename}{Example}
  \providecommand{\lemmaname}{Lemma}
  \providecommand{\propositionname}{Proposition}
  \providecommand{\remarkname}{Remark}
\providecommand{\theoremname}{Theorem}
\begin{document}
\title{\textbf{\Large{}An SPDE Model for Systemic Risk}\\[-4pt]
\textbf{\Large{}with Endogenous Contagion}}

\author{Ben Hambly and Andreas S{\o}jmark\\[1pt]
	\medskip{}
	Mathematical Institute, University of Oxford}
\date{September 28, 2018}
\maketitle
\begin{abstract}
We propose a dynamic mean field model for `systemic risk' in large financial
systems, which we derive from a system of interacting diffusions on the positive half-line with an absorbing boundary at the origin. These diffusions represent the distances-to-default of financial institutions and absorption at zero corresponds to default. As a way of modelling correlated exposures and herd
behaviour, we consider a common source of noise and a form of mean-reversion
in the drift. Moreover, we introduce an endogenous contagion mechanism
whereby the default of one institution can cause a drop in the distances-to-default
of the other institutions. In this way, we aim to capture key `system-wide' effects on risk. The resulting mean field limit is characterized uniquely by a nonlinear SPDE
on the half-line with a Dirichlet boundary condition. The density of this SPDE gives the conditional law of a non-standard `conditional' McKean--Vlasov diffusion, for which we provide a novel upper Dirichlet heat kernel type estimate that is essential to the proofs. Depending on
the realizations of the common noise and the rate of mean
reversion, the SPDE can exhibit rapid accelerations in the loss of
mass at the boundary. In other words, the contagion mechanism can give rise to periods of significant systemic default clustering.
\end{abstract}

\section{Introduction\label{sec:Introduction}}

One of the most important lessons of the 2007\textendash 2009 financial crisis is the imperative nature of  system-wide
perspectives on risk. That is, financial models need to take into account the interconnectedness
of the financial system and they must incorporate sensible notions of financial
contagion whereby the distress of one institution can lead to
losses for the other members of the system. While such ideas have already
had implications for macroprudential policies (Benoit et al.~\cite{Benoit et al}, Duffie
\cite{Duffie reform}) and stress
testing practices (Dees, Henry \& Martin \cite{Jerome Henry STAMPE}),
there is still a pressing need for a better understanding of the dynamic
feedback effects and amplification mechanisms that seem to have been
the real drivers of the financial crisis. 

Indeed, it is widely accepted that \textemdash{} up to an order of
magnitude \textemdash{} the extent of the crisis cannot be explained
by simple references to (linear) exogenous shocks such as the devaluation of
mortgage-backed securities (Cochrane \cite{cochrane}, Brunnermeier
\cite{Bru2009}, Hellwig \cite{Hellwig}). Instead, smaller scale shocks are
understood to have unfolded into a spiral of events rooted within
the financial system itself and amplified by a myriad of interactions
between the individual institutions. Accordingly, there have been
judicious calls for a better understanding of the  \emph{endogenous} (nonlinear) nature
of systemic risk (Pedersen \cite{Ped2009}, Danielsson,
Shin \& Zigrand \cite{DSZ2009}) and it has been emphasized that systemic
risk is inherently \emph{dynamic} with a gradual build-up typically taking
place in the background before it materializes in a crisis (Brunnermeier,
Gorton \& Krishnamurthy \cite{BGK2012}).

\subsection{A system-wide perspective on risk}
 Consider a `representative' member of a large financial system and let $X_t\in[0,\infty)$ denote a measure of its financial health at time $t$, which we call its \emph{distance-to-default}. Neglecting system-wide effects, it is a classical approach in structural credit risk theory to simply model $X_t$ in isolation by a Brownian motion with drift --- where default occurs at the first hitting time of zero. If `isolation' is taken to mean that all the actors are assumed independent, then this means that the overall health of the financial system is effectively described by a linear (deterministic) heat equation.
 
The aim of this paper is to introduce \emph{systemic risk} into this picture by instead proposing a mean field model derived from an interacting particle system that incorporates simple notions of (i) common exposures, (ii) herd behaviour, and (iii) endogenous contagion. In turn, the health of the financial system will now be described by a nonlinear mean field type SPDE on the positive half-line (Theorems 2.4 \& 2.6) and, in a suitable sense, the dynamics of a `representative' financial institution is no longer a Brownian motion with drift but rather a conditional McKean--Vlasov type SDE with dependence on the conditional law of its paths given the noise of the common exposures (Theorem 2.7).

\subsection{Established literature on systemic risk}

A decade after the global financial crisis,
there is by now a significant body of work dealing with the complex
web of interactions in the financial system and the dual
rôle of interconnectedness as a source of diversification or a channel
for contagion.

In terms of mathematical modelling, it is possible to identify three main approaches to the challenges of systemic risk. First, there
is the large literature on \emph{network-based} models for clearing
and contagion, which extend the early frameworks of Eisenberg \& Noe
\cite{EN2001} and Allen \& Gale \cite{AG2000} (for a comprehensive review of this approach, see \cite{GY2016}). While these network models are principally static, a dynamic extension has recently been studied in Banerjee, Bernstein \& Feinstein \cite{Feinstein}. Next, there is the
much smaller literature on dynamic \emph{mean field} models in the
spirit of Carmona, Fouque \& Sun \cite{carmona fouque} and Garnier,
Papanicolaou \& Wei \cite{GPW2013}. These models benefit from a richer dynamic and stochastic structure, but they tend to focus on quite simple interbank interactions that neglect defaults and contagion (see also \cite{fouque ichiba,FS2013,capponi,GPW2017}).
Lastly, there is the related \emph{reduced-form} literature
on intensity-based models for large portfolio credit risk (see e.g.~\cite{giesecke and spil,GSSS2015,CMZ2012}). These models seek to incorporate implicit notions of default contagion and they have been discussed in a systemic risk context by Spiliopoulos
\cite{Spi2015} and Giesecke, Schwenkler \& Sirignano \cite{GSS2017}.

The model we propose here belongs naturally to the mean field literature,
however, we develop a more flexible framework that incorporates contagion endogenously via a structural mechanism for
defaults. This approach differs markedly from the reduced-form literature, where  contagion is in the form of self-exciting point processes, and it has the added benefit of being conceptually close to the network-based approaches.

\subsection{The endogenous contagion mechanism\label{subsec:Default-contagion}}

Our starting point is inspired by recent dynamic frameworks for the \emph{structural} modelling of large portfolio credit risk (see e.g.~\cite{HL2016,hambly reisinger etc}). Specifically, we identify each financial institution (henceforth:~\emph{bank}) with a notion of its \emph{distance-to-default} given by
\begin{equation*}
Y_{t}^{i}=\log(A_{t}^{i})-\log(D_{t}^{i}),\quad\text{for}\;\; i=1,\ldots,N,
\end{equation*}
where $A_{t}^{i}$ is the market value of bank $i$'s assets and $D_{t}^{i}$
denotes its default barrier. These distances-to-default will be modelled by suitable stochastic processes on $(0,\infty)$ with absorption at the origin corresponding to default. The precise dynamics will be specified in Section~1.4, but first we discuss how to incorporate the contagion mechanism.

For simplicity, we assume that the system can be described
by assigning a weight $a_{i}^{{\scriptscriptstyle N}}=a_{i}/\sum_{n=1}^{{\scriptscriptstyle N}}a_{n}$ to each of
the banks, where $c\leq a_i \leq C$ for fixed constants $C,c>0$.
These weights can depend on the initial distances-to-default, and
they reflect the relative importance of the banks in the sense that
$a_{i}^{{\scriptscriptstyle N}}$ will determine the strength of bank
$i$'s impact on the others. Notice that $\sum_{i=1}^{{\scriptscriptstyle N}}a_{i}^{{\scriptscriptstyle N}}=1$ with $a_{i}^{{\scriptscriptstyle N}}=O(1/N)$ as $N \rightarrow \infty$. In particular, no single bank can have a macroscopic
effect on the system in the large population limit.
\begin{rem}
	[Systemically important banks] Since our model will contain a \emph{common}
	source of noise, the latter condition is not as restrictive as it
	may appear. Indeed, we could model a group of particularly influential
	banks by a separate diffusion and then treat this as a common input
	in the dynamics of the smaller banks.
\end{rem}

Suppose bank $j$ is the first to default. Its contagious impact on any other bank $i$ will be determined by the weight $a_j^{\scriptscriptstyle N}$ and a parameter $\alpha_t^i\geq0$ measuring how costly defaults are to bank $i$ at time $t$. Specifically, we model the resulting contagion by `discounting' the asset values of the other banks according to the rule
\begin{equation}\label{eq:discount_assets}
A^i_{\cdot} \longmapsto \hat{A}^i_{\cdot}:=\exp\bigl\{  -a_{j}^{\scriptscriptstyle N}\! {\textstyle\int_0^{\cdot}}\alpha^i_s 
d\mathfrak{L}^{j,\scriptscriptstyle N}_s \bigr\}A^i_\cdot, \quad \text{for each} \;\; i\neq j,
\end{equation}
with $\mathfrak{L}^{j,\scriptscriptstyle N}_t:=\int_0^t \mathfrak{K}(t-r) \mathbf{1}_{r\geq\tau_{j}}dr$, where the \emph{impact kernel} $\mathfrak{K}\in L^1(\mathbb{R}_+)$ models the gradual realisation of the losses spurred by the default. We stress that these losses are not restricted to direct counterparty exposures, but may also arise from more indirect sources such as emerging liquidity shortages, fire sales, and drops in confidence. Observe that $\mathfrak{L}^{j,\scriptscriptstyle N}_t=0$ for $t\leq\tau_{j}$ and, by requiring that $\Vert \mathfrak{K} \Vert_{L^1}=1$, we have $\mathfrak{L}^{j,\scriptscriptstyle N}_t=1$ for all $t\geq\tau_{j}+\varepsilon$ whenever $\text{supp}\mathfrak{K}\subseteq[0,\varepsilon]$, for some $\varepsilon>0$.

\begin{rem}[Interpretations of $\alpha$]\label{rem:alpha_interpreation}
Consider the case where $\alpha$ is a fixed constant and $\text{supp}\mathfrak{K}\subseteq[0,\varepsilon]$. At time $t=\tau_j+\varepsilon$, the discounting in (\ref{eq:discount_assets}) is then of the form
\[
\hat{A}_t^i=\exp\{  -\alpha a_{j}^{\scriptscriptstyle N} 
 \}A^i_t \simeq (1-\alpha a_{j}^{\scriptscriptstyle N} )A^i_t, 
\]
for large $N$, since $a_{j}^{\scriptscriptstyle N}=O(1/N)$. In other words, \emph{by time $\tau_j+\varepsilon$, the default of bank $j$ has caused  each bank $i\neq j$ to lose a proportion $\alpha a_{j}^{\scriptscriptstyle N}\!$ of their asset values, relative to what they would have been worth without the contagion}. Note also that $\alpha$ can be related to the connectivity of the system: Suppose, for example, that each default only affects a randomly sampled proportion $\hat{p}$ of the banks, each losing $\alpha a_{j}^{{\scriptscriptstyle N}}$ times their asset values upon bank $j$'s default. As $N$ gets large, this has a similar effect on the system as all the banks incurring the smaller loss of $\hat{\alpha}a_{j}^{{\scriptscriptstyle N}}$ times their asset values, where $\hat{\alpha}:=\hat{p}\alpha$.
\end{rem}

As more banks default, we continue to apply the rule from (\ref{eq:discount_assets}). Therefore, the actual (updated) asset values, $\hat{A}$, are given by
\begin{align*}
\hat{A}^i_t:=\prod_{j\neq i} \exp\Bigl\{ - a_{j}^{{\scriptscriptstyle N}}\!\!\int_0^t \!\alpha^i_s
d\mathfrak{L}^{j,{\scriptscriptstyle N}}_s \Bigr\}A_t^i
= 
\exp\Bigl\{  -\sum_{j\neq i}a_{j}^{{\scriptscriptstyle N}}\! \!\int_0^t \!\alpha^i_s
d\mathfrak{L}^{j,{\scriptscriptstyle N}}_s \Bigr\}A_t^i,
\end{align*}
for $i=1,\ldots,N$, where $\mathfrak{L}^{j,\scriptscriptstyle N}_t\!:=\int_0^t \mathfrak{K}(t-r) \mathbf{1}_{r\geq\hat{\tau}_{j}}dr$ with $\hat{\tau}_{j}:=\inf\{t>0:\hat{Y}_t^j\leq0\}$ and $\hat{Y}_t^j:=\log(\hat{A}_t^j)-\log(D_t^j)$.
Summing over the terms $a_j^{\scriptscriptstyle N}\mathfrak{L}_s^{j,\scriptscriptstyle N}$, this simplifies to
\begin{equation}\label{eq:A_hat}
\hat{A}^i_t = \exp\Bigl\{  -\int_0^t \!\alpha_s^i
d\mathfrak{L}^{\scriptscriptstyle N}_s  \Bigr\}A_t^i, \quad \text{for} \quad t<\hat{\tau}_i,\quad i=1,\ldots,N,
\end{equation}
 where
 \begin{equation}\label{eq:curly_loss}
 \mathfrak{L}^{\scriptscriptstyle N}_t := \int_0^t \mathfrak{K}(t-s)L_s^{\scriptscriptstyle N}ds \quad \text{and} \quad L_t^{\scriptscriptstyle N}:= \sum_{j=1}^{ N}a_{j}^{{\scriptscriptstyle N}}\mathbf{1}_{t\geq\hat{\tau}_{j}}.
 \end{equation}
Taking logarithms in (\ref{eq:A_hat}), it follows that the actual (updated) distances-to-default, $\hat{Y}$, have dynamics of the form
\[
d\hat{Y}_{t}^{i}= \:dY^i_t-\alpha_t^i d\mathfrak{L}_{t}^{{\scriptscriptstyle N}}\quad\text{for}\quad t<\hat{\tau}_{i},\quad i=1,\ldots,N.
\]
Here the first part, $Y^i_t$, is simply the original distance-to-default without contagion, while the latter part is a new contagion term
driven by the \emph{contagion process}, $\mathfrak{L}_{t}^{{\scriptscriptstyle N}}$, from (\ref{eq:curly_loss}).

\begin{rem}[The impact kernel $\mathfrak{K}$] Further to the above, we assume $\mathfrak{K}\in \mathcal{W}_{0}^{1,1}(\mathbb{R}_{+})$,
with $\Vert \mathfrak{K}\Vert _{1}=1$, where $\mathcal{W}_{0}^{1,p}(\mathbb{R}_{+})$
denotes the Sobolev space with one weak derivative in $L^{p}$ and
zero trace. The benefits of this construction
are: $\mathfrak{L}^{{\scriptscriptstyle N}}_t$
remains adapted, it inherits the monotonicity of $L^{{\scriptscriptstyle N}}_t$, and it has a weak derivative $\mathfrak{K}^{\prime}\ast L^{{\scriptscriptstyle N}}\in L^{\infty}$. The r\^ole of the kernel is to impose a continuous notion of latency whereby the impact of contagion is realised gradually as counterparty exposures are sorted out and indirect effects start to kick in.
\end{rem}

\begin{rem}
[Capital structure]As may be expected, the default contagion alone cannot
deplete the entire asset base. However, due in large part to the low volatility of banking assets
in normal times, financial institutions tend to have leverage ratios
as high as 85\textendash 95\% (see e.g.~\cite{berg gider,Stebulaev}). Thus, there
is ample room for the contagion to be detrimental.
\end{rem}

\subsection{A simple model for systemic risk\label{subsec:A-Base-Model}}
In addition to contagion, we  want our model to include common exposures and a notion of herding. The two latter effects have already been considered in Carmona, Fouque \& Sun \cite{carmona fouque}, by means of a (constant-coefficient) particle system with a common Brownian motion and mean reversion in the drift. Inspired by this, we can now present a precise formulation of our `base case' model for systemic risk: Letting $X^j$ denote the actual distances-to-default with default times $\tau_j:=\inf\{t>0:X_t^j\leq0\}$, for $j=1,\ldots,N$, we propose to model a large financial system  by an interacting particle system
of the form \vspace{-6pt}
\begin{align*}
dX_{t}^{j}=&\;\mu(t,X_{t}^{j})dt+\pi(t,\nu_{t}^{{\scriptscriptstyle N}})\sum_{i=1}^{N}a_{i}^{{\scriptscriptstyle N}}\!\cdot\!\bigl\{\bigl(X_{t}^{i}\mathbf{1}_{t<\tau_{i}}+\gamma(t,\nu_{t}^{{\scriptscriptstyle N}})\mathbf{1}_{t\geq\tau_{i}}\bigr)-X_{t}^{j}\bigr\} dt\\
&\;+\sigma(t,X_{t}^{j})\bigl(\sqrt{1-\rho(t,\nu_{t}^{{\scriptscriptstyle N}})^{2}}dW_{t}^{j}+\rho(t,\nu_{t}^{{\scriptscriptstyle N}})dW_{t}^{0}\bigr)-\alpha(t,X_t^j,\nu_{t}^{{\scriptscriptstyle N}})d\mathfrak{L}_{t}^{{\scriptscriptstyle N}},
\end{align*}
where $W^0$ and $W^1,\ldots,W^{\scriptscriptstyle N}$ are independent Brownian motions with
\[
\mathfrak{L}_{t}^{{\scriptscriptstyle N}}=\int_{0}^{t}\!\mathfrak{K}(t-s)L_{s}^{{\scriptscriptstyle N}}ds,\quad L_{t}^{{\scriptscriptstyle N}}=\sum_{i=1}^{N}a_{i}^{{\scriptscriptstyle N}}\mathbf{1}_{t\geq\tau_{i}},\quad \text{and}\quad \nu_{t}^{{\scriptscriptstyle N}}=\sum_{i=1}^{N}a_{i}^{{\scriptscriptstyle N}}\mathbf{1}_{t<\tau_{i}}\delta_{X_{t}^{i}}.
\]
Notice that, after each default, we do not renormalize the mean reverting interaction in the drift
as it is intended to reflect herding in investment decisions:
if banks are defaulting, this suggests the investments are not
performing and thus defaults should not suddenly adjust the drift
upwards by a renormalization. Also, we emphasize that our model is
intended for the study of short-run market imperfections rather than the long-run behaviour of the financial system. The parameters can be summarized as follows:
\begin{itemize}
\item $\sigma$ and $\mu$ model, respectively, the volatility of the banks and their core return net of the rate of change in the default
barrier.
\item $\rho$
is the correlation parameter which reflects the extent of common exposures.
\item $\alpha$
is the contagion parameter which decides how costly defaults are to the system.
\item $\pi$
determines the rate of mean reversion which adjusts the core return due
to herding in investment decisions or other interbank interactions.
\item $\gamma$
can capture what is left after defaults and could be influenced
by actions of the central bank or government seeking to stabilize the drift of the
system.
\end{itemize}
In order to capture system-wide influences on these variables, it is natural to allow them to depend on the empirical measure  $\nu^{{\scriptscriptstyle N}}$. In particular, the correlation can then act as an indirect source of contagion, in line with the observation that
correlations tend to increase in times of financial distress (see e.g.~Cont
\& Wagalath \cite{Cont fire sale forensics,CW13}). Similarly, the
rate of herding and the costliness of contagion can vary with the
health of the system, which can capture potentially self-reinforcing
amplification mechanisms. Furthermore, we will incorporate discontinuities
in the dependence on the losses, $L_t^{{\scriptscriptstyle N}}$, which can allow
for more abrupt adjustments of the herding, the correlation, or the costliness of defaults
(see also Section \ref{subsec:Credit-derivatives}).

\subsection{Outline of the paper}

In Section 2 we state our main results concerning the existence of a unique mean field limit for a general version of the systemic risk model from Section \ref{subsec:A-Base-Model}. Furthermore, we discuss some qualitative insights for systemic risk and consider closely related problems.

In Section 3 we study the regularity of the particle system,
which is centred around the boundary and tail behaviour of the densities
of the particles. The backbone of this is a novel family of upper Dirichlet
heat kernel type estimates, whose proofs we postpone to Section 6
in order to make the presentation as clear as possible.

In Section 4 we proceed to establish tightness of the system and
we show that the resulting limit points are solutions to a nonlinear
SPDE on the positive half-line.

In Section 5 we rely on energy estimates to prove uniqueness of the
SPDE and thus we deduce the full convergence in law to this limit.
We present the uniqueness proof in Section 5.1, postponing the 
technical estimates to Sections 5.2 and 5.3. Therefore, the reader 
can get a complete picture of existence and uniqueness by only reading up to Section 5.1.

\section{Main results}
 \label{sec:Main-Mathematical-Results}
To make our framework as flexible as possible, we will consider a more
general version of the model introduced in Section \ref{subsec:A-Base-Model}.
Specifically, we will focus on particle systems of the form
\begin{equation}
\left\{ \begin{aligned}dX_{t}^{i}= & \, b(t,X_{t}^{i},\nu_{t}^{{\scriptscriptstyle N}})dt + \sigma(t,X_{t}^{i})\sqrt{1-\rho(t,\nu_{t}^{{\scriptscriptstyle N}})}dW_{t}^{i}  \\
 & + \sigma(t,X_{t}^{i})\rho(t,\nu_{t}^{{\scriptscriptstyle N}})dW_{t}^{0} -\alpha(t,X_{t}^{i}, \nu_t^{\scriptscriptstyle N})d\mathfrak{L}_{t}^{{\scriptscriptstyle N}},\\
 \mathfrak{L}_{t}^{{\scriptscriptstyle N}}= & \,(\mathfrak{K}\ast L^{{\scriptscriptstyle N}})_{t}, \;\;  L_{t}^{{\scriptscriptstyle N}}=1-\nu_t^{{\scriptscriptstyle N}}(0,\infty),\\
 \nu_{t}^{{\scriptscriptstyle N}}=&\,{\textstyle\sum_{i=1}^{{\scriptscriptstyle N}}}a_{i}^{{\scriptscriptstyle N}}\mathbf{1}_{t<\tau_{i}}\delta_{X_{t}^{i}}, \;\; \tau_{i}=\inf\{ t\geq0:X_{t}^{i}\leq0\},
\end{aligned}\right.
\label{eq: Particle System}
\end{equation}
where  $\mathfrak{K}\in\mathcal{W}_{0}^{1,1}(\mathbb{R}_{+})$ and $W^0,\ldots,W^{\scriptscriptstyle N}$ are independent Brownian motions. Concerning the weights, we assume that there exist $C,c>0$ such that
\begin{equation}
a_{i}^{{\scriptscriptstyle N}}=\frac{a_{i}(X_{0}^{i})}{{\textstyle \sum_{j=1}^{{\scriptscriptstyle N}}}a_{j}(X_{0}^{j})}\quad\text{with}\quad c\leq a_{i}(\cdot)\leq C\quad\text{for each} \quad i=1,\ldots,N.
\label{eq: a_i coefficients}
\end{equation}
For the well-posedness of (\ref{eq: Particle System}) we refer to the beginning remarks of Section 3. As regards the model from Section \ref{subsec:A-Base-Model}, we stress that the drift $b$ depends explicitly on the \emph{mean process} $M_t^{\scriptscriptstyle N}:=\langle \nu_t^{\scriptscriptstyle N} , \text{Id} \rangle$ and similarly the \emph{loss process} $L^{{\scriptscriptstyle N}}_t$ plays a vital r\^ole. More generally, these two mean field statistics can serve as useful indicators of systemic risk. However, their convergence as $N\rightarrow\infty$ does not follow directly from our notion of convergence for the empirical measures $\nu^{{\scriptscriptstyle N}}$, so they will require special attention.

\subsection{Assumptions}

In view of Section \ref{subsec:A-Base-Model}, we need to consider
 local notions of Lipschitzness in $X_{t}^{i}$ and $\nu_{t}^{{\scriptscriptstyle N}}$, and crucially we must allow the drift to
have linear growth in both $X_{t}^{i}$ and $M_{t}^{{\scriptscriptstyle N}}$. To this end, we need suitable notions of distance on the space of sub-probability measures $\mathbf{M}_{\leq1}(\mathbb{R})$. This leads us to introduce the Kantorovich type distances
\begin{align*}
d_0(\mu,\tilde{\mu}):=&\;\sup \{ |\langle \mu-\tilde{\mu}, \psi \rangle| : \Vert \psi \Vert_{\text{Lip}} \leq 1, |\psi(0)|\leq 1\}, \;\;\text{and}\\ 
d_1(\mu,\tilde{\mu})&:=\,\sup \{ |\langle \mu-\tilde{\mu}, \psi \rangle| : \Vert \psi \Vert_{\text{Lip}} \leq 1, \Vert \psi \Vert_{\infty}\leq 1 \}.
\end{align*}
\begin{assump}[Structural assumptions]\label{Assumption 1 - finite system} Let the coefficients of (\ref{eq: Particle System}) be of the form $b(t,x,\mu,\ell^\mu)$, $\alpha(t,x,\mu,\ell^\mu)$, $\sigma(t,x)$, and $\rho(t,\mu,\ell^\mu)$ with $\ell^\mu:=1-\mu(0,\infty)$. We assume:
\begin{enumerate}
\item [\emph{(i).}]\emph{(Linear growth and space/time regularity).} Let $g=b,\alpha$. The map
$x\mapsto g(t,x,\mu,\ell)$ is $\mathcal{C}^{2}(\mathbb{R})$ and
 $(t,x)\mapsto\sigma(t,x)$ is $\mathcal{C}^{1,2}([0,T]\times\mathbb{R})$.
Moreover, there exists $C>0$ s.t.
\begin{align*}
&\left|g(t,x,\mu,\ell)\right|\leq C(1+\left|x\right|+ \langle \mu, \left|\cdot\right| \rangle),\;\; |\partial_{x}^{(n)}g(t,x,\mu,\ell)| \leq C,\;\; n=1,2,\\
&\left|\sigma(t,x)\right|\leq C,\;\; \left|\partial_{t}\sigma(t,x)\right|\leq C,\;\; \emph{\text{and} }\;\; |\partial_{x}^{(n)}\sigma(t,x)| \leq C,\;\; n=1,2.
\end{align*}
\item [\emph{(ii).}]\emph{(Local $d_0/d_1$-Lipschitzness in $\mu$).} Let $g=b,\alpha$. There exists $C>0$ s.t.
\begin{align*}
\left|g(t,x,\mu,\ell)-g(t,x,\tilde{\mu},\ell)\right| \leq& \;C(1+\left|x\right|+ \langle \mu, \left|\cdot\right| \rangle) d_0(\mu,\tilde{\mu})
\\ \left|\rho(t,\mu,\ell)-\rho(t,\tilde{\mu},\ell)\right| \leq& \;C(1+ \langle \mu, \left|\cdot\right| \rangle) d_1(\mu,\tilde{\mu})
\end{align*}
\item [\emph{(iii).}]\emph{(Piecewise local Lipschitzness in $\ell$).} There
exist $0=\theta_{0}<\cdots<\theta_{k}=1$ s.t.
\[
\bigl|g(t,x,\mu,\ell)-g(t,x,\mu,\tilde{\ell})\bigr|\leq C(1+\left|x\right|+ \langle \mu, \left|\cdot\right| \rangle)|\ell-\tilde{\ell}|
\]
whenever $\ell,\tilde{\ell}\in[\theta_{i-1},\theta_{i})$ for some
$1\leq i\leq k$, where $g$ is any of $b$, $\alpha$, or $\rho$.
\item [\emph{(iv).}]\emph{(Non-degeneracy).} There exists $\epsilon>0$ s.t. $0<\epsilon\leq\sigma(t,x)$ and $0\leq\rho(t,\mu)\leq1-\epsilon$.
\item [\emph{(v).}]\emph{(Sub-Gaussian initial law).} The sequence $X_{0}^{1},\ldots,X_{0}^{{\scriptscriptstyle N}}$
is  i.i.d and independent of the driving Brownian motions. Their common law, $\mu_0$, has a density in $L^2(0,\infty)$ and
\[
\exists\epsilon>0\quad\text{s.t.}\quad\mu_{0}(\lambda,\infty)=O(\exp\{-\epsilon\lambda^{2}\})\quad\text{as}\quad\lambda\rightarrow\infty.
\]
Letting $\nu_{0}^{{\scriptscriptstyle N}}:=\sum_{i=1}^{{\scriptscriptstyle N}}a_{i}^{{\scriptscriptstyle N}}\delta_{X_{0}^{i}}$, we assume $\nu_{0}^{{\scriptscriptstyle N}}$ converges weakly to some probability measure $\nu_{0}$ with a density $V_{0}\in L^{2}(0,\infty)$ s.t.
$\left\Vert xV_{0}\right\Vert _{L^{2}}=\int_{0}^{\infty}|xV_{0}(x)|^{2}dx<\infty$.
\end{enumerate}
\end{assump}
\begin{rem}
\label{rem:weaker_assumptions}In (i) it suffices for $\partial_{xx}b$,
$\partial_{xx}\sigma$ and $\partial_{t}\sigma$ to exist as weak
derivatives in $L^{\infty}$, and if $\sigma(t,x)=\sigma_{1}(t)\sigma_{2}(x)$
then no regularity in time is needed. Moreover, the assumption of a common law in (v) is not essential, but this is not the focus here.
\end{rem}

For the uniqueness of the limit SPDE we must restrict attention to
a class of processes with reasonable regularity and, naturally, we
want the limit points of the particle system to be included
in this class. As we will see in Section \ref{sec:Tightness-and-Convergence}
(see also Theorem \ref{thm:MCKEAN-VLASOV}), the conditions we impose
below are indeed weaker than those that hold for the limit points of
$\nu^{{\scriptscriptstyle N}}$.

\begin{assump}[Conditions for uniqueness]\label{Assumption 2.3}In Theorem \ref{thm:UNIQUENESS} we consider
the class of càdlàg processes $\tilde{\nu}$ that take values in $\mathbf{M}_{\leq1}(\mathbb{R})$ and satisfy the following conditions:
\begin{enumerate}
\item [\emph{(i).}]\emph{(Support on $\mathbb{R}_{+}$).} For every $t\in\left[0,T\right]$,
$\tilde{\nu}_{t}$ is supported on $\mathbb{R}_{+}=[0,\infty)$.
\item [\emph{(ii).}]\emph{(Increasing loss).} $\tilde{L}_{t}:=\!1-\tilde{\nu}_{t}(0,\infty)$
is strictly increasing when it is less than $1$.
\item [\emph{(iii).}]\emph{(Exponential tails).} For every $\epsilon>0$,
we have
\[
\mathbb{E}{\textstyle\int_{0}^{T}}\tilde{\nu}_{t}(a,\infty)dt=o(\exp\{-\epsilon a\})\quad\text{as}\quad a\rightarrow\infty.
\]
\item [\emph{(iv).}]\textcolor{black}{\emph{(Boundary decay).}} 
There exists $\beta>0$ such that
\[
\mathbb{E}{\textstyle\int_{0}^{T}}\tilde{\nu}_{t}(0,\varepsilon)dt=O(\varepsilon^{1+\beta})\quad\text{as}\quad\varepsilon\rightarrow0.
\]
\item [\emph{(v).}]\emph{(Spatial concentration).} There exist $C>0$ and
$\delta>0$ such that
\[
\mathbb{E}{\textstyle\int_{0}^{T}}(\tilde{\nu}_{t}(a,b)){}^{2}dt\leq C\left|b-a\right|^{\delta}\quad\forall a,b\in\mathbb{R}.
\]
\end{enumerate}
\end{assump}

\subsection{The limit SPDE\label{subsec:Existence-and-Uniqueness}}

Let $\mathscr{S}$ denote the space of Schwartz
functions on $\mathbb{R}$ and let $\mathscr{S}^{\prime}$ denote
its dual, the space of tempered distributions. Then we can think of
the empirical measures $\nu^{{\scriptscriptstyle N}}$ as living in the space $D_{\mathscr{S}^{\prime}}=D_{\mathscr{S}^{\prime}}[0,T]$
consisting of $\mathscr{S}^{\prime}$-valued càdlàg processes on $[0,T]$.
With a view to exploiting the monotonicity of the loss process, $L^{{\scriptscriptstyle N}}$, it becomes 
natural to use weak convergence with respect to Skorohod's M1 topology
on $D_{\mathscr{S}^{\prime}}$. 
The extension of the M1 topology to
this setting was introduced in \cite{Led2016} and further details can be found there.

It remains to mention that we consider the limit SPDE in its weak
formulation with respect to the space of test functions $\mathscr{C}_{{\scriptscriptstyle 0}}$
as defined by
\[
\mathscr{C}_{{\scriptscriptstyle 0}}:=\{\phi\in\mathscr{S}:\phi(0)=0\}.
\]
This encodes the idea that the SPDE is posed as a Dirichlet problem
on $\mathbb{R}_{+}$.
\begin{thm}
[Limit SPDE]\label{thm:EXISTENCE} Suppose Assumption \ref{Assumption 1 - finite system}
is satisfied. Then $(\nu^{{\scriptscriptstyle N}},W^{0})_{N\geq1}$
is tight on $(D_{\mathscr{S}^{\prime}},\text{\emph{M1}})$ and each
limit point $(\nu,W^{0})$ is a continuous $\mathbf{M}_{\leq1}(\mathbb{R}_+)$-valued
process satisfying Assumption \ref{Assumption 2.3}. Moreover, $\nu$
obeys, with probability 1, the limit SPDE
\begin{align}
d\negthinspace\left\langle \nu_{t},\phi\right\rangle = & \,\left\langle \nu_{t},b(t,\cdot,\nu_{t})\partial_{x}\phi\right\rangle dt+{\textstyle \frac{1}{2}}\left\langle \nu_{t},\sigma(t,\cdot)^{2}\partial_{xx}\phi\right\rangle dt\label{eq:LIMIT SPDE}\\
 & \;+\left\langle \nu_{t},\sigma(t,\cdot)\rho(t,\nu_{t})\partial_{x}\phi\right\rangle dW_{t}^{0}-\left\langle \nu_{t},\alpha(t,\cdot,\nu_t)\partial_{x}\phi\right\rangle d\mathfrak{L}_{t},\nonumber 
\end{align}
for $\phi\in\mathscr{C}_{{\scriptscriptstyle 0}}$ and $t\in[0,T]$, where $\mathfrak{L}_{t}:={\textstyle \int_{0}^{t}}\mathfrak{K}(t-s)L_{s}ds$ with $L_{t}:=1-\nu_{t}(0,\infty)$. Finally, if $\nu$ is attained along a subsequence $(\nu^{{\scriptscriptstyle N}_{\hspace{-1pt}k}},W^{0})_{k\geq1}$,
then $(L^{{\scriptscriptstyle N}_{\hspace{-1pt}k}},W^{0})$ and $(M^{{\scriptscriptstyle N}_{\hspace{-1pt}k}},W^{0})$
converge weakly to $(L,W^{0})$ and $(M,W^{0})$ on $(D_{\mathbb{R}},\text{\emph{M1}})\times(C_{\mathbb{R}},\left\Vert \cdot\right\Vert _{\infty})$, where $M_t:=\langle \nu_t, \emph{Id} \rangle$.
\end{thm}

It is instructive to think of the measure-valued SPDE (\ref{eq:LIMIT SPDE})
in terms of the equation for its density process, $V_{t}$, which
is guaranteed to exist in $L^{2}$ by Corollary \ref{cor:DENSITY}.
Based on the boundary decay from Theorem \ref{thm:MCKEAN-VLASOV}
below, a simple Borel\textendash Cantelli argument implies $V_{t}(0)=0$
in the sense that $\lim_{\varepsilon\downarrow0}\varepsilon^{-1}\nu_{t}(0,\varepsilon)=0$,
see Lemma \ref{lem:BOREL-CANTELLI-ARGUMENTS}. Therefore, integrating
by parts in (\ref{eq:LIMIT SPDE}) leads to the following observation:
\begin{rem}
[SPDE for the density]\label{rem:DENSITY}If $\nu$ is a limit point
from Theorem \ref{thm:EXISTENCE}, then it has a density process
$V_t$ in $L^{2}(\mathbb{R}_{+})$ which, in the weak sense, satisfies the Dirichlet
problem
\begin{align*}
dV_{t}(x)&={\textstyle \frac{1}{2}}\partial_{xx}\bigl(\sigma(t,x)^{2}V_{t}(x)\bigr)dt-\partial_{x}\bigl(b(t,x,\nu_{t})V_{t}(x)\bigr)dt\\
&-\rho(t,\nu_{t})\partial_{x}\bigl(\sigma(t,x)V_{t}(x)\bigr)dW_{t}^{0}+\alpha(t,x,\nu_t)\partial_{x}V_{t}(x)d\mathfrak{L}_{t},\quad V_{t}(0)=0,
\end{align*}
with $\mathfrak{L}_t=(\mathfrak{K}\ast L)_t$ and $L_{t}=1-\int_{0}^{\infty}V_{t}(x)dx$.
Assuming sufficient regularity, we can use this SPDE for $V$ and integrate by parts to obtain
the formal expression
\[
\frac{d}{dt}\mathfrak{L}_{t}=\frac{1}{2}\int_{0}^{t}\!\mathfrak{K}(t-s)\sigma(s,0)\partial_{x}V_{s}(0)ds.
\]
That is, the contagion term driven by $\mathfrak{L}$ acts like an
extra transport term proportional to the flux across the boundary --- smoothed in time according to $\mathfrak{K}$. In the case of constant
coefficients and $\alpha\equiv0$, sharp regularity results can be
found in \cite{Led2014}.
\end{rem}

Since every limit point of the finite particle system obeys the SPDE (\ref{eq:LIMIT SPDE}), we can deduce the full weak convergence to this mean-field limit once we have uniqueness of the SPDE in the
class of solutions satisfying Assumption \ref{Assumption 2.3}.
\begin{thm}
[Uniqueness \& LLN]\label{thm:UNIQUENESS}Let $(\nu,W^{0})$
be as in Theorem \ref{thm:EXISTENCE} and suppose $\tilde{\nu}$ is
another solution to the SPDE (\ref{eq:LIMIT SPDE})\emph{ }satisfying
Assumption \ref{Assumption 2.3}. Then, with probability 1, \textcolor{black}{\vspace{-2pt}
}
\[
\nu_{t}(A)=\tilde{\nu}_{t}(A)\quad\forall t\in[0,T]\quad\forall A\in\mathcal{B}(\mathbb{R}).
\]
In particular, a solution to the SPDE (\ref{eq:LIMIT SPDE}) has a
unique law on $(D_{\mathscr{S}^{\prime}},\text{\emph{M1}})\times(C_{\mathbb{R}},\left\Vert \cdot\right\Vert _{\infty})$
and $(\nu^{{\scriptscriptstyle N}},W^{0})$ converges weakly to this
law. Furthermore, the loss process $L^{{\scriptscriptstyle N}}$ and the mean process $M^{{\scriptscriptstyle N}}$
converge weakly to $L$ and $M$ as defined in\emph{ }Theorem \ref{thm:EXISTENCE}.
\end{thm}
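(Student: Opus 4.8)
The plan is to prove \emph{pathwise uniqueness} for the SPDE~(\ref{eq:LIMIT SPDE}) within the class of Assumption~\ref{Assumption 2.3}, by an energy estimate for the difference of two solutions, and then to upgrade this --- together with the subsequential convergence already supplied by Theorem~\ref{thm:EXISTENCE} --- to weak convergence of the whole sequence $(\nu^{{\scriptscriptstyle N}},W^{0})$ via a Gy\"ongy--Krylov argument. Throughout I would exploit the asymmetry between $\nu$ and $\tilde\nu$: being a limit point, $\nu$ inherits the stronger regularity of Section~3 (an $L^{2}(\mathbb{R}_{+})$ density $V_{t}$ vanishing at the origin, with good moments in $x$ and in time), while for $\tilde\nu$ only Assumption~\ref{Assumption 2.3} is available. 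The natural object is the $L^{2}$-norm of the difference of the two \emph{survival functions}, $H_{t}(x):=\nu_{t}(x,\infty)-\tilde\nu_{t}(x,\infty)=\int_{x}^{\infty}(V_{t}-\tilde V_{t})$, where $V_{t},\tilde V_{t}$ are the $L^{2}(\mathbb{R}_{+})$ densities (for $\nu$ by Corollary~\ref{cor:DENSITY}, and for $\tilde\nu$ by the same energy argument, the initial law $\mu_{0}$ being in $L^{2}$ with $\sigma$ non-degenerate); then $H_{t}(\infty)=0$, $H_{t}(0)=\tilde L_{t}-L_{t}$, and $\partial_{x}H_{t}=-(V_{t}-\tilde V_{t})\in L^{2}$. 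Testing~(\ref{eq:LIMIT SPDE}) for $\nu$ and for $\tilde\nu$ against smooth approximations of $\mathbf{1}_{(x,\infty)}$ (using the exponential tails of Assumption~\ref{Assumption 2.3}(iii) to control the far field) yields the evolution of $H_{t}$, to which I would apply It\^o's formula for $t\mapsto\Vert H_{t}\Vert_{L^{2}}^{2}$ --- after localising so that the various random weights below are bounded and the $dW^{0}$-integral is a true martingale that drops out after taking expectations and using the Burkholder--Davis--Gundy inequality.

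The estimate splits into three kinds of terms. \emph{(a) Dissipation versus transport noise.} Integrating the second-order part by parts gives a strictly negative term $-\tfrac12\langle\sigma^{2}\partial_{x}H_{t},\partial_{x}H_{t}\rangle$, the boundary terms at the origin (and at $+\infty$) vanishing because $\partial_{x}H_{t}=-(V_{t}-\tilde V_{t})$ vanishes there, both densities being zero at the boundary (Assumption~\ref{Assumption 2.3}(iv) and Remark~\ref{rem:DENSITY}). The common-noise term contributes, through its quadratic variation, a \emph{competing} $+\tfrac12\langle\sigma^{2}\rho_{t}^{2}\partial_{x}H_{t},\partial_{x}H_{t}\rangle$ (plus cross terms that Young's inequality absorbs), so by the non-degeneracy $(1-\rho_{t}^{2})\sigma^{2}\ge c_{0}>0$ of Assumption~\ref{Assumption 1 - finite system}(iv) a genuinely dissipative term $-c_{0}\Vert\partial_{x}H_{t}\Vert_{L^{2}}^{2}$ survives. \emph{(b) Linear lower-order terms.} The parts in which the $\nu$-coefficients merely multiply $\partial_{x}H_{t}$ are bounded by $C\Vert H_{t}\Vert_{L^{2}}^{2}$ after one more integration by parts, using the bounds on $\partial_{x}b$, $\partial_{x}\alpha$, $\partial_{x}\sigma$, and the fact that, since $\mathfrak{K}\in\mathcal{W}_{0}^{1,1}$ has zero trace, $\mathfrak{L}$ is Lipschitz with $d\mathfrak{L}_{t}=(\mathfrak{K}'\ast L)_{t}\,dt$ and $\Vert\mathfrak{K}'\ast L\Vert_{L^{\infty}}<\infty$. \emph{(c) Coefficient-difference terms.} Here the differences $b(t,\cdot,\nu_{t})-b(t,\cdot,\tilde\nu_{t})$, $\alpha(t,\cdot,\nu_{t})-\alpha(t,\cdot,\tilde\nu_{t})$, $\rho(t,\nu_{t})-\rho(t,\tilde\nu_{t})$ and $d\mathfrak{L}_{t}-d\tilde{\mathfrak{L}}_{t}=(\mathfrak{K}'\ast(L-\tilde L))_{t}\,dt$ enter; by Assumption~\ref{Assumption 1 - finite system}(ii)--(iii) they are dominated by $C(1+|x|+\langle\nu_{t},|\cdot|\rangle)[d_{0}(\nu_{t},\tilde\nu_{t})+|L_{t}-\tilde L_{t}|]$ (with $d_{1}$ in place of $d_{0}$ for $\rho$), while $d_{0}(\nu_{t},\tilde\nu_{t})$, $d_{1}(\nu_{t},\tilde\nu_{t})$ and $|L_{t}-\tilde L_{t}|=|H_{t}(0)|$ are all controlled --- modulo the tails of Assumption~\ref{Assumption 2.3}(iii) --- by $\Vert H_{t}\Vert_{L^{2}}$ and $|H_{t}(0)|\le\Vert H_{t}\Vert_{\infty}\lesssim\Vert H_{t}\Vert_{L^{2}}^{1/2}\Vert\partial_{x}H_{t}\Vert_{L^{2}}^{1/2}$. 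After Young's inequality a small fraction of the dissipation from (a) absorbs the $\Vert\partial_{x}H_{t}\Vert$-content, and what remains is $\le(\text{integrable random weight})\cdot\Vert H_{t}\Vert_{L^{2}}^{2}$, which closes a Gronwall inequality for $\mathbb{E}\Vert H_{t\wedge\tau_{n}}\Vert_{L^{2}}^{2}$ and forces $H\equiv0$.

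A separate point is that Assumption~\ref{Assumption 1 - finite system}(iii) only provides Lipschitz dependence on $\ell$ \emph{within} a regime $[\theta_{i-1},\theta_{i})$, so step (c) is licit only while $L_{t}$ and $\tilde L_{t}$ lie in a common regime. I would therefore run the above on $[0,\varsigma]$, with $\varsigma:=\inf\{t:L_{t},\tilde L_{t}\ \text{are not in a common}\ [\theta_{i-1},\theta_{i})\}$, obtaining $H\equiv0$ on $[0,\varsigma]$; then $L_{t}=\tilde L_{t}$ for $t<\varsigma$, so continuity of $L$ gives $L_{\varsigma}=\tilde L_{\varsigma}$, and since the regimes are the half-open intervals $[\theta_{i-1},\theta_{i})$ this places $L_{\varsigma},\tilde L_{\varsigma}$ back in a common regime, contradicting $\varsigma<T$. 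Hence $\varsigma=T$, and $\nu_{t}=\tilde\nu_{t}$ as measures for every $t\in[0,T]$ with probability $1$; the strict monotonicity of $\tilde L$ in Assumption~\ref{Assumption 2.3}(ii) ensures there are only finitely many threshold crossings, so the $dt$-integrals are unaffected by them.

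The main obstacle, I expect, is the \emph{closure} of step (c) under the weak regularity available for $\tilde\nu$. Because $b$ has linear growth both in $x$ and in the mean $M_{t}=\langle\nu_{t},\mathrm{Id}\rangle$, several error terms pair a coefficient difference against the unbounded weight $1+|x|+\langle\nu_{t},|\cdot|\rangle$ and --- after integrating by parts --- against $\tilde\nu_{t}$ itself, and the relevant weighted space-time moments then sit right at the edge of integrability; controlling them requires the full strength of Assumption~\ref{Assumption 2.3}, namely the exponential tails~(iii) for the moments of $\nu_{t}$ and $\tilde\nu_{t}$, the boundary decay~(iv) to annihilate the boundary contributions, and the spatial-concentration bound~(v) as an $L^{2}$-density surrogate for $\tilde\nu$, together with the Section-3 estimates for the limit point $\nu$. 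This is the technical heart of the argument (and, in the paper, the content of Sections~5.2--5.3). Once pathwise uniqueness is in hand, the remaining assertions follow at once: by Theorem~\ref{thm:EXISTENCE}, $(\nu^{{\scriptscriptstyle N}},W^{0})$ is tight and every subsequential limit solves~(\ref{eq:LIMIT SPDE}) while satisfying Assumption~\ref{Assumption 2.3}, so all limits have the same law and the whole sequence converges; and the convergence of $L^{{\scriptscriptstyle N}}$ and $M^{{\scriptscriptstyle N}}$ to the now uniquely determined $L$ and $M$ is inherited from the last assertion of Theorem~\ref{thm:EXISTENCE}.
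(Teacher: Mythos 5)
Your strategy is in outline the paper's own: your $H_t$ is precisely the $\varepsilon\downarrow 0$ limit of the smoothed anti-derivative $\partial_x^{-1}\mathcal{T}_\varepsilon\Delta_t$ used in Section \ref{sec:Uniqueness}, your dissipation-versus-common-noise bookkeeping is the $\theta$-trick of (\ref{eq: small theta trick}), your regime-switching stopping argument matches Section \ref{subsec:Uniqueness:-Proof-of}, and the passage from pathwise uniqueness to the LLN is the same. The two places where you assert rather than prove are, however, exactly where the proof lives. First, the a priori regularity needed to run your computation is not available: applying It\^o to $\Vert H_t\Vert_{L^2}^2$ and integrating by parts presupposes $\partial_x H_t=-(V_t-\tilde V_t)\in L^2$ and pointwise boundary vanishing, but for $\tilde\nu$ only Assumption \ref{Assumption 2.3} is given, and your claim that $\tilde\nu$ has an $L^2$ density ``by the same energy argument'' does not go through --- the estimate of Proposition \ref{prop: Energy bound on particle limit} uses that $\nu$ is dominated by the whole-space empirical measures of the particle system, a structure an abstract solution lacks. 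In the paper the $L^2$ density of $\Delta$ emerges only as a byproduct of the smoothed estimate (the term $c_0\mathbb{E}\int_0^{t\wedge t_n}\Vert\mathcal{T}_\varepsilon\Delta_s\Vert_2^2\,ds$ on the left of Proposition \ref{smooth_energy_est}), so taking it as an input is circular; this is why the smoothing by the Dirichlet kernel is kept throughout, It\^o is applied to $(\partial_x^{-1}\mathcal{T}_\varepsilon\Delta_t)^2$, and the boundary contributions and convolution errors are killed by Lemma \ref{lem:Boundary Estimate} and Lemma \ref{lem:Error terms half-space} before $\varepsilon\downarrow0$, rather than by a pointwise appeal to Assumption \ref{Assumption 2.3}(iv).

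Second, and more seriously, your Gr\"onwall closure in step (c) is incomplete. The bounds on $d_0$, $d_1$ and $|L_t-\tilde L_t|$ in terms of $\Vert H_t\Vert_2$ hold only after a spatial cut-off at a level $\lambda$, and they leave an additive tail error that is \emph{not} controlled by $\Vert H_t\Vert_2$; it merely decays like $e^{-a\lambda}$ in expectation, for every $a>0$ (Assumption \ref{Assumption 2.3}(iii) and Lemma \ref{lem:Tails for centre of mass}). The resulting inequality is of the form $\mathbb{E}\Vert H_{t\wedge t_n}\Vert_2^2\lesssim \mathrm{tail}(\lambda)\,e^{C(\lambda)t}$, and this forces $H\equiv0$ only if (i) the Gr\"onwall rate $C(\lambda)$ grows at most linearly in $\lambda$ --- which is why Proposition \ref{smooth_energy_est} deliberately keeps the $d_0$-term to first power via Cauchy--Schwarz, so that inserting Lemma \ref{lem: Lemma8.8 from hambly-ledger} produces $\lambda(1+\delta^{-1})\Vert\partial_x^{-1}\mathcal{T}_\varepsilon\Delta_s\Vert_2^2$ with $\lambda$ \emph{not} squared --- and (ii) one then exploits the arbitrarily fast exponential tail decay, fixing $a:=2(1+\delta^{-1})T$ and sending $\lambda\rightarrow\infty$, after first absorbing the $\delta^{1/2}\Vert\tfrac{d\Delta_s}{dx}\Vert_2$ contributions into the dissipation by the choice $\delta=c_0/2c_n'$. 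Your phrase ``modulo the tails \ldots{} closes a Gronwall inequality'' conceals exactly this interplay between the cut-off level, the tail rate and the $\lambda$-dependence of the Gr\"onwall constant; without it the sketch does not yield $H\equiv0$. (A minor remark: for the final LLN you do not need Gy\"ongy--Krylov; tightness plus the fact that every subsequential limit solves (\ref{eq:LIMIT SPDE}) and satisfies Assumption \ref{Assumption 2.3}, combined with the uniqueness just proved, already identifies all limit laws, which is the paper's route.)
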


In view of the pathwise uniqueness, the Yamada\textendash Watanabe
theorem ensures that the unique solution to the SPDE can be taken
to be strong. Our final result
shows that this solution can be recast as the conditional law, given
the common Brownian motion $W^{0}$, of a `conditional' McKean\textendash Vlasov
type diffusion with absorbing boundary. Furthermore, we establish an
Aronson type upper bound on the density of this absorbed SDE and
we show that it has power law decay near the boundary. This is analogous
to the classical Dirichlet heat kernel estimates that are available
for more standard diffusions
(see \cite{saloff-coste,Cho 2012}).
\begin{thm}
	[Conditional McKean--Vlasov formulation]\label{thm:MCKEAN-VLASOV}Let $(\nu,W^{0})$
	be the unique strong solution to the SPDE (\ref{eq:LIMIT SPDE}).
	Then, for any Brownian motion $W\perp(X_{0},W^{0})$, we have
	\[
	\nu_{t}=\mathbb{P}(X_{t}\in\cdot\,,\:t<\tau\mid W^{0})\quad\text{for}\quad\tau:=\inf\left\{ t>0:X_{t}\leq0\right\} ,
	\]
	where $X_{t}$ is the unique solution to the conditional McKean\textendash Vlasov
	diffusion
	\[
	\begin{cases}
	dX_{t}=b(t,X_{t},\nu_{t})dt+\sigma(t,X_{t})\sqrt{1-\rho(t,\nu_{t})^{2}}dW_{t}\\
	\qquad\quad+\,\sigma(t,X_{t})\rho(t,\nu_{t})dW_{t}^{0}
	-\alpha(t,X_t,\nu_t)d\mathfrak{L}_{t}\\[2pt]
	 \mathfrak{L}_{t}=(\mathfrak{K}\ast L)_{t},\;\;L_{t}=\mathbb{P}(\tau\leq t\mid W^{0}), \;\; X_{0}\sim\nu_{0}.
	\end{cases}
	\]
	Moreover, the absorbed process has a density $\mathfrak{p}(t,x)=\mathbb{E}V_{t}(x)$
	so that\vspace{-2pt}
	\[
	\mathbb{E}\nu_{t}(a,b)=\mathbb{P}(X_{t}\in(a,b),\,t<\tau)=\int_{a}^{b}\mathfrak{p}(t,x)dx,
	\]
	where, for any $\epsilon>0$, there exists $\kappa\in(0,1)$ and
	 $C,c>0$ such that 
	\begin{equation}
	\mathfrak{p}(t,x)\leq C\!\int_{0}^{\infty}\Bigl({\textstyle \frac{1}{\sqrt{t}}}({\textstyle \frac{x}{\sqrt{t}}}\land1)({\textstyle \frac{y}{\sqrt{t}}}\land1)+({\textstyle \frac{x^{\kappa}y^{\kappa}}{t^{\kappa}}}\land1)e^{\epsilon y^{2}}\Bigr)\bigl(1\land e^{-\frac{(x-y)^{2}}{ct}+c_{x\hspace{-0.4bp},\hspace{-0.4bp}y}}\bigr)d\nu_{0}(y)\label{eq: Aronson Estimate}
	\end{equation}
	and
	\begin{equation}
	\mathfrak{p}(t,x)\leq C\!\int_{0}^{\infty}\bigl({\textstyle \frac{1}{\sqrt{t}}}+e^{\epsilon y^{2}}\bigr)e^{-\frac{(x-y)^{2}}{ct}}d\nu_{0}(y),\label{eq: Aronson estimate whole space}
	\end{equation}
	with $c_{x,y}\apprle\left|x-y\right|(x\wedge y)$. Furthermore, if $\sigma(t,x)=\sigma_{1}(t)\sigma_{2}(x)$,
	then $c_{x,y}\equiv0$.
\end{thm}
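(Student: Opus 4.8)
The strategy is to exploit that $(\nu,W^{0})$ is already pinned down as the unique strong solution of (\ref{eq:LIMIT SPDE}), so that $\mathfrak{L}_{t}=(\mathfrak{K}\ast L)_{t}$ and the coefficients $b(t,\cdot,\nu_{t})$, $\rho(t,\nu_{t})$, $\alpha(t,\cdot,\nu_{t})$ become \emph{given}, $W^{0}$-adapted inputs; the McKean--Vlasov SDE then decouples into an ordinary absorbed It\^o SDE. First I would solve this SDE for $X$ up to $\tau=\inf\{t:X_{t}\le0\}$: the drift is $C^{2}$ with linear growth in $x$ (and depends on $\sup_{t\le T}M_{t}$, which is a.s.\ finite since $M$ is continuous by Theorem \ref{thm:EXISTENCE}), the diffusion coefficient is non-degenerate and Lipschitz, and the contagion term is harmless --- because $\mathfrak{K}\in\mathcal{W}_{0}^{1,1}(\mathbb{R}_{+})$ forces $\mathfrak{L}$ to be absolutely continuous with $\dot{\mathfrak{L}}=\mathfrak{K}'\ast L\in L^{\infty}$, so $-\alpha\,d\mathfrak{L}_{t}$ is just a bounded extra drift. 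This yields a pathwise unique strong solution; set $\hat{\nu}_{t}:=\mathbb{P}(X_{t}\in\cdot,\,t<\tau\mid W^{0})$. Applying a conditional It\^o/It\^o--Wentzell formula to $\phi(X_{t\wedge\tau})$ for $\phi\in\mathscr{C}_0$, taking $\mathbb{E}[\,\cdot\mid W^{0}]$ (the $dW$-martingale part averages out, $W$ being a Brownian motion under $\mathbb{P}(\cdot\mid W^{0})$), and using that $\mathbb{E}[\phi(X_{t\wedge\tau})\mid W^{0}]=\langle\hat{\nu}_{t},\phi\rangle+\phi(0)\mathbb{P}(\tau\le t\mid W^{0})=\langle\hat{\nu}_{t},\phi\rangle$ exactly because $\phi(0)=0$, one finds that $\hat{\nu}$ solves (\ref{eq:LIMIT SPDE}) with the \emph{same} coefficients and the \emph{same} $\mathfrak{L}$.

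To conclude $\hat{\nu}=\nu$ via the uniqueness in Theorem \ref{thm:UNIQUENESS}, it remains to check that $\hat{\nu}$ lies in the class of Assumption \ref{Assumption 2.3}. Condition (i) is immediate since $X$ is absorbed at $0$, and (ii) follows from non-degeneracy $\sigma\ge\epsilon$ via a support-theorem argument giving positive absorption probability in every time window. Conditions (iii)--(v) I would obtain \emph{a posteriori} from the density estimate (\ref{eq: Aronson Estimate}) proved below --- there is no circularity, since that estimate only uses the regularity of $\nu$ already available from Theorems \ref{thm:EXISTENCE}--\ref{thm:UNIQUENESS}. Once $\hat{\nu}=\nu$, the identities $\mathfrak{p}(t,x)=\mathbb{E}V_{t}(x)$ and $\mathbb{E}\nu_{t}(a,b)=\int_{a}^{b}\mathfrak{p}$ follow from $\nu=\hat{\nu}$ and Fubini, so that the fixed-point relation $L_{t}=\mathbb{P}(\tau\le t\mid W^{0})$ holds.

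It remains to establish the heat-kernel bounds (\ref{eq: Aronson Estimate})--(\ref{eq: Aronson estimate whole space}) for $\mathfrak{p}(t,x)=\mathbb{E}V_{t}(x)$, which is the heart of the matter and is deferred to Section 6. Conditioning on $W^{0}$, $V_{t}$ is the sub-density of an absorbed diffusion on $\mathbb{R}_{+}$ with $W^{0}$-measurable, non-degenerate, $C^{2}$-in-space coefficients, plus the common-noise term $\sigma(t,X_{t})\rho(t,\nu_{t})\,dW^{0}_{t}$ (the contagion term having been absorbed into the drift as above). The plan is to remove the common noise by a $W^{0}$-adapted change of variables: this turns $X$ into an absorbed diffusion driven by $W$ alone, at the price of a (rough, $W^{0}$-driven) \emph{moving} absorbing boundary. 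In the separable case $\sigma(t,x)=\sigma_{1}(t)\sigma_{2}(x)$ the $\sigma_{2}$-scale change renders the diffusion coefficient space-homogeneous and the transformation is otherwise clean, which is why $c_{x,y}\equiv0$ there; for non-separable $\sigma$ the change of variables only approximately decouples the common noise and leaves a residual transport term, which I would track to produce the correction $c_{x,y}\apprle|x-y|(x\wedge y)$. For the resulting absorbed diffusion I would obtain a Dirichlet kernel bound of the classical shape $\tfrac{1}{\sqrt t}(\tfrac{x}{\sqrt t}\wedge1)(\tfrac{y}{\sqrt t}\wedge1)e^{-(x-y)^{2}/ct}$ (cf.\ \cite{saloff-coste,Cho 2012}) by method-of-images comparison with Brownian motion plus a parametrix/perturbation step for the non-constant part and the moving barrier; the crude non-absorbed bound gives (\ref{eq: Aronson estimate whole space}).

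The main obstacle is that, after conditioning, the transformed coefficients are \emph{random}: they depend on the unbounded path of $W^{0}$ and on $\sup_{t\le T}M_{t}$, so the naive Aronson constants are $\omega$-dependent, and the roughness of the moving barrier degrades the near-boundary exponent from $1$ to some $\kappa\in(0,1)$, giving the extra term $(\tfrac{x^{\kappa}y^{\kappa}}{t^{\kappa}}\wedge1)$. Getting the \emph{uniform} prefactor $\tfrac{1}{\sqrt t}(\cdots)+(\cdots)e^{\epsilon y^{2}}$ requires splitting $\mathbb{E}V_{t}(x)$ according to whether these random quantities are moderate or large and optimising: the factor $e^{\epsilon y^{2}}$, for arbitrarily small $\epsilon$, is precisely the cost of the Gaussian tail of $W^{0}$ together with the tail of $\sup_{t\le T}M_{t}$, the latter controlled by the exponential-tail bound of Assumption \ref{Assumption 2.3}(iii) which $\nu$ already satisfies. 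Finally, feeding (\ref{eq: Aronson Estimate}) back and integrating against the sub-Gaussian $\nu_{0}$ yields the exponential tails and the $O(\varepsilon^{1+\beta})$ boundary decay (with $\beta=\kappa$), and together with the $L^{2}$-bound on $V_{t}$ the spatial-concentration estimate --- exactly what was needed to verify Assumption \ref{Assumption 2.3} for $\hat{\nu}$ and so close the argument.
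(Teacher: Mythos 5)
Your treatment of the first claim is essentially the paper's: freeze the coefficients at the given solution $\nu$ (so that $\mathfrak{L}$, with $\mathfrak{K}\in\mathcal{W}^{1,1}_0$, is an absolutely continuous, bounded-derivative input), solve the resulting decoupled absorbed SDE, apply It\^o's formula to $\phi(X_{t\wedge\tau})$ for $\phi\in\mathscr{C}_0$ and condition on $W^0$. But there is a logical slip at the identification step: the conditional law $\hat\nu$ is only shown to solve the \emph{linear} SPDE whose coefficients $b(t,\cdot,\nu_t)$, $\rho(t,\nu_t)$, $\alpha(t,\cdot,\nu_t)$ and whose $\mathfrak{L}$ are frozen at $\nu$; it is not known to solve the nonlinear equation (\ref{eq:LIMIT SPDE}) with $\hat\nu$ inside its own coefficients until one already knows $\hat\nu=\nu$. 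Invoking Theorem \ref{thm:UNIQUENESS} as stated is therefore circular. The paper avoids this by appealing to uniqueness \emph{for the linear SPDE with fixed $\nu$} (which follows from the same, in fact simpler, $H^{-1}$ energy estimates, since the $d_0$, $d_1$ and $|L-\tilde L|$ difference terms disappear); you would still need Assumption \ref{Assumption 2.3}-type regularity of $\hat\nu$ to run that estimate, so your plan to verify it is not wasted, but the uniqueness statement you cite must be the linear one.

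The real substance of the theorem, the bounds (\ref{eq: Aronson Estimate})--(\ref{eq: Aronson estimate whole space}), is where your route departs from the paper and where the gap lies. The paper never analyses the limiting dynamics conditionally on $W^0$: the estimates are proved for the \emph{finite} particle system (Proposition \ref{prop: Aronson estimate}, Section 6) by propagating sub-Gaussianity (Proposition \ref{prop:SUBGAUSSIAN}), scaling to unit diffusivity, a Girsanov change of measure with the Radon--Nikodym estimate of Lemma \ref{lem: Radon-Nik estimate}, and a comparison of $Z_{t\wedge\tau}$ with an absorbed Brownian motion via $s\mapsto\mathbb{E}^z[u(s,Z_{s\wedge\tau})]$; these bounds, uniform in $N$, are then passed to the limit (Proposition \ref{prop:limit_regularity}) to bound $\mathfrak{p}(t,x)=\mathbb{E}V_t(x)$. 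In that argument $\kappa=1/a$ and the factor $e^{\epsilon y^2}$ are the H\"older/Girsanov cost of the linear-growth drift, not boundary roughness or the Gaussian tail of $W^0$. Your alternative --- condition on $W^0$, remove the common noise by a $W^0$-adapted change of variables, and treat a rough moving absorbing barrier by parametrix methods --- is not substantiated: after conditioning, $\sigma\rho\,dW^0$ cannot be absorbed as a drift (it has unbounded variation), an exact decoupling transformation does not exist for general $\sigma(t,x)$ (only the separable case is clean, which is why $c_{x,y}\equiv0$ there), and uniform-in-$\omega$ Dirichlet heat kernel bounds for a Brownian-rough moving boundary are a hard open-ended problem that your sketch does not address. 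As written, the key estimates would not follow, and with them the a posteriori verification of the exponential tails, boundary decay and spatial concentration for $\hat\nu$ that your uniqueness step relies on.
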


\begin{rem}
The bound (\ref{eq: Aronson estimate whole space}) also holds on
the whole space. Note that the correction term $c_{x,y}$
in (\ref{eq: Aronson Estimate}) is only relevant when $x,y\rightarrow\infty$
jointly (inside a cone determined by $c_{x,y}$) and in this case
the tail is controlled by (\ref{eq: Aronson estimate whole space}).
Note also that the factor $e^{\epsilon y^{2}}$ requires the initial
law to be sub-Gaussian. This is a natural consequence of the linear
growth. If e.g.~the drift is bounded, then $e^{\epsilon y^{2}}$ can be dropped and $\kappa$
can be taken arbitrarily close to $1$. Similarly, the factor $e^{\epsilon y^{2}}$ drops if the linear growth is in terms of $\left|M_{t}-M_{0}\right|$ and $\left|X_{t}-X_{0}\right|$.
\end{rem}

\begin{proof}
[Proof of Theorem \ref{thm:MCKEAN-VLASOV}]Given Theorems \ref{thm:EXISTENCE}
and \ref{thm:UNIQUENESS}, the first claim is straightforward. Indeed, treating
$\nu_{t}$
as given, the SDE has a strong solution, $\tilde{X}$, by the standard theory
and we can thus define $\tilde{\nu}_{t}:=\mathbb{P}(\tilde{X}_{t}\in\cdot\,,\:t<\tau\mid W^{0})$.
Arguing as in Section 9 of \cite{HL2016}, with the obvious changes,
we can apply Itô's formula to $\phi(\tilde{X}_{t})$, for $\phi\in\mathscr{C}_{{\scriptscriptstyle 0}}$,
to show that $\tilde{\nu}$ solves a linear version of the SPDE (\ref{eq:LIMIT SPDE})
with the fixed $\nu$. But then $\tilde{\nu}=\nu$ by uniqueness
(for the linear SPDE) which proves the claim. The density estimates
(\ref{eq: Aronson Estimate}) and (\ref{eq: Aronson estimate whole space}) are more involved, but they
will follow from Proposition \ref{prop:First regularity prop} (based on the work in
Section 6).
\end{proof}
For the proofs of Theorems \ref{thm:EXISTENCE} and \ref{thm:UNIQUENESS},
our techniques build upon and extend the methods of \cite{HL2016},
which dealt with a similar SPDE problem, albeit without the contagion
term (i.e.~$\alpha\equiv0$) and with a bounded drift as well as with coefficients that only depend on $\nu$ via the losses $L$.
The main insight from \cite{HL2016} is that it can be fruitful to approach
uniqueness via suitable energy estimates in $H^{-1}$ (the dual of the first Sobolev space) when combined with careful control over
$\mathbb{E}\nu_{t}(0,\varepsilon)$ as $\varepsilon\downarrow0$
and $\mathbb{E}\nu_{t}(\lambda,\infty)$ as $\lambda\uparrow\infty$.
However, in order for this to work in our setting, several
extensions of the arguments in \cite{HL2016} are needed and, crucially, we must rely on novel upper bounds for the densities
of the absorbed particles (Proposition~\ref{prop: Aronson estimate}).
Ultimately, we thus arrive at the essential $H^{-1}$ energy estimate (Proposition~\ref{smooth_energy_est} and Lemma \ref{lem: Lemma8.8 from hambly-ledger}),
by establishing power law decay of $x\mapsto\mathbb{E}V_{t}(x)$ near
the Dirichlet boundary and Gaussian tails towards infinity (Corollary~\ref{Cor:Regularity of the empirical measures}).

The proofs of the density bounds are the subject of Section 6 and it is
these efforts that lead to the absorbing density estimates
in Theorem~\ref{thm:MCKEAN-VLASOV}. As far as we are aware, these
estimates are not available from results elsewhere in the literature,
and we believe they are of independent interest. In particular, they have already proved useful in a separate paper \cite{HLS2018}
related to the problem discussed in Section \ref{subsec:Non-smoothed loss}.

\subsection{Financial contagion and default clustering}

Recall that the nonlinearities of the limit SPDE are of a non-local nature tied to the flux across the origin. This differentiates our setting from the existing theory for Zakai type SPDEs and it has striking consequences for the qualitative behaviour of the solution.

In particular, the health of the financial system --- governed by the limit SPDE --- depends critically on the interplay between the common noise $W^0$ and the nonlinear effects of the contagion process $\mathfrak{L}$ (see Fig.~\ref{Fig1}). At least conceptually, this captures the main forces at work in the 2007--2009 financial crisis, where  contagion ensued from the correlated corrections across the U.S. housing markets as \emph{``financial institutions had levered
	up on similar large portfolios of securities and loans that faced
	little idiosyncratic risk, but large amounts of systematic risk''
}(Acharya et al.~\cite{Acharya Pedersen}).
\begin{figure}[H]\vspace{-9pt}
	\begin{center}
		\hspace{-2cm} \includegraphics[width=0.59\textwidth]{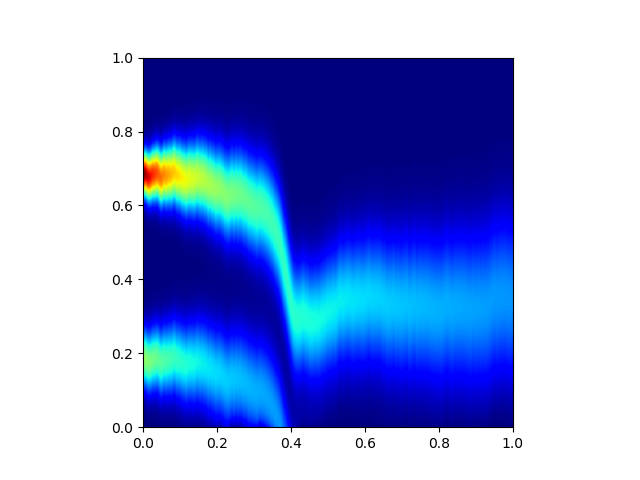} \hspace{-1.7cm} \includegraphics[width=0.59\textwidth]{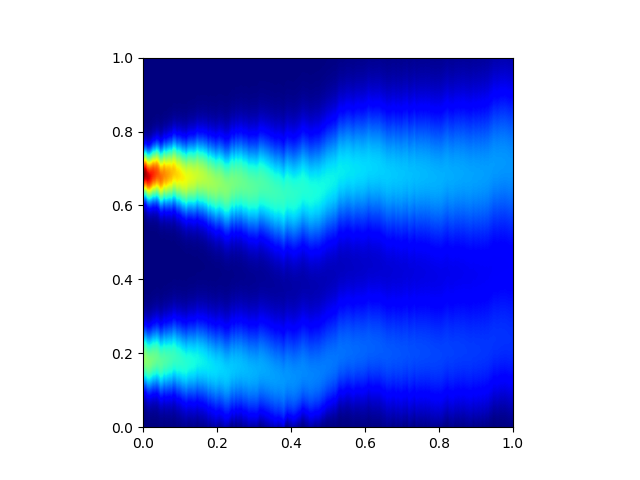} \hspace{-2.1cm}\vspace{-4pt}
		\caption{The figure shows two heat plots of the density $(t,x)\mapsto V_t(x)$ from Remark \ref{rem:DENSITY}, based on a numerical simulation of the SPDE for a fixed realization of $W^0$. On the left $\alpha=1.5$, while on the right $\alpha=0$. The other parameters are: $\rho=0.1$, $\sigma=1$, $b=0$, and $\mathfrak{K}$ is an isosceles triangle on $[0,0.015]$ with height $2/0.015$. The realization of $W^0$ starts on a slightly negative trend, but then moves back up again.}
		\label{Fig1}\vspace{-10pt}
	\end{center}
\end{figure} 
The left-hand plot in Figure \ref{Fig1} illustrates how contagion can cause a period of significant default clustering (from $t=0.2$ to $t=0.4$), with the right-hand plot confirming that the system would have done just fine in the absence of contagion. Indeed, the entire lower group of `unhealthy' banks default in the left plot, while the right plot shows a comfortable recovery from the slight initial deterioration (caused by the common exposures).

Notice also that the demise of the lower `unhealthy' part of the system causes a substantial drop in the distance-to-default of the `healthier' upper part. However, these problems do not result in further defaults, so the contagious effects die out. 
As illustrated by Figure \ref{Fig2} below, a sharper decline of the common exposures can prompt much more severe contagion, which in turn can result in a default cascade that also wipes out the healthier upper part (over a very short period of time). Finally, we emphasise the critical r\^ole of the common exposures as an instigator of such periods of high contagion. This is made clear by the rightmost plot of Figure \ref{Fig2}, which shows the system trending solidly upwards in a contrasting scenario where the common exposures are doing well.
\begin{figure}[H]\vspace{-9pt}
	\begin{center}
		\hspace{-2cm} \includegraphics[width=0.568\textwidth]{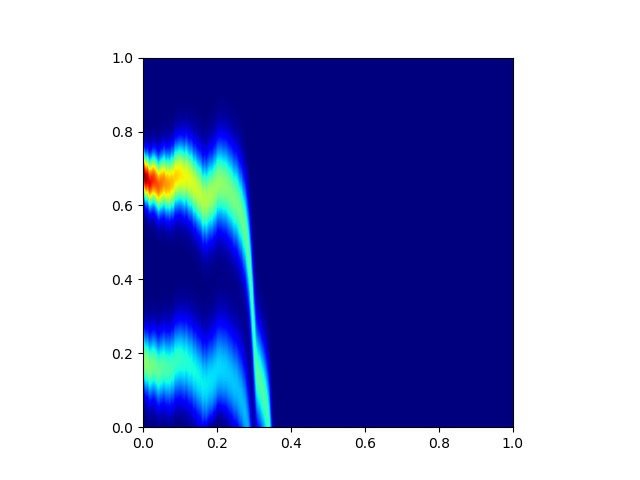} \hspace{-1.66cm} \includegraphics[width=0.65\textwidth]{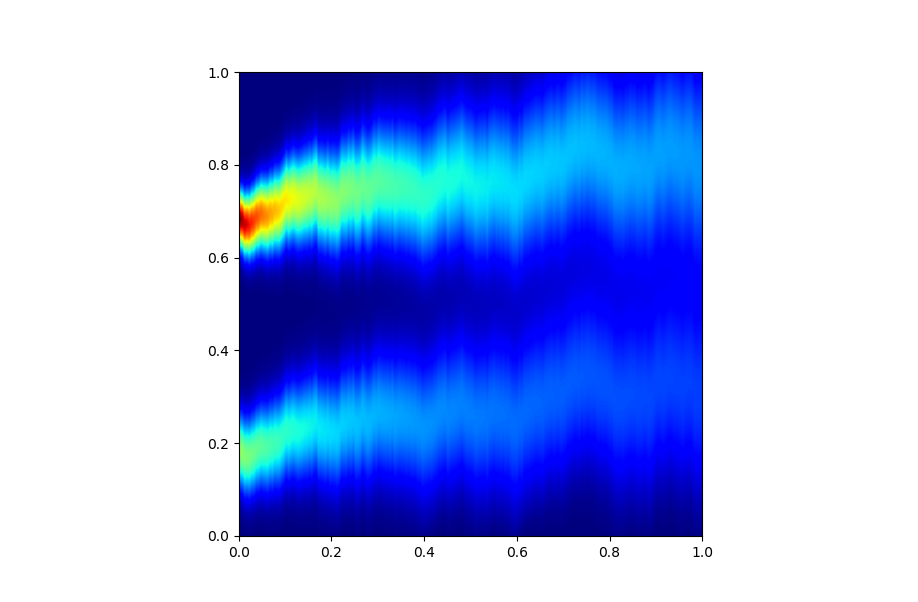} \hspace{-2.45cm}\vspace{-4pt}
		\caption{In these two heat plots of $(t,x)\mapsto V_t(x)$, we have $\alpha=2$ on both sides and otherwise the parameters are the same as in Fig.~\ref{Fig1}. However, the simulations are done for two different realizations of the common noise $W^0$: on the left it declines steadily, while on the right it increases correspondingly.}
		\label{Fig2}\vspace{-10pt}
	\end{center}
\end{figure}
The importance of the common noise (as portrayed by Fig.~\ref{Fig2}) agrees with observations
from the network-based literature, which suggest that idiosyncratic shocks are unlikely to significantly impact a large network, whereas
the addition of common shocks can generate substantial losses from
contagion (see e.g.~the discussion in Cont, Moussa \& Santos \cite{Cont Moussa}). Also, recalling the model from Section \ref{subsec:A-Base-Model},
we note that a high rate of herding can have the dual effect of producing a
healthier system in `normal' times, while serving as an amplifier of potential crises by causing even more default clustering if the common exposures decline significantly. Similarly, the correlation function presents another endogenous channel
that can amplify the effect of declining common exposures. We intend to return to a closer study of the interplay between the different parameters in future work.

\subsubsection{Default cascades and the limiting case of instantaneous contagion\label{subsec:Non-smoothed loss}}
In view of the steep decline in Figure \ref{Fig2}, it is interesting to consider what happens as the contagious impact of defaults becomes instantaneous. That is, as a sequence of impact kernels $(\mathfrak{K}_\varepsilon)$ approximates the dirac mass at $0$.

In the case of constant coefficients and no common noise, the resulting limit has recently been studied independently by Nadtochiy \& Shkolnikov \cite{Misha} and the authors of this paper together with Ledger \cite{HLS2018}, building on earlier work by Delarue et al.~\cite{delarue global solv,delarue approx}. To be specific, the limiting McKean--Vlasov problem  is of the form
\begin{equation}\label{eq:Non-smoothed McKean-Vlasov}
X_{t}=X_0+bt+\sigma W_{t}-\alpha L_{t}, \quad
L_{t}=\mathbb{P}(\tau\leq t),
\end{equation}
where $\tau=\inf\{t>0:X_{t}\leq0\}$. As it is, (\ref{eq:Non-smoothed McKean-Vlasov}) is ill-posed, however, it is conjectured (see Conj.~1.9 in \cite{HLS2018})  that it is well-posed in the class of `physical' solutions as introduced in \cite{delarue approx}. Global existence of a `physical' solution is known from \cite{delarue approx}, but uniqueness remains unsettled: If $\alpha$ is sufficiently large (given $X_0$), $t\mapsto L_t$ cannot be continuous \cite[Thm.~1.1]{HLS2018} and uniqueness is then only
known up to the first time the $\mathcal{W}^{1,2}$ norm of $ L$
explodes \cite[Thm.~1.8]{HLS2018}, see also \cite{Misha}. On the other hand, if $\alpha$
is sufficiently small, then it follows from \cite[Thm.~2.4]{delarue global solv} 
that there is a unique global solution such that $t\mapsto L_{t}$
is in $\mathcal{C}^1[0,T]$.

Mathematically, this phase transition in $\alpha$ is very interesting: it means that the steep decline in Figure \ref{Fig2} may degenerate into a jump discontinuity whereby a macroscopic part of the system is lost at the blink of an eye. Financially, such a jump could offer an idealized definition of a true `systemic default cascade', which is the approach adopted in  \cite{Misha} for a variant of (\ref{eq:Non-smoothed McKean-Vlasov}). However, beyond the benefits of a precise definition this may be too categorical, as instantaneous default cascades are not observed in practice and, from a systemic risk point of view, it is the default clustering and steep declines in distances-to-default that matter, not whether they materialized over short periods of time or as jumps. Therefore, we believe the framework proposed in this paper can serve as the reference model, with the instantaneous problem (\ref{eq:Non-smoothed McKean-Vlasov}) arising as an important  limiting case.

In addition, there are two theoretical advantages of the model in this paper. Firstly, it needs no extra notion of `physical' solutions and it makes sense as an SPDE globally. Secondly, the SPDE characterizes the unique limit of the finite system. This latter point is somewhat problematic for (\ref{eq:Non-smoothed McKean-Vlasov}), at least until the aforementioned conjecture is resolved, since global uniqueness is not known and the explosion time (up to which we have uniqueness) may in principle lie strictly before the first jump. In turn, even up to the first jump time, the finite system is not guaranteed to converge to a unique limit, and thus the jump definition of a `systemic default cascade' cannot be said to rigorously represent the finite financial system. This issue is even more pronounced with non-constant (nonlinear) coefficients and common noise, as nothing is then known about uniqueness.

\subsubsection{Large portfolio credit risk \label{subsec:Credit-derivatives}}

Although the focus here is on financial institutions, our framework
could also be applied to the study of default clustering in large portfolios
of more general defaultable entities. Viewed as a structural large portfolio model for pricing of credit derivatives, 
our framework extends \cite{HL2016,hambly reisinger etc}. In this regard, the idea of having a loss-dependent correlation
with finitely many discontinuities (Assumption \ref{Assumption 1 - finite system},
part iii) was considered in \cite{HL2016} as a possible way
to accommodate for the implied correlation skew across the
tranches of a CDO.

Recently, there has been a great
deal of interest in disentangling the rôles of contagion and common
risk factors as drivers of corporate default clustering in large portfolios (Azizpour, Giesecke \& Schwenkler \cite{Azizpour Giesescke Schwenkler},
Lando \& Nielsen \cite{Lando and Nielsen}, Duffie et al.~\cite{Duffie Frailty}).
However, the existing literature remains inconclusive and has focused almost exclusively on
self-exciting point processes, so the model we present here could serve as a first attempt towards a structural platform from
which to approach these matters.

\subsubsection{Connections to mathematical neuroscience\label{subsec:Connections-to-computational}}

Interestingly, our setup is closely related to nonlinear
\emph{leaky integrate-and-fire} models for electrically coupled neurons
with noisy input. These models can be phrased as  particle systems, where each SDE corresponds to the electrical potential of a neuron, and when this potential reaches a threshold voltage, the
neuron is then said to \emph{spike}, thus causing it to fire an electrical
signal to the other neurons exciting them to higher voltage
levels.

As suggested by Inglis \& Talay \cite{Inglis Talay}, the transmission
of this signal can be modelled by a \emph{cable equation}, which translates
to a gradual impact of the signal in complete analogy with our
contagion mechanism. However, instead of being absorbed at the boundary,
the neurons that spike are instantly reset to a predetermined
value (called the resting potential) and they then continue to evolve
according to this rule ad infinitum. In the mean field limit, this yields a McKean\textendash Vlasov
problem analogous to that of Theorem 2.7 except for the resetting
of the particle. So far, this has only been studied without common noise (and with simpler interactions) in which case the global well-posedness was proved in \cite{Inglis Talay}, building on ideas from \cite{delarue global solv,delarue approx}.
For further background, we refer to \cite{Inglis Talay,Brunel Hakim,Moreno-Bote and Parga}.

\section{The finite particle system\label{sec:The-particle-system}}

We begin by observing that the finite system (\ref{eq: Particle System}) is well-posed for each $N\geq1$.
To see this, we write $\mathbf{X}^{{\scriptscriptstyle N}}=(X^{1},\ldots,X^{{\scriptscriptstyle N}})$ and
express the system as a
vector-valued SDE
\[
d\mathbf{X}_{t}^{{\scriptscriptstyle N}}=\boldsymbol{b}(t,\mathbf{X}_{t}^{{\scriptscriptstyle N}}\!, \nu_t^{{\scriptscriptstyle N}})dt+\boldsymbol{\sigma}(t,\mathbf{X}_{t}^{{\scriptscriptstyle N}})\bigl(\rho_{t}d\mathbf{W}_{\!t}^{0}+(1\!-\!\rho_{t}^{2})^{\frac{1}{2}}d\mathbf{W}_{\!t}\bigr)-\boldsymbol{\alpha}(t,\mathbf{X}_{t}^{{\scriptscriptstyle N}}\!, \nu_t^{{\scriptscriptstyle N}})d\mathfrak{L}_{t}^{{\scriptscriptstyle N}}\!.
\]
\begin{rem}
[Shorthand notation]Here $\rho_t=\rho(t,\nu_{t}^{{\scriptscriptstyle N}})$. Similarly, we will sometimes write $\alpha^i_t=\alpha(t,X_t^i,\nu_{t}^{{\scriptscriptstyle N}})$ and $b_{t}^{i}=b(t,X_{t}^{i},\nu_{t}^{{\scriptscriptstyle N}})$ as well as $\sigma_{t}^{i}=\sigma(t,X_{t}^{i})$.
\end{rem}

Recall from (iii) of Assumption \ref{Assumption 1 - finite system} that the Lipschitzness in $L_t^{{\scriptscriptstyle N}}$ only holds piecewise. However, on each of the intervals between defaults, $L_{t}^{{\scriptscriptstyle N}}$ is simply equal to a fixed $\mathcal{F}_{0}$-measurable random variable with $L_{t}^{{\scriptscriptstyle N}}=0$ on $[0,\varsigma_{1})$
and then $L_{t}^{{\scriptscriptstyle N}}=\sum_{k=1}^{n}a_{i_{k}}^{{\scriptscriptstyle N}}$
on $[\varsigma_{n},\varsigma_{n+1})$, where $\varsigma_{n}$ is the
time of the $n$'th default (and $i_{1},\ldots,i_{n}$
have defaulted). Thus, on each of these intervals we can treat the coefficients as functions of just $t$ and the surviving members of $\mathbf{X}_t^{{\scriptscriptstyle N}}$, with (i)-(ii)  of Assumption \ref{Assumption 1 - finite system} giving local Lipschitzness in the euclidean norm. That is, we can solve the system inductively on each of the intervals
by running it up to the next default and then restarting it
with the new (fixed) value for $L_{t}^{{\scriptscriptstyle N}}$ and with
the defaulted particle removed from $\nu_{t}^{{\scriptscriptstyle N}}$. 
Hence the well-posedness follows by the standard theory for SDEs with locally Lipschitz coefficients of  at most linear growth.

\subsection{The finite-dimensional evolution equation}

Since we have sufficient symmetry in the coefficients of the particles, we can
obtain a single evolution equation for the dynamics of the empirical
measures. Furthermore, the assumptions on the weights $a_{i}^{{\scriptscriptstyle N}}$
ensure that enough averaging is taking place in order for the idiosyncratic
noise in this equation to vanish in the large population limit. These observations
are made more precise in the next proposition and the work that follows.
\begin{prop}
[Finite evolution equation]\label{prop: halfspace finite eqn}Given
$N\geq1$, it holds for all $\phi\in\mathscr{C}_{{\scriptscriptstyle 0}}$
that
\begin{align}
d\negthinspace\left\langle \nu_{t}^{{\scriptscriptstyle N}},\phi\right\rangle = \langle &\nu_{t}^{{\scriptscriptstyle N}},b(t,\cdot,\nu_{t}^{{\scriptscriptstyle N}})\partial_{x}\phi\rangle dt+{\textstyle \frac{1}{2}}\langle \nu_{t}^{{\scriptscriptstyle N}},\sigma(t,\cdot)^{2}\partial_{xx}\phi\rangle dt\label{eq:finite evol}\\
 & +\,\langle \nu_{t}^{{\scriptscriptstyle N}},\sigma(t,\cdot)\rho_{t}\partial_{x}\phi\rangle dW_{t}^{0}-\langle \nu_{t}^{{\scriptscriptstyle N}},\alpha_t\partial_{x}\phi\rangle d\mathfrak{L}_{t}^{{\scriptscriptstyle N}}+dI_{t}^{{\scriptscriptstyle N}}(\phi),\nonumber 
\end{align}
where the idiosyncratic driver\emph{ }$I_{t}^{{\scriptscriptstyle N}}$
satisfies
\[
\mathbb{E}\Bigl[\,\sup_{t\leq T}\left|I_{t}^{{\scriptscriptstyle N}}(\phi)\right|^{2}\Bigr]=O(1/N)\quad\text{as}\quad N\rightarrow\infty.
\]
\end{prop}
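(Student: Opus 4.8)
The plan is to apply Itô's formula to $\phi(X_t^i)$ for each surviving particle, average against the weights $a_i^{\scriptscriptstyle N}$, and then identify which terms assemble into the drift/diffusion/contagion brackets of \eqref{eq:finite evol} and which are collected into the idiosyncratic remainder $I_t^{\scriptscriptstyle N}(\phi)$. Concretely, since $\phi\in\mathscr{C}_{\scriptscriptstyle 0}$ vanishes at the origin, the absorption of $X^i$ at $\tau_i$ contributes nothing to $\langle\nu_t^{\scriptscriptstyle N},\phi\rangle=\sum_i a_i^{\scriptscriptstyle N}\mathbf{1}_{t<\tau_i}\phi(X_t^i)$, so no boundary local-time term appears — this is the point of posing the SPDE as a Dirichlet problem. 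Applying Itô to $\phi(X_t^i)$ on $\{t<\tau_i\}$ using the dynamics \eqref{eq: Particle System}, and summing with weights, I would obtain the $dt$-terms $\langle\nu_t^{\scriptscriptstyle N},b(t,\cdot,\nu_t^{\scriptscriptstyle N})\partial_x\phi\rangle\,dt+\tfrac12\langle\nu_t^{\scriptscriptstyle N},\sigma^2\partial_{xx}\phi\rangle\,dt$, the common-noise term $\langle\nu_t^{\scriptscriptstyle N},\sigma\rho_t\partial_x\phi\rangle\,dW_t^0$, and the contagion term $-\langle\nu_t^{\scriptscriptstyle N},\alpha_t\partial_x\phi\rangle\,d\mathfrak{L}_t^{\scriptscriptstyle N}$ (the last two using that $\mathfrak{L}^{\scriptscriptstyle N}$ has bounded variation, being $\mathfrak{K}\ast L^{\scriptscriptstyle N}$ with $\mathfrak{K}\in\mathcal{W}_0^{1,1}$). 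Everything left over is the idiosyncratic martingale
\[
I_t^{\scriptscriptstyle N}(\phi):=\sum_{i=1}^{N}a_i^{\scriptscriptstyle N}\int_0^{t\wedge\tau_i}\sigma(s,X_s^i)\sqrt{1-\rho_s}\,\partial_x\phi(X_s^i)\,dW_s^i.
\]

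Next I would estimate $I^{\scriptscriptstyle N}(\phi)$. Since the $W^i$ are independent of each other and of $W^0$, the summands are orthogonal martingales, so by Doob's $L^2$-inequality and Itô isometry,
\[
\mathbb{E}\Bigl[\sup_{t\le T}|I_t^{\scriptscriptstyle N}(\phi)|^2\Bigr]\le 4\sum_{i=1}^{N}(a_i^{\scriptscriptstyle N})^2\,\mathbb{E}\int_0^{T\wedge\tau_i}\sigma(s,X_s^i)^2(1-\rho_s)\,\partial_x\phi(X_s^i)^2\,ds.
\]
Using the non-degeneracy/boundedness of $\sigma$ and of $\rho$ from Assumption \ref{Assumption 1 - finite system}(iv) and the boundedness of $\partial_x\phi$ (as $\phi\in\mathscr{S}$), the time-integral is bounded by a constant $C_\phi T$ uniformly in $i$, so the right side is $\le 4C_\phi T\sum_i (a_i^{\scriptscriptstyle N})^2$. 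Finally, the weight bounds \eqref{eq: a_i coefficients}, $c\le a_i(\cdot)\le C$, give $a_i^{\scriptscriptstyle N}\le C/(cN)$, hence $\sum_{i=1}^N (a_i^{\scriptscriptstyle N})^2\le (C/c)^2/N=O(1/N)$, which is the claimed rate.

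There is essentially no serious obstacle here; the one point requiring a little care is the bookkeeping of the indicator $\mathbf{1}_{t<\tau_i}$ when differentiating $\langle\nu_t^{\scriptscriptstyle N},\phi\rangle$ — one must check that the jumps of this process at the default times $\varsigma_n$ vanish because $\phi(X_{\varsigma_n}^{i_n})=\phi(0)=0$, so that $\langle\nu^{\scriptscriptstyle N},\phi\rangle$ is in fact continuous and the Itô expansion has no jump part. After that, matching terms and the $L^2$ martingale estimate above are routine, so I would keep the write-up correspondingly brief.
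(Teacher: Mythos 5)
Your proposal is correct and follows essentially the same route as the paper: rewrite $\langle\nu_t^{\scriptscriptstyle N},\phi\rangle=\sum_i a_i^{\scriptscriptstyle N}\phi(X^i_{t\wedge\tau_i})$ using $\phi(0)=0$, apply It\^o to the stopped particles, collect the idiosyncratic stochastic integrals into $I^{\scriptscriptstyle N}(\phi)$, and bound it via independence of the $W^i$, boundedness of $\sigma$, $\rho$, $\partial_x\phi$, Doob's inequality, and $a_i^{\scriptscriptstyle N}\leq C/N$. No gaps worth noting.
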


\begin{proof}
Notice that, since $\phi(0)=0$ for $\phi\in\mathscr{C}_{{\scriptscriptstyle 0}}$,
we have
\[
\left\langle \nu_{t}^{{\scriptscriptstyle N}},\phi\right\rangle =\sum_{i=1}^{N}a_{i}^{{\scriptscriptstyle N}}\mathbf{1}_{s<\tau_{i}}\phi(X_{t}^{i})=\sum_{i=1}^{N}a_{i}^{{\scriptscriptstyle N}}\phi(X_{t\land\tau_{i}}^{i}),\quad \text{for} \quad \phi\in\mathscr{C}_{{\scriptscriptstyle 0}}.
\]
Applying Itô's formula to $\phi(X_{t\land\tau_{i}}^{i})$, the first
result then follows with
\[
I_{t}^{{\scriptscriptstyle N}}(\phi):=\sum_{i=1}^{N}\int_{0}^{t}a_{i}^{{\scriptscriptstyle N}}\sigma(s,X_{s}^{i})(1-\rho(s,\nu_{s}^{{\scriptscriptstyle N}})^{2})^{\frac{1}{2}}\partial_{x}\phi(X_{s\land\tau_{i}}^{i})dW_{s}^{i}.
\]
Using the independence of the Brownian motions and the boundedness
of $\sigma$, we get
\begin{align*}
\mathbb{E}{\langle I_{\cdot}^{{\scriptscriptstyle N}}(\phi)\rangle{}}_{t} & =\sum_{i=1}^{N}\mathbb{E}\Bigl[\int_{0}^{t}\!(a_{i}^{{\scriptscriptstyle N}})^{2}(\sigma_{s}^{i})^{2}(1-\rho_{s}^{2})\partial_{x}\phi(X_{s\land\tau_{i}}^{i})^{2}ds\Bigr]\leq C\left\Vert \partial_{x}\phi\right\Vert _{\infty}^{2}\sum_{i=1}^{N}\mathbb{E}\bigl[(a_{i}^{{\scriptscriptstyle N}})^{2}\bigr].
\end{align*}
Thus, the claim follows by Doob's martingale inequality, since $a_{i}^{{\scriptscriptstyle N}}\leq C/N$.
\end{proof}

\subsection{Regularity properties of the particles \label{sec:Probabilistic-Estimates}}

In Section 4 we pass to the limit in the finite evolution
equation. However, first we need to ensure sufficient regularity at the level of the particles.
The cornerstone of this is a family of upper Dirichlet heat kernel type estimates for the densities of the particles.
\begin{prop}
[Density estimates]\label{prop: Aronson estimate}Let $X_{t}^{i}$
be given by (\ref{eq: Particle System}) under Assumption \ref{Assumption 1 - finite system}.
Then the absorbed process $(X_{t}^{i},t<\tau_{i})$
has a transition density $\mathfrak{p}_{t}^{i,{\scriptscriptstyle N}}$, which satisfies the following bounds: For
every $\epsilon>0$ there exist $\kappa\in(0,1)$ and constants $C,c>0$, uniformly in $N\geq1$ and $t\in (0,T]$, such that\vspace{-4pt}
\begin{equation}
\mathfrak{p}_{t}^{i,{\scriptscriptstyle N}}\hspace{-1pt}(x,y)\leq 
C\Bigl({\textstyle \frac{1}{\sqrt{t}}}({\textstyle \frac{x}{\sqrt{t}}}\land1)({\textstyle \frac{y}{\sqrt{t}}}\land1)+({\textstyle \frac{x^{\kappa}y^{\kappa}}{t^{\kappa}}}\land1)e^{\epsilon y^{2}}\Bigr)\bigl(1\land e^{-\frac{(x-y)^{2}}{ct}+c_{x\hspace{-1pt},\hspace{-1pt}y}}\bigr)
\label{eq:finite half-line density estimate}
\end{equation}
and
\begin{equation}
\mathfrak{p}_{t}^{i,{\scriptscriptstyle N}}\hspace{-1pt}(x,y)\leq C\bigl({\textstyle \frac{1}{\sqrt{t}}}+e^{\epsilon y^{2}}\bigr)e^{-\frac{(x-y)^{2}}{ct}},\label{eq:finite whole-space density estimate}
\end{equation}
where $c_{x,y}\apprle\left|x-y\right|\left(x\land y\right)$. Furthermore,
if $\sigma(t,x)=\sigma_{1}(t)\sigma_{2}(x)$, then $c_{x,y}\equiv0$. 
\end{prop}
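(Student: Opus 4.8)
The plan is to reduce \eqref{eq:finite half-line density estimate}--\eqref{eq:finite whole-space density estimate} to an Aronson type bound for a one-dimensional diffusion with \emph{unit} diffusion coefficient, \emph{linear-growth} (but only adapted) drift, and absorption at $0$, and then to prove such a bound by a Girsanov change of measure against absorbed Brownian motion. \emph{Step~1 (reductions).} First I would absorb the contagion term into the drift: since $\mathfrak{K}\in\mathcal{W}_0^{1,1}(\mathbb{R}_+)$ has zero trace, $t\mapsto\mathfrak{L}^{{\scriptscriptstyle N}}_t=(\mathfrak{K}\ast L^{{\scriptscriptstyle N}})_t$ is absolutely continuous with $\frac{d}{dt}\mathfrak{L}^{{\scriptscriptstyle N}}_t=(\mathfrak{K}'\ast L^{{\scriptscriptstyle N}})_t$ bounded by $\Vert\mathfrak{K}'\Vert_{L^1}$, so $-\alpha^i_t\,d\mathfrak{L}^{{\scriptscriptstyle N}}_t$ is a drift of size $\le C(1+|X^i_t|+M^{{\scriptscriptstyle N}}_t)$ with $M^{{\scriptscriptstyle N}}_t:=\langle\nu^{{\scriptscriptstyle N}}_t,\text{Id}\rangle$. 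Next, by L\'evy's characterisation, $B^i_t:=\int_0^t\sqrt{1-\rho_s^2}\,dW^i_s+\int_0^t\rho_s\,dW^0_s$ is a Brownian motion (its bracket is $t$), so $X^i$ solves $dX^i_t=\beta_t\,dt+\sigma(t,X^i_t)\,dB^i_t$ with $\beta$ adapted and $|\beta_t|\le C(1+|X^i_t|+M^{{\scriptscriptstyle N}}_t)$. Finally, the space change $Y^i_t:=F(t,X^i_t)$, $F(t,x):=\int_0^x\sigma(t,z)^{-1}dz$, yields $dY^i_t=\tilde\beta_t\,dt+dB^i_t$, still absorbed at $0$; here $F(t,\cdot)$ fixes the origin, $F(t,x)\asymp x$ uniformly, and $\tilde\beta$ again has linear growth (the extra term $\partial_tF(t,x)$ is $O(x)$ since $\partial_t\sigma$ is bounded and $\sigma$ is bounded below). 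Throughout one treats the dependence of the coefficients on $\nu^{{\scriptscriptstyle N}}$ as a given adapted environment for which the a priori bounds of Step~2 hold uniformly (the $O(1/N)$ self-contribution of particle $i$ being harmless by local Lipschitzness), and the mere existence of the density $x\mapsto\mathfrak{p}^{i,{\scriptscriptstyle N}}_t(x,y)$ drops out of the Girsanov step. When $\sigma(t,x)=\sigma_1(t)\sigma_2(x)$ one instead first removes $\sigma_1$ by a time change and then $\sigma_2$ by a \emph{time-independent} space change, so no $\partial_tF$-term is produced -- this is the origin of $c_{x,y}\equiv0$ in that case -- whereas in general $\partial_tF$ is responsible for the Gaussian correction $c_{x,y}$, which (as in the remark after Theorem~\ref{thm:MCKEAN-VLASOV}) is only active in a cone $|x-y|\lesssim x\land y$ where one falls back on \eqref{eq:finite whole-space density estimate}.

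\emph{Step~2 (uniform control of $M^{{\scriptscriptstyle N}}$) and Step~3 (Girsanov).} Since all $N$-dependence now sits in $M^{{\scriptscriptstyle N}}$, the next ingredient is a uniform-in-$N$ estimate showing $\sup_{t\le T}M^{{\scriptscriptstyle N}}_t$ has sub-Gaussian tails, hence $\mathbb{E}\exp(\lambda\int_0^T(M^{{\scriptscriptstyle N}}_s)^2\,ds)<\infty$ for a range of $\lambda>0$; this follows from the sub-Gaussian initial law (Assumption~\ref{Assumption 1 - finite system}(v)), a Gr\"onwall argument on $\langle\nu^{{\scriptscriptstyle N}}_t,\text{Id}\rangle$ using the linear growth, and $a_i^{{\scriptscriptstyle N}}\le C/N$ (which makes the idiosyncratic part of the average $O(N^{-1/2})$). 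Now let $\mathbb{Q}$ be defined by $d\mathbb{P}/d\mathbb{Q}\vert_{\mathcal{F}_t}=Z_t:=\exp(\int_0^t\tilde\beta_s\,dB^{\mathbb{Q}}_s-\tfrac12\int_0^t\tilde\beta_s^2\,ds)$; under $\mathbb{Q}$ the process $Y^i$ is a Brownian motion absorbed at $0$, with explicit Dirichlet kernel $q_t(x,y)=(2\pi t)^{-1/2}(e^{-(x-y)^2/2t}-e^{-(x+y)^2/2t})\le Ct^{-1/2}(\tfrac{x}{\sqrt t}\land1)(\tfrac{y}{\sqrt t}\land1)e^{-(x-y)^2/ct}$. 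Combining Step~2 with the explicit absorbed-Brownian law of $\int_0^t(Y^i_s)^2\,ds$ from start $y$ -- completing the square in the resulting Gaussian -- one obtains $\mathbb{E}^{\mathbb{Q}}\exp(\lambda\int_0^t\tilde\beta_s^2\,ds)\le C_\lambda\,e^{\epsilon_\lambda(t)y^2}$ with $\epsilon_\lambda(t)\downarrow0$ as $t\downarrow0$, which also verifies the Novikov condition on short intervals.

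\emph{Step~4 (density bound and propagation).} For any Borel set $A$ one has $\mathbb{P}_y(Y^i_t\in A,\,t<\tau_i)=\mathbb{E}^{\mathbb{Q}}[Z_t\,\mathbf{1}_A(Y^i_t)\mathbf{1}_{t<\tau_i}]$; disintegrating over the absorbed-Brownian bridge gives $\mathfrak{p}^{i,{\scriptscriptstyle N}}_t(x,y)=q_t(x,y)\,\mathbb{E}^{\mathbb{Q}^{y\to x}_t}[Z_t]$, and one estimates $\mathbb{E}^{\mathbb{Q}^{y\to x}_t}[Z_t]$ by a H\"older/Cauchy--Schwarz argument against the exponential moments of Step~3 (splitting $[0,t]$ at $t/2$ so the bridge drift correction is controlled). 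Pairing the bulk factor $t^{-1/2}$ of $q_t$ with the part of the moments that is bounded in $y$ yields the first term of \eqref{eq:finite half-line density estimate}; the boundary factors $\tfrac{x}{\sqrt t}\land1$ and $\tfrac{y}{\sqrt t}\land1$, once raised to the H\"older exponent $1/p<1$ needed to absorb the genuinely unbounded part of $Z_t$, degrade to $\tfrac{x^\kappa y^\kappa}{t^\kappa}\land1$ with $\kappa:=1/p$, while that part contributes the factor $e^{\epsilon y^2}$ -- this is the second term. The whole-space bound \eqref{eq:finite whole-space density estimate} follows identically with the Gaussian heat kernel in place of $q_t$. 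To pass from short times to all $t\in(0,T]$ one iterates Chapman--Kolmogorov, $\mathfrak{p}^{i,{\scriptscriptstyle N}}_t(x,y)=\int_0^\infty\mathfrak{p}^{i,{\scriptscriptstyle N}}_{t-s}(x,z)\,\mathfrak{p}^{i,{\scriptscriptstyle N}}_s(z,y)\,dz$: the factor $e^{\epsilon z^2}$ from one kernel is absorbed by the Gaussian decay $e^{-(x-z)^2/c(t-s)}$ of the other once $\epsilon$ is small enough, and the finitely many steps covering $[0,T]$ only cost $N$-independent constants depending on $T$.

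\emph{Main obstacle.} The crux is Step~4: preserving the Dirichlet (linear) boundary decay under a Girsanov transform whose Radon--Nikodym derivative is \emph{not} bounded, because the drift has genuine linear growth. Classical Aronson/parametrix arguments only tolerate bounded perturbations of the heat kernel, so one really needs the bridge-based H\"older estimate, and one must accept -- sharply, for this method -- both the degradation of the boundary exponent from $1$ to $\kappa<1$ and the sub-Gaussian factor $e^{\epsilon y^2}$. A secondary technical point is obtaining the exponential-moment control of $\int_0^T(M^{{\scriptscriptstyle N}}_s)^2\,ds$ in Step~2 uniformly in $N$.
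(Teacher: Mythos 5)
Your high-level strategy---the space transform $\Upsilon_t(x)=\int_0^x\sigma(t,z)^{-1}dz$, Girsanov against absorbed Brownian motion, H\"older to pay for the unbounded Radon--Nikodym derivative (whence $\kappa<1$ and the factor $e^{\epsilon y^2}$), all resting on sub-Gaussian moment bounds uniform in $N$---is the same as the paper's, but two steps as written have genuine gaps. The first is your key estimate $\mathbb{E}^{\mathbb{Q}}\exp\{\lambda\int_0^t\tilde\beta_s^2ds\}\leq C_\lambda e^{\epsilon_\lambda(t)y^2}$. The drift $\tilde\beta$ contains $M^{\scriptscriptstyle N}_s$, i.e.\ all the other particles, and the Girsanov tilt is built from $B^i$, which involves $W^0$; hence under $\mathbb{Q}$ the law of the common noise, and therefore of the environment $M^{\scriptscriptstyle N}$, is changed, so your Step~2 (a $\mathbb{P}$-estimate) does not transfer, and transferring it by H\"older reintroduces precisely the negative moments of the stochastic exponential you are trying to avoid. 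The paper never takes environment moments under $\mathbb{Q}$: it bounds $\mathbb{E}[\mathcal{E}_t^{1-p}\mid X_0^i=x_0]$ under $\mathbb{P}$ (Lemma \ref{lem: Radon-Nik estimate}), where H\"older and the supermartingale property reduce everything to $\mathbb{E}^{\mathbb{P}}\exp\{C_p\int_0^t(\hat b_s)^2ds\}$ with $C_p\downarrow0$ as $p\downarrow1$, so Proposition \ref{prop:SUBGAUSSIAN} applies on all of $[0,T]$; the only computations under $\mathbb{Q}$ are explicit Gaussian integrals against the Dirichlet kernel, entering through the interpolation formula $v(s,z)=\mathbb{E}^z[u(s,Z_{s\wedge\tau})]$ and Lemma \ref{lem: derivative}, not through a conditional expectation of the Radon--Nikodym derivative along a bridge (which would also require the environment's law conditioned on the endpoint of $Y^i$). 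The same issue affects Novikov: it must be verified under $\mathbb{P}$ (the paper partitions $[0,T]$ and uses the $\mathbb{P}$-sub-Gaussianity), not deduced from $\mathbb{Q}$-moments.

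The second gap is the propagation step: $X^i$ on its own is not Markov (its coefficients depend on $\nu^{\scriptscriptstyle N}$, $\mathfrak{L}^{\scriptscriptstyle N}$ and $W^0$), so $\mathfrak{p}^{i,\scriptscriptstyle N}_t(\cdot,y)$ is the density of the absorbed time-$t$ marginal given $X_0^i=y$ and the Chapman--Kolmogorov identity $\mathfrak{p}_t=\int\mathfrak{p}_{t-s}\,\mathfrak{p}_s\,dz$ simply does not hold; conditioning at an intermediate time drags in the whole system state, not just $X^i_s$. In the paper no iteration in time is needed, because the Radon--Nikodym and sub-Gaussian estimates are uniform on $(0,T]$ and the bound is obtained in one pass for every $t\leq T$; if your scheme genuinely requires small $t$, this crutch is unavailable. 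A smaller inaccuracy: the correction $c_{x,y}$ is not produced by the $\partial_t F$ term of the drift (that term has linear growth and is absorbed into $\hat b$); it appears at the final change of variables, when $(\Upsilon_t(x)-\Upsilon_0(x_0))^2$ is compared with $(x-x_0)^2$ and the discrepancy is of order $t\,|x-x_0|(x\wedge x_0)$, which is also why a time-independent transform in the case $\sigma(t,x)=\sigma_1(t)\sigma_2(x)$ gives $c_{x,y}\equiv0$.
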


\begin{proof}
The proof is postponed to Section 6. For the final proof, see Section
\ref{subsec:Proof-of-Prop3.2}.
\end{proof}
The above estimates provide critical control over the first moment
of the mass of $\nu^{{\scriptscriptstyle N}}$ near the boundary as
well as Gaussian decay towards infinity (see Corollary 3.4 below).
As we verify in Proposition \ref{prop:limit_regularity}, these
features carry over to the limit points of the particle system and
this will be essential to the proof of the energy estimates in Section 5.
\begin{cor}
[Regularity of the empirical measures]\label{Cor:Regularity of the empirical measures}The
empirical measures $\nu^{{\scriptscriptstyle N}}$ satisfy $\mathbb{E}\nu_{t}^{{\scriptscriptstyle N}}(a,b)\leq Ct^{-\frac{1}{2}}\left|b-a\right|$,
and it holds uniformly in $N\geq1$ and $t\in (0,T]$ that
\[
\begin{cases}
\exists\epsilon>0: & \mathbb{E}\nu_{t}^{{\scriptscriptstyle N}}(a,\infty)=O(\exp\left\{ -\epsilon a^{2}\right\} )\quad\text{as}\quad a\rightarrow\infty\\[2pt]
\exists\delta\in(0,1],\,\beta>0: & \mathbb{E}\nu_{t}^{{\scriptscriptstyle N}}(0,\varepsilon)=t^{-\frac{\delta}{2}}O(\varepsilon^{1+\beta})\quad\text{as}\quad\varepsilon\rightarrow0
\end{cases}
\]
\end{cor}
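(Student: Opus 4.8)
The plan is to reduce each of the three estimates on $\mathbb{E}\nu_t^{\scriptscriptstyle N}$ to the pointwise density bounds of Proposition \ref{prop: Aronson estimate} by integrating in $y$ against the initial law $\mu_0$ and then in $x$ over the relevant region. First, by the exchangeability of the particles and the definition $\nu_t^{\scriptscriptstyle N}=\sum_i a_i^{\scriptscriptstyle N}\mathbf{1}_{t<\tau_i}\delta_{X_t^i}$, together with $\sum_i a_i^{\scriptscriptstyle N}=1$, one has $\mathbb{E}\nu_t^{\scriptscriptstyle N}(A)=\sum_i a_i^{\scriptscriptstyle N}\mathbb{P}(X_t^i\in A,\,t<\tau_i)$; strictly speaking the weights $a_i^{\scriptscriptstyle N}$ are random and correlated with the dynamics, but since they are $\mathcal{F}_0$-measurable and the density bounds in Proposition \ref{prop: Aronson estimate} are uniform in $N$ and in $i$, one can condition on $\mathcal{F}_0$ and pull the bound through, so that it suffices to control $\int_A\!\int_0^\infty \mathfrak{p}_t^{i,{\scriptscriptstyle N}}(y,x)\,d\mu_0(y)\,dx$ uniformly.

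For the first, Lipschitz-type bound $\mathbb{E}\nu_t^{\scriptscriptstyle N}(a,b)\le Ct^{-1/2}|b-a|$, I would use the whole-space estimate \eqref{eq:finite whole-space density estimate}: integrating $\bigl(\tfrac{1}{\sqrt t}+e^{\epsilon y^2}\bigr)e^{-(x-y)^2/(ct)}$ over $x\in(a,b)$ gives at most $C t^{-1/2}|b-a|\bigl(1+e^{\epsilon y^2}\bigr)$ after bounding the Gaussian by its sup, and the sub-Gaussian tail of $\mu_0$ from Assumption \ref{Assumption 1 - finite system}(v) makes $\int_0^\infty(1+e^{\epsilon y^2})\,d\mu_0(y)$ finite provided $\epsilon$ is chosen smaller than the rate in (v) — this is exactly the role of the freedom to pick $\epsilon$ in Proposition \ref{prop: Aronson estimate}. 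For the Gaussian tail $\mathbb{E}\nu_t^{\scriptscriptstyle N}(a,\infty)=O(e^{-\epsilon a^2})$, I would again start from \eqref{eq:finite whole-space density estimate}, split the $y$-integral at $y=a/2$: for $y\le a/2$ the factor $e^{-(x-y)^2/(ct)}$ with $x>a$ forces $|x-y|\ge a/2$ and yields a term $\lesssim e^{-a^2/(4cT)}$ uniformly in $t\le T$, while for $y>a/2$ one uses the sub-Gaussian decay of $\mu_0$ to get $O(e^{-\epsilon' a^2})$; taking $\epsilon$ in the statement smaller than $\min\{1/(4cT),\epsilon'\}$ closes this case. (The $t^{-1/2}$ prefactor is harmless here since it only improves the bound for $t$ bounded away from $0$ and is absorbed for small $t$ into the extra Gaussian decay, after possibly shrinking $\epsilon$.)

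The boundary decay $\mathbb{E}\nu_t^{\scriptscriptstyle N}(0,\varepsilon)=t^{-\delta/2}O(\varepsilon^{1+\beta})$ is where the Dirichlet structure genuinely enters, and this I expect to be the main point: here I must use the \emph{half-line} estimate \eqref{eq:finite half-line density estimate}, whose leading term is $\tfrac{1}{\sqrt t}(\tfrac{x}{\sqrt t}\wedge 1)(\tfrac{y}{\sqrt t}\wedge 1)$ and whose second term carries the factor $(\tfrac{x^\kappa y^\kappa}{t^\kappa}\wedge 1)$. Integrating the first term over $x\in(0,\varepsilon)$: if $\varepsilon\le\sqrt t$ this contributes $\lesssim t^{-1}\varepsilon^2\int(\tfrac{y}{\sqrt t}\wedge 1)\,d\mu_0(y)\lesssim t^{-1/2}\varepsilon^2$, which is $O(\varepsilon^{1+\beta})$ with $\beta=1$ on the range $\varepsilon\le\sqrt t$; the Gaussian decay factor handles large $y$ and the integrability is again thanks to the sub-Gaussian $\mu_0$. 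Integrating the second term over $x\in(0,\varepsilon)$ gives $\lesssim t^{-\kappa}\varepsilon^{1+\kappa}\int y^\kappa e^{\epsilon y^2}\,d\mu_0(y)\lesssim t^{-\kappa}\varepsilon^{1+\kappa}$, which is $t^{-\delta/2}O(\varepsilon^{1+\beta})$ with $\beta=\kappa\in(0,1)$ and $\delta=2\kappa$. Taking $\beta=\kappa$ and $\delta=2\kappa$ (and then the smaller of the two $\beta$'s, i.e.\ $\beta=\kappa$) reconciles the two terms; the only subtlety is to make sure the case $\varepsilon>\sqrt t$ is also covered, which follows by monotonicity in $\varepsilon$ together with $\nu_t^{\scriptscriptstyle N}$ being a sub-probability measure, since then $\varepsilon^{1+\beta}\gtrsim t^{(1+\beta)/2}$ absorbs the $t^{-\delta/2}$ against the trivial bound $\mathbb{E}\nu_t^{\scriptscriptstyle N}(0,\varepsilon)\le 1$. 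The correction exponent $c_{x,y}\lesssim|x-y|(x\wedge y)$ in \eqref{eq:finite half-line density estimate} is uniformly bounded on the region $x\in(0,\varepsilon)$, $\varepsilon$ small (since there $x\wedge y\le\varepsilon$), so it only costs a multiplicative constant and does not affect any of the exponents; the same remark handles it in the Lipschitz and tail estimates once one is in a regime where $x\wedge y$ or $|x-y|$ is controlled, and otherwise the whole-space bound \eqref{eq:finite whole-space density estimate}, which has no such correction, is used instead.
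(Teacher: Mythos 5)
Your reduction to the density bounds of Proposition \ref{prop: Aronson estimate} (using $a_i^{\scriptscriptstyle N}\leq C/N$, or equivalently your conditioning remark) and your treatment of the first two claims are fine and coincide with the paper's argument: integrate (\ref{eq:finite whole-space density estimate}) against $\mu_0$, using the sub-Gaussian initial law to absorb $e^{\epsilon y^2}$. The gap is in the boundary decay. First, your passage from $t^{-1}\varepsilon^{2}\int(\tfrac{y}{\sqrt t}\wedge 1)\,d\mu_{0}(y)$ to $t^{-1/2}\varepsilon^{2}$ rests on $\int(\tfrac{y}{\sqrt t}\wedge 1)\,d\mu_{0}(y)\lesssim\sqrt t$, which is false: the left-hand side tends to $\mu_{0}(0,\infty)=1$ as $t\downarrow 0$, so no power of $t$ can be gained this way, and indeed the bound $t^{-1/2}\varepsilon^{2}$ itself fails for initial densities concentrating near $0$ (e.g.\ $V_{0}(y)\sim y^{-1/2+\eta}$, at $\varepsilon\sim\sqrt t$ the mass is of order $\varepsilon^{1/2}\gg t^{-1/2}\varepsilon^{2}$). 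Second, the regime $\varepsilon>\sqrt t$ is \emph{not} covered by the trivial bound $\mathbb{E}\nu_{t}^{\scriptscriptstyle N}(0,\varepsilon)\leq 1$: with $\delta\leq1$ and $\beta>0$ one has $t^{-\delta/2}\varepsilon^{1+\beta}\geq t^{(1+\beta-\delta)/2}\rightarrow 0$ as $t\downarrow0$, so the claimed absorption would require $\delta\geq 1+\beta$, incompatible with $\delta\in(0,1]$. Thus precisely the small-time regime $t\lesssim\varepsilon^{2}$, which is the heart of the uniform-in-$t$ statement, is left unproved.

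The missing ingredient is to use the Gaussian factor in (\ref{eq:finite half-line density estimate}) together with the $L^{2}$ density of $\mu_{0}$ from Assumption \ref{Assumption 1 - finite system}(v). Split the $y$-integral at $y=2\varepsilon$: for $y\leq2\varepsilon$, bound $(\tfrac{x}{\sqrt t}\wedge1)(\tfrac{y}{\sqrt t}\wedge1)\leq\tfrac{\varepsilon}{\sqrt t}\wedge1\leq\bigl(\tfrac{\varepsilon}{\sqrt t}\bigr)^{1/2}$, integrate the kernel $\tfrac1{\sqrt t}e^{-(x-y)^{2}/(ct)}$ in $x$ to a constant, and use $\mu_{0}(0,2\varepsilon)\leq\Vert V_{0}\Vert_{2}(2\varepsilon)^{1/2}$ by Cauchy--Schwarz; for $y>2\varepsilon$ and $x<\varepsilon$ one has $|x-y|\geq y/2$, so the factor $e^{-(x-y)^{2}/(ct)}$ (the correction $c_{x,y}\lesssim\varepsilon|x-y|$ being absorbable, as you note) supplies $e^{-c'\varepsilon^{2}/t}\leq C_{m}(t/\varepsilon^{2})^{m}$, which kills the crude $\varepsilon^{2}/t$ for a suitable $m\in[\tfrac14,\tfrac12)$. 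Both pieces are then $\lesssim t^{-1/2}\varepsilon^{3/2}$, uniformly over $t\in(0,T]$ and $N$. Combining with the second term of (\ref{eq:finite half-line density estimate}), which gives $t^{-\kappa}\varepsilon^{1+\kappa}$ (after decreasing $\kappa$ to be $\leq\tfrac12$ if necessary, which is allowed since $(\tfrac{x^{\kappa}y^{\kappa}}{t^{\kappa}}\wedge1)$ only increases when $\kappa$ decreases), one obtains the statement with $\delta=1$ and $\beta=\min\{\tfrac12,\kappa\}$. Your whole-space and tail estimates need no change.
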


\begin{proof}
Recalling the definition of $\nu_{t}^{{\scriptscriptstyle N}}$ and
using that $a_{i}^{{\scriptscriptstyle N}}\leq C/N$, it follows
from Proposition \ref{prop: Aronson estimate} that, for any $(a,b)\subseteq\mathbb{R}_{+}$,
we have
\begin{align*}
\mathbb{E}\nu_{t}^{{\scriptscriptstyle N}}(a,b) & \leq\frac{C}{N}\sum_{i=1}^{N}\mathbb{P}_{\mathbf{}}\bigl(X_{t\land\tau_{i}}^{i}\in(a,b)\bigr)\leq C^{\prime}\int_{0}^{\infty}\hspace{-3pt}\int_{a}^{b}f_{t}(x,y)dxd\mu_{0}(y),
\end{align*}
where $f_{t}(x,y)$ can take the value of either of the right-hand
sides of (\ref{eq:finite half-line density estimate}) or (\ref{eq:finite whole-space density estimate}). 
Given (\ref{eq:finite whole-space density estimate}) and the sub-Gaussianity
of $\mu_{0}$, the first two claims are immediate. Similarly, the
final claim follows from (\ref{eq:finite half-line density estimate})
by exploiting the power law decay at the boundary for $t>0$.
\end{proof}

\subsection{Regularity properties of the loss process}

Below we present two important results about the limiting behaviour of the loss
process. The first result assures that, in the large population limit,
it is strictly increasing when there is mass
left in the system. This is crucial for the convergence to the limit
SPDE (Section \ref{subsec:The-limit-SPDE:}) and for uniqueness
(Section \ref{subsec:Uniqueness:-Proof-of}) as it ensures the loss process cannot get stuck at one of the coefficients'
finitely discontinuity points (see Assumption \ref{Assumption 1 - finite system}(iii)).
\begin{prop}
\label{prop: 1_loss increments}For any $t\in[0,T)$ and $h>0$, it
holds for all $r<1$ that
\[
\lim_{\delta\rightarrow0}\limsup_{N\rightarrow\infty}\mathbb{P}(L_{t+h}^{{\scriptscriptstyle N}}-L_{t}^{{\scriptscriptstyle N}}<\delta,L_{t}^{{\scriptscriptstyle N}}<r)=0.
\]
\end{prop}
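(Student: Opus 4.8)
The plan is to argue by contradiction: if the loss cannot increase by a macroscopic amount $\delta$ over $[t,t+h]$ with non-negligible probability while still being below $r<1$, then we are in the regime where a definite proportion of mass survives at time $t$, and we must show that some of this surviving mass is driven into the absorbing boundary over the fixed time window $[t,t+h]$. First I would fix $r<1$ and restrict attention to the event $\{L_t^{\scriptscriptstyle N}<r\}$, on which $\nu_t^{\scriptscriptstyle N}(0,\infty)=1-L_t^{\scriptscriptstyle N}>1-r>0$. Since the particles have weights $a_i^{\scriptscriptstyle N}=O(1/N)$ bounded below by $c/N$, the event $\{L_t^{\scriptscriptstyle N}<r\}$ forces at least a fixed fraction (of order $1-r$, up to the constants $c,C$ from \eqref{eq: a_i coefficients}) of the $N$ particles to be alive at time $t$. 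Among these surviving particles, I would use Corollary~\ref{Cor:Regularity of the empirical measures} — specifically $\mathbb{E}\nu_t^{\scriptscriptstyle N}(0,b)\le C t^{-1/2}|b|$ combined with the boundary decay bound — to get a lower bound: there is a fixed $b_0>0$ such that, with probability bounded below (on the event in question), a non-negligible fraction of the alive particles sit in $(0,b_0)$, i.e.\ close to the boundary. Actually it is cleaner to argue that the alive mass cannot all escape to infinity: by the Gaussian tail bound $\mathbb{E}\nu_t^{\scriptscriptstyle N}(a,\infty)=O(e^{-\epsilon a^2})$, for $a$ large enough $\nu_t^{\scriptscriptstyle N}(a,\infty)$ is small in expectation, so a definite fraction of the alive mass lies in some fixed compact $(0,a_0)$.

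Next, conditionally on $\mathcal{F}_t$, each such particle $X^i$ with $X_t^i\in(0,a_0)$ and $t<\tau_i$ evolves on $[t,t+h]$ as a diffusion with bounded volatility (bounded below by $\epsilon$ by non-degeneracy, part (iv) of Assumption~\ref{Assumption 1 - finite system}) and drift of at most linear growth, perturbed by the contagion term $-\alpha_s^i\,d\mathfrak{L}_s^{\scriptscriptstyle N}$, which is \emph{non-increasing} in the distance-to-default (since $\alpha\ge0$ and $\mathfrak{L}^{\scriptscriptstyle N}$ is non-decreasing) and so only helps push the particle towards $0$. I would therefore produce a lower bound, uniform in $N$ and in the starting point inside $(0,a_0)$, on the probability that such a particle hits $0$ before time $t+h$: this is a standard small-ball / hitting-probability estimate for a uniformly elliptic one-dimensional diffusion with controlled drift — e.g.\ bound below by the probability that a Brownian motion with the worst-case drift, started at $a_0$, reaches $0$ within time $h$, which is some explicit $p_0=p_0(a_0,h,\epsilon,C)>0$. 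Crucially $p_0$ does not depend on $N$, nor on the realization of $W^0$ and $\mathfrak{L}^{\scriptscriptstyle N}$ beyond the deterministic bounds, because conditionally on the common noise these hitting probabilities only improve. Summing over the alive particles in $(0,a_0)$ and taking expectations, this shows $\mathbb{E}[L_{t+h}^{\scriptscriptstyle N}-L_t^{\scriptscriptstyle N}\,;\,L_t^{\scriptscriptstyle N}<r]$ is bounded below by a constant independent of $N$; more precisely, on the event $\{L_t^{\scriptscriptstyle N}<r\}$ a fixed fraction of the surviving mass defaults with probability bounded below, giving $\liminf_N \mathbb{E}[(L_{t+h}^{\scriptscriptstyle N}-L_t^{\scriptscriptstyle N})\wedge \delta_0 \,;\,L_t^{\scriptscriptstyle N}<r]\ge c_1>0$ for some fixed $\delta_0>0$. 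This contradicts the hypothesis that the probability in the statement tends to $0$ as $\delta\to0$ after taking $\limsup_N$, once one notes $\mathbb{P}(L_{t+h}^{\scriptscriptstyle N}-L_t^{\scriptscriptstyle N}<\delta,\,L_t^{\scriptscriptstyle N}<r)\ge \mathbb{P}(L_t^{\scriptscriptstyle N}<r) - \delta_0^{-1}\mathbb{E}[(L_{t+h}^{\scriptscriptstyle N}-L_t^{\scriptscriptstyle N})\wedge\delta_0\,;\,L_t^{\scriptscriptstyle N}<r]$ for $\delta<\delta_0$, so the complementary event $\{L_{t+h}^{\scriptscriptstyle N}-L_t^{\scriptscriptstyle N}\ge \delta, L_t^{\scriptscriptstyle N}<r\}$ has probability bounded below — wait, this needs care: I would instead directly lower-bound $\mathbb{P}(L_{t+h}^{\scriptscriptstyle N}-L_t^{\scriptscriptstyle N}\ge\delta_0,\,L_t^{\scriptscriptstyle N}<r)$ by a constant via a second-moment / concentration argument over the independent idiosyncratic coin-flips of which alive particles default, using that conditionally on $\mathcal{F}_t$ and $W^0$ these are (essentially) independent events each of probability $\ge p_0$.

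The main obstacle I anticipate is handling the weak interdependence of the particles' default events over $[t,t+h]$: the drift, the correlation $\rho$, the contagion rate $\alpha$, and especially $\mathfrak{L}^{\scriptscriptstyle N}$ all depend on $\nu^{\scriptscriptstyle N}$, so the particles are not conditionally independent given $\mathcal{F}_t$ alone. The resolution is that all these mean-field coefficients enter only through \emph{bounds} that are favorable for hitting $0$ — bounded ellipticity below, at most linear drift (controlled on the event that not too much mass has escaped, which one can also arrange), and a contagion term of the right sign — so one can \emph{couple} each alive particle from below by an autonomous diffusion (or even a Brownian motion with constant worst-case drift) whose hitting probability of $0$ within time $h$ is $\ge p_0$, and these lower-bounding processes, driven by the genuinely independent $W^i$, \emph{are} conditionally independent given $\mathcal{F}_t$ and $W^0$. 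A minor additional technical point is controlling $\langle\nu_t^{\scriptscriptstyle N},|\cdot|\rangle$ (the mean $M_t^{\scriptscriptstyle N}$) to make the linear growth of the drift harmless on $[t,t+h]$; this follows from Corollary~\ref{Cor:Regularity of the empirical measures} (Gaussian tails give a uniform bound on $\mathbb{E} M_t^{\scriptscriptstyle N}$) together with a Gronwall/Doob argument over the short window, or simply by intersecting with a high-probability event $\{M_t^{\scriptscriptstyle N}\le R\}$ for $R$ large. Once the coupling is in place, the conclusion is an elementary large-deviations / Chebyshev estimate for a sum of conditionally independent Bernoulli$(\ge p_0)$ variables indexed by the $\gtrsim (1-r)N$ alive particles (restricted to the compact $(0,a_0)$, still a fixed fraction), which gives that at least $\delta_0:=\tfrac{c}{2C}p_0(1-r)\cdot(\text{fraction in }(0,a_0))$ of total mass defaults with probability $\to 1$ on $\{L_t^{\scriptscriptstyle N}<r\}\cap\{M_t^{\scriptscriptstyle N}\le R\}$, contradicting the hypothesis for any $\delta<\delta_0$.
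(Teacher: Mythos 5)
Your overall strategy --- isolate a macroscopic number of surviving particles in a fixed compact set near the boundary (via the Gaussian tail bound), show that each of them defaults on $[t,t+h]$ with a probability bounded below uniformly in $N$, and then run a law of large numbers over the idiosyncratic noise --- is exactly the route the paper takes (following Prop.~4.6 of \cite{HL2016}). However, there is a genuine gap in the step that produces the uniform lower bound $p_0$. You assert that $p_0$ can be taken independent of the realization of $W^0$ ``because conditionally on the common noise these hitting probabilities only improve''. This is false: only the contagion term $-\alpha\, d\mathfrak{L}^{N}$ has a definite, favourable sign; the common-noise term $\sigma(s,X^i_s)\rho_s\,dW^0_s$ is a martingale term with no definite sign, and on realizations where $W^0$ trends strongly upwards over $[t,t+h]$ the conditional hitting probability of every particle can be arbitrarily small. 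For the same reason this term cannot be absorbed into a ``worst-case constant drift'' in your proposed coupling, so a pathwise domination by an autonomous process driven only by $W^i$ is not available. Two further points are glossed over: the diffusion coefficient $\sigma(t,x)\sqrt{1-\rho_t^2}$ is state- and measure-dependent, so a naive SDE comparison with a Brownian motion with constant drift does not apply; and controlling the linear-growth drift through the event $\{M_t^{N}\le R\}$ at the single time $t$ is insufficient --- one needs the running maximum of the dominating process $\Lambda^{i,N}$ over the whole window.

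These issues are repaired precisely as in the paper's proof. One first applies the scale transform $\Upsilon$ of Lemma \ref{lem:=00005BRestatement-of-Lemma4.1 from hambly ledger}, which turns each particle into a unit-diffusion process $Z^i$ with drift $\hat b^i$ of linear growth and splits the Brownian input as $dI_u + dJ^i_u$, with $I_u=\int\rho\,dW^0$ common and $J^i_u=\int\sqrt{1-\rho^2}\,dW^i$ idiosyncratic. One then intersects with the high-probability events $\{\sup_{u\le h}|I_u|<c_2 a\}$ (Doob's maximal inequality) and $\{\sup_{u\le h}\Lambda^{i,N}_{t+u}<a\}$ (the sub-Gaussian running-max bound of Corollary \ref{cor: Sup is subgaussian}), on which a particle of $\mathcal{I}$ is forced to default as soon as $\inf_{u\le h}J^i_u\le -3c_2 a-c_2$; this is an event of fixed positive probability which, after a common time change, is independent across $i$, so the LLN applies, with the free parameter $a$ chosen as a function of $\delta$. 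Without this event-splitting for the common noise and the running-max control of the drift, your claimed uniform $p_0$ and the conditional-independence LLN do not go through as stated.
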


\begin{proof}
See Section A.2 in the Appendix.
\end{proof}
While the system is progressively losing mass by the previous proposition, the next result ensures
that there cannot be too large losses in arbitrarily small amounts
of time. This is used in the tightness arguments below (Section \ref{sec:Tightness-and-Convergence}).
\begin{prop}
\label{prop: 2_loss increments}For every $t\in[0,T]$ and $\eta>0$,
we have
\[
\lim_{\delta\rightarrow0}\lim_{N\rightarrow\infty}\mathbb{P}\left(L_{t+\delta}^{{\scriptscriptstyle N}}-L_{t}^{{\scriptscriptstyle N}}\geq\eta\right)=0.
\]
\end{prop}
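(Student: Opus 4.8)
The plan is to establish the claim of Proposition~\ref{prop: 2_loss increments} by bounding $L_{t+\delta}^{{\scriptscriptstyle N}}-L_{t}^{{\scriptscriptstyle N}}$ via the probability that a not-yet-defaulted particle hits the origin within a short window, and then showing this probability is uniformly small for $\delta$ small. First I would write, using the definition of $L^{{\scriptscriptstyle N}}$ and the weights,
\[
L_{t+\delta}^{{\scriptscriptstyle N}}-L_{t}^{{\scriptscriptstyle N}}=\sum_{i=1}^{{\scriptscriptstyle N}}a_{i}^{{\scriptscriptstyle N}}\mathbf{1}_{t<\tau_{i}\leq t+\delta}\leq\frac{C}{N}\sum_{i=1}^{{\scriptscriptstyle N}}\mathbf{1}_{t<\tau_{i}\leq t+\delta},
\]
so that by Markov's inequality it suffices to show $\mathbb{E}[L_{t+\delta}^{{\scriptscriptstyle N}}-L_{t}^{{\scriptscriptstyle N}}]\leq\frac{C}{N}\sum_{i}\mathbb{P}(t<\tau_{i}\leq t+\delta)$ tends to $0$ as first $N\to\infty$ and then $\delta\to0$. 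Splitting on the position of $X_{t}^{i}$, a surviving particle with $X_{t}^{i}>\sqrt{\delta}$ (say) must move down by at least $\sqrt\delta$ over $[t,t+\delta]$ to default; a particle with $0<X_{t}^{i}\leq\sqrt{\delta}$ contributes at most $\mathbb{P}(X_{t}^{i}\in(0,\sqrt\delta),\,t<\tau_i)=\mathbb{E}\nu_{t}^{{\scriptscriptstyle N}}(0,\sqrt\delta)/a_i^{\scriptscriptstyle N}$-type mass, which by Corollary~\ref{Cor:Regularity of the empirical measures} is $O(\delta^{(1+\beta)/2})$ after summing with the weights.

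For the first piece, conditional on $\mathcal{F}_t$ the dynamics of $X^i$ on $[t,t+\delta]$ before $\tau_i$ is, by Assumption~\ref{Assumption 1 - finite system}, a diffusion with bounded volatility $\sigma$ and drift of linear growth in $x$ and in $M^{\scriptscriptstyle N}$, minus the finite-variation contagion term $\alpha\,d\mathfrak{L}^{\scriptscriptstyle N}$. The key point is that $\mathfrak{L}^{\scriptscriptstyle N}$ has a bounded density $\mathfrak{K}'*L^{\scriptscriptstyle N}\in L^\infty$ (as noted in the remark on the impact kernel), so $\mathfrak{L}_{t+\delta}^{\scriptscriptstyle N}-\mathfrak{L}_{t}^{\scriptscriptstyle N}\leq\|\mathfrak{K}'*L^{\scriptscriptstyle N}\|_\infty\,\delta\leq C\delta$ uniformly; likewise the drift contributes at most $C\delta(1+|X_t^i|+M_t^{\scriptscriptstyle N})$. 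Hence the event $\{t<\tau_i\leq t+\delta,\,X_t^i>\sqrt\delta\}$ forces the Brownian part $\sigma(W^i+\cdots)$ to oscillate by at least $\sqrt\delta-C\delta(1+|X_t^i|+M_t^{\scriptscriptstyle N})\geq\tfrac12\sqrt\delta$ on a window of length $\delta$, once we further restrict to $|X_t^i|\leq \delta^{-1/4}$ and $M_t^{\scriptscriptstyle N}\leq\delta^{-1/4}$ (the complementary events being negligible in expectation by the Gaussian tail bound in Corollary~\ref{Cor:Regularity of the empirical measures}). A standard reflection/Gaussian estimate then gives $\mathbb{P}(t<\tau_i\leq t+\delta\mid\mathcal F_t)\leq 2\exp\{-c(\sqrt\delta/\sqrt\delta)^2\}$... more carefully, the modulus of continuity of Brownian motion over $[t,t+\delta]$ exceeding $\tfrac12\sqrt\delta$ has probability $\leq C\exp\{-c\}$, which is not small — so instead one uses that $X_t^i$ itself is spread out: integrating against the density $\mathfrak p_{t}^{i,{\scriptscriptstyle N}}$, the contribution of $\{X_t^i\in(\sqrt\delta,\infty)\}$ with a downward move of $\ge\tfrac12\sqrt\delta$ is bounded by $\int_{\sqrt\delta}^\infty \mathfrak p_t^{i,\scriptscriptstyle N}(\cdot,y)\,\mathbb P(\inf_{[0,\delta]}(\text{incr.})\leq-\tfrac12\sqrt\delta + C\delta)$, and the latter Gaussian probability is of order $1$ only on the strip $y\in(\sqrt\delta,2\sqrt\delta)$ and decays in $y$; summing yields $O(\sqrt\delta)$. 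Averaging over $i$ with $\sum_i a_i^{\scriptscriptstyle N}=1$ then gives $\mathbb{E}[L_{t+\delta}^{\scriptscriptstyle N}-L_t^{\scriptscriptstyle N}]\leq C(\delta^{(1+\beta)/2}+\sqrt\delta)$, uniformly in $N$.

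Putting the pieces together, for any $\eta>0$,
\[
\mathbb{P}(L_{t+\delta}^{{\scriptscriptstyle N}}-L_{t}^{{\scriptscriptstyle N}}\geq\eta)\leq\frac{1}{\eta}\,\mathbb{E}\bigl[L_{t+\delta}^{{\scriptscriptstyle N}}-L_{t}^{{\scriptscriptstyle N}}\bigr]\leq\frac{C}{\eta}\bigl(\delta^{(1+\beta)/2}+\sqrt\delta\bigr)+o_N(1),
\]
where the $o_N(1)$ collects the negligible events $\{|X_t^i|>\delta^{-1/4}\}$, $\{M_t^{\scriptscriptstyle N}>\delta^{-1/4}\}$ (controlled uniformly in $N$ by the Gaussian tail in Corollary~\ref{Cor:Regularity of the empirical measures} and the definition $M_t^{\scriptscriptstyle N}=\langle\nu_t^{\scriptscriptstyle N},\mathrm{Id}\rangle$). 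Letting $N\to\infty$ and then $\delta\to0$ closes the argument. The main obstacle is the handling of the diffusion near the boundary: one cannot afford to use a crude modulus-of-continuity bound (which would not shrink with $\delta$), so the argument must genuinely exploit the Dirichlet heat kernel estimate from Proposition~\ref{prop: Aronson estimate} — namely that the surviving mass in any strip $(a,b)$ is $O(t^{-1/2}|b-a|)$ with extra power-law decay near $0$ — to show that only a $O(\sqrt\delta)$ fraction of particles sit close enough to the origin (relative to a typical $\sqrt\delta$ fluctuation) to default in time $\delta$. A secondary technical point is that the drift's linear growth in $x$ and $M_t^{\scriptscriptstyle N}$ must be absorbed using the Gaussian/tail control, which is why the truncations at $\delta^{-1/4}$ are introduced; since $\delta\cdot\delta^{-1/4}=\delta^{3/4}\to0$, these do not spoil the estimate.
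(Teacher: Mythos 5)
Your argument is essentially correct for every fixed $t\in(0,T]$, but it takes a genuinely different route from the paper. You bound $\mathbb{E}[L^{\scriptscriptstyle N}_{t+\delta}-L^{\scriptscriptstyle N}_t]$ directly and finish with Markov's inequality, splitting the surviving particles at level $\sqrt\delta$: the near-boundary mass is $O(t^{-1/2}\sqrt\delta)$ by Corollary \ref{Cor:Regularity of the empirical measures}, while a particle at height $y>\sqrt\delta$ can only default if its martingale part (quadratic variation at most $C^2\delta$) falls by roughly $y$ — as you note mid-argument, the threshold must scale with $y$, not with $\sqrt{\delta}$ — an event of conditional probability of order $e^{-cy^2/\delta}$ once the drift and contagion displacements, of size $O(\delta(1+\sup_{[t,t+\delta]}\Lambda^{i,{\scriptscriptstyle N}}_s))$, are truncated; integrating $e^{-cy^2/\delta}$ against the density bound again gives $O(t^{-1/2}\sqrt\delta)$. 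The paper instead follows \cite{HL2016}: it reduces to the event $\{\nu^{\scriptscriptstyle N}_t(0,\varepsilon)<\eta/2\}$, conditions on the random index set of particles at distance at least $\varepsilon$ from the origin, transforms them via $\Upsilon$, separates the common noise by Doob's inequality with the threshold $a(\delta)=\delta^{1/2}\log\log(1/\delta)$, and concludes with a law-of-large-numbers argument over the independent idiosyncratic Brownian motions, each of which must drop by about $\varepsilon$ within time $\delta$. Your route is more quantitative (it yields a rate of order $\sqrt\delta$ in expectation, uniformly in $N$, so the $N\to\infty$ limit is not even needed), at the price of invoking the density estimates of Proposition \ref{prop: Aronson estimate}, which the paper's proof of this proposition does not use.

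Two points need patching. First, since the coefficients are evaluated along the whole interval $[t,t+\delta]$, the truncation must control $\sup_{s\in[t,t+\delta]}\Lambda^{i,{\scriptscriptstyle N}}_s$ and not just $X^i_t$ and $M^{\scriptscriptstyle N}_t$; the correct tool is the running-sup sub-Gaussianity of Corollary \ref{cor: Sup is subgaussian} (equivalently Lemma \ref{lem:sup-Tails for Loss increments}), not the fixed-time tail bound of Corollary \ref{Cor:Regularity of the empirical measures}, and the resulting error is $o(1)$ as $\delta\to0$ uniformly in $N$, rather than the $o_N(1)$ you wrote (for fixed $\delta$ it does not vanish in $N$, but the stated order of limits makes this harmless). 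Second, the statement includes $t=0$, which is exactly the case used in the tightness argument, and there your constants blow up because of the $t^{-1/2}$ factor in Corollary \ref{Cor:Regularity of the empirical measures}; at $t=0$ you should instead argue directly with the initial law, e.g.\ $\mu_0(0,\sqrt\delta)\leq \Vert V_0\Vert_{L^{2}}\,\delta^{1/4}$ together with the same Gaussian hitting bound integrated against the sub-Gaussian measure $\mu_0$. With these repairs the argument goes through.
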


\begin{proof}
See Section A.2 in the Appendix.
\end{proof}
The above relies on the
impact kernel $\mathfrak{K}$ being in $\mathcal{W}^{1,1}$. As discussed in Section \ref{subsec:Non-smoothed loss}, instantaneous contagion can imply jumps in the loss process with positive probability.

\section{Tightness and convergence\label{sec:Tightness-and-Convergence}}

The aim of this section is to recover the SPDE (\ref{eq:LIMIT SPDE})
by passing to the limit in the finite evolution equation
(\ref{eq:finite evol}). To achieve this, we need to establish tightness of $(\nu^{\scriptscriptstyle N})$ and then we need some continuity results for the integrals in (\ref{eq:finite evol}). The first step towards tightness is to control the increments of the particles.
\begin{lem}
\label{lem:Fourth moments}For all $s,t\in[0,T]$, it holds uniformly in $N\geq1$ and $i=1,\ldots,N$ that
\[
\mathbb{E}\bigl[|X_{t\wedge\tau_{i}}^{i}-X_{s\wedge\tau_{i}}^{i}|^{4}\bigr]=O(|t-s|^2)\quad\text{as}\quad|t-s|\rightarrow0,
\]
where $\tau_i=\inf\{t>0:X_t^i\leq0\}$.
\end{lem}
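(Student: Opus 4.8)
The plan is to decompose the stopped increment into a drift part, a contagion part, and two stochastic-integral parts, bound each in $L^{4}$, and reduce the whole estimate to a uniform-in-$N$ fourth-moment bound on the particles that one reads off from the density estimates of Proposition~\ref{prop: Aronson estimate}.

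Assume $s\le t$. Using that a stochastic integral stopped at $\tau_{i}$ equals the integral of its integrand against the indicator $\mathbf{1}_{u\le\tau_{i}}$, I would write
\[
X_{t\wedge\tau_{i}}^{i}-X_{s\wedge\tau_{i}}^{i}=D_{s,t}^{i}+M_{s,t}^{i}+M_{s,t}^{0}+C_{s,t}^{i},
\]
where $D_{s,t}^{i}=\int_{s}^{t}\mathbf{1}_{u\le\tau_{i}}b(u,X_{u}^{i},\nu_{u}^{{\scriptscriptstyle N}})du$, $C_{s,t}^{i}=-\int_{s}^{t}\mathbf{1}_{u\le\tau_{i}}\alpha(u,X_{u}^{i},\nu_{u}^{{\scriptscriptstyle N}})d\mathfrak{L}_{u}^{{\scriptscriptstyle N}}$, and $M_{s,t}^{i},M_{s,t}^{0}$ are the $W^{i}$- and $W^{0}$-integrals of $\mathbf{1}_{u\le\tau_{i}}\sigma(u,X_{u}^{i})(\cdot)$, so that $\mathbb{E}|X_{t\wedge\tau_{i}}^{i}-X_{s\wedge\tau_{i}}^{i}|^{4}\le C\bigl(\mathbb{E}|D_{s,t}^{i}|^{4}+\mathbb{E}|M_{s,t}^{i}|^{4}+\mathbb{E}|M_{s,t}^{0}|^{4}+\mathbb{E}|C_{s,t}^{i}|^{4}\bigr)$. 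For the two martingale terms I would apply the Burkholder--Davis--Gundy inequality: since $\sigma$ is bounded and $0\le\rho\le1-\epsilon$ by Assumption~\ref{Assumption 1 - finite system}(iv), the quadratic-variation rates of $M^{i}$ and $M^{0}$ are bounded by $\|\sigma\|_{\infty}^{2}$, whence $\mathbb{E}|M_{s,t}^{i}|^{4}+\mathbb{E}|M_{s,t}^{0}|^{4}\le C\|\sigma\|_{\infty}^{4}|t-s|^{2}$; these already contribute the required rate with no further input.

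For the finite-variation terms I would use that $\mathfrak{L}^{{\scriptscriptstyle N}}$ is Lipschitz in time, uniformly in $N$: because $\mathfrak{K}\in\mathcal{W}_{0}^{1,1}(\mathbb{R}_{+})$ has zero trace and $0\le L^{{\scriptscriptstyle N}}\le1$, one has $\tfrac{d}{du}\mathfrak{L}_{u}^{{\scriptscriptstyle N}}=(\mathfrak{K}'\ast L^{{\scriptscriptstyle N}})_{u}$ with $|\tfrac{d}{du}\mathfrak{L}_{u}^{{\scriptscriptstyle N}}|\le\|\mathfrak{K}'\|_{L^{1}}$. Combining the linear growth of $b$ and $\alpha$ from Assumption~\ref{Assumption 1 - finite system}(i) with $|X_{u}^{i}|\mathbf{1}_{u\le\tau_{i}}=X_{u\wedge\tau_{i}}^{i}$ and applying Hölder's inequality in $u$ then gives
\[
\mathbb{E}|D_{s,t}^{i}|^{4}+\mathbb{E}|C_{s,t}^{i}|^{4}\le C|t-s|^{3}\int_{s}^{t}\mathbb{E}\bigl[1+(X_{u\wedge\tau_{i}}^{i})^{4}+\langle\nu_{u}^{{\scriptscriptstyle N}},|\cdot|\rangle^{4}\bigr]du .
\]
The only step that is not entirely routine, and where I expect the work to lie, is the uniform bound $m_{T}:=\sup_{N\ge1}\sup_{i\le N}\sup_{u\le T}\mathbb{E}(X_{u\wedge\tau_{i}}^{i})^{4}<\infty$. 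Since $X^{i}$ has continuous paths, $X_{\tau_{i}}^{i}=0$, so $\mathbb{E}(X_{u\wedge\tau_{i}}^{i})^{4}=\mathbb{E}[\mathbf{1}_{u<\tau_{i}}|X_{u}^{i}|^{4}]=\int_{0}^{\infty}\!\int_{0}^{\infty}x^{4}\mathfrak{p}_{u}^{i,{\scriptscriptstyle N}}(x,y)dx\,d\mu_{0}(y)$, and inserting the whole-space estimate (\ref{eq:finite whole-space density estimate}) together with the sub-Gaussian tail of $\mu_{0}$ (choosing the free $\epsilon$ in Proposition~\ref{prop: Aronson estimate} small enough) bounds this by a constant depending only on $T$, uniformly in $N$, $i$, and $u\in(0,T]$, and trivially at $u=0$ since $\int y^{4}d\mu_{0}(y)<\infty$; this is the moment content behind Corollary~\ref{Cor:Regularity of the empirical measures}. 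Moreover, as $\nu_{u}^{{\scriptscriptstyle N}}$ has total mass at most one, Jensen's inequality gives $\langle\nu_{u}^{{\scriptscriptstyle N}},|\cdot|\rangle^{4}\le\langle\nu_{u}^{{\scriptscriptstyle N}},|\cdot|^{4}\rangle\le\sum_{j}a_{j}^{{\scriptscriptstyle N}}(X_{u\wedge\tau_{j}}^{j})^{4}$, so $\mathbb{E}\langle\nu_{u}^{{\scriptscriptstyle N}},|\cdot|\rangle^{4}\le m_{T}$ as well.

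Feeding $m_{T}<\infty$ into the last display yields $\mathbb{E}|D_{s,t}^{i}|^{4}+\mathbb{E}|C_{s,t}^{i}|^{4}\le C|t-s|^{4}\le CT^{2}|t-s|^{2}$, and summing the four contributions gives $\mathbb{E}|X_{t\wedge\tau_{i}}^{i}-X_{s\wedge\tau_{i}}^{i}|^{4}=O(|t-s|^{2})$ uniformly in $N\ge1$ and $i$, as claimed. As an alternative to invoking Proposition~\ref{prop: Aronson estimate}, the bound $m_{T}<\infty$ can be obtained in a self-contained way by applying Gronwall's inequality to $u\mapsto\sup_{N,i}\mathbb{E}(X_{u\wedge\tau_{i}}^{i})^{4}$ after a standard localization, using only linear growth, boundedness of $\sigma$, and boundedness of $\tfrac{d}{du}\mathfrak{L}^{{\scriptscriptstyle N}}$; in either route the one point requiring care is keeping all constants uniform in $N$ despite the nonlocal linear growth of the drift, which is precisely what the Jensen bound on $\langle\nu_{u}^{{\scriptscriptstyle N}},|\cdot|\rangle^{4}$ combined with the density estimates resolves.
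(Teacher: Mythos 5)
Your proposal is correct and follows essentially the same route as the paper: decompose the stopped increment, apply Burkholder--Davis--Gundy with the bounded volatility for the martingale part, and reduce the drift and contagion terms (using the linear growth bound and $|(\mathfrak{K}'\ast L^{\scriptscriptstyle N})_u|\le\Vert\mathfrak{K}'\Vert_{L^1}$) to a uniform-in-$N$ fourth-moment bound obtained from the whole-space density estimate (\ref{eq:finite whole-space density estimate}) of Proposition \ref{prop: Aronson estimate}. The only differences are cosmetic: the paper combines $W^i$ and $W^0$ into a single Brownian motion and folds $-\alpha\,(\mathfrak{L}^{\scriptscriptstyle N})'$ into an effective drift bounded by $1+\Lambda_r^{i,\scriptscriptstyle N}$, whereas you treat the pieces separately and additionally note a self-contained Gronwall alternative.
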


\begin{proof}
Using the equation for $X_{t}^{i}$ with $dB_{t}^{i}=\sqrt{1-\rho_{t}^{2}}dW_{t}^{i}+\rho_{t}dW_{t}^{0}$,
we have
\[
\mathbb{E}\Bigl[\bigl|X_{t\wedge\tau_{i}}^{i}-X_{s\wedge\tau_{i}}^{i}\bigr|^{4}\Bigr]\apprle\mathbb{E}\Bigl[\Bigl(\int_{s}^{t}\sigma_{r}\mathbf{1}_{r<\tau_{i}}dB_{r}^{i}\Bigr)^{\!4}\Bigr]+\mathbb{E}\Bigl[\Bigl(\int_{s}^{t}\bigl|b^i_{r}-\alpha^i_r(\mathfrak{L}^{{\scriptscriptstyle N}})_{t}^{\prime}\bigr|dr\Bigr)^{\!4}\Bigr].
\]
Since $\sigma_{u}$ is bounded, the Burkholder\textendash Davis\textendash Gundy
inequality yields
\[
\mathbb{E}\Bigl[\Bigl(\int_{s}^{t}\sigma_{r}\mathbf{1}_{r<\tau_{i}}dB_{r}^{i}\Bigr)^{\!4}\Bigr]\apprle\mathbb{E}\bigl[\left\langle B_{\cdot}-B_{s}\right\rangle _{t}^{2}\bigr]=(t-s)^{2}.
\]
Letting $\Lambda_{r}^{i,{\scriptscriptstyle N}}:=|X_{t}^{i}|+\sum_{j=1}^{{\scriptscriptstyle N}}a_{j}^{{\scriptscriptstyle N}}|X_{t}^{j}|$,
the conditions in Assumption \ref{Assumption 1 - finite system}
imply the bound $|b^i_{r}| +|\alpha^i_{r}| \apprle1+\Lambda_{r}^{i,{\scriptscriptstyle N}}$.
Noting also that $(\mathfrak{L}^{{\scriptscriptstyle N}})^{\prime}\in L^{\infty}$,
Jensen's inequality gives
\begin{align*}
\mathbb{E}\Bigl[\Bigl(\int_{s}^{t}\bigl|b^i_{r}-\alpha^i_r(\mathfrak{L}^{{\scriptscriptstyle N}})_{r}^{\prime}\bigr|dr\Bigr)\Bigr] & \apprle\mathbb{E}\Bigl[\Bigl(\int_{s}^{t}(1+\Lambda_{r}^{i,{\scriptscriptstyle N}})dr\Bigr)^{\!4}\Bigr]\leq(t-s)^{4}\sup_{r\leq T}\mathbb{E}\bigl[(1+\Lambda_{r}^{i,{\scriptscriptstyle N}})^{4}\bigr],
\end{align*}
where last term is bounded uniformly, by (\ref{eq:finite whole-space density estimate})
of Proposition \ref{prop: Aronson estimate}.
\end{proof}
\begin{prop}[\textcolor{black}{Tightness}]
\textcolor{black}{\label{prop:First regularity prop}The sequence
$(\nu^{{\scriptscriptstyle N}},W^{0})$ is tight on $(D_{\mathscr{S}^{\prime}},\text{\emph{M1}})\times(C_{\mathbb{R}},\left\Vert \cdot\right\Vert _{\infty})$}
and any limit point $\nu^{*}$ is $\mathbf{M}_{\leq1}(\mathbb{R}_{+})$-valued.
Moreover, if we set $L_{t}^{*}:=1-\nu_{t}^{*}(0,\infty)$, then
$L^{*}$ is strictly increasing when $L^{*}<1$,
and \textcolor{black}{$(L^{{\scriptscriptstyle N}},W^{0})$ converges
weakly to $(L^{*},W^{0})$ on $(D_{\mathbb{R}},\text{\emph{M1}})\times(C_{\mathbb{R}},\left\Vert \cdot\right\Vert _{\infty})$
whenever $(\nu^{{\scriptscriptstyle N}},W^{0})$ converges weakly
to $(\nu^{*},W^{0})$.}
\end{prop}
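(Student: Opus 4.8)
The plan is to establish the statement in three parts --- (a) tightness of $(\nu^{{\scriptscriptstyle N}},W^{0})$ together with the fact that limit points are $\mathbf{M}_{\leq1}(\mathbb{R}_{+})$-valued, (b) the convergence $(L^{{\scriptscriptstyle N}},W^{0})\Rightarrow(L^{*},W^{0})$, and (c) the strict monotonicity of $L^{*}$ --- and I sketch each in turn. For (a), since $W^{0}$ is the same Brownian motion for every $N$, it suffices to show $(\nu^{{\scriptscriptstyle N}})$ is tight on $(D_{\mathscr{S}'},\text{M1})$, and by a Mitoma-type criterion adapted to the M1 topology on $D_{\mathscr{S}'}$ (cf.\ \cite{Led2016}) this reduces to M1-tightness of the real processes $\langle\nu_{\cdot}^{{\scriptscriptstyle N}},\phi\rangle$ for $\phi$ in a countable dense subset of $\mathscr{S}$. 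For $\phi\in\mathscr{C}_{0}$ the process $\langle\nu_{\cdot}^{{\scriptscriptstyle N}},\phi\rangle$ is continuous in $t$ (the defaults remove mass located at the origin, where $\phi$ vanishes, so they create no jump), and Proposition \ref{prop: halfspace finite eqn} writes it as the initial value plus two finite-variation drift terms, a continuous $W^{0}$-martingale, the contagion term driven by $\mathfrak{L}^{{\scriptscriptstyle N}}$, and the vanishing idiosyncratic term $I^{{\scriptscriptstyle N}}(\phi)$; a general $\phi\in\mathscr{S}$ reduces to this case after writing $\langle\nu^{{\scriptscriptstyle N}}_{t},\phi\rangle=\sum_{i}a_{i}^{{\scriptscriptstyle N}}\phi(X^{i}_{t\wedge\tau_{i}})-\phi(0)L^{{\scriptscriptstyle N}}_{t}$: the first summand satisfies the evolution of Proposition \ref{prop: halfspace finite eqn} (whose proof applies to it verbatim), $L^{{\scriptscriptstyle N}}$ is non-decreasing and $[0,1]$-valued hence M1-tight, and adding a uniformly convergent term does not spoil M1-convergence. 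The a priori input that makes all the terms uniformly controllable is the moment bound $\sup_{N}\mathbb{E}[\sup_{t\leq T}\langle\nu^{{\scriptscriptstyle N}}_{t},1+|\cdot|^{2}\rangle]<\infty$, which I would obtain from the SDE for $X^{i}$ by a standard Gronwall/Burkholder--Davis--Gundy argument (boundedness of $\sigma$, $(\mathfrak{L}^{{\scriptscriptstyle N}})'\in L^{\infty}$, the linear growth of Assumption \ref{Assumption 1 - finite system}, Jensen over the weights $a_{i}^{{\scriptscriptstyle N}}$), with Proposition \ref{prop: Aronson estimate} and Corollary \ref{Cor:Regularity of the empirical measures} closing the recursion. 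Given this, the drift and contagion terms are Lipschitz in $t$ with random constants dominated uniformly in $N$ (using that $x\partial_{x}\phi$ and $\partial_{xx}\phi$ are bounded), the $W^{0}$-martingale has quadratic variation $\leq C\|\partial_{x}\phi\|_{\infty}^{2}T$, and $\mathbb{E}\sup_{t}|I_{t}^{{\scriptscriptstyle N}}(\phi)|^{2}=O(1/N)$, so that, exactly as in Lemma \ref{lem:Fourth moments}, $\mathbb{E}|\langle\nu^{{\scriptscriptstyle N}}_{t},\phi\rangle-\langle\nu^{{\scriptscriptstyle N}}_{s},\phi\rangle|^{4}\leq C|t-s|^{2}$ uniformly in $N$, and the Kolmogorov criterion yields tightness in $(C_{\mathbb{R}},\|\cdot\|_{\infty})$. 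By Skorokhod's representation theorem, any limit point $\nu^{*}$ is then positive and supported on $\mathbb{R}_{+}$ (closed constraints), with total mass $\leq1$ --- the Gaussian tails of Corollary \ref{Cor:Regularity of the empirical measures} preventing any escape of mass to infinity --- i.e.\ $\nu^{*}$ is $\mathbf{M}_{\leq1}(\mathbb{R}_{+})$-valued.

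For (b), suppose $(\nu^{{\scriptscriptstyle N}},W^{0})\Rightarrow(\nu^{*},W^{0})$ along a subsequence. As $(L^{{\scriptscriptstyle N}})$ is M1-tight by (a), I would pass to a further subsequence along which $(\nu^{{\scriptscriptstyle N}},L^{{\scriptscriptstyle N}},W^{0})\Rightarrow(\nu^{*},\tilde{L},W^{0})$ and, by Skorokhod's theorem, take the convergence to be almost sure; it then suffices to show $\tilde{L}_{t}=1-\nu^{*}_{t}(0,\infty)=L^{*}_{t}$. At a continuity point $t$ there is again no escape of mass, so $\nu^{*}_{t}(\mathbb{R})=\lim_{k}(1-L^{{\scriptscriptstyle N}_{k}}_{t})=1-\tilde{L}_{t}$, and since $\nu^{*}_{t}$ is carried by $\mathbb{R}_{+}$ this gives $L^{*}_{t}=\tilde{L}_{t}+\nu^{*}_{t}(\{0\})$. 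The crux --- and the most delicate point of the argument --- is that $\nu^{*}_{t}(\{0\})=0$: taking $\phi_{\varepsilon}\in\mathscr{S}$ with $\mathbf{1}_{\{0\}}\leq\phi_{\varepsilon}\leq\mathbf{1}_{(-\varepsilon,\varepsilon)}$ and using that the finite-$N$ measures carry no atom at the origin, one gets $\nu^{*}_{t}(\{0\})\leq\langle\nu^{*}_{t},\phi_{\varepsilon}\rangle=\lim_{k}\langle\nu^{{\scriptscriptstyle N}_{k}}_{t},\phi_{\varepsilon}\rangle\leq\liminf_{k}\nu^{{\scriptscriptstyle N}_{k}}_{t}(0,\varepsilon)$, whence by Fatou and the boundary decay of Corollary \ref{Cor:Regularity of the empirical measures}, $\mathbb{E}\,\nu^{*}_{t}(\{0\})\leq\liminf_{k}\mathbb{E}\,\nu^{{\scriptscriptstyle N}_{k}}_{t}(0,\varepsilon)\leq C\,t^{-\delta/2}\varepsilon^{1+\beta}\to0$ as $\varepsilon\to0$. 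Hence $\tilde{L}_{t}=L^{*}_{t}$ for almost every $t$, and as $\tilde{L}$ is non-decreasing and right-continuous this identifies the limit uniquely, so $(L^{{\scriptscriptstyle N}},W^{0})\Rightarrow(L^{*},W^{0})$.

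For (c), with $(L^{{\scriptscriptstyle N}},W^{0})\Rightarrow(L^{*},W^{0})$ established, I would argue by contradiction from Proposition \ref{prop: 1_loss increments}: if with positive probability $L^{*}$ had a flat piece on some interval $[t_{0},t_{0}+h]$ with $L^{*}_{t_{0}}<r<1$, then the Skorokhod representation (together with the sandwiching of $L^{{\scriptscriptstyle N}_{k}}_{t}$ between $L^{*}_{t-}$ and $L^{*}_{t}$ valid for M1-convergent monotone functions) and Fatou's lemma would give $\liminf_{k}\mathbb{P}(L^{{\scriptscriptstyle N}_{k}}_{t_{0}+h}-L^{{\scriptscriptstyle N}_{k}}_{t_{0}}<\delta,\,L^{{\scriptscriptstyle N}_{k}}_{t_{0}}<r)\geq c_{0}>0$ for \emph{every} $\delta>0$, contradicting Proposition \ref{prop: 1_loss increments}. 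Running this over rational pairs $t_{0}<t_{0}+h$ and rational $r<1$, and using the monotonicity and right-continuity of $L^{*}$, then gives that $L^{*}$ is almost surely strictly increasing on $\{L^{*}<1\}$.

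The step I expect to be the main obstacle is the vanishing of $\nu^{*}_{t}(\{0\})$ in (b). The difficulty is that $\mu\mapsto\mu(0,\infty)$ is \emph{not} continuous on $\mathscr{S}'$, so the convergence $\nu^{{\scriptscriptstyle N}}\Rightarrow\nu^{*}$ tells us nothing about the losses on its own; what rescues the argument is exactly the exclusion of a limiting atom at the origin, and this is supplied by the uniform boundary-decay estimate coming from the Aronson-type bounds of Proposition \ref{prop: Aronson estimate} (whose proof is postponed to Section 6). By comparison, the per-test-function tightness in (a) is routine once those bounds --- and the moment estimates they yield --- are available, and (c) follows by a by-now-standard argument from Proposition \ref{prop: 1_loss increments}.
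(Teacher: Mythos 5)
Your overall architecture matches the paper's: reduce tightness via \cite[Thm.~3.2]{Led2016} to M1-tightness of $\langle\nu^{\scriptscriptstyle N},\phi\rangle$, isolate the monotone part through the decomposition $\langle\nu^{\scriptscriptstyle N}_t,\phi\rangle=\langle\hat\nu^{\scriptscriptstyle N}_t,\phi\rangle-\phi(0)L^{\scriptscriptstyle N}_t$, control the continuous part by the fourth-moment increment bound of Lemma \ref{lem:Fourth moments}, and obtain the statements about $\mathbf{M}_{\leq1}(\mathbb{R}_+)$-valued limits, the convergence of $L^{\scriptscriptstyle N}$, and the strict increase of $L^*$ by (re)constructing the arguments the paper imports from \cite{HL2016} (your identification of the atom at zero via the boundary decay in Corollary \ref{Cor:Regularity of the empirical measures}, and your contradiction argument from Proposition \ref{prop: 1_loss increments}, are exactly in that spirit).

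There is, however, a genuine gap in part (a): you assert that $L^{\scriptscriptstyle N}$ is M1-tight simply because it is non-decreasing and $[0,1]$-valued. This is false. Whitt's tightness criterion \cite[Thm.~12.12.3]{Whi2002} requires, besides the (vanishing) M1 oscillation, control of the oscillations near the endpoints, i.e.\ of $\sup_{t\in(0,\delta)}|x(t)-x(0)|$ and $\sup_{t\in(T-\delta,T)}|x(T)-x(t)|$; monotonicity gives no such control. The deterministic sequence $x_n=\mathbf{1}_{[1/n,T]}$ is monotone and bounded but has no M1-convergent subsequence in $D[0,T]$ (any limit would have to be discontinuous from the right at $0$), and in the model this scenario corresponds precisely to a macroscopic fraction of the system defaulting immediately after time $0$ (or just before $T$), which must be ruled out by an argument, not by monotonicity. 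This is exactly where the paper invokes Proposition \ref{prop: 2_loss increments} (no macroscopic loss in vanishingly small time, uniformly in $N$) to verify the endpoint condition, via the bound $\mathbb{P}(|\phi(0)|L^{\scriptscriptstyle N}_\delta\geq\varepsilon/2)\to0$ and its analogue near $T$; the same proposition is also needed when you identify the limit of $L^{\scriptscriptstyle N}$ in part (b). Your argument becomes correct once you insert Proposition \ref{prop: 2_loss increments} at this point; as written, the tightness claim for the loss component does not stand. A further, more cosmetic, point: joint almost-sure Skorokhod representations involving the non-Polish space $(D_{\mathscr{S}'},\text{M1})$ require the Jakubowski-type justification given in Remark \ref{Skorokhod_Rep}, which you use implicitly in parts (b) and (c).
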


\begin{proof}
For the tightness of $(\nu^{{\scriptscriptstyle N}},W^{0})$, it suffices
to show the M1 tightness of $\left\langle \nu^{{\scriptscriptstyle N}},\phi\right\rangle $
on $D_{\mathbb{R}}$ for every $\phi\in\mathscr{S}$, by Theorem 3.2
in \cite{Led2016}. To this end, we verify the sufficient conditions
(i) and (ii) from Theorem 12.12.3 in \cite{Whi2002}. The first condition
is trivial since $\left\langle \nu^{{\scriptscriptstyle N}},\phi\right\rangle $
is uniformly bounded by $\left|\left\langle \nu^{{\scriptscriptstyle N}},\phi\right\rangle \right|\leq\left\Vert \phi\right\Vert _{\infty}$
for all $N\geq1$.

For the second condition, we can consider the decomposition
\begin{equation}
\left\langle \nu_{t}^{{\scriptscriptstyle N}},\phi\right\rangle =\left\langle \hat{\nu}_{t}^{{\scriptscriptstyle N}},\phi\right\rangle -\phi(0)L_{t}^{{\scriptscriptstyle N}}\quad\text{with}\quad\hat{\nu}_{t}^{{\scriptscriptstyle N}}:={\textstyle \sum_{i=1}^{{\scriptscriptstyle N}}}a_{i}^{{\scriptscriptstyle N}}\delta_{X_{t\land\tau_{i}}^{i}}.\label{eq: DECOMPOSITION-1}
\end{equation}
The advantage of this is that the monotone part, $\phi(0)L_{t}^{{\scriptscriptstyle N}}$,
is immaterial to the M1 modulus of continuity. Indeed, by Propositions
4.1 and 4.2 of \cite{Led2016}, it is sufficient to verify that
\begin{equation}
\mathbb{E}\Bigl[\bigl|\left\langle \hat{\nu}_{t}^{{\scriptscriptstyle N}},\phi\right\rangle -\left\langle \hat{\nu}_{s}^{{\scriptscriptstyle N}},\phi\right\rangle \bigr|^{4}\Bigr]=O(|t-s|^{2})\quad\forall s,t\in[0,T]\label{eq:tight_1}
\end{equation}
and that, for every $\varepsilon>0$,
\begin{equation}
\lim_{\delta\rightarrow0}\lim_{N\rightarrow\infty}\mathbb{P}\Bigl(\sup_{t\in(0,\delta)}\bigl|\left\langle \nu_{t}^{{\scriptscriptstyle N}}-\nu_{0}^{{\scriptscriptstyle N}},\phi\right\rangle \bigr|+\sup_{t\in(T-\delta,T)}\bigl|\left\langle \nu_{T}^{{\scriptscriptstyle N}}-\nu_{t}^{{\scriptscriptstyle N}},\phi\right\rangle \bigr|>\varepsilon\Bigr)=0.\label{eq:tight_2}
\end{equation}
Recalling $a_{i}^{{\scriptscriptstyle N}}\leq C/N$, Jensen's inequality
and the Lipschitzness of $\phi\in\mathscr{S}$ imply
\[
\mathbb{E}\Bigl[\bigl|\left\langle \hat{\nu}_{t}^{{\scriptscriptstyle N}},\phi\right\rangle -\left\langle \hat{\nu}_{s}^{{\scriptscriptstyle N}},\phi\right\rangle \bigr|^{4}\Bigr]\leq\frac{C}{N}\left\Vert \phi\right\Vert _{\text{Lip}}\sum_{i=1}^{N}\mathbb{E}_{\mathbf{}}\Bigl[\bigl|X_{t\land\tau_{i}}^{i}-X_{s\land\tau_{i}}^{i}\bigr|^{4}\Bigr].
\]
Thus, we can conclude from Lemma \ref{lem:Fourth moments} that (\ref{eq:tight_1})
is satisfied. With regard to (\ref{eq:tight_2}), the decomposition
(\ref{eq: DECOMPOSITION-1}) yields
\[
\mathbb{P}\Bigl(\sup_{t\in(0,\delta)}\bigl|\left\langle \nu_{t}^{{\scriptscriptstyle N}}-\nu_{0}^{{\scriptscriptstyle N}},\phi\right\rangle \bigr|>\varepsilon\Bigr)\leq\mathbb{P}\Bigl(\sup_{t\in(0,\delta)}\bigl|\left\langle \hat{\nu}_{t}^{{\scriptscriptstyle N}}-\hat{\nu}_{0}^{{\scriptscriptstyle N}},\phi\right\rangle \bigr|\geq\frac{\varepsilon}{2}\Bigr)+\mathbb{P}\Bigl(|\phi(0)|L_{\delta}^{{\scriptscriptstyle N}}\geq\frac{\varepsilon}{2}\Bigr)
\]
and likewise for the supremum over $t\in(T-\delta,T)$. By Markov's
inequality and the same arguments as above, the first term vanishes uniformly in $N\geq1$ as $\delta\rightarrow0$. Combining this with
Proposition \ref{prop: 2_loss increments}, we deduce (\ref{eq:tight_2}).
Therefore, $(\nu^{{\scriptscriptstyle N}},W^{0})$ is tight, as desired.

For the final claims, Proposition 5.3 of \cite{HL2016} ensures that
each limit point $\nu_{t}^{*}$ can be recovered as an element of $\mathbf{M}_{\leq1}(\mathbb{R}_{+})$.
Moreover, using Propositions \ref{prop: 1_loss increments} and \ref{prop: 2_loss increments},
the claims about $L^{*}$ and $L^{{\scriptscriptstyle N}}$ follow
by arguing as in Propositions 5.5 and 5.6 of \cite{HL2016}.
\end{proof}
\begin{prop}[Limit point regularity]
\label{prop:limit_regularity}For every $\epsilon>0$ there exist
$\kappa\in(0,1)$ and $c>0$ such that any continuous
limit point $\nu^{*}$ of $(\nu^{{\scriptscriptstyle N}})$ satisfies\vspace{-2pt}
\[
\begin{cases}
\mathbb{E}\nu_{t}^{*}(a,b)\apprle {\textstyle \int_{0}^{\infty}\hspace{-2pt}\int_{a}^{b}}\bigl({\textstyle \frac{1}{\sqrt{t}}}({\textstyle \frac{x}{\sqrt{t}}}\land1)({\textstyle \frac{y}{\sqrt{t}}}\land1)+({\textstyle \frac{x^{\kappa}y^{\kappa}}{t^{\kappa}}}\land1)e^{\epsilon y^{2}}\bigr)\bigl(1\land e^{-\frac{(x-y)^{2}}{ct}+c_{x\hspace{-1pt},\hspace{-1pt}y}}\bigr)dxd\nu_{0}(y)\\[5pt]
\mathbb{E}\nu_{t}^{*}(a,b) \apprle {\textstyle \int_{0}^{\infty}\hspace{-2pt}\int_{a}^{b}}\bigl({\textstyle \frac{1}{\sqrt{t}}}+e^{\epsilon y^{2}}\bigr)e^{-\frac{(x-y)^{2}}{ct}}dxd\nu_{0}(y),
\end{cases}
\]
for any $(a,b)\subseteq\mathbb{R}_{+}$. Here $c_{x,y}\apprle\left|x-y\right|\left(x\land y\right)$,
and if $\sigma(t,x)=\sigma_{1}(t)\sigma_{2}(x)$, then $c_{x,y}\equiv0$.
In particular, $\nu^{*}$ satisfies Assumption \ref{Assumption 2.3}
with the pointwise properties
\[
\begin{cases}
\exists\epsilon>0: & \mathbb{E}\nu_{t}^{*}(\lambda,\infty)=O(\exp\left\{ -\epsilon \lambda^{2}\right\} )\quad\text{as}\quad \lambda\rightarrow\infty\\[3pt]
\exists\delta\in(0,1],\,\beta>0: & \mathbb{E}\nu_{t}^{*}(0,\varepsilon)=t^{-\frac{\delta}{2}}O(\varepsilon^{1+\beta})\quad\text{as}\quad\varepsilon\rightarrow0
\end{cases}
\]
\end{prop}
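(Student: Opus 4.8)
The plan is to transfer to the limit point $\nu^{*}$ the uniform density bounds of Proposition~\ref{prop: Aronson estimate}, which are already assembled at the level of the empirical measures in the proof of Corollary~\ref{Cor:Regularity of the empirical measures}; the only delicate point is that $\mathbf{1}_{(a,b)}$ is not an admissible test function and that evaluation at a fixed time is not continuous on $D_{\mathscr{S}^{\prime}}$ in general. So fix a continuous limit point $\nu^{*}$, say $\nu^{{\scriptscriptstyle N}_{k}}\Rightarrow\nu^{*}$ jointly with $W^{0}$ on $(D_{\mathscr{S}^{\prime}},\text{M1})\times(C_{\mathbb{R}},\Vert\cdot\Vert_{\infty})$; fix $t\in(0,T]$ and $(a,b)\subseteq\mathbb{R}_{+}$; and for $h>0$ choose $\phi_{h}\in\mathscr{S}$ with $\mathbf{1}_{(a,b)}\le\phi_{h}\le\mathbf{1}_{(a-h,b+h)}$. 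Because $\nu^{*}$ is a.s.\ continuous, for each $\phi\in\mathscr{S}$ the M1 convergence $\langle\nu_{\cdot}^{{\scriptscriptstyle N}_{k}},\phi\rangle\to\langle\nu_{\cdot}^{*},\phi\rangle$ (Theorem~3.2 of \cite{Led2016}) is in fact uniform on $[0,T]$, since M1 convergence to a continuous limit is uniform; hence $\nu\mapsto\langle\nu_{t},\phi_{h}\rangle$ is bounded by $\Vert\phi_{h}\Vert_{\infty}\le1$ and continuous at $\nu^{*}$-a.e.\ path. The continuous mapping theorem and bounded convergence then give $\mathbb{E}\langle\nu_{t}^{{\scriptscriptstyle N}_{k}},\phi_{h}\rangle\to\mathbb{E}\langle\nu_{t}^{*},\phi_{h}\rangle$, so that
\[
\mathbb{E}\nu_{t}^{*}(a,b)\ \le\ \mathbb{E}\langle\nu_{t}^{*},\phi_{h}\rangle\ =\ \lim_{k}\mathbb{E}\langle\nu_{t}^{{\scriptscriptstyle N}_{k}},\phi_{h}\rangle\ \le\ \limsup_{k}\mathbb{E}\nu_{t}^{{\scriptscriptstyle N}_{k}}(a-h,b+h).
\]

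Next I would invoke the bound from the proof of Corollary~\ref{Cor:Regularity of the empirical measures}: using $a_{i}^{{\scriptscriptstyle N}}\le C/N$ and Proposition~\ref{prop: Aronson estimate}, one has $\mathbb{E}\nu_{t}^{{\scriptscriptstyle N}}(a-h,b+h)\le C'\int_{0}^{\infty}\!\int_{a-h}^{b+h}f_{t}(x,y)\,dx\,d\mu_{0}(y)$ uniformly in $N$, where $f_{t}$ is either right-hand side of (\ref{eq:finite half-line density estimate}) or (\ref{eq:finite whole-space density estimate}). Letting $h\downarrow0$ and using dominated convergence --- legitimate because, for $\epsilon$ below the sub-Gaussian rate of $\mu_{0}$ from Assumption~\ref{Assumption 1 - finite system}(v), $f_{t}$ is $dx\,d\mu_{0}$-integrable, while for larger $\epsilon$ the asserted bound is vacuous --- yields $\mathbb{E}\nu_{t}^{*}(a,b)\le C'\int_{0}^{\infty}\!\int_{a}^{b}f_{t}(x,y)\,dx\,d\mu_{0}(y)$. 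Finally, by (\ref{eq: a_i coefficients}) the initial laws $\mu_{0}$ and $\nu_{0}$ are comparable (mutually absolutely continuous with densities bounded above and below), so $d\mu_{0}$ may be replaced by $d\nu_{0}$ at the cost of enlarging the constant; this gives both integral bounds, with $c_{x,y}\apprle|x-y|(x\land y)$ and $c_{x,y}\equiv0$ for $\sigma(t,x)=\sigma_{1}(t)\sigma_{2}(x)$ inherited verbatim from Proposition~\ref{prop: Aronson estimate}.

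The pointwise properties then follow exactly as in the proof of Corollary~\ref{Cor:Regularity of the empirical measures}. For the Gaussian tail one writes $\mathbb{E}\nu_{t}^{*}(\lambda,\infty)=\lim_{R\to\infty}\mathbb{E}\nu_{t}^{*}(\lambda,R)$ by monotone convergence, dominates each term by the whole-space integral bound on $(\lambda,R)$, and lets $R\to\infty$, reproducing $O(\exp\{-\epsilon\lambda^{2}\})$ uniformly in $t$ for $\epsilon$ small; for the boundary decay one takes $(a,b)=(0,\varepsilon)$ in the half-line bound and extracts the power-law behaviour at the Dirichlet boundary to obtain $t^{-\delta/2}O(\varepsilon^{1+\beta})$. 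Assumption~\ref{Assumption 2.3} is then immediate: (i) support on $\mathbb{R}_{+}$ and (ii) the strict monotonicity of $L^{*}$ are part of Proposition~\ref{prop:First regularity prop}; (iii) and (iv) follow by integrating the pointwise bounds over $t\in[0,T]$ (using $\int_{0}^{T}t^{-\delta/2}\,dt<\infty$ as $\delta\le1$, and $e^{-\epsilon\lambda^{2}}=o(e^{-\epsilon'\lambda})$ for every $\epsilon'>0$); and (v) follows from $\nu_{t}^{*}(a,b)^{2}\le\nu_{t}^{*}(a,b)$ together with $\mathbb{E}\nu_{t}^{*}(a,b)\apprle(t^{-1/2}|b-a|)\land1$ (immediate from the whole-space bound, as in Corollary~\ref{Cor:Regularity of the empirical measures}), so that $\mathbb{E}\int_{0}^{T}\nu_{t}^{*}(a,b)^{2}\,dt\apprle|b-a|\land1\le|b-a|^{\delta}$.

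I expect the first step to be the main obstacle: passing from weak M1 convergence of the measure-valued processes to convergence of the scalars $\mathbb{E}\nu_{t}^{{\scriptscriptstyle N}}(a,b)$ must cope with the discontinuity of both $\mathbf{1}_{(a,b)}$ and time-$t$ evaluation, and one must ensure the $h$-fattening introduced to handle the former can be removed afterwards --- which is possible precisely because the uniform bound of Corollary~\ref{Cor:Regularity of the empirical measures} is available in the integrated form above, so that dominated convergence closes the argument. Everything downstream is then the same bookkeeping as in Corollary~\ref{Cor:Regularity of the empirical measures}.
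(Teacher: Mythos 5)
Your proposal is correct and follows essentially the same route as the paper: approximate $\mathbf{1}_{(a,b)}$ by Schwartz cutoffs, use the M1 convergence together with the a.s.\ continuity of $\nu^{*}$ (the paper invokes \cite[Prop.~2.7]{Led2016} and \cite[Thm.~12.4.1]{Whi2002}, which is exactly your ``M1 convergence to a continuous limit gives fixed-time convergence'' step) plus bounded convergence to pass the means to the limit, then apply the uniform bound from Corollary \ref{Cor:Regularity of the empirical measures}, remove the fattening by dominated convergence, and replace $\mu_{0}$ by $\nu_{0}$ via (\ref{eq: a_i coefficients}). Your extra bookkeeping for the Assumption \ref{Assumption 2.3} properties matches what the paper leaves implicit.
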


\begin{proof}
Fix an interval $(a,b)\subseteq\mathbb{R}_{+}$ and fix a small $\varepsilon>0$
s.t. $a-\varepsilon>0$. Let $\varphi_{\varepsilon}$ be a smooth
cut-off function in $\mathcal{C}_{c}^{\infty}(\mathbb{R};[0,1])$
equal to $1$ on $(a,b)$ and zero outside $(a-\varepsilon,b+\varepsilon)$.
Then, as in the proof of Corollary \ref{Cor:Regularity of the empirical measures},
we have
\begin{equation}
\mathbb{E}\left\langle \nu_{t}^{{\scriptscriptstyle N}},\varphi_{\varepsilon}\right\rangle \leq\mathbb{E}\nu_{t}^{{\scriptscriptstyle N}}(a-\varepsilon,b+\varepsilon)\leq C\int_{0}^{\infty}\int_{a-\varepsilon}^{b+\varepsilon}f_{t}(x,y)dxd\nu_{0}(y),
\label{eq:bound_limit_regualrity}
\end{equation}
where $f_{t}(x,y)$ can take the value of either of the right-hand
sides of (\ref{eq:finite half-line density estimate}) and (\ref{eq:finite whole-space density estimate}). Here we have also used that $\mu_0$ is dominated by a constant times $\nu_{0}=\lim_{{\scriptscriptstyle N}}\sum_{i=1}^{{\scriptscriptstyle N}}a_{i}^{{\scriptscriptstyle {\scriptscriptstyle N}}}\delta_{X_{0}^{i}}$, in light of (\ref{eq: a_i coefficients}).
Since $\varphi_{\varepsilon}\in\mathscr{S}$, after passing to a convergent
subsequence, $\left\langle \nu^{{\scriptscriptstyle N_{\hspace{-1pt}k}}},\varphi_{\varepsilon}\right\rangle $
converges weakly to $\left\langle \nu^{*},\varphi_{\varepsilon}\right\rangle $
on $D_{\mathbb{R}}$ by \cite[Prop.~2.7]{Led2016}. Thus, by the assumption that $\nu^{*}$ is continuous, \cite[Thm.~12.4.1]{Whi2002} gives that $\left\langle \nu_{t}^{{\scriptscriptstyle N}_{\hspace{-1pt}k}},\varphi_{\varepsilon}\right\rangle $
converges weakly to $\left\langle \nu_{t}^{*},\varphi_{\varepsilon}\right\rangle $
on $\mathbb{R}$ for all $t\in[0,T]$. Using bounded convergence, we get convergence
of the means and hence, for all $\varepsilon>0$,
\[
\mathbb{E}\nu_{t}^{*}(a,b)\leq\mathbb{E}\left\langle \nu_{t}^{*},\varphi_{\varepsilon}\right\rangle =\lim_{k\rightarrow\infty}\mathbb{E}\left\langle \nu_{t}^{{\scriptscriptstyle N_{\hspace{-1pt}k}}},\varphi_{\varepsilon}\right\rangle 
\]
Recalling (\ref{eq:bound_limit_regualrity})
and applying dominated convergence, the proof is complete. 
\end{proof}
\begin{rem}
\label{rem:limit_reg}If $\nu^{*}$ is not continuous, it still holds
that $\int_{0}^{T}\!\left\langle \nu_{t}^{{\scriptscriptstyle N}_{\hspace{-1pt}k}},\varphi_{\varepsilon}\right\rangle \!dt\Rightarrow\int_{0}^{T}\!\left\langle \nu_{t}^{*},\varphi_{\varepsilon}\right\rangle \!dt$,
by \cite[Thm.~11.5.1]{Whi2002}, where `$\Rightarrow$' denotes
weak convergence. Hence \textcolor{black}{Assumption \ref{Assumption 2.3}
holds for every limit point of $(\nu^{{\scriptscriptstyle N}})$ without
any a priori knowledge of continuity.}
\end{rem}

\subsection{Convergence of the mean process}

 Let $M_{t}^{{\scriptscriptstyle N}}:=\left\langle \nu_{t}^{{\scriptscriptstyle N}},\mbox{\ensuremath{\psi}}\right\rangle $ for any given Lipschitz function $\psi\in\text{Lip}(\mathbb{R})$. While we are mainly interested in the case $\psi=\text{Id}$, other applications may call for a general Lipschitz function, so we allow for this in the ensuing arguments.

Given $\nu^{{\scriptscriptstyle N}}\Rightarrow\nu^{*}$ on $(D_{\mathscr{S}'},\text{M1})$,
where `$\Rightarrow$' denotes weak convergence, we would like to
know that $M^{{\scriptscriptstyle N}}\Rightarrow M^{*}:=
 \langle\nu^{*},\psi\rangle $ on $(D_{\mathbb{R}},\text{M1})$.
However, $\psi$ need not be in $\mathscr{S}$, so we
cannot simply appeal to the continuity of the projection map from $(D_{\mathbb{\mathscr{S}}^{\prime}},\text{M1})$
to $(D_{\mathbb{R}},\text{M1})$. Nevertheless, we can work around
this obstacle by exploiting the
uniformly exponential tails of the empirical measures in expectation, cf.~Proposition \ref{prop: Aronson estimate}.

First of all, we can observe that $(M^{{\scriptscriptstyle N}})_{N\geq1}$
is tight on $(D_{\mathbb{R}},\text{M1})$. To see this, we verify
the necessary conditions (i) and (ii) from Theorem 12.12.3 in \cite{Whi2002}.
The first condition amounts to showing that, for all $\varepsilon>0$,
there exists $c>0$ such that
\[
\mathbb{P}\Bigl(\,\sup_{t\leq T}\left|M_{t}^{{\scriptscriptstyle N}}\right|>c\Bigr)\leq\varepsilon\quad\forall N\geq1.
\]
This property is immediate, since we even have uniformly sub-Gaussian tails of the
running max of $M_{t}^{{\scriptscriptstyle N}}$, by Corrollary \ref{cor: Sup is subgaussian}.
For the second condition, we can rely on the decomposition (\ref{eq: DECOMPOSITION-1}),
which amounts to
\[
M_{t}^{{\scriptscriptstyle N}}=\hat{M}_{t}^{{\scriptscriptstyle N}}-\psi(0)L_{t}^{{\scriptscriptstyle N}},\quad\hat{M}_{t}^{{\scriptscriptstyle N}}:=\left\langle \hat{\nu}_{t}^{{\scriptscriptstyle N}},\psi\right\rangle.
\]
Since $\psi$ is Lipschitz, the tightness then follows by repeating the same
arguments as in the proof of Proposition \ref{prop:First regularity prop}. Given this, we can now address the weak convergence.
\begin{prop}[Functional LLN for the mean]
	\label{Convergence_of_mean}
Suppose $(\nu^{{\scriptscriptstyle N}},W^{0})\Rightarrow (\nu^{*},W^{0})$. Then $(M^{{\scriptscriptstyle N}},W^{0}) \Rightarrow(M^{*},W^{0})$ on $(D_\mathbb{R},\emph{M1})\times(C_\mathbb{R},\left\Vert \cdot \right\Vert_\infty)$.
\end{prop}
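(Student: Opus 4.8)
The plan is to upgrade the already-established tightness of $(M^{{\scriptscriptstyle N}},W^{0})$ on $(D_{\mathbb{R}},\text{M1})\times(C_{\mathbb{R}},\|\cdot\|_\infty)$ to full weak convergence by identifying all subsequential limits with $(M^{*},W^{0})=(\langle\nu^{*},\psi\rangle,W^{0})$. Since $(\nu^{{\scriptscriptstyle N}},W^{0})\Rightarrow(\nu^{*},W^{0})$ is assumed, it suffices to show that along any subsequence realizing the joint limit of $(\nu^{{\scriptscriptstyle N}},M^{{\scriptscriptstyle N}},W^{0})$, the $M$-coordinate of the limit coincides almost surely with $\langle\nu^{*},\psi\rangle$. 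The obstruction, as noted in the text preceding the statement, is that $\psi\notin\mathscr{S}$, so we cannot push $\nu^{{\scriptscriptstyle N}}\Rightarrow\nu^{*}$ through the (discontinuous) projection $\mu\mapsto\langle\mu,\psi\rangle$. The workaround is a truncation argument powered by the uniform sub-Gaussian tail bound $\mathbb{E}\,\nu_t^{{\scriptscriptstyle N}}(\lambda,\infty)=O(\exp\{-\epsilon\lambda^2\})$ from Corollary \ref{Cor:Regularity of the empirical measures}.

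First I would fix a smooth cut-off: for $R>0$ pick $\chi_R\in\mathcal{C}^\infty(\mathbb{R};[0,1])$ with $\chi_R\equiv1$ on $[-R,R]$ and $\chi_R\equiv0$ outside $[-2R,2R]$, and set $\psi_R:=\psi\chi_R$. Then $\psi_R\in\mathscr{C}_{{\scriptscriptstyle 0}}\subseteq\mathscr{S}$ (we may as well take $\psi(0)=0$ after subtracting the monotone part $\psi(0)L^{{\scriptscriptstyle N}}$ exactly as in the tightness proof, whose limit is already understood via Proposition \ref{prop:First regularity prop}), so by \cite[Prop.~2.7]{Led2016} the map $\mu\mapsto\langle\mu,\psi_R\rangle$ is continuous from $(D_{\mathscr{S}'},\text{M1})$ to $(D_{\mathbb{R}},\text{M1})$, whence $(\langle\nu^{{\scriptscriptstyle N}},\psi_R\rangle,W^0)\Rightarrow(\langle\nu^{*},\psi_R\rangle,W^0)$ on $(D_{\mathbb{R}},\text{M1})\times(C_{\mathbb{R}},\|\cdot\|_\infty)$ for every fixed $R$. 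Next I control the truncation error: since $\psi$ is Lipschitz with $\psi(0)=0$ we have $|\psi(x)-\psi_R(x)|\le |\psi(x)|\mathbf{1}_{|x|>R}\le C(1+|x|)\mathbf{1}_{|x|>R}$, and because $\nu_t^{{\scriptscriptstyle N}}$ is supported on $\mathbb{R}_+$,
\[
\mathbb{E}\sup_{t\le T}\bigl|\langle\nu_t^{{\scriptscriptstyle N}},\psi-\psi_R\rangle\bigr|
\;\le\; C\,\mathbb{E}\sup_{t\le T}\!\int_R^\infty (1+x)\,\nu_t^{{\scriptscriptstyle N}}(dx).
\]
Using the uniform tail decay of $\mathbb{E}\,\nu_t^{{\scriptscriptstyle N}}(\lambda,\infty)$ together with the uniform sub-Gaussian bound on $\sup_{t\le T}M_t^{{\scriptscriptstyle N}}$ from Corollary \ref{cor: Sup is subgaussian} (applied, say, with a Cauchy--Schwarz split to move the supremum inside), this is $o(1)$ as $R\to\infty$, uniformly in $N\ge1$. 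In particular $\langle\nu^{{\scriptscriptstyle N}},\psi_R\rangle\to\langle\nu^{{\scriptscriptstyle N}},\psi\rangle=M^{{\scriptscriptstyle N}}$ in $L^1(\sup$-norm$)$ uniformly in $N$.

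Now assemble the pieces along a fixed subsequence. Given tightness of $(\nu^{{\scriptscriptstyle N}},M^{{\scriptscriptstyle N}},W^0)$ (the middle coordinate tight by the discussion above), pass to a further subsequence with a joint limit $(\nu^{*},M^{\dagger},W^0)$. By the continuity of $\mu\mapsto\langle\mu,\psi_R\rangle$ the limit of the $\langle\nu^{{\scriptscriptstyle N}},\psi_R\rangle$-coordinate is $\langle\nu^{*},\psi_R\rangle$; on the other hand the uniform $L^1$-closeness of $\langle\nu^{{\scriptscriptstyle N}},\psi_R\rangle$ to $M^{{\scriptscriptstyle N}}$ forces (after using the Skorokhod representation to work with a.s.\ convergence, and the fact that M1-convergence is stable under uniformly small perturbations, cf.\ \cite[Thm.~12.4.1]{Whi2002}) that $M^\dagger$ and $\langle\nu^{*},\psi_R\rangle$ differ by at most the $R$-error in the relevant metric, and similarly $\langle\nu^{*},\psi_R\rangle\to\langle\nu^{*},\psi\rangle$ as $R\to\infty$ by monotone/dominated convergence using the exponential tails of $\mathbb{E}\,\nu_t^{*}(\lambda,\infty)$ from Proposition \ref{prop:limit_regularity}. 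Letting $R\to\infty$ identifies $M^\dagger=\langle\nu^{*},\psi\rangle=M^{*}$ almost surely, and since the limit is the same along every subsequence we conclude $(M^{{\scriptscriptstyle N}},W^0)\Rightarrow(M^{*},W^0)$. I expect the main obstacle to be the bookkeeping in this last step: making the interchange of the $N\to\infty$ and $R\to\infty$ limits rigorous in the M1 topology (where one cannot simply add functions), which is why I would route it through Skorokhod representation and the $\sup$-norm estimate, reducing the M1 statement to the elementary fact that a uniformly small $\|\cdot\|_\infty$-perturbation does not affect an M1 limit.
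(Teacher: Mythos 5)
Your proposal is correct in substance and shares the paper's skeleton (smooth truncation $\psi_R=\psi\chi_R\in\mathscr{S}$, continuity of the projection $\mu\mapsto\langle\mu,\psi_R\rangle$ from \cite[Prop.~2.7]{Led2016}, and a tail estimate to kill the truncation error), but you finish the identification of the subsequential limit by a genuinely different mechanism. The paper never takes a supremum over time: it only uses the pointwise-in-$t$ bound $\mathbb{E}|\langle\nu_t^{N_k},\psi-\phi_\lambda\rangle|=o(1)$ uniformly in $k$ (Lemma \ref{lem:Tails for centre of mass}), and then compares finite-dimensional distributions of $(M^{\dagger},W^0)$ and $(M^{*},W^0)$ at a dense set $\widetilde{\mathbb{T}}$ of a.s.\ continuity times, sandwiching $f_i(M^{N_k}_{t_i})$ between $f_i(\langle\nu^{N_k}_{t_i},\phi_\lambda\rangle)\pm o(1)$ for bounded Lipschitz $f_i$ and invoking \cite[Thm.~12.4.1]{Whi2002} for marginal convergence; no Skorokhod representation is needed. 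You instead work on a Skorokhod representation (legitimate here only via Remark \ref{Skorokhod_Rep}, which you should cite), use the elementary domination of the M1 metric by the uniform metric (this is \cite[Thm.~12.3.2]{Whi2002}, not 12.4.1, which concerns marginal projections), and a uniform-in-$N$ \emph{expected-supremum} truncation bound; the latter is a strictly stronger input than the paper uses, but it is indeed available here from Corollary \ref{cor: Sup is subgaussian} via a Markov/second-moment split, so your route closes (with Fatou to handle the $N\to\infty$, $R\to\infty$ interchange you flag). Two small corrections: the aside about subtracting $\psi(0)L^{N}$ is unnecessary and best avoided — $\psi_R\in\mathscr{S}$ already suffices for the projection continuity, whereas re-adding $\psi(0)L^{N}$ afterwards would raise the (non-trivial) issue that addition is not M1-continuous; and your final ``$M^{\dagger}=M^{*}$ almost surely'' should be read as equality at a dense set of times (continuity points of $M^{\dagger}$ intersected with times where $M_t^{*}<\infty$ a.s., exactly the paper's $\widetilde{\mathbb{T}}$ device, since Proposition \ref{prop:limit_regularity}'s pointwise tail bounds presuppose continuity of $\nu^{*}$), which is all that is needed to identify the limit law and conclude. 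In exchange for the heavier technical inputs, your argument yields an a.s.\ identification of the limit on a common probability space rather than only an identification in law, while the paper's argument is leaner on moment estimates and avoids Skorokhod representation entirely.
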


\begin{proof}
Let $(M^{{\scriptscriptstyle N}_{\hspace{-1pt}k}},W^{0})_{k\geq1}$
be an arbitrary subsequence. By the tightness of $(M^{ {\scriptscriptstyle N}_{\hspace{-1pt}k}})$, we can pass to a further convergent subsequence, also
indexed by $k\geq1$. Let $(M^{\dagger},W^{0})$ denote the weak limit
of this subsequence and note that we still have $\nu^{{\scriptscriptstyle N}_{\hspace{-1pt}k}}\Rightarrow\nu^{*}$. We need to show that the laws of $(M^{*},W^{0})$ and $(M^{\dagger},W^{0})$ agree.
To this end, let $\phi_{\lambda}:=\varphi_{\lambda}\psi$, where $\varphi_{\lambda}\in\mathcal{C}_{c}^{\infty}(\mathbb{R};[0,1])$
is a standard cutoff function equal to $1$ on $[-\lambda,\lambda]$.
Then $\phi_{\lambda}\in\mathscr{S}(\mathbb{R})$ and $\phi_{\lambda}\rightarrow\psi$
pointwise as $\lambda\rightarrow\infty$. Since $\phi_{\lambda}\in\mathscr{S}$,
the projection map $\pi^{\phi_{\lambda}}:(D_{\mathbb{\mathscr{S}}^{\prime}},\text{M}1)\rightarrow(D_{\mathbb{R}},\text{M}1)$
is continuous (see~\cite[Prop.~2.7]{Led2016}), so we get
\begin{equation}
\left\langle \nu^{ {\scriptscriptstyle N}_{\hspace{-1pt}k}},\phi_{\lambda}\right\rangle \Rightarrow\left\langle \nu^{*},\phi_{\lambda}\right\rangle \quad\mbox{on}\quad(D_{\mathbb{R}},\text{M}1),\label{eq:a}
\end{equation}
by the continuous mapping theorem. Moreover, for all $\lambda>0$,
we have
\[
\mathbb{E}\left|\left\langle \nu_{t}^{{\scriptscriptstyle N}_{\hspace{-1pt}k}},\psi-\phi_{\lambda}\right\rangle \right|\leq\mathbb{E}\left\langle \nu_{t}^{ {\scriptscriptstyle N}_{\hspace{-1pt}k}},\left|\psi\right|\mathbf{1}_{[\lambda,\infty)}\right\rangle \apprle\mathbb{E}\left\langle \nu_{t}^{ {\scriptscriptstyle N}_{\hspace{-1pt}k}},(1+\left|x\right|)\mathbf{1}_{[\lambda,\infty)}\right\rangle ,
\]
and hence the exponential tails (in expectation)
imply
\begin{equation}
\mathbb{E}\left|\left\langle \nu_{t}^{{\scriptscriptstyle N}_{\hspace{-1pt}k}},\psi-\phi_{\lambda}\right\rangle \right|=o(1)\quad\mbox{as}\;\lambda\rightarrow\infty,\quad\text{uniformly}\;\text{in}\;k\geq1,\;t\in[0,T],\label{eq: o(1) tail}
\end{equation}
see Lemma \ref{lem:Tails for centre of mass}. Next, we define
\[
\mathbb{T}:=\mbox{cont}(M^{\dagger})\cap{\textstyle \bigcap_{\lambda\in\mathbb{N}}}\,\mbox{cont}(\left\langle \nu^{*},\phi_{\lambda}\right\rangle ),\quad\mbox{cont}(\xi)=\left\{ t\in[0,T]:\mathbb{P}(\xi_{t-}=\xi_{t})=1\right\} ,
\]
and note that this set is co-countable since $\left\langle \nu^{*},\phi_{\lambda}\right\rangle $
and $M^{\dagger}$ are in $D_{\mathbb{R}}$ (see e.g.~\cite[Cor.~12.2.1]{Whi2002}). Given this, we define $\widetilde{\mathbb{T}}:=\mathbb{T}\cap\left\{ t\in[0,T]:\mathbb{P}(M_{t}^{*}<\infty)=1\right\}$, 
where the latter set has full Lebesgue measure, so $\widetilde{\mathbb{T}}$
is dense in $[0,T]$. We will show that all the finite dimensional
distributions of $(M^{*},W^{0})$ and $(M^{\dagger},W^{0})$ agree
for indices $\left\{ t_{1},\ldots,t_{l}\right\} \in\widetilde{\mathbb{T}}$.
By a monotone class argument, it suffices to show that
\begin{equation}
\mathbb{E}\biggl[\prod_{i=1}^{l}f_{i}(M_{t_{i}}^{*})g_{i}(W_{t_{i}}^{0})\biggr]=\lim_{k\rightarrow\infty}\mathbb{E}\biggl[\prod_{i=1}^{l}f_{i}(M_{t_{i}}^{{\scriptscriptstyle N}_{\hspace{-1pt}k}})g_{i}(W_{t_{i}}^{0})\biggr]\label{eq:to_be_proved}
\end{equation}
for bounded functions $f_{i},g_{i}\in\mbox{Lip}(\mathbb{R};\mathbb{R}_{+})$, $i=1,\ldots,l$, where we have used convergence of the marginals $(M_{t_i}^{{\scriptscriptstyle N}_{\hspace{-1pt}k}},W_{t_i}^0)_{i=1,\ldots,l}\Rightarrow(M_{t_i}^{\dagger},W_{t_i}^0)_{i=1,\ldots,l}$,
see \cite[Thm.~11.6.6]{Whi2002}.

Recalling $M_{t_{i}}^{ {\scriptscriptstyle N}_{\hspace{-1pt}k}}=\langle \nu_{t_{i}}^{{\scriptscriptstyle N}_{\hspace{-1pt}k}},\psi\rangle $,
we can observe that
\begin{equation}
\bigl|f_{i}(M_{t_{i}}^{ {\scriptscriptstyle N}_{\hspace{-1pt}k}})-f_{i}\bigl(\left\langle \nu_{t_{i}}^{{\scriptscriptstyle N}_{\hspace{-1pt}k}},\phi_{\lambda}\right\rangle \bigr)\bigr|\leq\left\Vert f_{i}\right\Vert _{\text{Lip}}\left|\left\langle \nu_{t_{i}}^{{\scriptscriptstyle N}_{\hspace{-1pt}k}},\psi-\phi_{\lambda}\right\rangle \right|,\quad i=1,\ldots,l.\label{eq:bounds on f(M)}
\end{equation}
Using the upper bounds on each $f_{i}(M_{t_{i}}^{{\scriptscriptstyle {\scriptscriptstyle N}_{\hspace{-1pt}k}}})$,
as implied by (\ref{eq:bounds on f(M)}), it follows from (\ref{eq: o(1) tail})
and the boundedness of $f_{i},g_{i}\geq0$, that 
\begin{equation}
\mathbb{E}\left[\prod_{i=1}^{l}f_{i}(M_{t_{i}}^{{\scriptscriptstyle N}_{\hspace{-1pt}k}})g_{i}(W_{t_{i}}^{0})\right]\leq\mathbb{E}\left[\prod_{i=1}^{l}f_{i}\left(\left\langle \nu_{t_{i}}^{{\scriptscriptstyle N}_{\hspace{-1pt}k}},\phi_{\lambda}\right\rangle \right)g_{i}(W_{t_{i}}^{0})\right]+\left\{ o(1)\;\mbox{terms}\right\} ,\label{eq: first in equaity}
\end{equation}
as $\lambda\rightarrow\infty$, where the $o(1)$ terms are uniform
in $k\geq1$. By construction of $\mathbb{\widetilde{T}}$, \cite[Thm.~12.4.1]{Whi2002} implies that the canonical projections $\pi_{t_{i}}:(D_{\mathbb{R}},\text{M}1)\rightarrow\mathbb{R}$
are continuous at each $\left\langle \nu^{{\scriptscriptstyle N}_{\hspace{-1pt}k}},\phi_{\lambda}\right\rangle $
for $\lambda\in\mathbb{N}$. Thus, with $\langle \nu_{t_{i}}^{{\scriptscriptstyle N}_{\hspace{-1pt}k}},\phi_{\lambda}\rangle =\pi_{t_{i}}\!\left\langle \nu^{ {\scriptscriptstyle N}_{\hspace{-1pt}k}},\phi_{\lambda}\right\rangle $,
the weak convergence (\ref{eq:a}) implies that, by taking a $\limsup$
over $k\geq1$ on both sides of (\ref{eq: first in equaity}),
\[
\lim_{k\rightarrow\infty}\mathbb{E}\left[\prod_{i=1}^{l}f_{i}(M_{t_{i}}^{{\scriptscriptstyle N}_{\hspace{-1pt}k}})g_{i}(W_{t_{i}}^{0})\right]\leq\mathbb{E}\left[\prod_{i=1}^{l}f_{i}\left(\left\langle \nu_{t_{i}}^{*},\phi_{\lambda}\right\rangle \right)g_{i}(W_{t}^{0})\right]+o(1)\quad\text{as}\;\lambda\rightarrow\infty.
\]
Finally, we note that $\left\langle \nu_{t}^{*},\phi_{\lambda}\right\rangle \rightarrow\left\langle \nu_{t}^{*},\psi\right\rangle =M_{t}^{*}$
as $\lambda\rightarrow\infty$, by dominated convergence, since $\left|\phi_{\lambda}\right|\leq\left|\psi\right|$
and $\psi$ is integrable with respect to $\nu_{t}^{*}$ for $t\in\widetilde{\mathbb{T}}$.
Consequently, by the boundedness and continuity of each $(f_{i},g_{i})$,
we can send $\lambda\rightarrow\infty$ to find that
\[
\lim_{k\rightarrow\infty}\mathbb{E}\left[\prod_{i=1}^{l}f_{i}(M_{t_{i}}^{{\scriptscriptstyle N}_{\hspace{-1pt}k}})g_{i}(W_{t_{i}}^{0})\right]\leq\mathbb{E}\left[\prod_{i=1}^{l}f_{i}\left(M_{t_{i}}^{*}\right)g_{i}(W_{t_{i}}^{0})\right].
\]
Now, if we had instead relied on the lower bounds for each $f_{i}(M_{t_{i}}^{{\scriptscriptstyle {\scriptscriptstyle N}_{\hspace{-1pt}k}}})$
implied by (\ref{eq:bounds on f(M)}), then the analogous arguments
would yield the reverse inequality, thus proving (\ref{eq:to_be_proved}).
\end{proof}

\subsection{The limit SPDE \textemdash{} Proof of Theorem \ref{thm:EXISTENCE}\label{subsec:The-limit-SPDE:}}
By \cite[Thm.~3,2]{Led2016}, the tightness from Proposition \ref{prop:First regularity prop} implies relative compactness in the sense that every subsequence of $(\nu^{{\scriptscriptstyle N}},W^0)$ has a weakly convergent subsequence. Using this, we show that the corresponding integrals in the finite evolution equation (\ref{eq:finite evol}) converge weakly (Proposition \ref{prop:Weak convergence of integrals}) and then we employ a martingale argument (Proposition \ref{prop: Maringale approach}) to see that this gives rise to the desired limit SPDE.

\begin{rem}[Skorokhod's representation theorem]\label{Skorokhod_Rep} While  $(D_{\mathscr{S}'},\text{M1})$ fails to be Polish, it is a completely regular Suslin space. In particular, condition (10) of \cite[Thm.~2]{Jakubowski} holds and hence any weakly convergent subsequence of  $(\nu^{\scriptscriptstyle N})$ has a further subsequence with the usual a.s.~Skorokhod representation property. Firstly, $(D_{\mathscr{S}'},\text{M1})$  is  completely regular and Hausdorff by \cite[Prop.~2.7]{Led2016} and \cite[Thm.~2.1.1]{Kallianpur_Xiong}. Secondly, \cite[Prop.~2.7]{Led2016} and its proof gives $D_{\mathscr{S}'}=\bigcup_{n=1}^{\infty}D_{S_{\text{-}n}}$ where each $(D_{S_{\text{-}n}},\text{M1})$ is Polish and the inclusions $D_{S_{\text{-}n}}\!\hookrightarrow D_{\mathscr{S}'}$ are M1-continuous. Thus, each $D_{S_{\text{-}n}}$ is Suslin in the Hausdorff space $(D_{\mathscr{S}'},\text{M1})$ and so  their union is Suslin under the finest topology such that the inclusions are continuous \cite[Thm.~I.II.3]{Schwartz}. As the M1-topology is coarser, we conclude that $(D_{\mathscr{S}'},\text{M1})$ is Suslin.
\end{rem}

\begin{prop}[Convergence of integrals]
\label{prop:Weak convergence of integrals}Fix $t\leq T$
and $\phi\in\mathscr{S}$, and define
\[
\Psi,\Phi:\left\{ \zeta\in D_{\mathscr{S}^{\prime}}:\zeta_{s}\in\mathbf{M}_{\leq1}(\mathbb{R})\right\} \times  D_{\mathbb{R}}\rightarrow\mathbb{R}\quad\text{by}
\]
\[
\Psi(\zeta,\ell):=\int_{0}^{t}\left\langle \zeta_{s},g(s,\cdot,\zeta_s,\ell_{s})\phi\right\rangle ds,\;\;\Phi(\zeta,\ell):=\int_{0}^{t}\left\langle \zeta_{s},\alpha(s,\cdot,\zeta_s,\ell_{s})\phi\right\rangle d(\mathfrak{K}\ast\ell)_{s},
\]
where $g$ is any of $b$, $\sigma^{2}$, or $\sigma\rho$. Given $(\nu^{{\scriptscriptstyle N}},L^{{\scriptscriptstyle N}})\Rightarrow(\nu^{*},L^{*})$ on $(D_{\mathscr{S}'},\emph{M1})\times(D_\mathbb{R},\emph{M1})$, $\Psi^{{\scriptscriptstyle N}}:=\Psi(\nu^{{\scriptscriptstyle N}},L^{{\scriptscriptstyle N}})$ converges weakly to $\Psi^{*}:=\Psi(\nu^{*},L^{*})$, on $\mathbb{R}$, and likewise for $\Phi$.
\end{prop}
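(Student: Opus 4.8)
The plan is to prove convergence along subsequences and to invoke the Skorokhod representation. Since weak convergence on $\mathbb{R}$ is determined along subsequences, fix an arbitrary subsequence of $(\nu^{{\scriptscriptstyle N}},L^{{\scriptscriptstyle N}})$; it suffices to extract a further subsequence along which $\Psi^{{\scriptscriptstyle N}}\to\Psi^{*}$ and $\Phi^{{\scriptscriptstyle N}}\to\Phi^{*}$ almost surely. Along the fixed subsequence we still have $(\nu^{{\scriptscriptstyle N}},L^{{\scriptscriptstyle N}})\Rightarrow(\nu^{*},L^{*})$, and by the $\text{M1}$-tightness of $(M^{{\scriptscriptstyle N}})$ recorded before Proposition~\ref{Convergence_of_mean} together with an argument as in the proof of that proposition, we may pass to a further subsequence with $(\nu^{{\scriptscriptstyle N}},L^{{\scriptscriptstyle N}},M^{{\scriptscriptstyle N}})\Rightarrow(\nu^{*},L^{*},M^{*})$ jointly on $(D_{\mathscr{S}'},\text{M1})\times(D_{\mathbb{R}},\text{M1})^{2}$, where $M^{*}_{s}=\langle\nu^{*}_{s},\text{Id}\rangle$ for a.e.~$s$. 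By Remark~\ref{Skorokhod_Rep} we may then work on a probability space on which this convergence holds $\mathbb{P}$-a.s.; we fix such an $\omega$ and show $\Psi^{{\scriptscriptstyle N}}(\omega)\to\Psi^{*}(\omega)$, the argument for $\Phi$ being the same after one reduction.

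\textbf{Reduction of $\Phi$.} As $\mathfrak{K}\in\mathcal{W}^{1,1}_{0}(\mathbb{R}_{+})$ has zero trace, $s\mapsto(\mathfrak{K}\ast\ell)_{s}$ is absolutely continuous with $\tfrac{d}{ds}(\mathfrak{K}\ast\ell)_{s}=(\mathfrak{K}'\ast\ell)_{s}$ and $|(\mathfrak{K}'\ast\ell)_{s}|\leq\|\mathfrak{K}'\|_{L^{1}}$ for any $[0,1]$-valued $\ell$. Hence $\Phi(\zeta,\ell)=\int_{0}^{t}\langle\zeta_{s},\alpha(s,\cdot,\zeta_{s},\ell_{s})\phi\rangle(\mathfrak{K}'\ast\ell)_{s}\,ds$, and since $L^{{\scriptscriptstyle N}}_{r}\to L^{*}_{r}$ for all but countably many $r$, dominated convergence gives $(\mathfrak{K}'\ast L^{{\scriptscriptstyle N}})_{s}\to(\mathfrak{K}'\ast L^{*})_{s}$ for \emph{every} $s\in[0,t]$. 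Thus $\Phi$ has exactly the form of $\Psi$ with $g=\alpha$, apart from the uniformly bounded, a.e.-convergent weight $(\mathfrak{K}'\ast L^{{\scriptscriptstyle N}})_{s}$; it therefore suffices to treat $\Psi$ and carry such a weight along.

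\textbf{Pointwise convergence of the integrand.} Let $\mathcal{B}\subset[0,t]$ be the union of the (countably many) jump times of $\nu^{*}$, $L^{*}$ and $M^{*}$ and the (finitely many) times at which $L^{*}_{s}\in\{\theta_{1},\dots,\theta_{k-1}\}$; the latter set is finite because $L^{*}$ is strictly increasing on $\{L^{*}<1\}$ by Proposition~\ref{prop:First regularity prop}. Fix $s\notin\mathcal{B}$. Continuity of $\nu^{*}$ at $s$ and the $\text{M1}$-convergence $\langle\nu^{{\scriptscriptstyle N}},\psi\rangle\to\langle\nu^{*},\psi\rangle$ in $D_{\mathbb{R}}$ (for each $\psi\in\mathscr{S}$) give $\langle\nu^{{\scriptscriptstyle N}}_{s},\psi\rangle\to\langle\nu^{*}_{s},\psi\rangle$, which extends to all $\psi\in C_{0}(\mathbb{R})$ by density of $\mathscr{S}$ and the bound $\langle\nu^{{\scriptscriptstyle N}}_{s},1\rangle\leq1$; together with $M^{{\scriptscriptstyle N}}_{s}\to M^{*}_{s}<\infty$ this yields weak convergence $\nu^{{\scriptscriptstyle N}}_{s}\to\nu^{*}_{s}$ and $d_{0}(\nu^{{\scriptscriptstyle N}}_{s},\nu^{*}_{s})\to0$. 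Also $L^{{\scriptscriptstyle N}}_{s}\to L^{*}_{s}$, and since $L^{*}_{s}$ lies strictly inside some $[\theta_{i-1},\theta_{i})$, so does $L^{{\scriptscriptstyle N}}_{s}$ for large $N$. Splitting $\langle\nu^{{\scriptscriptstyle N}}_{s},g(s,\cdot,\nu^{{\scriptscriptstyle N}}_{s},L^{{\scriptscriptstyle N}}_{s})\phi\rangle-\langle\nu^{*}_{s},g(s,\cdot,\nu^{*}_{s},L^{*}_{s})\phi\rangle$ as $\langle\nu^{{\scriptscriptstyle N}}_{s}-\nu^{*}_{s},g(s,\cdot,\nu^{*}_{s},L^{*}_{s})\phi\rangle$ plus $\langle\nu^{{\scriptscriptstyle N}}_{s},[\,g(s,\cdot,\nu^{{\scriptscriptstyle N}}_{s},L^{{\scriptscriptstyle N}}_{s})-g(s,\cdot,\nu^{*}_{s},L^{*}_{s})\,]\phi\rangle$, the first term vanishes because $g(s,\cdot,\nu^{*}_{s},L^{*}_{s})\phi\in C_{0}(\mathbb{R})$ (as $g(s,\cdot,\mu,\ell)$ grows at most linearly and $\phi\in\mathscr{S}$), while Assumption~\ref{Assumption 1 - finite system}(ii)--(iii) (with $d_{1}\leq d_{0}$ when $g=\sigma\rho$, and nothing to do when $g=\sigma^{2}$) gives
\[
|g(s,x,\nu^{{\scriptscriptstyle N}}_{s},L^{{\scriptscriptstyle N}}_{s})-g(s,x,\nu^{*}_{s},L^{*}_{s})|\leq C\bigl(1+|x|+\langle\nu^{{\scriptscriptstyle N}}_{s},|\cdot|\rangle+\langle\nu^{*}_{s},|\cdot|\rangle\bigr)\bigl(d_{0}(\nu^{{\scriptscriptstyle N}}_{s},\nu^{*}_{s})+|L^{{\scriptscriptstyle N}}_{s}-L^{*}_{s}|\bigr),
\]
so the second term is $\leq C\|\phi\|_{\infty}(1+2M^{{\scriptscriptstyle N}}_{s}+M^{*}_{s})\bigl(d_{0}(\nu^{{\scriptscriptstyle N}}_{s},\nu^{*}_{s})+|L^{{\scriptscriptstyle N}}_{s}-L^{*}_{s}|\bigr)\to0$. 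Hence the $\Psi$-integrand (and, multiplying by the a.e.-convergent weight, the $\Phi$-integrand) converges for a.e.~$s\in[0,t]$.

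\textbf{Conclusion and the main obstacle.} Since $\text{M1}$-convergent sequences in $D_{\mathbb{R}}$ are bounded in the uniform norm and $M^{*}$ is c\`adl\`ag, $K(\omega):=\sup_{{\scriptscriptstyle N}}\sup_{s\leq T}M^{{\scriptscriptstyle N}}_{s}(\omega)<\infty$; the bound $|\langle\nu^{{\scriptscriptstyle N}}_{s},g(s,\cdot,\nu^{{\scriptscriptstyle N}}_{s},L^{{\scriptscriptstyle N}}_{s})\phi\rangle|\leq C\|\phi\|_{\infty}(1+M^{{\scriptscriptstyle N}}_{s})\leq C\|\phi\|_{\infty}(1+K(\omega))$ then provides a constant dominating function on $[0,t]$, so dominated convergence gives $\Psi^{{\scriptscriptstyle N}}(\omega)\to\Psi^{*}(\omega)$, and the same domination (times $\|\mathfrak{K}'\|_{L^{1}}$) gives $\Phi^{{\scriptscriptstyle N}}(\omega)\to\Phi^{*}(\omega)$. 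This holds for a.e.~$\omega$, hence $\Psi^{{\scriptscriptstyle N}}\to\Psi^{*}$ and $\Phi^{{\scriptscriptstyle N}}\to\Phi^{*}$ a.s.\ along the chosen subsequence, in particular in law; as the limit is always the same, the full sequences converge weakly to $\Psi^{*}$ and $\Phi^{*}$. I expect the crux to be the pointwise-in-time step: because the coefficients depend on the empirical measure through the $d_{0}$-argument, bare $\mathscr{S}'$-convergence of $\nu^{{\scriptscriptstyle N}}_{s}$ is insufficient and must be upgraded to $d_{0}$-convergence --- which is exactly where the convergence of the mean process (Proposition~\ref{Convergence_of_mean}) and the uniform Gaussian tails behind it (Corollary~\ref{Cor:Regularity of the empirical measures}) are used --- while the merely piecewise Lipschitzness in $\ell$ forces one to exclude the limiting loss $L^{*}$ resting at a discontinuity point $\theta_{i}$ for a non-negligible set of times, which is precisely the content of the strict monotonicity in Proposition~\ref{prop:First regularity prop} (ultimately Proposition~\ref{prop: 1_loss increments}).
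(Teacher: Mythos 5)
Your proposal is correct in substance but follows a genuinely different route from the paper's proof. The paper never establishes pointwise-in-time convergence of $\nu^{\scriptscriptstyle N}_s$: after the same Skorokhod step (Remark \ref{Skorokhod_Rep}) it estimates the difference of expectations directly through a three-term decomposition --- same coefficient tested against $\nu^*_s-\nu^{\scriptscriptstyle N}_s$, change of the measure argument inside the coefficient, and change of the loss argument --- handling the first by mollification and a cutoff with the exponential tails in expectation, the second by an Arzel\`a--Ascoli reduction of the $d_0$-test class to finitely many compactly supported functions together with second-moment tail control (Lemma \ref{lem:Tails for centre of mass}), and the third exactly as you do, via the piecewise Lipschitzness and the strict monotonicity of $L^*$; your treatment of $\Phi$ through $(\mathfrak{K}\ast\ell)'=\mathfrak{K}'\ast\ell$ also matches the paper's closing remark. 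You instead enrich the coupling with the mean process and upgrade, at a.e.\ fixed time, vague convergence plus convergence of first moments to $d_0$-convergence, then conclude by dominated convergence with the $\omega$-wise bound $\sup_{\scriptscriptstyle N}\sup_{s\leq T}M^{\scriptscriptstyle N}_s<\infty$ coming from M1-boundedness; this buys a cleaner pathwise statement (a.s.\ convergence of $\Psi^{\scriptscriptstyle N}$ under the coupling) and avoids the mollification/compactness machinery. The one step you should tighten is the identification $M^*_s=\langle\nu^*_s,\mathrm{Id}\rangle$ for a.e.\ $s$ under the \emph{joint} limit: Proposition \ref{Convergence_of_mean} as stated only identifies the law of the limit of $M^{\scriptscriptstyle N}$ with that of $\langle\nu^*,\mathrm{Id}\rangle$, which is weaker than the pathwise identification your $d_0$-upgrade uses. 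It does follow under the Skorokhod coupling from $|M^{\scriptscriptstyle N}_s-\langle\nu^{\scriptscriptstyle N}_s,\phi_\lambda\rangle|\leq\langle\nu^{\scriptscriptstyle N}_s,|x|\mathbf{1}_{[\lambda,\infty)}\rangle$, Fatou's lemma, and the uniform tail bounds of Corollary \ref{Cor:Regularity of the empirical measures} and Lemma \ref{lem:Tails for centre of mass}, so it is a compressible rather than fatal omission; similarly, it is worth spelling out that vague convergence plus convergence of the means yields uniform integrability of the first moments, which is precisely what passing from convergence against $C_0$ test functions to $d_0(\nu^{\scriptscriptstyle N}_s,\nu^*_s)\rightarrow0$ requires.
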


\begin{proof}
Fix a bounded $f\in\text{Lip}(\mathbb{R})$. By Remark \ref{Skorokhod_Rep} any subsequence of $(\nu^{{\scriptscriptstyle N}},L^{{\scriptscriptstyle N}})$ has a further subsequence, also indexed by $N$, for which we can assume almost sure convergence. By applying the triangle inequality, we have
\begin{align*}
\bigl|\mathbb{E}f(\Psi^{{\scriptscriptstyle N}})-&\mathbb{E}f(\Psi^{*})\bigr|
\apprle \mathbb{E} \,\Bigl|\int_{0}^{t}\bigl\langle \nu^{*}_{s}-\nu^{{\scriptscriptstyle N}}_{s},g(s,\cdot,\nu^*_s,L^*_{s})\phi\bigr\rangle ds\Bigr|\\
&  +\mathbb{E}\,\Bigl|\int_{0}^{t}\bigl\langle \nu^{{\scriptscriptstyle N}}_{s},g(s,\cdot,\nu^{*}_s,L^{{\scriptscriptstyle N}}_{s})\phi-g(s,\cdot,\nu^{{\scriptscriptstyle N}}_{s},L^{{\scriptscriptstyle N}}_{s})\phi\bigr\rangle ds\Bigr|\\
&  +\mathbb{E}\,\Bigl|\int_{0}^{t}\bigl\langle \nu^{{\scriptscriptstyle N}}_{s},g(s,\cdot,\nu^*_s,L^*_{s})\phi-g(s,\cdot,\nu^{*}_s,L^{{\scriptscriptstyle N}}_{s})\phi\bigr\rangle ds\Bigr|\\
& =: \; \mathbb{E}I_{1}^{{\scriptscriptstyle N}}+\mathbb{E}I_{2}^{{\scriptscriptstyle N}}+\mathbb{E}I_{3}^{{\scriptscriptstyle N}}.
\end{align*}
Starting with $I_{1}$, fix $\delta>0$ and recall $|g|\leq|x|+C(\nu^*)$ with $C(\nu^*)\apprle1+\sup_{s\leq t}|M^*_{s}|$. Hence, using that $\phi\in\mathscr{S}$, we can take $\lambda=\lambda(\delta)$
sufficiently large so that
\begin{equation}
\sup_{x\in\mathbb{R}\setminus[-\lambda,\lambda]}\left|g(s,x,\nu^*_s,L^*_{s})\phi(x)\right|<(1+C(\nu^*))\delta/2 \quad\forall s\in[0,T].
\label{eq: b*phi < delta}
\end{equation}
Now take a family of mollifiers $\psi_{\varepsilon}\in {\mathcal{C}{}}^{\infty}_c(\mathbb{R})$ and consider the (random) mollifications
\[
g_{s}^{\varepsilon}(x):=\bigl(g(s,\cdot,\nu^*_s,L^*_{s})\ast\psi_{\varepsilon}\bigr)(x), \quad \varepsilon>0.
\]
As $\nu^*$ and $\nu^{\scriptscriptstyle N}$ are sub-probability measures,
we have
\[
I_{1}\leq\Bigl|\int_{0}^{t}\bigl\langle \nu^{*}_{s}\!-\nu^{{\scriptscriptstyle N}}_{s},g^{\varepsilon}_s\phi\bigr\rangle ds\Bigr|+2\left\Vert \phi\right\Vert _{\infty}\!\int_{0}^{t}\!\sup_{|x|\leq\lambda}\left|g_{s}^{\varepsilon}-g_{s}\right|ds+(1+C(\nu^*))\delta t.
\]
Since $g_{s}^{\varepsilon}\rightarrow g_{s}$ uniformly
in $x\in[\lambda,\lambda]$, dominated convergence implies that, for $\varepsilon$ sufficiently small, the second term is less than $\delta$ in expectation. Likewise,  $g_{s}^{\varepsilon}\phi\in\mathscr{S}$ ensures that, for $N$ large, the first term is less than $\delta$ in expectation. Hence
  $\mathbb{E}I_1^{\scriptscriptstyle N} \leq C (\delta+\delta + \delta t)$ for large enough $N$, where $C$ only depends on $\mathbb{E}C(\nu^*)$, so $\mathbb{E}I_1^{\scriptscriptstyle N}\rightarrow0$ as $N\rightarrow\infty$.

	For $I_{2}^{\scriptscriptstyle N}$, the local Lipschitzness of $g$, together with $\phi \in \mathscr{S}$ and $\nu^{\scriptscriptstyle N}_s\in \mathbf{M}_{\leq1}$, gives
	\[
	I_{2}^{\scriptscriptstyle N}\leq C(\nu^*) \!\!\int_{0}^{t} \sup \{ | \langle \nu_s^* - \nu_s^{\scriptscriptstyle N} , \psi \rangle | : \psi\in \mathcal{C}_{d_0} \}ds,
	\]
	where $\mathcal{C}_{d_0}:= \{\psi\in\mathcal{C}(\mathbb{R}):\left\Vert \psi\right\Vert_\text{Lip}\leq1, |\psi(x)|\leq 1+|x|\}$, and $C(\nu^*)\apprle1+\sup_{s\leq t}|M^*_s|$. Fix $\delta>0$ and take $\lambda=\lambda(\delta)$ large (to be determined later). By the Arzel\`a--Ascoli theorem, there is a finite family $\psi_1,\ldots,\psi_k\in \mathcal{C}_{d_0}$ supported in $[-\lambda,\lambda]$ so that, for each $\psi \in \mathcal{C}_{d_0}$,
	\[
	\sup\{|\psi(x)-\psi_i(x)| :x\in[-\lambda,\lambda]\}<\delta/2
	\]
	for some $i\in\{1,\ldots,k\}$. Fixing any $\psi \in \mathcal{C}_{d_0}(\mathbb{R})$ and the corresponding $\psi_i$, we have
	\[
	|\langle \nu^*\!-\nu^{\scriptscriptstyle N},\psi\rangle|
	\leq \Bigl|\int_{0}^{\lambda} \! (\psi-\psi_i) d(\nu^*\!-\nu^{\scriptscriptstyle N})\Bigr|
	+ \Bigl|\int_{\lambda}^{\infty} \!\!(\psi-\psi_i) d(\nu^*\!-\nu^{\scriptscriptstyle N})\Bigr|
	+ |\langle \nu^*\!-\nu^{\scriptscriptstyle N},\psi_i\rangle|.
	\]
	By construction, the first term is bounded by $\delta$ uniformly in $N\geq1$. Moreover, by Cauchy--Schwartz and Jensen's inequality, we have
	\[
	\mathbb{E}\Bigl[C(\nu^*)\!\!\int_0^t\Bigl|\int_{\lambda}^{\infty} \!\!(\psi-\psi_i) d(\nu^*_s\!-\nu^{\scriptscriptstyle N}_s)\Bigr|ds\Bigr] \leq C\mathbb{E}\!\int_0^t\! \langle \nu^*_s+\nu^{\scriptscriptstyle N}_s,\lvert\cdot\rvert^2\textbf{1}_{[\lambda,\infty)}\rangle ds,
	\]
and hence
	\[
	\mathbb{E}I_{2}^{\scriptscriptstyle N} \leq C\delta t + 
	 C\mathbb{E}\!\int_0^t\! \langle \nu^*_s + \nu^{\scriptscriptstyle N}_s,\lvert\cdot\rvert^2\textbf{1}_{[\lambda,\infty)}\rangle ds + C\sup_{i=1,\ldots,k} \mathbb{E}\int_0^t|\langle \nu_s^*\!-\nu_s^{\scriptscriptstyle N},\psi_i\rangle|ds.
	\]
	By Corollary \ref{Cor:Regularity of the empirical measures} and Proposition \ref{prop:limit_regularity}, Lemma \ref{lem:Tails for centre of mass} gives that the middle term vanishes uniformly in $N\geq1$ as $\lambda\rightarrow\infty$, so we can take $\lambda$ large enough so that it is uniformly bounded by $\delta$. For the final term, recall that $\psi_i$ has compact support in $[-\lambda,\lambda]$, so we can use the same mollification argument as for $I_1^{\scriptscriptstyle N}$. Since there are only finitely many $\psi_i$'s to consider, we can thus take $N$ sufficiently large such that $\mathbb{E}I_2^{\scriptscriptstyle N}\leq C(\delta t +\delta+ 2\delta )$,
	where $C$ is a fixed numerical constant.
	This proves that $\mathbb{E}I_2^{\scriptscriptstyle N}$ vanishes as $N \rightarrow \infty$.

Finally, we consider the last integral $I_3^{\scriptscriptstyle N}$. By (iii) of Assumption \ref{Assumption 1 - finite system}, we have
\begin{equation}\label{eq:I_3^N}
\bigl|\bigl\langle \nu^{{\scriptscriptstyle N}}_{s},g(s,\cdot,\nu^*_s,\ell^*_{s})\phi-g(s,\cdot,\nu^{*}_s,\ell^{{\scriptscriptstyle N}}_{s})\phi\bigr\rangle \bigr| \apprle C(\nu^{*}) |\ell_s^*-\ell_s^{\scriptscriptstyle N}|
\end{equation}
whenever $\ell_s^*,\ell_s^{\scriptscriptstyle N}\in [\theta_{i-1},\theta_{i})$ for some $i=1,\ldots,k$. Let $\{\ell^*,(\ell^{\scriptscriptstyle N})_{\scriptscriptstyle N \geq1}\}$ represent a fixed realization of $\{L^*, (L^{\scriptscriptstyle N})_{\scriptscriptstyle N \geq1}\}$. Then $\ell^{\scriptscriptstyle N}\rightarrow\ell^*$ in $(D_\mathbb{R},\text{M1})$ and hence $\ell^{\scriptscriptstyle N}_{s}\rightarrow\ell^*_{s}$ for any fixed $s\in\left\{ r\in[0,t]:\ell^*_{r-}=\ell^*_{r}\right\} $. Fix $\varepsilon>0$. Since $\ell^*$ is
strictly increasing (by Assumption \ref{Assumption 2.3}), we can
take $\delta=\delta(\varepsilon)$ small so that $\text{Leb}\bigl(\left\{ r\in[0,T]:|\theta_{i}-\ell^*_{r}|<\delta\;\text{for}\;\text{some}\;i\right\} \bigr)\leq\varepsilon$.
On the other hand, if $|\theta_{i}-\ell^*_{s}|\geq\delta$ for all
$i$, then we eventually have $\ell^*_{s},\ell^{\scriptscriptstyle N}_{s}\in[\theta_{i-1},\theta_{i})$
for some $i$, so (\ref{eq:I_3^N}) applies.
Thus, for the given realization of the randomness,
\[
\limsup_{N\rightarrow\infty} I_3^{{\scriptscriptstyle N}} \leq
C'\!(\nu^*) \int_0^t \mathbf{1}_{\left\{ r\in[0,T]\,:\,|\theta_{i}-\ell^*_{r}|<\delta\;\text{for}\;\text{some}\;i\right\} }(s)ds \leq C'\!(\nu^*) \varepsilon.
\]
As $\varepsilon>0$ was arbitrary, we deduce that $\lim_{N} I_3^{{\scriptscriptstyle N}}=0$ almost surely. Noting the uniformity of the bound in (\ref{eq:I_3^N}), dominated convergence gives $\mathbb{E}I_3^{{\scriptscriptstyle N}}\rightarrow0$ as $N\rightarrow \infty$.

It remains to prove $\mathbb{E}f(\Phi^{\scriptscriptstyle N})\rightarrow\mathbb{E}f(\Phi^*)$ as $N\rightarrow\infty$. To this end, the main points are simply that
$|(\mathfrak{K}^{\prime}\ast L^*)_{s}|\leq\left\Vert \mathfrak{K}^{\prime}\right\Vert _{1}$ and $|\int_0^t (\mathfrak{K}^{\prime}\ast (L^*-L^{\scriptscriptstyle N}))_{s}ds|\leq\left\Vert \mathfrak{K}^{\prime}\right\Vert _{1}\int_0^t|L^*_s-L^{\scriptscriptstyle N}_s|ds$.
Using these observations, the arguments are the same as for $\Psi$.
\end{proof}

\begin{prop}[Martingale argument]
\label{prop: Maringale approach}
Fix an arbitrary $\phi\in\mathscr{C}_{{\scriptscriptstyle 0}}$ and define, for all $(\zeta,\ell,w)\in D_{\mathscr{S}^{\prime}}\times D_{\mathbb{R}}\times C_{\mathbb{R}}$,
the $D_{\mathbb{R}}$-processes\vspace{-2pt}
\begin{align*}
\mathcal{M}_{t}(\zeta,\ell) &:=\left\langle \zeta_{t},\phi\right\rangle -\left\langle \nu_{0},\phi\right\rangle -{\textstyle \int_{0}^{t}}\left\langle \zeta_{s},b(s,\cdot,\zeta_s,\ell_{s})\partial_{x}\phi\right\rangle ds\\
 +&\,{\textstyle \frac{1}{2}}{\textstyle \int_{0}^{t}}\left\langle \zeta_{s},\sigma(s,\cdot)^{2}\partial_{xx}\phi\right\rangle ds-{\textstyle \int_{0}^{t}}\left\langle \zeta_{s},\alpha(s,\cdot,\zeta_s,\ell_{s})\partial_{x}\phi\right\rangle d(\mathfrak{K}\ast\ell)_{s},\\
\mathcal{N}_{t}(\zeta,\ell) &:=\mathcal{M}_{t}(\zeta,\ell)^{2}-{\textstyle \int_{0}^{t}}\left\langle \zeta_{s},\sigma(s,\cdot)\rho(s,\zeta_{s},\ell_{s})\partial_{x}\phi\right\rangle ^{2}ds,\\
\mathcal{K}_{t}(\zeta,\ell,w)& :=\mathcal{M}_{t}(\zeta,\ell)\cdot w_{t}-{\textstyle \int_{0}^{t}}\left\langle \zeta_{s},\sigma(s,\cdot)\rho(s,\zeta_s,\ell_{s})\partial_{x}\phi\right\rangle ds.
\end{align*}
If $(\nu^{{\scriptscriptstyle N}},L^{{\scriptscriptstyle N}},W^0) \Rightarrow (\nu^{*},L^{*},W^0)$, then $\mathcal{M}(\nu^{*},L^{*})$,
$\mathcal{N}(\nu^{*},L^{*})$, and $\mathcal{K}(\nu^{*},L^{*},W^0)$
are all continuous martingales.
\end{prop}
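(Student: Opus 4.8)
The strategy is the standard martingale‑problem (Stroock--Varadhan) route, carried out in three steps: first realise the three processes evaluated along $(\nu^{\scriptscriptstyle N},L^{\scriptscriptstyle N},W^{0})$ as genuine martingales up to a vanishing error; then pass to the limit tested against cylinder functionals of $(\nu^{*},W^{0})$; and finally upgrade to the martingale property by a monotone class argument. For the first step I would compare the definition of $\mathcal{M}$ with the finite evolution equation (\ref{eq:finite evol}) of Proposition \ref{prop: halfspace finite eqn}: the drift, It\^o and contagion terms cancel and one is left with $\mathcal{M}_{t}(\nu^{\scriptscriptstyle N},L^{\scriptscriptstyle N})-\langle\nu_{0}^{\scriptscriptstyle N}-\nu_{0},\phi\rangle=\int_{0}^{t}\langle\nu_{s}^{\scriptscriptstyle N},\sigma(s,\cdot)\rho_{s}\partial_{x}\phi\rangle dW_{s}^{0}+I_{t}^{\scriptscriptstyle N}(\phi)$, which is a continuous martingale $\tilde{\mathcal{M}}^{\scriptscriptstyle N}$ for the filtration $\mathbf{F}^{\scriptscriptstyle N}$ generated by $\mathcal{F}_{0}$ and $W^{0},\dots,W^{\scriptscriptstyle N}$. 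Using that these Brownian motions are independent, $\langle\tilde{\mathcal{M}}^{\scriptscriptstyle N}\rangle_{t}=\int_{0}^{t}\langle\nu_{s}^{\scriptscriptstyle N},\sigma(s,\cdot)\rho_{s}\partial_{x}\phi\rangle^{2}ds+R_{t}^{\scriptscriptstyle N}$ with $0\le R_{t}^{\scriptscriptstyle N}=\langle I^{\scriptscriptstyle N}(\phi)\rangle_{t}\le C\Vert\partial_{x}\phi\Vert_{\infty}^{2}T/N$ (by $a_{i}^{\scriptscriptstyle N}\le C/N$) and $\langle\tilde{\mathcal{M}}^{\scriptscriptstyle N},W^{0}\rangle_{t}=\int_{0}^{t}\langle\nu_{s}^{\scriptscriptstyle N},\sigma(s,\cdot)\rho_{s}\partial_{x}\phi\rangle ds$. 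Expanding squares and using that $(\tilde{\mathcal{M}}^{\scriptscriptstyle N})^{2}-\langle\tilde{\mathcal{M}}^{\scriptscriptstyle N}\rangle$ and $\tilde{\mathcal{M}}^{\scriptscriptstyle N}W^{0}-\langle\tilde{\mathcal{M}}^{\scriptscriptstyle N},W^{0}\rangle$ are $\mathbf{F}^{\scriptscriptstyle N}$‑martingales (the $\mathcal{F}_{0}$‑measurable correction being harmless), one finds that $\mathcal{N}_{t}(\nu^{\scriptscriptstyle N},L^{\scriptscriptstyle N})-R_{t}^{\scriptscriptstyle N}$ and $\mathcal{K}_{t}(\nu^{\scriptscriptstyle N},L^{\scriptscriptstyle N},W^{0})$ are $\mathbf{F}^{\scriptscriptstyle N}$‑martingales too. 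Hence, for all $0\le s<t\le T$ and bounded $\mathcal{F}_{s}^{\scriptscriptstyle N}$‑measurable $G^{\scriptscriptstyle N}$, $\mathbb{E}[(\mathcal{M}_{t}-\mathcal{M}_{s})G^{\scriptscriptstyle N}]=\mathbb{E}[(\mathcal{K}_{t}-\mathcal{K}_{s})G^{\scriptscriptstyle N}]=0$ and $|\mathbb{E}[(\mathcal{N}_{t}-\mathcal{N}_{s})G^{\scriptscriptstyle N}]|\le\Vert G^{\scriptscriptstyle N}\Vert_{\infty}C/N$.

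Next I would fix times $s_{1}<\dots<s_{m}\le s<t\le T$, functions $\varphi_{j}\in\mathscr{S}$ and bounded Lipschitz $h_{j},g_{j}$, and set $G^{*}:=\prod_{j}h_{j}(\langle\nu_{s_{j}}^{*},\varphi_{j}\rangle)g_{j}(W_{s_{j}}^{0})$, with $G^{\scriptscriptstyle N}$ the same functional of $(\nu^{\scriptscriptstyle N},W^{0})$. Given any subsequence, Remark \ref{Skorokhod_Rep} yields a further subsequence on one probability space along which $(\nu^{\scriptscriptstyle N},L^{\scriptscriptstyle N},W^{0})\to(\nu^{*},L^{*},W^{0})$ a.s.; along it the time integrals in $\mathcal{M},\mathcal{N},\mathcal{K}$ converge a.s.\ by Proposition \ref{prop:Weak convergence of integrals} (and a minor variant of its proof for the quadratic term $\int_{0}^{\cdot}\langle\nu_{s}^{\scriptscriptstyle N},\sigma\rho_{s}\partial_{x}\phi\rangle^{2}ds$, which reduces to the linear case via $a^{2}-b^{2}=(a+b)(a-b)$ and uniform boundedness of the integrand), continuity of $\nu^{*}$ (from the tightness analysis behind Proposition \ref{prop:First regularity prop} and Theorem \ref{thm:EXISTENCE}) gives $\langle\nu_{u}^{\scriptscriptstyle N},\phi\rangle\to\langle\nu_{u}^{*},\phi\rangle$ a.s.\ for $u\in\{s,t\}$, $\langle\nu_{0}^{\scriptscriptstyle N}-\nu_{0},\phi\rangle\to0$ (and in any case cancels in the increments), and $G^{\scriptscriptstyle N}\to G^{*}$ a.s. To pass $\mathbb{E}$ through the limit I would invoke uniform integrability: the linear growth of $b,\alpha$ in Assumption \ref{Assumption 1 - finite system}, together with the Gaussian/sub‑Gaussian moment bounds of Corollaries \ref{Cor:Regularity of the empirical measures} and \ref{cor: Sup is subgaussian}, gives $\sup_{N}\mathbb{E}[\sup_{u\le T}|M_{u}^{\scriptscriptstyle N}|^{p}]<\infty$ for every $p<\infty$, hence a uniform $L^{p}$‑bound on $\mathcal{M}_{t}(\nu^{\scriptscriptstyle N},L^{\scriptscriptstyle N})$ (the $W^{0}$‑integral being bounded in every $L^{p}$ by boundedness of $\sigma\rho\partial_{x}\phi$), whence uniform integrability of $\mathcal{M}_{t}$, $\mathcal{M}_{t}^{2}$, $\mathcal{N}_{t}$ and $\mathcal{M}_{t}W_{t}^{0}$. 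Combining a.s.\ convergence, uniform integrability and Step 1, and noting the limit is subsequence‑independent, I obtain $\mathbb{E}[(\mathcal{M}_{t}(\nu^{*},L^{*})-\mathcal{M}_{s}(\nu^{*},L^{*}))G^{*}]=0$, and the analogous identities for $\mathcal{N}(\nu^{*},L^{*})$ and $\mathcal{K}(\nu^{*},L^{*},W^{0})$.

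Finally, the cylinder functionals $G^{*}$ generate the completed right‑continuous filtration $\mathcal{G}_{t}:=\sigma(\nu_{u}^{*},W_{u}^{0}:u\le t)$ --- using continuity of $u\mapsto\langle\nu_{u}^{*},\varphi\rangle$ to reduce to countably many times and that $\mathscr{S}$ is separating --- so a monotone class argument upgrades the vanishing increments of the previous step to $\mathbb{E}[\mathcal{M}_{t}(\nu^{*},L^{*})\mid\mathcal{G}_{s}]=\mathcal{M}_{s}(\nu^{*},L^{*})$, and likewise for $\mathcal{N}$ and $\mathcal{K}$; integrability of the limits was settled above. Since $t\mapsto\langle\nu_{t}^{*},\phi\rangle$ and $t\mapsto L_{t}^{*}$ (hence $t\mapsto(\mathfrak{K}\ast L^{*})_{t}$), the time integrals, and $W^{0}$ are all continuous, the three processes have continuous paths, so $\mathcal{M}(\nu^{*},L^{*})$, $\mathcal{N}(\nu^{*},L^{*})$ and $\mathcal{K}(\nu^{*},L^{*},W^{0})$ are continuous $(\mathcal{G}_{t})$‑martingales.

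The hard part will be the passage to the limit: none of the maps $\zeta\mapsto\langle\zeta_{t},\phi\rangle$, $\zeta\mapsto\int_{0}^{t}\langle\zeta_{s},g(s,\cdot)\partial_{x}\phi\rangle ds$ or $(\zeta,\ell)\mapsto\int_{0}^{t}\langle\zeta_{s},\alpha(s,\cdot)\partial_{x}\phi\rangle d(\mathfrak{K}\ast\ell)_{s}$ is continuous on $(D_{\mathscr{S}'},\text{M1})\times(D_{\mathbb{R}},\text{M1})$ --- the M1 topology, the linear growth of $b$ and $\alpha$, and the merely piecewise Lipschitz dependence on $\ell$ all stand in the way --- but this is precisely what Proposition \ref{prop:Weak convergence of integrals} delivers, leaning in turn on the Skorokhod representation of Remark \ref{Skorokhod_Rep}, on the strict monotonicity of $L^{*}$ from Proposition \ref{prop: 1_loss increments} (so that the loss spends no time at the finitely many discontinuity points of the coefficients), and on the uniform tail estimates that tame the linear growth. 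A related subtlety is that the fixed‑time projections $\langle\nu_{t}^{*},\phi\rangle$ are weakly continuous only because $\nu^{*}$ is a.s.\ continuous, a fact supplied by the tightness analysis of Section \ref{sec:Tightness-and-Convergence}; if one wished to avoid any a priori continuity one would work at the (dense set of) continuity times of $\nu^{*}$ and extend by right‑continuity of $\mathcal{M},\mathcal{N},\mathcal{K}$.
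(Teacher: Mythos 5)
Your proposal is correct and follows essentially the same route as the paper: identify the finite-$N$ processes as martingales up to the $O(1/N)$ idiosyncratic error from Proposition \ref{prop: halfspace finite eqn}, pass to the limit against cylinder functionals using Proposition \ref{prop:Weak convergence of integrals} (via the Skorokhod representation of Remark \ref{Skorokhod_Rep}) together with uniform integrability, and conclude by a monotone class argument. The only cosmetic difference is that you test against cylinder functions of $(\nu^{*},W^{0})$ while the paper tests against bounded functions of $\mathcal{M}_{s_i}$, $\mathcal{N}_{s_i}$, $\mathcal{K}_{s_i}$ themselves, which changes nothing of substance.
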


\begin{proof}
Let $\mathcal{M}_{t}^{{\scriptscriptstyle N}}:=\mathcal{M}_{t}(\nu^{{\scriptscriptstyle N}},L^{{\scriptscriptstyle N}})$
and $\mathcal{M}_{t}^{*}:=\mathcal{M}_{t}(\nu^{*},L^{*})$.
Now fix $s,t\in[0,T]$ with $s<t$ and define, for any $s_{1},\ldots,s_{n}\in[0,s]$,
\[
F_{\!\mathcal{{\scriptscriptstyle M}}}^{{\scriptscriptstyle N}}:=\bigl(\mathcal{M}_{t}^{{\scriptscriptstyle N}}-\mathcal{M}_{s}^{{\scriptscriptstyle N}}\bigr)\prod_{i=1}^{n}f_{i}(\mathcal{M}_{s_{i}}^{{\scriptscriptstyle N}})\quad\text{and}\quad F_{\!\mathcal{{\scriptscriptstyle M}}}^{*}:=\bigl(\mathcal{M}_{t}^{*}-\mathcal{M}_{s}^{*}\bigr)\prod_{i=1}^{n}f_{i}(\mathcal{M}_{s_{i}}^{*}),
\]
where $f_{1},\ldots,f_{n}\in C_{b}(\mathbb{R})$ are arbitrary. Proceeding
analogously for $\mathcal{N}$ and $\mathcal{K}$, it
follows from Proposition \ref{prop:Weak convergence of integrals}
and the continuous mapping theorem that
\[
F_{\!\mathcal{{\scriptscriptstyle M}}}^{{\scriptscriptstyle N}}\Rightarrow F_{\!\mathcal{{\scriptscriptstyle M}}}^{*},\quad F_{\!\mathcal{{\scriptscriptstyle N}}}^{{\scriptscriptstyle N}}\Rightarrow F_{\!\mathcal{{\scriptscriptstyle N}}}^{*},\quad\text{and}\quad F_{\!\mathcal{{\scriptscriptstyle K}}}^{{\scriptscriptstyle N}}\Rightarrow F_{\!\mathcal{{\scriptscriptstyle K}}}^{*}.
\]
Using this, and appealing to the finite dimensional evolution equation
in Proposition \ref{prop: halfspace finite eqn}, the goal is now
to show that
\[
\mathbb{E}F_{\!\mathcal{{\scriptscriptstyle M}}}^{*}=\lim_{N\rightarrow\infty}\mathbb{E}F_{\!\mathcal{{\scriptscriptstyle M}}}^{{\scriptscriptstyle N}}=0,\quad\mathbb{E}F_{\!\mathcal{{\scriptscriptstyle N}}}^{*}=\lim_{N\rightarrow\infty}\mathbb{E}F_{\!\mathcal{{\scriptscriptstyle N}}}^{{\scriptscriptstyle N}}=0,\quad\mathbb{E}F_{\!\mathcal{{\scriptscriptstyle K}}}^{*}=\lim_{N\rightarrow\infty}\mathbb{E}F_{\!\mathcal{{\scriptscriptstyle K}}}^{{\scriptscriptstyle N}}=0,
\]
thus proving that $\mathcal{M}^{*}$, $\mathcal{N}^{*}$,
and $\mathcal{K}^{*}$ are true martingales, by a standard monotone
class argument. Relying on uniform integrability 
to conclude the convergence of the means, this follows easily by minor modifications of the arguments in \cite[Prop.~5.11]{HL2016}.
\end{proof}
By \cite[Thm.~3.2]{Led2016} and the tightness from Propositions \ref{prop:First regularity prop} and \ref{Convergence_of_mean}, we can extract a weakly convergent subsequence $(\nu^{{\scriptscriptstyle N}},L^{{\scriptscriptstyle N}},W^0)\Rightarrow(\nu^{*},L^*,W^0)$. In turn, Proposition \ref{prop: Maringale approach} and
the Doob\textendash Meyer decomposition theorem allows us to conclude that, for each $\phi\in\mathscr{S}$,
\begin{align*}
\left\langle \mathcal{M}(\nu^{*},L^{*})\right\rangle _{t}={\textstyle \int_{0}^{t}}&\left\langle \nu_{s}^{*},\sigma(s,\cdot)\rho(s,\nu^{*}_s,L^{*}_s)\partial_{x}\phi\right\rangle ^{2}\!ds \;\; \text{and}
\\\left\langle \mathcal{M}(\nu^{*},L^{*}),W^{0}\right\rangle _{t}&={\textstyle \int_{0}^{t}}\left\langle \nu_{s}^{*},\sigma(s,\cdot)\rho(s,\nu^{*}_s,L^{*}_s)\partial_{x}\phi\right\rangle ds,
\end{align*}
so it holds for all $t\in[0,T]$ that
\[
\left\langle \mathcal{M}(\nu^{*},L^{*})-{\textstyle \int_{0}^{\cdot}}\left\langle \nu_{s}^{*},\sigma(s,\cdot)\rho(s,\nu^{*}_s,L^{*}_s)\partial_{x}\phi\right\rangle dW_{s}^{0}\right\rangle _{t}=0.
\]
Hence $(\nu^{*},W^0)$ satisfies the SPDE (\ref{eq:LIMIT SPDE}) and
thus the proof of Theorem \ref{thm:EXISTENCE} is complete.

\section{Uniqueness arguments\label{sec:Uniqueness}}

In this section we present a proof of Theorem \ref{thm:UNIQUENESS}. In view of Section \ref{subsec:The-limit-SPDE:}, we fix a limit point $\nu$
of the sequence of empirical measures ($\nu^{{\scriptscriptstyle N}}$)
and let $\tilde{\nu}$ denote another candidate solution to the SPDE
(\ref{eq:LIMIT SPDE}). Then the strategy is to establish an energy
estimate in $H^{-1}$ for the difference $\Delta_{t}:=\nu_{t}-\tilde{\nu}_{t}$,
where $H^{-1}$ is the usual dual space of $H_{0}^{1}=\mathcal{W}_{0}^{1,2}(\mathbb{R}_{+})$.

	More specifically, we will rely on a `smoothed' $H^{-1}$  estimate given in Proposition~\ref{smooth_energy_est}. Based on this estimate, we derive the uniqueness of the SPDE in Section \ref{subsec:Uniqueness:-Proof-of}, and then Sections \ref{L2reg_and_bdr_effects} and \ref{subsec:The H^-1 Estimate} are devoted to the proof of Proposition \ref{smooth_energy_est}.

\subsection{Energy estimates and smoothing}\label{subsec:Energy-Estimates} 

Rather than estimating the $H^{-1}$ norm of $\Delta_t$ directly, our approach relies on smoothing the solutions $\nu$ and $\tilde{\nu}$ by means of convolution (with
a family of kernels approximating the identity). In this way, we can manipulate
the resulting equations classically. Since our problem is phrased on
the positive half-line with absorption at the boundary, it is natural
to consider the family of Dirichlet heat kernels $G_{\varepsilon}$
given by
\begin{equation}
G_{\varepsilon}(x,y):=p_{\varepsilon}(x-y)-p_{\varepsilon}(x+y),\quad p_{\varepsilon}(x)=(2\pi\varepsilon)^{-\frac{1}{2}}\exp\{-x^{2}/2\varepsilon\}.\label{eq: absorbing gaussian kernel}
\end{equation}
We denote the action of $G_{\varepsilon}$ on a measure $\mu$
by $\mathcal{T}_{\varepsilon}\mu$, that is

\[
(\mathcal{T}_{\varepsilon}\mu)(x):=\int_{0}^{\infty}G_{\varepsilon}(x,y)d\mu(y).
\]
For simplicity of presentation, we introduce the notation $\partial_{x}^{-1}$
for the anti-derivative, which amounts to
\begin{equation}
(\partial_{x}^{-1}\mathcal{T}_{\varepsilon}\Delta_{t})(x):=-\int_{x}^{\infty}(\mathcal{T}_{\varepsilon}\Delta_{t})(y)dy.\label{eq: Antiderivative of smoothed soln}
\end{equation}
Recall the embedding $\mathbf{M}_{\pm}\hookrightarrow H^{-1}$, where $\mathbf{M}_{\pm}$ is the space of finite signed measures on $\mathbb{R}$ (with total variation norm). As in \cite[Prop.~6.5]{HL2016}, we then have
\begin{equation}
\left\Vert \Delta_{t}\right\Vert _{H^{-1}}\leq\liminf_{\varepsilon\rightarrow0}\left\Vert \partial_{x}^{-1}\mathcal{T}_{\varepsilon}\Delta_{t}\right\Vert _{2},
\label{eq: H-1 norm via antiderivatve}
\end{equation}
where $\Vert \cdot \Vert_2$ denotes the $L^2$-norm on $\mathbb{R}_+$. Therefore, we can estimate the $H^{-1}$ norm of the difference $\Delta_t$
via the anti-derivatives of the smoothed solutions.

Let us now briefly outline the key ideas behind our approach. The
first observation is that $y\mapsto G_{\varepsilon}(x,y)$
is an admissible test function in $\mathscr{C}_{{\scriptscriptstyle 0}}$,
so we can plug it into the SPDE and thus obtain expressions for the smoothed solutions, $\mathcal{T}_{\varepsilon}\nu_{t}$
and $\mathcal{T}_{\varepsilon}\tilde{\nu}_{t}$. Integrating these to introduce the anti-derivatives, and looking at their
difference, this then allows us to obtain an equation for $\partial_{x}^{-1}\mathcal{T}_{\varepsilon}\Delta_{t}$
in terms of $\mathcal{T}_{\varepsilon}\Delta_{t}$ along with the occurrence
of $\mathcal{T}_{\varepsilon}\nu_{t}$ and some critical `boundary effects'
as well as a collection of simpler error terms.

In order to control the $L^{2}$-norm of $\partial_{x}^{-1}\mathcal{T}_{\varepsilon}\Delta_{t}$,
we thus need a uniform estimate on $\mathcal{T}_{\varepsilon}\nu_{t}$, and we
need to contain the boundary effects (and error terms) as $\varepsilon\rightarrow0$. These two tasks are the subject of Section \ref{L2reg_and_bdr_effects}, which then allows us to derive the aforementioned $L^2$  estimate for  $\partial_{x}^{-1}\mathcal{T}_{\varepsilon}\Delta_{t}$ in Section \ref{subsec:The H^-1 Estimate}. However, before any of this, we begin by showing how to derive the uniqueness of the SPDE given that the desired energy estimate holds.

\subsubsection{Uniqueness of the SPDE \textemdash{} Proof of Theorem \ref{thm:UNIQUENESS}\label{subsec:Uniqueness:-Proof-of}}

As above, we let $\nu$ be a limit point of the particle system and
suppose $\tilde{\nu}$ is another solution to the SPDE (\ref{eq:LIMIT SPDE}),
satisfying Assumption \ref{Assumption 2.3}.

Recall that the local Lipschitzness in the loss variable only holds
in a piecewise fashion on the intervals $[\theta_{i-1},\theta_{i})$
for $i=1,\ldots,k$. This hurdle is easily overcome by the following
piecewise stopping argument: Suppose we can prove uniqueness on $[0,t_0]$ if $L_t,\tilde{L}_t\in[0,\theta_1)$ for $t<t_0$ (see Prop.~\ref{smooth_energy_est} and the ensuing arguments) and introduce the stopping times
\[
\tau_{1}:=\inf\{t>0:L_{t}\geq\theta_{1}\}\land T\quad\text{and}\quad\tilde{\tau}_{1}:=\inf\{t>0:\tilde{L}_{t}\geq\theta_{1}\}\land T.
\]
Then $L_{t},\tilde{L}_{t}\in[0,\theta_{1})$ for $t<\varsigma_{1}:=\tau_{1}\land\tilde{\tau}_{1}$,
so we get uniqueness up to $\varsigma_{1}$. Note that, by the uniqueness, $L=\tilde{L}$ on $[0,\varsigma_{1}]$ and hence $\varsigma_{1}=\tau_{1}=\tilde{\tau}_{1}$. Thus, we can repeat
the uniqueness arguments on $[\varsigma_{1},\varsigma_{2})$, where
$L_{t},\tilde{L}_{t}\in[\theta_{1},\theta_{2})$, by defining $\varsigma_{2}:=\tau_{2}\land\tilde{\tau}_{2}$
for
\[
\tau_{2}:=\inf\{t>\varsigma_{1}:L_{t}\geq\theta_{2}\}\land T\quad\text{and}\quad\tilde{\tau}_{2}:=\inf\{t>\varsigma_{1}:\tilde{L}_{t}\geq\theta_{2}\}\land T.
\]
Continuing in this way for $\varsigma_{3},\ldots,\varsigma_{k-1}$,
we get uniqueness on all of $[0,T]$, since $L_{t}$ and $\tilde{L}_{t}$
are strictly increasing (recall part (ii) of Assumption \ref{Assumption 2.3}).

Below we prove uniqueness when the local Lipschitzness in $L$ holds everywhere, noting that the arguments imply uniqueness if $L$ and $\tilde{L}$ are confined to a particular piece $[\theta_{i-1},\theta_i)$. Thus, in view of the above stopping argument, the next result will suffice to complete the proof of Theorem \ref{thm:UNIQUENESS}.
\begin{prop}[Smoothed $H^{-1}$ estimate]\label{smooth_energy_est}
	Suppose the local Lipschitzness in Assumption \ref{Assumption 1 - finite system}(iii) holds everywhere, as opposed to piecewise. Then, as $\varepsilon\downarrow0$, we have
	\begin{align*}
	\mathbb{E}  \left\Vert \partial_{x}^{-1} \mathcal{T}_{\varepsilon}\Delta_{t\wedge t_n} \right\Vert _{2}^2 +&\, c_0\mathbb{E} \int_0^{t\wedge t_{n}}\!\left\Vert \mathcal{T}_\varepsilon\Delta_{s}\right\Vert _{2}^{2}ds  \leq  c_n \mathbb{E} \int_0^{t\wedge t_{n}} \!\!d_0(\nu_s,\tilde{\nu}_s)\!\left\Vert \partial_x^{-1}\mathcal{T}_\varepsilon\Delta_s \right\Vert_{2}ds\\
	& + c_{n}\mathbb{E}\int_{0}^{t\wedge t_{n}} \!|L_{s}-\tilde{L}_{s}|^2+\!d_1(\nu_{s},\tilde{\nu}_{s})^{2}ds+ o(1)  ,
	\end{align*}
	for a fixed $c_0>0$, and with $c_n$ only depending on $n$, where 
	$(t_{n})$ is a sequence of stopping times such that $t_{n}\uparrow T$ as $n\uparrow \infty$.
\end{prop}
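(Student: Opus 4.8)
The plan is to follow the smoothing strategy of Section~\ref{subsec:Energy-Estimates} and adapt the $H^{-1}$ energy method of \cite{HL2016} to the present, more delicate setting. The first move is to obtain a closed equation for $U_t^{\varepsilon}:=\partial_x^{-1}\mathcal{T}_{\varepsilon}\Delta_t$. For each fixed $x>0$ the map $y\mapsto G_{\varepsilon}(x,y)$ lies in $\mathscr{C}_0$, hence is an admissible test function in (\ref{eq:LIMIT SPDE}); plugging it in for $\nu$ and for $\tilde\nu$ and subtracting gives an equation for $\mathcal{T}_{\varepsilon}\Delta_t(x)=\langle\Delta_t,G_{\varepsilon}(x,\cdot)\rangle$. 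Two identities for the Dirichlet kernel are exploited here: $\partial_{yy}G_{\varepsilon}=\partial_{xx}G_{\varepsilon}$, which lets the diffusion term be written as $\tfrac12\partial_{xx}\mathcal{T}_{\varepsilon}(\sigma_t^{2}\Delta_t)$ --- note that $\sigma$ does not depend on the measure, so $\sigma_t^{2}\nu_t-\sigma_t^{2}\tilde\nu_t=\sigma_t^{2}\Delta_t$ \emph{exactly} --- and $\partial_{y}G_{\varepsilon}(x,y)=-\partial_{x}G_{\varepsilon}(x,y)-2p_{\varepsilon}'(x+y)$, whose last term produces the ``boundary effects'' $\langle\nu_t,g_t\,p_{\varepsilon}'(x+\cdot)\rangle$, supported near the origin. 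Splitting the measure-dependent coefficients via $g(t,\cdot,\nu_t)\nu_t-g(t,\cdot,\tilde\nu_t)\tilde\nu_t=g(t,\cdot,\nu_t)\Delta_t+\bigl(g(t,\cdot,\nu_t)-g(t,\cdot,\tilde\nu_t)\bigr)\tilde\nu_t$ and applying $\partial_x^{-1}$, one arrives at
\[
dU_t^{\varepsilon}=\tfrac12\partial_x\mathcal{T}_{\varepsilon}(\sigma_t^{2}\Delta_t)\,dt+(\text{transport in }\Delta_t)+(\text{coefficient differences})+(\text{contagion in }d\mathfrak{L})+d(\text{boundary effects}),
\]
where, since $\mathfrak{K}\in\mathcal{W}_{0}^{1,1}(\mathbb{R}_+)$ has zero trace, $d\mathfrak{L}_t=(\mathfrak{K}'\ast L)_t\,dt$ has bounded density and $d\mathfrak{L}_t-d\tilde{\mathfrak{L}}_t=(\mathfrak{K}'\ast(L-\tilde L))_t\,dt$. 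As both solutions start from $\nu_0$, we have $\Delta_0=0$, so $U_0^{\varepsilon}=0$.

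Next, I would apply It\^o's formula to $t\mapsto\|U_t^{\varepsilon}\|_2^2$ as an $L^{2}(\mathbb{R}_+)$-valued semimartingale. The coercive term comes from the drift of the second-order part: integrating by parts in $x$ --- the boundary contribution at $x=0$ vanishes because $\mathcal{T}_{\varepsilon}$ maps into functions vanishing at the origin --- it equals $-\langle\mathcal{T}_{\varepsilon}\Delta_t,\mathcal{T}_{\varepsilon}(\sigma_t^{2}\Delta_t)\rangle$, while the quadratic variation of the $W^{0}$-martingale adds $+\|\mathcal{T}_{\varepsilon}(\sigma_t\rho_t\Delta_t)\|_2^2$. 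Using the elementary estimate $\|\mathcal{T}_{\varepsilon}(g\Delta_t)-g(t,\cdot)\mathcal{T}_{\varepsilon}\Delta_t\|_2=O(\varepsilon^{1/4})$ --- valid for $g=\sigma^{2},\sigma\rho,b,\alpha$ by the boundedness of $\partial_x g$ and the Gaussian localisation of $G_{\varepsilon}$ --- these two terms reduce to $-\langle\sigma_t^{2}(1-\rho_t^{2}),(\mathcal{T}_{\varepsilon}\Delta_t)^{2}\rangle$ up to an $O(\varepsilon^{1/4})$ error; by the non-degeneracy $\sigma\ge\epsilon$ and $\rho\le1-\epsilon$ of Assumption~\ref{Assumption 1 - finite system}(iv), this is $\le-c_0\|\mathcal{T}_{\varepsilon}\Delta_t\|_2^2$ with $c_0>0$ \emph{fixed}, depending only on the ellipticity constants. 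The $O(\varepsilon^{1/4})$ corrections, split against $\|\mathcal{T}_{\varepsilon}\Delta_t\|_2$ by Young's inequality so as to consume only a small fraction of $c_0$, contribute only $o(1)$ after integrating in time and taking expectations.

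It then remains to control the lower-order terms. The transport terms linear in $\Delta_t$ (with coefficients $b(t,\cdot,\nu_t)$, $\alpha(t,\cdot,\nu_t)$) are treated by replacing $\mathcal{T}_{\varepsilon}(g_t\Delta_t)$ with $g_t\,\partial_x U_t^{\varepsilon}$ (error $O(\varepsilon^{1/4})$) and integrating by parts: since $\partial_x b,\partial_x\alpha$ are bounded and $U_t^{\varepsilon}(0)$ approximates $L_t-\tilde L_t$ as $\varepsilon\downarrow0$, this is bounded by $\lesssim\|U_t^{\varepsilon}\|_2^2+c_n|L_t-\tilde L_t|^2+O(\varepsilon^{1/2})$. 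For the coefficient-difference terms, Assumption~\ref{Assumption 1 - finite system}(ii)--(iii) give $|b(t,x,\nu_t)-b(t,x,\tilde\nu_t)|\lesssim(1+|x|+M_t)\bigl(d_0(\nu_t,\tilde\nu_t)+|L_t-\tilde L_t|\bigr)$, and likewise for $\alpha$ (with $d_0$) and for $\rho$ (with $d_1$); combined with uniform-in-$\varepsilon$ \emph{weighted} $L^{2}$-bounds on $\mathcal{T}_{\varepsilon}\nu_t,\mathcal{T}_{\varepsilon}\tilde\nu_t$, Cauchy--Schwarz and Young produce exactly the right-hand side quantities $c_n\,d_0(\nu_t,\tilde\nu_t)\|U_t^{\varepsilon}\|_2$, $c_n\,d_1(\nu_t,\tilde\nu_t)^2$ and $c_n|L_t-\tilde L_t|^2$. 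It is here that the linear growth in $x$ and in $M_t$ forces the localisation at the stopping times $t_n\uparrow T$ (taken, say, as the first time a running functional of $M_s\vee\tilde M_s$ reaches $n$), along which all coefficients are bounded by $n$-dependent constants. The contagion difference $(\mathfrak{K}'\ast(L-\tilde L))_t\,dt$ with $\|\mathfrak{K}'\|_1<\infty$ adds a further $c_n|L_t-\tilde L_t|^2$ by Fubini and Young, and the $W^{0}$-stochastic integral in the It\^o expansion has zero expectation (the required integrability following from the stopping and the tail estimates). Finally, after applying $\partial_x^{-1}$ the boundary effects become $\langle\nu_t,g_t\,p_{\varepsilon}(x+\cdot)\rangle$-type functions of $x$ whose $L^{2}_x$-norms are dominated by $\nu_t(0,c\sqrt\varepsilon)$-type quantities; by the boundary decay $\mathbb{E}\nu_t(0,\varepsilon)=O(\varepsilon^{1+\beta})$ from Corollary~\ref{Cor:Regularity of the empirical measures} and Proposition~\ref{prop:limit_regularity}, these vanish as $\varepsilon\downarrow0$ and go into $o(1)$.

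Collecting these estimates, one reaches an inequality bounding $\mathbb{E}\|U_{t\wedge t_n}^{\varepsilon}\|_2^2+c_0\mathbb{E}\int_0^{t\wedge t_n}\|\mathcal{T}_{\varepsilon}\Delta_s\|_2^2\,ds$ by $C_n\mathbb{E}\int_0^{t\wedge t_n}\|U_s^{\varepsilon}\|_2^2\,ds$ plus the right-hand side asserted in the statement plus $o(1)$; Gronwall's inequality in its $n$-dependent form then removes the $\mathbb{E}\int\|U_s^{\varepsilon}\|_2^2$ term while leaving the \emph{fixed} $c_0$ in front of $\mathbb{E}\int\|\mathcal{T}_{\varepsilon}\Delta_s\|_2^2\,ds$ untouched (since $c_0$ stems solely from the ellipticity constants), and letting $\varepsilon\downarrow0$ gives the stated bound. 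The main obstacle, in my view, is the pair of analytic inputs used above: the uniform-in-$\varepsilon$ (weighted) $L^{2}$-control of $\mathcal{T}_{\varepsilon}\nu_t$ and $\mathcal{T}_{\varepsilon}\tilde\nu_t$, both near and away from the boundary, and the vanishing of the origin-concentrated boundary effects. Both rest on the Aronson-type density estimates of Proposition~\ref{prop: Aronson estimate} and are precisely the content of Section~\ref{L2reg_and_bdr_effects}; the remainder is a careful but essentially routine adaptation of the error bookkeeping in \cite{HL2016}, the novelties being the state-dependent diffusion $\sigma(t,x)$, the linear drift growth, and the contagion term $d\mathfrak{L}$.
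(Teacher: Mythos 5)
Your overall scheme coincides with the paper's: test the SPDEs with $G_{\varepsilon}(x,\cdot)$, pass to the anti-derivative, apply It\^o's formula to its squared $L^2$-norm, extract the coercive term $-c_0\Vert\mathcal{T}_{\varepsilon}\Delta_s\Vert_2^2$ from the non-degeneracy of $\sigma$ and $\rho$, dispose of commutator errors and of the origin-concentrated boundary terms via the boundary decay, localise with stopping times, and close with Gronwall. There is, however, one genuine gap, and it sits exactly where the paper has to be asymmetric. You split the nonlinearity as $g(\cdot,\nu_t)\nu_t-g(\cdot,\tilde\nu_t)\tilde\nu_t=g(\cdot,\nu_t)\Delta_t+\bigl(g(\cdot,\nu_t)-g(\cdot,\tilde\nu_t)\bigr)\tilde\nu_t$, so that the coefficient differences (which produce $d_0$, $d_1$, $|L-\tilde L|$) end up multiplying $\tilde\nu_t$, and you then invoke uniform-in-$\varepsilon$ weighted $L^2$ bounds on \emph{both} $\mathcal{T}_{\varepsilon}\nu_t$ and $\mathcal{T}_{\varepsilon}\tilde\nu_t$ as ``the content of Section \ref{L2reg_and_bdr_effects}''. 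But Proposition \ref{prop: Energy bound on particle limit} gives this bound only for $\nu$, because its proof dominates the empirical measures by their whole-space counterparts and exploits that $\nu$ arises as a limit point of the particle system; $\tilde\nu$ is an arbitrary solution in the uniqueness class, and Assumption \ref{Assumption 2.3} (in particular (v), with exponent $\delta$ possibly less than $1$) gives nothing like a uniform-in-$\varepsilon$ $L^2$ control of $\mathcal{T}_{\varepsilon}\tilde\nu_t$ (recall $\Vert\mathcal{T}_{\varepsilon}\delta_{x_0}\Vert_2\sim\varepsilon^{-1/4}$). With your splitting, the drift term $\int_0^{\infty}(\partial_x^{-1}\mathcal{T}_{\varepsilon}\Delta_t)\,\delta_t^{\mathfrak{b}}\,\mathcal{T}_{\varepsilon}\tilde\nu_t\,dx$ and, worse, the quadratic-variation contribution $\Vert\sigma_t\,\delta_t^{\rho}\,\mathcal{T}_{\varepsilon}\tilde\nu_t\Vert_2^2$ cannot be bounded uniformly in $\varepsilon$ by $c_n\,d_0(\nu_t,\tilde\nu_t)\Vert\partial_x^{-1}\mathcal{T}_{\varepsilon}\Delta_t\Vert_2$ and $c_n\,d_1(\nu_t,\tilde\nu_t)^2$, so this step fails as written.

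The fix is precisely the paper's choice of decomposition: $g(\cdot,\nu)\nu-g(\cdot,\tilde\nu)\tilde\nu=g(\cdot,\tilde\nu)\Delta+\bigl(g(\cdot,\nu)-g(\cdot,\tilde\nu)\bigr)\nu$, so the differences $\delta^{\mathfrak{b}},\delta^{\rho}$ always multiply $\mathcal{T}_{\varepsilon}\nu_t$, for which Proposition \ref{prop: Energy bound on particle limit} applies; correspondingly, the stopping times $t_n$ must cap not only the means but also $\sup_{s\leq t}\sup_{\varepsilon>0}\Vert(1+x)\mathcal{T}_{\varepsilon}\nu_s\Vert_2$ (your $t_n$ defined through $M\vee\tilde M$ alone is not enough, since the weighted $L^2$ bound is only an expectation, not an a.s.\ bound). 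With this flip, the rest of your bookkeeping --- the commutator estimates, the boundary terms via Lemma \ref{lem:Boundary Estimate}, the $\mathfrak{K}'\ast(L-\tilde L)$ term through $\Vert\mathfrak{K}'\Vert_1$, and the $n$-dependent Gronwall step that leaves the fixed $c_0$ untouched --- matches the paper's Steps 1--5 in Section \ref{subsec:The H^-1 Estimate}.
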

\begin{proof}
	The proof is the subject of Sections \ref{L2reg_and_bdr_effects} and \ref{subsec:The H^-1 Estimate}.
	\end{proof}
Note that at this point Gr\"onwall already gives that $\mathbb{E}\!\int_0^{t\land t_n}\!\Vert\frac{d\Delta_s}{dx}\Vert_2ds$ is finite and, in particular, $\Delta_{s\land t_n}$ has a density in $L^2$, which we make use of below. Observe also that $|L-\tilde{L}|$  is dominated by $d_1(\nu,\tilde{\nu})$, however, it is included in the estimate as it shows where the above `piecewise stopping argument' would come into play. The next lemma relates the left- and right-hand sides of the smoothed $H^{-1}$ estimate, thus opening the door for a stronger Gr\"onwall argument that will allow us to finish the proof of Theorem \ref{thm:UNIQUENESS}.

\begin{lem}\label{lem: Lemma8.8 from hambly-ledger}
	There exists $c>0$ such that, for all $s\leq T$, $\delta\in(0,1)$, and $\lambda>1$,
	\[
	\begin{cases}
	d_0(\nu_s,\tilde{\nu}_s) \leq c\lambda(1+\delta^{-1}) {\bigl\Vert \partial_{x}^{-1}\mathcal{T}_{\varepsilon}\Delta_s\bigr\Vert}_{2} + c\delta^{\frac{1}{2}}\bigl\Vert {\textstyle \frac{d\Delta_s}{dx}} \bigr\Vert_2 + f_s(\lambda)+g_s(\varepsilon)\\[5pt]	
	d_1(\nu_s,\tilde{\nu}_s) \leq c\sqrt{\lambda+\delta^{-1}}{\bigl\Vert \partial_{x}^{-1}\mathcal{T}_{\varepsilon}\Delta_{s}\bigr\Vert}_{2} + c\delta^{\frac{1}{2}}\bigl\Vert {\textstyle \frac{d\Delta_s}{dx}} \bigr\Vert_2 + f_s(\lambda)+g_s(\varepsilon),
	\end{cases}
	\]
	as $\varepsilon\downarrow0$. Here $(f_t(\lambda))_{t\leq T}$ is a process such that, for every $a>0$, $\mathbb{E}\!\int_0^T\!f_s(\lambda)^2ds \leq c_a e^{-a\lambda}$ for some $c_a>0$, and $(g_s(\varepsilon))_{s\leq T}$ is process such that $\mathbb{E}\!\int_0^T \!g_s(\varepsilon)^2ds=o(1)$ as $\varepsilon\downarrow0$.
\end{lem}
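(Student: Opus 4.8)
The plan is to turn each of the two suprema into a quantitative pairing estimate, using two facts that hold where the lemma is applied: by the Gr\"onwall step following Proposition~\ref{smooth_energy_est}, the difference $\Delta_{s}$ has an $H^{1}(\mathbb{R}_{+})$ density whose trace at the origin vanishes (both $\nu_{s}$ and $\tilde{\nu}_{s}$ satisfy a Dirichlet boundary condition) and which decays exponentially at infinity; and $\|\partial_{x}^{-1}\mathcal{T}_{\varepsilon}\Delta_{s}\|_{2}\to\|\partial_{x}^{-1}\Delta_{s}\|_{2}$ as $\varepsilon\downarrow0$, so up to an error $g_{s}(\varepsilon):=\|\partial_{x}^{-1}(\Delta_{s}-\mathcal{T}_{\varepsilon}\Delta_{s})\|_{2}$, which satisfies $\mathbb{E}\!\int_{0}^{T}g_{s}(\varepsilon)^{2}ds=o(1)$ by dominated convergence and the uniform exponential tails, we may replace $\mathcal{T}_{\varepsilon}\Delta_{s}$ by $\Delta_{s}$ throughout (cf.~\eqref{eq: H-1 norm via antiderivatve}). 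It thus suffices to bound $|\langle\Delta_{s},\psi\rangle|$, for $\psi$ admissible in the definition of $d_{0}$ (respectively $d_{1}$), in terms of $\|\partial_{x}^{-1}\Delta_{s}\|_{2}$ and $\|\tfrac{d\Delta_{s}}{dx}\|_{2}$, a tail term, and $g_{s}(\varepsilon)$.

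\emph{Truncation.} Fix $\lambda>1$ and a smooth cutoff $\chi_{\lambda}$ equal to $1$ on $[0,\lambda]$, supported in $[0,2\lambda]$, with $\|\chi_{\lambda}'\|_{\infty}\lesssim\lambda^{-1}$, and split $\langle\Delta_{s},\psi\rangle=\langle\Delta_{s},\psi\chi_{\lambda}\rangle+\langle\Delta_{s},\psi(1-\chi_{\lambda})\rangle$. On the support of $1-\chi_{\lambda}$ we have $|\psi|\leq1+|x|$ (case $d_{0}$) or $|\psi|\leq1$ (case $d_{1}$), so the second term is dominated by $f_{s}(\lambda):=\langle\nu_{s}+\tilde{\nu}_{s},(1+|x|)\mathbf{1}_{[\lambda,\infty)}\rangle$; the uniform Gaussian bound $\mathbb{E}\nu_{s}(\lambda,\infty)=O(e^{-\epsilon\lambda^{2}})$ (Corollary~\ref{Cor:Regularity of the empirical measures} and Proposition~\ref{prop:limit_regularity}) together with the exponential tails of $\tilde{\nu}$ (Assumption~\ref{Assumption 2.3}(iii)) give $\mathbb{E}\!\int_{0}^{T}f_{s}(\lambda)^{2}ds\leq c_{a}e^{-a\lambda}$ for every $a>0$.

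\emph{The part on $[0,2\lambda]$.} The function $\phi_{\lambda}:=\psi\chi_{\lambda}$ is Lipschitz, supported in $[0,2\lambda]$, with $\|\phi_{\lambda}'\|_{\infty}\lesssim1$ and $|\phi_{\lambda}(0)|\leq1$, and integrating by parts against the $H^{1}$ density of $\Delta_{s}$ gives $\langle\Delta_{s},\phi_{\lambda}\rangle=(\partial_{x}^{-1}\Delta_{s})(0)\,\phi_{\lambda}(0)-\int_{0}^{2\lambda}(\partial_{x}^{-1}\Delta_{s})\phi_{\lambda}'\,dx$. By Cauchy--Schwarz over $[0,2\lambda]$ the integral is $\lesssim\sqrt{\lambda}\,\|\partial_{x}^{-1}\Delta_{s}\|_{2}$. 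For the boundary term, the trace inequality $|h(0)|^{2}\leq2\|h\|_{2}\|h'\|_{2}$ applied to $h=\partial_{x}^{-1}\Delta_{s}$ and the interpolation identity $\|\Delta_{s}\|_{2}^{2}=-\int_{0}^{\infty}(\partial_{x}^{-1}\Delta_{s})\tfrac{d\Delta_{s}}{dx}\,dx\leq\|\partial_{x}^{-1}\Delta_{s}\|_{2}\|\tfrac{d\Delta_{s}}{dx}\|_{2}$ (which uses $\Delta_{s}(0)=0$) together yield $|(\partial_{x}^{-1}\Delta_{s})(0)|\lesssim\|\partial_{x}^{-1}\Delta_{s}\|_{2}^{3/4}\|\tfrac{d\Delta_{s}}{dx}\|_{2}^{1/4}$, and a weighted Young inequality --- with the weight named $\delta\in(0,1)$ --- bounds this by $\delta^{-1/2}\|\partial_{x}^{-1}\Delta_{s}\|_{2}+\delta^{1/2}\|\tfrac{d\Delta_{s}}{dx}\|_{2}$. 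Adding the truncation error $f_{s}(\lambda)$ and the error $g_{s}(\varepsilon)$ and taking the supremum over admissible $\psi$ gives the $d_{1}$ bound with coefficient $\sqrt{\lambda}+\delta^{-1/2}\leq\sqrt{2}\sqrt{\lambda+\delta^{-1}}$; the $d_{0}$ case is identical except that $\psi$ may grow linearly, which a more generous accounting of the cutoff absorbs into the stated (non-optimal) coefficient $\lambda(1+\delta^{-1})$. This is the half-line counterpart of \cite[Lemma~8.8]{HL2016}.

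\emph{Main obstacle.} The crux is the boundary term at the origin: morally $(\partial_{x}^{-1}\Delta_{s})(0)=L_{s}-\tilde{L}_{s}$, which is \emph{not} controlled by $\|\partial_{x}^{-1}\Delta_{s}\|_{2}$ alone. The argument only closes because the Dirichlet condition $\Delta_{s}(0)=0$ makes the trace inequality and the interpolation identity available, so that this contribution can be traded for an arbitrarily small multiple of $\|\tfrac{d\Delta_{s}}{dx}\|_{2}$ at the price of a large ($\delta^{-1/2}$) multiple of $\|\partial_{x}^{-1}\Delta_{s}\|_{2}$ --- exactly the trade-off that will let the Gr\"onwall argument following Proposition~\ref{smooth_energy_est} close. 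Making this trade-off quantitative and uniform in $\varepsilon$, and keeping the $\lambda$- and $\delta$-dependence of all constants explicit (the source of the asymmetry between the $d_{0}$ and $d_{1}$ coefficients), is the bookkeeping-heavy part of the argument.
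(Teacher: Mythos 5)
Your overall architecture (cutoff at scale $\lambda$, integration by parts against the anti-derivative, a $\delta$-weighted trade-off, a tail term $f_s(\lambda)$ controlled by the exponential tails, and a smoothing error $g_s(\varepsilon)$) is the right one, and it differs from the paper mainly in two places: the paper's cutoff $\chi$ is supported in $(\delta/2,\lambda)$, i.e.\ it \emph{vanishes near the origin}, so the pairing $\langle\Delta_s,\chi\psi\rangle$ produces no boundary term and the strip $[0,\delta]$ is handled directly by Cauchy--Schwarz against the $L^2$ density (this is where the $\delta^{1/2}\Vert\frac{d\Delta_s}{dx}\Vert_2$ comes from); and the paper keeps the pairing with $\mathcal{T}_\varepsilon\Delta_s$ and makes the smoothing error uniform over the test class via an Arzel\`a--Ascoli finite net, rather than replacing $\mathcal{T}_\varepsilon\Delta_s$ by $\Delta_s$ at the level of norms. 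Your replacement step, with $g_s(\varepsilon)=\Vert\partial_x^{-1}(\Delta_s-\mathcal{T}_\varepsilon\Delta_s)\Vert_2$, is plausible but not free: you need $\partial_x^{-1}\mathcal{T}_\varepsilon\Delta_s\to\partial_x^{-1}\Delta_s$ in $L^2(\mathbb{R}_+)$ (the paper only ever uses the one-sided liminf inequality (\ref{eq: H-1 norm via antiderivatve})), and near $x=0$ the Dirichlet kernel does not reproduce the measure, so this convergence requires the boundary-decay properties, not just the tails and dominated convergence.

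The genuine gap is in your treatment of the boundary term. You assert that ``$\Delta_s$ has an $H^1(\mathbb{R}_+)$ density whose trace at the origin vanishes''; the Gr\"onwall remark after Proposition \ref{smooth_energy_est} only gives a density in $L^2$, and neither $H^1$ regularity of that density nor a pointwise zero trace for $\tilde{\nu}$ is available (Assumption \ref{Assumption 2.3}(iv) is a time-integrated moment bound on mass near $0$, not a trace statement). Your ``interpolation identity'' $\Vert\Delta_s\Vert_2^2=-\int_0^\infty(\partial_x^{-1}\Delta_s)\frac{d\Delta_s}{dx}\,dx$ only makes sense if $\frac{d\Delta_s}{dx}$ denotes the \emph{derivative of the density}, whereas in the statement (and in the paper throughout) $\frac{d\Delta_s}{dx}$ \emph{is} the Lebesgue density of $\Delta_s$; as written you either use an identity that is false ($-\int hh'=\tfrac12 h(0)^2$, not $\Vert h'\Vert_2^2$) or you prove a bound in terms of the $H^1$-seminorm of the density, which is a different and stronger quantity than the one in the lemma --- and the downstream absorption step relies precisely on $\Vert\mathcal{T}_\varepsilon\Delta_s\Vert_2\to\Vert\frac{d\Delta_s}{dx}\Vert_2$ (the $L^2$ norm of the density), so the lemma with the stronger norm would not close the uniqueness argument. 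The repair is simple and stays within your route: apply the trace inequality $|h(0)|^2\le 2\Vert h\Vert_2\Vert h'\Vert_2$ directly to $h=\partial_x^{-1}\Delta_s$, whose derivative is exactly the $L^2$ density, and then Young's inequality already gives $|(\partial_x^{-1}\Delta_s)(0)|\le c\,\delta^{-1/2}\Vert\partial_x^{-1}\Delta_s\Vert_2+c\,\delta^{1/2}\Vert\frac{d\Delta_s}{dx}\Vert_2$; no $H^1$ density, no zero trace of the density, and no interpolation identity are needed.
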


Inserting the bound for $d_0$ in Prop.~\ref{smooth_energy_est}, we get the leading term $\lambda(1+\delta^{-1})\Vert \partial_{x}^{-1}\mathcal{T}_{\varepsilon}\Delta_s\Vert_2^2$, where it is crucial that $\lambda$ is not squared. For the rest of the terms coming from $d_0$ we can simply apply Young's inequality. After also inserting the bound for $d_1$, it follows that
	\begin{align}\label{unique_est}
\mathbb{E} & \left\Vert \partial_{x}^{-1}\mathcal{T}_\varepsilon \Delta_{t\wedge t_n} \right\Vert _{2}^2  \leq  \,
\delta c'_{n}\mathbb{E}\!\int_{0}^{t\wedge t_{n}} \Bigl\Vert \frac{d\Delta_s}{dx} \Bigr\Vert_2^2 ds - c_0\mathbb{E} \!\int_0^{t\wedge t_{n}}\!\left\Vert \mathcal{T}_\varepsilon\Delta_{s}\right\Vert _{2}^{2}ds \\
& \,+ c'_n (1+ \lambda + \lambda \delta^{-1}) \int_0^{t} \mathbb{E}\left\Vert \partial_x^{-1}\mathcal{T}_\varepsilon\Delta_{s\land t_n} \right\Vert_{2}^2ds + c'_a e^{-a\lambda} + o(1),\nonumber
\end{align}
as $\varepsilon\downarrow0$. By construction of $\mathcal{T}_\varepsilon$, we have $\Vert \mathcal{T}_{\varepsilon}\Delta_s \Vert_2 \leq \Vert \frac{d\Delta_s}{dx} \Vert_2$ and $\Vert \mathcal{T}_{\varepsilon}\Delta_s \Vert_2 \rightarrow \Vert \frac{d\Delta_s}{dx} \Vert_2$, so dominated convergence gives
\[
\mathbb{E}\int_0^{t\land t_n}\bigl\Vert \mathcal{T}_\varepsilon\Delta_s \bigr\Vert_2 ds \rightarrow  \mathbb{E}\int_0^{t\land t_n}\Bigl\Vert \frac{d\Delta_s}{dx} \Bigr\Vert_2 ds \quad \text{as}\quad\varepsilon\downarrow0.
\]
Therefore, by taking $\delta:=c_0/2c'_n$, the sum of the first two terms on the right-hand side of (\ref{unique_est}) is eventually non-positive for small  $\varepsilon>0$. In turn, we can apply the integrating factor $\exp\{c'_n(1+\lambda+\lambda\delta^{-1})t\}$ to the anti-derivative term in (\ref{unique_est}) and deduce that
\[
\mathbb{E} \left\Vert \partial_{x}^{-1}\mathcal{T}_{\varepsilon}\Delta_{t\wedge t_n} \right\Vert _{2}^2  \leq 
c'_ac'_n(1+\lambda+\lambda\delta^{-1})e^{c'_n(1+\lambda+\lambda\delta^{-1})T}e^{-a\lambda}+o(1)
\]
as $\varepsilon\downarrow0$. Recalling (\ref{eq: H-1 norm via antiderivatve}) and appealing to Fatou's lemma, it follows that
\[
\mathbb{E} \left\Vert \Delta_{t\wedge t_n} \right\Vert _{H^{-1}}^2  \leq 
c'_{a,n,T}(1+\lambda+\lambda\delta^{-1}) \exp\{ \lambda(1+\delta^{-1})T - a\lambda  \}.
\]
Consequently, we can simply take $a:=2(1+\delta^{-1})T$ and send $\lambda\rightarrow\infty$ to arrive at
\[
\mathbb{E} \left\Vert \Delta_{t\wedge t_n} \right\Vert _{H^{-1}}^2=0 \quad\forall t\in[0,T].
\]
Since $n$ was arbitrary and $t_n\uparrow T$ as $n\uparrow\infty$, we conclude that
$ \nu_t=\tilde{\nu}_t$ for all $t\in[0,T]$. This completes the proof of Theorem \ref{thm:UNIQUENESS}.

\begin{proof}[Proof of Lemma \ref{lem: Lemma8.8 from hambly-ledger}]
Fix $\psi \in\mathcal{C}_{d_0}$, where $\mathcal{C}_{d_0}:=\{ f : \Vert f\Vert_{\text{Lip}} \leq 1, |f(x)|\leq 1+|x|\}$. Fixing also $\lambda>1$ and $\delta>0$, we can take a cut-off function $\chi\in\mathcal{C}_{c}^{\infty}(\mathbb{R})$ equal to 1 on $[\delta,\lambda-1]$ and supported in $(\delta/2,\lambda)$ such that
\[
|\chi|\leq1,\quad |\partial\chi|\leq C_{\chi}/\delta \;\; \text{on}\;\; [\delta/2,\delta], \quad \text{and} \quad |\partial\chi|\leq C_{\chi} \;\;\text{on}\; \;[\lambda-1,\lambda].
\]
Observe that $\chi\psi\in\mathcal{W}^{1,\infty}_0(\delta/2,\lambda)$ with
\begin{align*}
\label{eq:cut_off}
\left\Vert \partial (\chi\psi)\right\Vert _{2}^{2} &= \int_{0}^{\delta}\left|\partial\chi\right|^{2}\left|\psi\right|^{2}dx + \int_{\lambda-1}^{\lambda}\left|\partial\chi\right|^{2}\left|\psi\right|^{2}dx + \int_{0}^{\lambda}\left|\chi\right|^{2}\left|\partial\psi\right|^{2}dx \\
&\leq C_\chi^2(\delta^{-1}+1) \Vert \psi \Vert_{L^{\infty}(0,\lambda)}^2 + \lambda.
\end{align*}
Next, we can observe that
\begin{align*}
|\langle \Delta_s,\psi\rangle| &\leq |\langle \Delta_s,\chi\psi\rangle| + |\langle \Delta_s,(\chi-1)\psi\textbf{1}_{[0,\delta]}\rangle| + \langle \nu_s + \tilde{\nu}_s,|\psi|\textbf{1}_{[\lambda-1,\infty)}\rangle\\
&\leq |\langle \Delta_s,\chi\psi\rangle| + (1+\delta)\delta^{1/2}\bigl\Vert {\textstyle \frac{d\Delta_s}{dx}} \bigr\Vert_2 + \langle \nu_s + \tilde{\nu}_s,(1+\left|\cdot\right|)\textbf{1}_{[\lambda-1,\infty)}\rangle
\end{align*}
Let $(\cdot,\cdot)$ be the inner product on $L^2$. Integration by parts and Cauchy--Schwarz gives
\begin{align*}
|\langle \Delta_s, \chi\psi \rangle| & \leq \bigl|(\mathcal{T}_\varepsilon\Delta_s , \chi\psi)\bigr| + \bigl|(\mathcal{T}_\varepsilon\Delta_s , \chi\psi) - \langle \Delta_s , \chi\psi \rangle \bigr| \\
& \leq {\bigl\Vert \partial_{x}^{-1}\mathcal{T}_{\varepsilon}\Delta_{s}\bigr\Vert}_{2} {\bigl\Vert \partial_{x}(\chi \psi) \bigr\Vert}_2  + \bigl|(\mathcal{T}_\varepsilon\Delta_s , \chi\psi) - \langle \Delta_s , \chi\psi \rangle \bigr|.
\end{align*}
By the Arzel\`a--Ascoli theorem applied to $\{\chi f:f\in \mathcal{C}_{d_0}\}$, we can find a finitely family $ \{\varphi_i\in \text{Lip}(\mathbb{R}):i=1,\ldots,k(\delta,\lambda)\}$ supported in $[\delta/2,\lambda]$ so that, for any $f \in \mathcal{C}_{d_0}$,
\[
\sup_{x\in\mathbb{R}}|\varphi_i(x)-\chi f(x)|\leq \delta^{\frac{1}{2}} \lambda^{-\frac{1}{2}}, \quad \text{for some} \;\; i=1,...,k(\delta,\lambda).
\]
Consequently, there is a $\varphi_i$ such that
\[
\bigl|(\mathcal{T}_\varepsilon\Delta_s , \chi\psi) - \langle \Delta_s , \chi\psi \rangle \bigr| 
 \leq  \bigl|(\mathcal{T}_\varepsilon\Delta_s , \varphi_i) - \langle \Delta_s , \varphi_i \rangle \bigr| +2  \sqrt{\delta} \bigl\Vert {\textstyle \frac{d\Delta_s}{dx}} \bigr\Vert_2,
\]
where we have used Cauchy--Schwarz with $\Vert \mathcal{T}_\varepsilon\Delta_s \Vert_2 \leq \Vert \frac{d\Delta_s}{dx} \Vert_2$ and $\Vert \varphi_i - \chi\psi \Vert_2 \leq \sqrt{\delta}$. Note that  $(\mathcal{T}_\varepsilon\Delta_s , \varphi_i) \rightarrow \langle \Delta_s , \varphi_i \rangle$ as $\varepsilon \rightarrow 0$ (see e.g.~\cite[Prop.~6.4]{HL2016}), so defining
\[
g_s^\varepsilon=g_s^\varepsilon(\delta,\lambda):=\sup \{ |(\mathcal{T}_\varepsilon\Delta_s , \varphi_i) - \langle \Delta_s , \varphi_i \rangle |:i=1,\ldots,k(\delta,\lambda)\},
\]
we have $\mathbb{E} \int_0^T\! g_s(\varepsilon)^2  ds  \rightarrow 0$ as $\varepsilon\rightarrow0$ by bounded convergence. Consequently,
\[
|\langle \Delta_s, \chi\psi \rangle| \leq {\bigl\Vert \partial_{x}^{-1}\mathcal{T}_{\varepsilon}\Delta_{s}\bigr\Vert}_{2} {\bigl\Vert \partial_{x}(\chi \psi) \bigr\Vert}_2 + 2  \sqrt{\delta} \bigl\Vert {\textstyle \frac{d\Delta_s}{dx}} \bigr\Vert_2 + g_s(\varepsilon),
\]
where $g$ is as required by the lemma. Finally, combining the above, we can take suprema over $\psi$ in the function classes that define $d_0$ and $d_1$ to find that
\[
d_0(\nu_s,\tilde{\nu}_s) \leq c\lambda\sqrt{1+\delta^{-1}} {\bigl\Vert \partial_{x}^{-1}\mathcal{T}_{\varepsilon}\Delta_{s}\bigr\Vert}_{2} + c\delta^{\frac{1}{2}}\bigl\Vert {\textstyle \frac{d\Delta_s}{dx}} \bigr\Vert_2 + f_s(\lambda)+g_s(\varepsilon),
\]
and
\[
d_1(\nu_s,\tilde{\nu}_s) \leq c\sqrt{\lambda+\delta^{-1}}{\bigl\Vert \partial_{x}^{-1}\mathcal{T}_{\varepsilon}\Delta_{s}\bigr\Vert}_{2} + c\delta^{\frac{1}{2}}\bigl\Vert {\textstyle \frac{d\Delta_s}{dx}} \bigr\Vert_2 + f_s(\lambda)+g_s(\varepsilon),
\]
 where $f_s(\lambda):= \langle \nu_s + \tilde{\nu}_s,(1+\left|\cdot\right|)\textbf{1}_{[\lambda-1,\infty)}\rangle$. Hence the proof is complete by reference to the exponential decay properties of $f_s(\lambda)$ as guaranteed by Lemma \ref{lem:Tails for centre of mass}.
\end{proof}

\subsection{$L^{2}$ regularity and containment of boundary effects}\label{L2reg_and_bdr_effects}

Our first task in this section is to establish a weighted $L^{2}$ estimate for $\mathcal{T}_{\varepsilon}\nu_{t}$
that is uniform in $\varepsilon>0$ and $t\in[0,T]$. This can be achieved by exploiting the extra control that we have
over $\nu$ because it arises as a limit point of the particle system.

Specifically, we can use that the empirical measures $(\nu^{{\scriptscriptstyle N}})$
are dominated by their whole-space counterparts so that, in the limit, $\nu$ is dominated
by a solution to the whole-space version of the SPDE (\ref{eq:LIMIT SPDE}) with $M$ and $L$ still defined in terms of $\nu$.
This is crucial as we can perform the $L^{2}$ estimates for the whole-space
SPDE without any boundary effects. On the other hand, the estimates for the half-line only succeed because we work
in the weaker space $H^{-1}$, where we can control the  boundary effects
solely by relying on the boundary decay from Proposition \ref{prop:limit_regularity}
and Assumption \ref{Assumption 2.3} (see Lemma 5.5 and Section 5.3.2).
\begin{prop}
[$L^{2}$~energy estimate]\label{prop: Energy bound on particle limit}Let
$\nu^{*}$ be a limit point of $(\nu^{{\scriptscriptstyle N}})$.
Then 
\begin{equation}
\mathbb{E}\biggl[\sup_{t\in[0,T]}\sup_{\varepsilon>0}\int_{0}^{\infty}(1+x^{2})(\mathcal{T}_{\varepsilon}\nu_{t}^{*})^{2}dx\biggr]<\infty.\label{eq:L2_Energy_est}
\end{equation}
\end{prop}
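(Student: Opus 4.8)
The plan is to eliminate the absorbing boundary altogether and then run a classical weighted $L^{2}$ energy estimate on a mollification of the resulting whole-space solution. Since $X^{i}_{t\wedge\tau_{i}}=\bar{X}^{i}_{t}$ on $\{t<\tau_{i}\}$, where $\bar{X}^{i}$ solves the same SDE \emph{without} absorption, we have $\nu^{{\scriptscriptstyle N}}_{t}\le\bar{\nu}^{{\scriptscriptstyle N}}_{t}:=\sum_{i}a_{i}^{{\scriptscriptstyle N}}\delta_{\bar{X}^{i}_{t}}$ as measures; passing to the limit along the relevant subsequence gives $\nu^{*}_{t}\le\bar{\nu}_{t}$, where $\bar{\nu}$ solves the whole-space version of (\ref{eq:LIMIT SPDE}) with $b,\rho,\alpha,\mathfrak{L}$ still evaluated at $\nu^{*}$, so that $\bar{\nu}$ is a \emph{linear} SPDE given $\nu^{*}$ and has an $L^{2}$ density $\bar{V}_{t}$ by the analogue of Corollary \ref{cor:DENSITY}. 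I claim it then suffices to prove the whole-space bound $\mathbb{E}\sup_{t\le T}\int_{\mathbb{R}}(1+x^{2})\bar{V}_{t}(x)^{2}\,dx<\infty$. Indeed, since $0\le G_{\varepsilon}(x,y)\le p_{\varepsilon}(x-y)$ and $\nu^{*}_{t}\le\bar{\nu}_{t}$ we get, for $x\ge0$, that $0\le\mathcal{T}_{\varepsilon}\nu^{*}_{t}(x)\le(p_{\varepsilon}*\bar{\nu}_{t})(x)=:u^{\varepsilon}_{t}(x)$; the heat-semigroup identity $\tfrac{d}{d\varepsilon}\int_{\mathbb{R}}(1+x^{2})(u^{\varepsilon}_{t})^{2}dx\le\|u^{\varepsilon}_{t}\|_{2}^{2}\le\|\bar{V}_{t}\|_{2}^{2}$ then gives $\sup_{0<\varepsilon\le1}\int_{0}^{\infty}(1+x^{2})(\mathcal{T}_{\varepsilon}\nu^{*}_{t})^{2}dx\le C\int_{\mathbb{R}}(1+x^{2})\bar{V}_{t}^{2}dx$, while for $\varepsilon\ge1$ we write $\mathcal{T}_{\varepsilon}\nu^{*}_{t}=p_{\varepsilon}*(\nu^{*}_{t})^{\mathrm{odd}}$ and use Plancherel together with the bound $|\widehat{(\nu^{*}_{t})^{\mathrm{odd}}}(\xi)|\le 2(M^{*}_{t}|\xi|\wedge1)$ forced by oddness (here $M^{*}_{t}=\langle\nu^{*}_{t},|\cdot|\rangle$) to obtain $\int_{0}^{\infty}(1+x^{2})(\mathcal{T}_{\varepsilon}\nu^{*}_{t})^{2}dx\le CM^{*2}_{t}\varepsilon^{-1/2}$. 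Combining these, and using that $\mathbb{E}\sup_{t\le T}M^{*2}_{t}<\infty$ by the uniform sub-Gaussian tails (Corollary \ref{Cor:Regularity of the empirical measures}, Corollary \ref{cor: Sup is subgaussian} and Proposition \ref{prop:limit_regularity}), reduces everything to the stated whole-space bound.

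For this, I mollify and work with $u^{\varepsilon}_{t}=p_{\varepsilon}*\bar{\nu}_{t}$, which is a.s.\ smooth in $x$; plugging the Schwartz test function $y\mapsto p_{\varepsilon}(x-y)$ into the whole-space SPDE for $\bar{\nu}$ turns each measure bracket into an honest convolution, so $u^{\varepsilon}$ solves a genuine random parabolic SPDE up to Friedrichs-type commutators from the $x$-dependence of $\sigma,b,\alpha$, each of which is $o(1)$ as $\varepsilon\downarrow0$ or absorbable below. Applying It\^o's formula to $\mathcal{A}^{\varepsilon}_{t}:=\int_{\mathbb{R}}(1+x^{2})(u^{\varepsilon}_{t})^{2}dx$ and integrating by parts, the second-order term produces the dissipation $-c\,\mathcal{D}^{\varepsilon}_{t}$ with $\mathcal{D}^{\varepsilon}_{t}:=\int_{\mathbb{R}}(1+x^{2})(\partial_{x}u^{\varepsilon}_{t})^{2}dx$ (using $\sigma\ge\epsilon$), while the It\^o correction of the $dW^{0}$-integral contributes $+\rho^{2}_{t}\int(1+x^{2})\sigma^{2}(\partial_{x}u^{\varepsilon}_{t})^{2}dx+\cdots$; the non-degeneracy $\rho\le1-\epsilon$ in Assumption \ref{Assumption 1 - finite system}(iv) is exactly what makes the dissipation strictly dominate this term, leaving a net $-c'\mathcal{D}^{\varepsilon}_{t}$. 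The transport term ($b$) and the contagion term ($\alpha\,d\mathfrak{L}_{t}$, with $\mathfrak{L}'_{t}=(\mathfrak{K}'*L)_{t}\in L^{\infty}$ since $\mathfrak{K}\in\mathcal{W}_{0}^{1,1}$), together with the commutators, are then bounded --- after integration by parts and the estimates $|b|+|\alpha|\le C(1+|x|+M^{*}_{t})$ and $|\partial_{x}b|+|\partial_{x}\alpha|+|\partial_{x}\sigma|\le C$ --- by $\eta\,\mathcal{D}^{\varepsilon}_{t}+C(1+\sup_{r\le T}M^{*}_{r})\mathcal{A}^{\varepsilon}_{t}$, the linear growth in $x$ being absorbed by the quadratic weight. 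Choosing $\eta$ small, taking expectations and using that $\sup_{r\le T}M^{*}_{r}$ has exponential moments, Gr\"onwall gives $\sup_{0<\varepsilon\le1}\sup_{t\le T}\mathbb{E}\mathcal{A}^{\varepsilon}_{t}<\infty$ and $\sup_{0<\varepsilon\le1}\mathbb{E}\int_{0}^{T}\mathcal{D}^{\varepsilon}_{s}ds<\infty$; a second pass with Burkholder--Davis--Gundy on the $dW^{0}$-martingale (whose bracket is controlled by $\int_{0}^{t}\mathcal{A}^{\varepsilon}_{s}(\mathcal{D}^{\varepsilon}_{s}+\mathcal{A}^{\varepsilon}_{s})ds$) and absorbing $\tfrac14\mathbb{E}\sup_{s\le t}\mathcal{A}^{\varepsilon}_{s}$ upgrades this to $\sup_{0<\varepsilon\le1}\mathbb{E}\sup_{t\le T}\mathcal{A}^{\varepsilon}_{t}<\infty$. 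Letting $\varepsilon\downarrow0$ and applying Fatou yields $\mathbb{E}\sup_{t\le T}\int_{\mathbb{R}}(1+x^{2})\bar{V}_{t}(x)^{2}dx<\infty$, completing the reduction above.

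The main obstacle is closing this Gr\"onwall loop for the weighted energy: the parabolic dissipation has to beat \emph{both} the stochastic It\^o-correction --- which is precisely where the non-degeneracy $\rho\le1-\epsilon$ is indispensable --- \emph{and} the Burkholder--Davis--Gundy term, while keeping every constant uniform in the mollification scale $\varepsilon$; this is exactly the point at which the passage to the whole-space SPDE (so that no boundary terms appear in the integrations by parts) and the commutator estimates do the essential work. The remaining pieces --- the reflection/Fourier argument controlling large $\varepsilon$, the exponential moments of $\sup_{t}M^{*}_{t}$, and the bookkeeping of the linear growth against the quadratic weight --- are comparatively routine.
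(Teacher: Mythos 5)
Your overall route coincides with the paper's: dominate $\nu^{*}$ by the whole-space limit $\bar{\nu}$ of the non-absorbed system (with $M,L$ still read off $\nu^{*}$), smooth with the Gaussian kernel, apply It\^o's formula to the weighted squared $L^{2}$ norm, use $\sigma\geq\epsilon$ and $\rho\leq1-\epsilon$ so the dissipation beats the It\^o correction, control the commutator/error terms, and finish with Burkholder--Davis--Gundy, Gr\"onwall and Fatou. Your handling of the $\sup_{\varepsilon>0}$ (Jensen/heat-flow for small $\varepsilon$, an odd-reflection Plancherel bound for large $\varepsilon$) is a workable substitute for the paper's appeal to the reduction lemmas of \cite{HL2016}, provided you state the reduction in terms of the mollifications rather than invoking the density $\bar{V}_{t}$ up front (its existence is a consequence of, not an input to, the energy bound).

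The genuine gap is in how you close the Gr\"onwall loop against the mean process. In the weighted estimate, integration by parts of the drift term leaves a contribution of size $|x|\,|M_{t}|\,(u_{t}^{\varepsilon})^{2}$, which you absorb into $C(1+\sup_{r\leq T}M_{r}^{*})\,\mathcal{A}_{t}^{\varepsilon}$ and then close by ``taking expectations and using that $\sup_{r}M_{r}^{*}$ has exponential moments, Gr\"onwall gives\ldots''. This step does not work: the Gr\"onwall coefficient is random and correlated with $\mathcal{A}^{\varepsilon}$, so one cannot take expectations first ($\mathbb{E}[(1+\sup M)\mathcal{A}_{s}]\not\leq C\,\mathbb{E}\mathcal{A}_{s}$); and a pathwise Gr\"onwall followed by expectations requires controlling $\mathbb{E}\bigl[e^{C(1+\sup M)T}\bigl(\mathcal{A}_{0}^{\varepsilon}+\sup_{t}|\text{martingale}_{t}|+\text{errors}\bigr)\bigr]$, where the martingale supremum is exactly the quantity under estimation --- exponential moments of $\sup M$ alone do not decouple this, and localizing on $\{\sup M\leq n\}$ only produces constants that blow up in $n$. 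This also undermines your ``first pass'', i.e.\ the claimed bounds $\sup_{\varepsilon}\sup_{t}\mathbb{E}\mathcal{A}_{t}^{\varepsilon}<\infty$ and $\sup_{\varepsilon}\mathbb{E}\int_{0}^{T}\mathcal{D}_{s}^{\varepsilon}ds<\infty$, on which the BDG step relies. The paper's resolution is structural and is precisely what is missing here: split $|x|\,|M_{s}|\leq x^{2}+M_{s}^{2}$ so that the random factor multiplies only the \emph{unweighted} norm, i.e.\ the offending term is $C\int_{0}^{t}(1+|M_{s}|)\Vert\bar{\mathcal{T}}_{\varepsilon}\bar{\nu}_{s}^{*}\Vert_{2}^{2}ds$; then decouple by Young's inequality, $\mathbb{E}\bigl[(1+|M_{s}|)\Vert\bar{\mathcal{T}}_{\varepsilon}\bar{\nu}_{s}^{*}\Vert_{2}^{2}\bigr]\leq\mathbb{E}(1+|M_{s}|)^{2}+\mathbb{E}\Vert\bar{\mathcal{T}}_{\varepsilon}\bar{\nu}_{s}^{*}\Vert_{2}^{4}$, which is why the unweighted estimate is first established raised to a general power $k$ (the case $k=2$ supplies the fourth moments) before the $x^{2}$-weighted estimate is attempted. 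With that decoupling (or an equivalent one) inserted, the rest of your argument goes through along the same lines as the paper.
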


\begin{proof}
The rough ideas are the same as \cite[Prop.~7.1]{HL2016},
so we will refer back to this in order to avoid
duplication. First, let $\bar{\nu}^{{\scriptscriptstyle N}}:=\sum_{i=1}^{\scriptscriptstyle N}a_{i}^{\scriptscriptstyle N}\delta_{X_{t}^{i}}$
and note $\nu^{{\scriptscriptstyle N}}(A) \leq \bar{\nu}^{{\scriptscriptstyle N}}(A)$ for all $A\in\mathcal{B}(\mathbb{R})$.
Now, the same work as for $(\nu^{{\scriptscriptstyle N}})$ ensures $(\nu^{{\scriptscriptstyle N}},\bar{\nu}^{{\scriptscriptstyle N}})$ is tight with limit points $\nu^* \leq \bar{\nu}^*$, where $\bar{\nu}^*$ satisfies the whole-space analogue of the SPDE (\ref{eq:LIMIT SPDE}) in the sense that the space of test functions is all of $\mathscr{S}$, but with $M$ and $L$ still defined in terms of $\nu^*$.
Arguing as in Lemmas 7.2 and 7.3 of \cite{HL2016}, to prove
(\ref{eq:L2_Energy_est}) it suffices to bound
\[
J_{1}:=\liminf_{\varepsilon\rightarrow0}\mathbb{E}\biggl[\sup_{t\leq T}\int_{\mathbb{R}}(\bar{\mathcal{T}}_{\varepsilon}\bar{\nu}_{t}^{*})^{2}dx\biggr]\quad\text{and}\quad J_{2}:=\liminf_{\varepsilon\rightarrow0}\mathbb{E}\biggl[\sup_{t\leq T}\int_{\mathbb{R}}x^{2}(\bar{\mathcal{T}}_{\varepsilon}\bar{\nu}_{t}^{*})^{2}dx\biggr],
\]
where the action of $\bar{\mathcal{T}}_{\varepsilon}$ is given by
\[
(\bar{\mathcal{T}}_{\varepsilon}\bar{\nu}_{t}^{*})(x):=\int_{\mathbb{R}}p_{\varepsilon}(x-y)d\bar{\nu}_{t}^{*}(y).
\]
Note that $y\mapsto p_{\varepsilon}(x-y)$ is certainly not in $\mathscr{C}_{{\scriptscriptstyle 0}}$,
but it has rapid decay at $\pm\infty$, so it is an admissible test
function for the whole-space SPDE satisfied by $\bar{\nu}^{*}$.
We split the remaining parts of the proof into three steps.

\textbf{Step 1. }We begin by showing that $J_{1}\apprle\int_{0}^{\infty}V_{0}(x)^{2}dx$.
To simplify things, let 
\[
\mathfrak{b}_{t}(x):=b(t,x,\nu_{t},L_t)-\alpha(t,x,\nu_{t},L_t)\mathfrak{L}'_{t}.
\]
Then $\mathfrak{b}_{t}$ will play the r\^ole of the drift $\mu_{t}$
in the proof of Proposition 7.1 from \cite{HL2016}. Proceeding as
in \cite{HL2016}, we can test the SPDE with $p_{\varepsilon}(\cdot-y)$
and introduce appropriate error terms in order to get a tractable
expression for $\bar{\mathcal{T}}_{\varepsilon}\bar{\nu}_{t}^{*}$. Next, we
can then apply Itô's formula and thus convert this into an expression
for the square $(\bar{\mathcal{T}}_{\varepsilon}\bar{\nu}_{t}^{*})^{2}$. In this way, we arrive at
\begin{align}
d(\bar{\mathcal{T}}_{\varepsilon}\bar{\nu}_{t}^{*})^{2}= & -2(\bar{\mathcal{T}}_{\varepsilon}\bar{\nu}_{t}^{*})\bigl(\mathfrak{b}_{t}\partial_{x}(\bar{\mathcal{T}}_{\varepsilon}\bar{\nu}_{t}^{*})-\partial_{x}\mathfrak{b}_{t}\mathcal{\bar{H}}_{t,\varepsilon}^{\mathfrak{b}}+\mathcal{\bar{E}}_{t,\varepsilon}^{\mathfrak{b}}\bigr)dt\label{eq: Ito square}\\
 & +(\bar{\mathcal{T}}_{\varepsilon}\bar{\nu}_{t}^{*})\partial_{x}\bigl(\sigma_{t}^{2}\partial_{x}(\bar{\mathcal{T}}_{\varepsilon}\bar{\nu}_{t}^{*})-\partial_{x}\sigma_{t}^{2}\mathcal{\bar{H}}_{t,\varepsilon}^{\sigma^{2}}+\mathcal{\bar{E}}_{t,\varepsilon}^{\sigma^{2}}\bigr)dt\nonumber \\
 & -2\rho_{t}(\bar{\mathcal{T}}_{\varepsilon}\bar{\nu}_{t}^{*})\bigl(\sigma_{t}\partial_{x}(\bar{\mathcal{T}}_{\varepsilon}\bar{\nu}_{t}^{*})-\partial_{x}\sigma_{t}\mathcal{\bar{H}}_{t,\varepsilon}^{\sigma}+\mathcal{\bar{E}}_{t,\varepsilon}^{\sigma}\bigr)dW_{t}\nonumber \\
 & +\rho_{t}^{2}\bigl(\sigma_{t}\partial_{x}(\bar{\mathcal{T}}_{\varepsilon}\bar{\nu}_{t}^{*})-\partial_{x}\sigma_{t}\mathcal{\bar{H}}_{t,\varepsilon}^{\sigma}+\mathcal{\bar{E}}_{t,\varepsilon}^{\sigma}\bigr)^{2}dt,\nonumber 
\end{align}
where the error terms $\mathcal{\bar{E}}$ and $\mathcal{\bar{H}}$
are defined in Lemma \ref{lem:Error Terms Whole Space} of the Appendix.
Compared to \cite{HL2016}, we must be careful with the linear growth of
$\mathfrak{b}_{t}$, however, this is taken care
of via integration by parts: Since $\bar{\mathcal{T}}_{\varepsilon}\bar{\nu}_{t}^{*}$
vanishes at $\pm\infty$ by the tails of $\bar{\nu}_{t}^{*}$ (using the the whole-space analogue of Lemma \ref{lem:BOREL-CANTELLI-ARGUMENTS}), we thus get
\begin{align}
-\int_{\mathbb{R}}\int_{0}^{t}2\mathfrak{b}_{s}(\bar{\mathcal{T}}_{\varepsilon}\bar{\nu}_{s}^{*})\partial_{x}(\bar{\mathcal{T}}_{\varepsilon}\bar{\nu}_{s}^{*})dsdx= & -\int_{0}^{t}\int_{\mathbb{R}}\mathfrak{b}_{s}\partial_{x}(\bar{\mathcal{T}}_{\varepsilon}\bar{\nu}_{s}^{*})^{2}dxds\nonumber \\
= & \int_{0}^{t}\int_{\mathbb{R}}\partial_{x}\mathfrak{b}_{s}(\bar{\mathcal{T}}_{\varepsilon}\bar{\nu}_{s}^{*})^{2}dxds\leq C\int_{0}^{t}\bigl\Vert\bar{\mathcal{T}}_{\varepsilon}\bar{\nu}_{s}^{*}\bigr\Vert_{2}^{2}ds.\label{eq: modified drift bound}
\end{align}

Given this, the plan is to integrate over $x\in\mathbb{R}$ in the
equation (\ref{eq: Ito square}) in order to obtain an estimate for
the $L^{2}$-norm of $\bar{\mathcal{T}}_{\varepsilon}\bar{\nu}_{t}^{*}$. After
integrating over $x$, we can then appeal to the previous estimate
(\ref{eq: modified drift bound}) and, similarly, we can perform another
integration by parts in the second line on the right-hand side of
(\ref{eq: Ito square}). Using this, and the fact that $|\mathcal{\bar{H}}_{t,\varepsilon}^{g}|\apprle \bar{\mathcal{T}}_{2\varepsilon}\bar{\nu}_{t}^{*}$
by Lemma \ref{lem:Error Terms Whole Space}, we can thus apply Young's
inequality (with free parameter $\theta>0$) to see that
\begin{align}
\bigl\Vert\bar{\mathcal{T}}_{\varepsilon}\bar{\nu}_{t}^{*}\bigr\Vert_{2}^{2}\leq & \;\bigl\Vert\bar{\mathcal{T}}_{\varepsilon}\nu_{0}\bigr\Vert_{2}^{2}+C_{\theta}\int_{0}^{t}\bigl\Vert\bar{\mathcal{T}}_{\varepsilon}\bar{\nu}_{s}^{*}\bigr\Vert_{2}^{2}ds+C_{\theta}\int_{0}^{t}\bigl\Vert\bar{\mathcal{T}}_{2\varepsilon}\bar{\nu}_{s}^{*}\bigr\Vert_{2}^{2}ds\label{eq: Whole Space estimate}\\
 & +C_{\theta}\int_{0}^{t}\bigl(\bigl\Vert\mathcal{\bar{E}}_{s,\varepsilon}^{\mathfrak{b}}\bigr\Vert_{2}^{2}+\bigl\Vert\mathcal{\bar{E}}_{s,\varepsilon}^{\sigma^{2}}\bigr\Vert_{2}^{2}+\bigl\Vert\mathcal{\bar{E}}_{s,\varepsilon}^{\sigma}\bigr\Vert_{2}^{2}\bigr)ds\nonumber \\
 & +\int_{0}^{t}\int_{\mathbb{R}}\bigl(\rho_{s}^{2}\sigma_{s}^{2}+\theta\rho_{s}^{2}\sigma_{s}^{2}-\sigma_{s}^{2}+\theta\bigr)\bigl|\partial_{x}(\bar{\mathcal{T}}_{\varepsilon}\bar{\nu}_{s}^{*})\bigr|^{2}dxds\nonumber \\
 & +\int_{0}^{t}\int_{\mathbb{R}}\rho_{s}\partial_{x}\sigma_{s}(\bar{\mathcal{T}}_{\varepsilon}\bar{\nu}_{s}^{*})^{2}+2\rho_{s}(\bar{\mathcal{T}}_{\varepsilon}\bar{\nu}_{s}^{*})(\partial_{x}\sigma_{s}^{2}\mathcal{\bar{H}}_{s,\varepsilon}^{\sigma}-\mathcal{\bar{E}}_{s,\varepsilon}^{\sigma}\bigr)dxdW_{s}.\nonumber 
\end{align}
Here we have used the stochastic fubini theorem to switch the order
of integration in the stochastic integral (allowed due
to the exponential tails as in
\cite[Lem.~8.3]{HL2016}) and we have also integrated by parts
in the $dx$-integral inside the stochastic integral.

\textbf{Step 2. }Since $\rho^{2}$ is bounded away from $1$ (and $\sigma^2$ is bounded away from $0$),
we can choose $\theta$ sufficiently small so that the 
third line of (\ref{eq: Whole Space estimate}) is negative and hence we can discard it. Raising both sides
of (\ref{eq: Whole Space estimate}) to a power $k\geq1$, we thus
have
\begin{align}
\sup_{r\leq t}\bigl\Vert\bar{\mathcal{T}}_{\varepsilon}\bar{\nu}_{r}^{*}\bigr\Vert_{2}^{2k}\leq & \;C\bigl\Vert\bar{\mathcal{T}}_{\varepsilon}\nu_{0}\bigr\Vert_{2}^{2k}+C\int_{0}^{t}\Bigl(\bigl\Vert\bar{\mathcal{T}}_{\varepsilon}\bar{\nu}_{s}^{*}\bigr\Vert_{2}^{2k}+\bigl\Vert\bar{\mathcal{T}}_{2\varepsilon}\bar{\nu}_{s}^{*}\bigr\Vert_{2}^{2k}\Bigr)ds\label{eq: 4th power energy estimate}\\
 & +C\int_{0}^{t}\bigl(\bigl\Vert\mathcal{\bar{E}}_{s,\varepsilon}^{\mathfrak{b}}\bigr\Vert_{2}^{2k}+\bigl\Vert\mathcal{\bar{E}}_{s,\varepsilon}^{\sigma^{2}}\bigr\Vert_{2}^{2k}+\bigl\Vert\mathcal{\bar{E}}_{s,\varepsilon}^{\sigma}\bigr\Vert_{2}^{2k}\bigr)ds\nonumber \\
 & +C\Bigl\{\sup_{r\leq t}\int_{0}^{r}\!\int_{\mathbb{R}}\rho_{s}\partial_{x}\sigma_{s}(\bar{\mathcal{T}}_{\varepsilon}\bar{\nu}_{s}^{*})^{2}+2\rho_{s}(\bar{\mathcal{T}}_{\varepsilon}\bar{\nu}_{s}^{*})(\partial_{x}\sigma_{s}^{2}\mathcal{\bar{H}}_{s,\varepsilon}^{\sigma}-\mathcal{\bar{E}}_{s,\varepsilon}^{\sigma}\bigr)dxdW_{s}\Bigr\}^{k},\nonumber 
\end{align}
with $C=C(k)$. By Burkholder\textendash Davis\textendash Gundy, and Hölder's and Young's inequalities, the expected supremum of the stochastic integral is bounded by a constant times
\begin{multline*}
 \quad\mathbb{E}\Bigl\{\int_{0}^{t}\Bigl(\int_{\mathbb{R}}(\bar{\mathcal{T}}_{\varepsilon}\bar{\nu}_{s}^{*})\bigl(\bar{\mathcal{T}}_{\varepsilon}\bar{\nu}_{s}^{*}+|\partial_{x}\sigma_{s}^{2}\mathcal{\bar{H}}_{s,\varepsilon}^{\sigma}|+|\mathcal{\bar{E}}_{s,\varepsilon}^{\sigma}|\bigr)dx\Bigr)^{2}ds\Bigr\}^{k/2}\\ \leq
C'\mathbb{E}\sup_{r\leq t}\bigl\Vert\bar{\mathcal{T}}_{\varepsilon}\bar{\nu}_{r}^{*}\bigr\Vert_{2}^{2k}+C''\mathbb{E}\int_{0}^{t}\bigl(\bigl\Vert\bar{\mathcal{T}}_{\varepsilon}\bar{\nu}_{t}^{*}\bigr\Vert_{2}^{2k}+\bigl\Vert\bar{\mathcal{T}}_{2\varepsilon}\bar{\nu}_{t}^{*}\bigr\Vert_{2}^{2k}+\bigl\Vert\mathcal{\bar{E}}_{s,\varepsilon}^{\sigma}\bigr\Vert_{2}^{2k}\bigr)ds,\quad
\end{multline*}
Consequently, taking expectations in (\ref{eq: 4th power energy estimate}) gives
\begin{align*}
\inf_{\varepsilon\leq\varepsilon'}\mathbb{E}\sup_{r\leq t}\bigl\Vert\bar{\mathcal{T}}_{\varepsilon}\bar{\nu}_{r}^{*}\bigr\Vert_{2}^{2k}\leq & \;C_{0}\inf_{\varepsilon\leq\varepsilon'}\bigl\Vert\bar{\mathcal{T}}_{\varepsilon}\nu_{0}\bigr\Vert_{2}^{2k}+tC_{0}\inf_{\varepsilon\leq\varepsilon'}\mathbb{E}\sup_{r\leq t}\bigl\Vert\bar{\mathcal{T}}_{\varepsilon}\bar{\nu}_{r}^{*}\bigr\Vert_{2}^{2k}\\
 & +C_{0}\inf_{\varepsilon\leq\varepsilon'}\mathbb{E}\int_{0}^{t}\bigl(\bigl\Vert\mathcal{\bar{E}}_{s,\varepsilon}^{\mathfrak{b}}\bigr\Vert_{2}^{2k}+\bigl\Vert\mathcal{\bar{E}}_{s,\varepsilon}^{\sigma^{2}}\bigr\Vert_{2}^{2k}+\bigl\Vert\mathcal{\bar{E}}_{s,\varepsilon}^{\sigma}\bigr\Vert_{2}^{2k}\bigr)ds.
\end{align*}
If we now restrict to $t \leq T_{0}:=1/2C_{0}$ and
send $\varepsilon'\rightarrow0$, then we get
\[
\liminf_{\varepsilon\rightarrow0}\mathbb{E}\sup_{t\leq T_{0}}\bigl\Vert\bar{\mathcal{T}}_{\varepsilon}\bar{\nu}_{t}^{*}\bigr\Vert_{2}^{2k}\leq2C_{0}\bigl\Vert V_{0}\bigr\Vert_{2}^{2k},
\]
where we have used that $\Vert \bar{\mathcal{T}}_{\varepsilon}\nu_{0}\Vert _{2}\rightarrow\Vert V_{0}\Vert_{2}$
as $\varepsilon\rightarrow0$ and that the error terms vanish by Lemma
\ref{lem:Error Terms Whole Space}. For $k=1$, this proves the bound
on $J_{1}$ in small time, that is, for $t\in[0,T_0]$ with $T_0=1/2C_0$. The extension of this bound to all of $[0,T]$ follows by propagating the argument onto the finitely many intervals $[T_k,T_{k+1} \land T]$ with $T_{k+1}=T_k+1/2C_0$ for $k=1,\ldots,\left\lceil 2C_{0}T\right\rceil$ (as in the proof of \cite[Prop.~7.1]{HL2016}).

\textbf{Step 3. }We now show how to extend the previous work to prove
that $J_{2}$ is finite. In order to succeed at this, we will need
to control the 4th moments of $\sup_{t\leq T}\Vert\bar{\mathcal{T}}_{\varepsilon}\bar{\nu}_{t}^{*}\Vert_{2}$,
which is the reason for introducing the power $k$ in Step 2 above. The idea is simply to multiply by $x^{2}$ in (\ref{eq: Ito square})
before integrating over $x\in\mathbb{R}$ and then proceed as in Steps 1 and 2 above. Beginning with the first term,
an integration by part yields
\begin{align*}
-\int_{\mathbb{R}}\int_{0}^{t}2x^{2}\mathfrak{b}_{s}(\bar{\mathcal{T}}_{\varepsilon}\bar{\nu}_{s}^{*})\partial_{x}(\bar{\mathcal{T}}_{\varepsilon}\bar{\nu}_{s}^{*})dsdx= & \,\int_{0}^{t}\int_{\mathbb{R}}\bigl(x^{2}\partial_{x}\mathfrak{b}_{s}+2x\mathfrak{b}_{s}\bigr)(\bar{\mathcal{T}}_{\varepsilon}\bar{\nu}_{s}^{*})^{2}dxds\\
\leq & \;C\int_{0}^{t}\bigl\Vert x(\bar{\mathcal{T}}_{\varepsilon}\bar{\nu}_{s}^{*})\bigr\Vert_{2}^{2}ds+C\int_{0}^{t}(1+|M_{s}|)\bigl\Vert\bar{\mathcal{T}}_{\varepsilon}\bar{\nu}_{s}^{*}\bigr\Vert_{2}^{2}ds.
\end{align*}
Crucially, the second term on the right-hand
side can be controlled by
\begin{equation}
\mathbb{E}\int_{0}^{t}(1+|M_{s}|)\bigl\Vert\bar{\mathcal{T}}_{\varepsilon}\bar{\nu}_{s}^{*}\bigr\Vert_{2}^{2}ds\leq\int_{0}^{t}\mathbb{E}\left[(1+|M_{s}|)^{2}\right]ds+\int_{0}^{t}\mathbb{E}\left[\bigl\Vert\bar{\mathcal{T}}_{\varepsilon}\bar{\nu}_{s}^{*}\bigr\Vert_{2}^{4}\right]ds,\label{eq: x^2 M*Tv bound}
\end{equation}
which is finite because of the result in Step 2 with $k=2$. Arguments
completely analogous to those that led to (\ref{eq: Whole Space estimate})\textendash (\ref{eq: 4th power energy estimate})
then yield
\begin{align}
\bigl\Vert x\bar{\mathcal{T}}_{\varepsilon}\bar{\nu}_{t}^{*}\bigr\Vert_{2}^{2}\leq & \;\bigl\Vert x\bar{\mathcal{T}}_{\varepsilon}\nu_{0}\bigr\Vert_{2}^{2}+C_{\theta}\int_{0}^{t}\bigl\Vert x\bar{\mathcal{T}}_{\varepsilon}\bar{\nu}_{s}^{*}\bigr\Vert_{2}^{2}ds+C_{\theta}\int_{0}^{t}\bigl\Vert x\bar{\mathcal{T}}_{2\varepsilon}\bar{\nu}_{s}^{*}\bigr\Vert_{2}^{2}ds+C^{\prime}\label{eq: Whole Space estimate-1}\\
 & +C_{\theta}\int_{0}^{t}\bigl(\bigl\Vert x\mathcal{\bar{E}}_{s,\varepsilon}^{\mathfrak{b}}\bigr\Vert_{2}^{2}+\bigl\Vert x\mathcal{\bar{E}}_{s,\varepsilon}^{\sigma^{2}}\bigr\Vert_{2}^{2}+\bigl\Vert x\mathcal{\bar{E}}_{s,\varepsilon}^{\sigma}\bigr\Vert_{2}^{2}\bigr)ds\nonumber \\
 & -2\int_{0}^{t}\int_{\mathbb{R}}x^{2}\rho_{s}\bar{\mathcal{T}}_{\varepsilon}\bar{\nu}_{s}^{*}(\sigma_{s}\partial_{x}\bar{\mathcal{T}}_{\varepsilon}\bar{\nu}_{t}^{*}+\partial_{x}\sigma_{s}\mathcal{\bar{H}}_{s,\varepsilon}^{\sigma}+\mathcal{\bar{\mathcal{E}}}_{s,\varepsilon}^{\sigma})dxdW_{s},\nonumber 
\end{align}
where the extra constant $C^{\prime}$ comes from (\ref{eq: x^2 M*Tv bound}).
Given this, we can argue as in Step 2 (with $k=1$) to bound $J_{2}$
by a multiple of $C^{\prime}+\Vert xV_{0}\Vert_{2}^{2}$.
\end{proof}
\begin{cor}
[Density process]\label{cor:DENSITY}Any limit point $\nu^{*}$ of
$(\nu^{{\scriptscriptstyle N}})$ has an $L^{2}$-valued density process
$(\displaystyle{V_{t}^{*}})_{t\geq0}$ with $\left\Vert x\displaystyle{V_{t}^{*}}\right\Vert _{2}<\infty$.
\end{cor}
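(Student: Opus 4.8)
The plan is to read the corollary off the weighted $L^{2}$ bound of Proposition \ref{prop: Energy bound on particle limit}. That estimate says that, with probability one, the Dirichlet mollifications $\mathcal{T}_{\varepsilon}\nu_{t}^{*}$ stay bounded in the weighted Hilbert space $H:=L^{2}(\mathbb{R}_{+},(1+x^{2})dx)$, uniformly in $\varepsilon>0$ and $t\in[0,T]$. Since bounded sets in a Hilbert space are weakly sequentially compact, for each $t$ we may extract a weak limit along some sequence $\varepsilon_{k}\downarrow0$; the substance of the proof is to check that this limit is a density for $\nu_{t}^{*}$ (so that it does not depend on the chosen subsequence) and that it lies in $H$, the last point giving $\|xV_{t}^{*}\|_{2}<\infty$ for free.

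Fix $\omega$ in the almost-sure event on which $\sup_{t\le T}\sup_{\varepsilon>0}\int_{0}^{\infty}(1+x^{2})(\mathcal{T}_{\varepsilon}\nu_{t}^{*})^{2}dx<\infty$, and fix $t\in[0,T]$. By weak compactness in $H$ there are $\varepsilon_{k}\downarrow0$ and $V_{t}^{*}\in H$ with $\mathcal{T}_{\varepsilon_{k}}\nu_{t}^{*}\rightharpoonup V_{t}^{*}$ weakly in $H$. To identify $V_{t}^{*}$, note that $G_{\varepsilon}(x,y)=G_{\varepsilon}(y,x)\ge0$ for $x,y\ge0$, so $\mathcal{T}_{\varepsilon}$ is a symmetric non-negative operator and, for any $\phi\in\mathcal{C}_{c}^{\infty}((0,\infty))$, Fubini gives
\[
\int_{0}^{\infty}\phi(x)(\mathcal{T}_{\varepsilon_{k}}\nu_{t}^{*})(x)\,dx=\langle\nu_{t}^{*},\mathcal{T}_{\varepsilon_{k}}\phi\rangle.
\]
As $\varepsilon_{k}\downarrow0$, standard mollification yields $\mathcal{T}_{\varepsilon_{k}}\phi\to\phi$ pointwise on $\mathbb{R}_{+}$ (in particular $\mathcal{T}_{\varepsilon}\phi(0)=0=\phi(0)$) with $\|\mathcal{T}_{\varepsilon_{k}}\phi\|_{\infty}\le2\|\phi\|_{\infty}$, so bounded convergence (using that $\nu_{t}^{*}$ is a sub-probability measure) gives $\langle\nu_{t}^{*},\mathcal{T}_{\varepsilon_{k}}\phi\rangle\to\langle\nu_{t}^{*},\phi\rangle$. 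On the other hand $\phi\in H^{*}\cong L^{2}(\mathbb{R}_{+},(1+x^{2})^{-1}dx)$, so the left-hand side converges to $\int_{0}^{\infty}\phi\,V_{t}^{*}\,dx$ by the weak convergence in $H$. Hence $\langle\nu_{t}^{*},\phi\rangle=\int_{0}^{\infty}\phi\,V_{t}^{*}\,dx$ for all $\phi\in\mathcal{C}_{c}^{\infty}((0,\infty))$, i.e.\ $\nu_{t}^{*}$ has density $V_{t}^{*}$ on $(0,\infty)$, with $\|V_{t}^{*}\|_{2}<\infty$ and $\|xV_{t}^{*}\|_{2}<\infty$ since $V_{t}^{*}\in H$; in particular $V_{t}^{*}$ is independent of the extracting subsequence.

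This produces a density $V_{t}^{*}$ for every $t\in[0,T]$, hence for every $t\ge0$ as $T$ was arbitrary, and weak lower semicontinuity of $\|\cdot\|_{H}$ shows $\sup_{t}\|V_{t}^{*}\|_{2}$ and $\sup_{t}\|xV_{t}^{*}\|_{2}$ are finite (almost surely). For the assertion that $(V_{t}^{*})_{t\ge0}$ is a genuine process, observe that $t\mapsto\langle V_{t}^{*},\phi\rangle=\langle\nu_{t}^{*},\phi\rangle$ is measurable for each $\phi\in\mathcal{C}_{c}^{\infty}((0,\infty))$ (indeed continuous, since limit points may be taken continuous in $t$), that $\mathcal{C}_{c}^{\infty}((0,\infty))$ is dense in $L^{2}(\mathbb{R}_{+})$ and $\sup_{t}\|V_{t}^{*}\|_{2}<\infty$, so $t\mapsto V_{t}^{*}$ is weakly measurable and, $L^{2}(\mathbb{R}_{+})$ being separable, strongly measurable by Pettis's theorem. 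The only step that requires any care is the identification of the weak $L^{2}$-limit with the distributional limit $\nu_{t}^{*}$ of $\mathcal{T}_{\varepsilon_{k}}\nu_{t}^{*}$; once the symmetry of $\mathcal{T}_{\varepsilon}$ and the approximate-identity property $\mathcal{T}_{\varepsilon}\phi\to\phi$ are in hand, everything else is routine.
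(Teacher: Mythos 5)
Your argument is correct and is exactly the route the paper takes: its proof of this corollary consists of invoking Proposition \ref{prop: Energy bound on particle limit} and the ``standard weak compactness argument'' for the $L^{2}$-bounded family $(\mathcal{T}_{\varepsilon}\nu_{t}^{*})_{\varepsilon>0}$, which is precisely what you have spelled out (weak limit in the weighted space, identification via the symmetry and approximate-identity property of $G_{\varepsilon}$, lower semicontinuity for the weighted bound).
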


\begin{proof}
Given Proposition \ref{prop: Energy bound on particle limit}, this follows by a standard weak compactness argument for the  $L^{2}$-bounded sequence
$(\mathcal{T}_{\varepsilon}\nu^{*}_{t})_{\epsilon>0}$.
\end{proof}
The next lemma represses the `boundary effects' in the
estimates for $\partial_{x}^{-1}\mathcal{T}_{\varepsilon}\Delta_{t}$, by ensuring that the relevant terms vanish as $\varepsilon\rightarrow0$. This
depends critically on the behaviour of the mass of the solutions near the boundary, so the essential ingredient
is the boundary decay from Assumption \ref{Assumption 2.3} as satisfied by the limit points because of Proposition \ref{prop:limit_regularity}.
\begin{lem}
[Boundary estimate]\label{lem:Boundary Estimate}Let $\mu$ satisfy
Assumption \ref{Assumption 2.3} and let $g_{t}(y)$ be a (stochastic)
function with $\left|g_{t}(y)\right|\apprle1+\left|y\right|+M_{t}$,
where $M_{t}:=\left\langle \mu_{t},\psi\right\rangle $ for some $\psi\in\text{\emph{Lip}}(\mathbb{R})$.
Then
\[
\mathbb{E}\int_{0}^{T}\int_{0}^{\infty}\bigl|\bigl\langle\mu_{t},g_{t}(\cdot)p_{\varepsilon}(x+\cdot)\bigr\rangle\bigr|^{2}dxdt\rightarrow0\quad\text{as}\quad\varepsilon\rightarrow0.
\]
\end{lem}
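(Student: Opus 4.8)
The plan is to bound the integral by splitting the inner pairing $\langle\mu_t,g_t(\cdot)p_\varepsilon(x+\cdot)\rangle$ according to whether the source point $y$ lies in a shrinking boundary layer $(0,\varepsilon^\gamma)$ or in the bulk $[\varepsilon^\gamma,\infty)$; the structural point is that $\mu$ is supported on $\mathbb{R}_+$, so $p_\varepsilon(x+y)$ only ever sees arguments $x+y\ge\max(x,y)\ge0$. First, since $\psi\in\mathrm{Lip}(\mathbb{R})$ we have $|M_t|\le C(1+\langle\mu_t,|\cdot|\rangle)$, so with $G_t:=1+|M_t|$ the linear growth hypothesis becomes $|g_t(y)|\le C\,G_t(1+y)$; and writing $\langle\mu_t,|\cdot|^{m}\rangle$ through the layer-cake formula and using the exponential tails in Assumption~\ref{Assumption 2.3}(iii) together with $\mu_t(\mathbb{R}_+)\le1$, one checks that $\mathbb{E}\int_0^T G_t^{\,m}\,dt<\infty$ for every $m\ge1$. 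Fix then $\gamma\in\bigl(\tfrac{1}{2(1+\beta)},\tfrac12\bigr)$, a nonempty interval precisely because $\beta>0$ in Assumption~\ref{Assumption 2.3}(iv), and write $\langle\mu_t,g_t(\cdot)p_\varepsilon(x+\cdot)\rangle=A_\varepsilon(x)+B_\varepsilon(x)$, where $A_\varepsilon$ collects the contribution of $y\in(0,\varepsilon^\gamma)$ and $B_\varepsilon$ that of $y\ge\varepsilon^\gamma$. It suffices to show that $\mathbb{E}\int_0^T\!\int_0^\infty|A_\varepsilon|^2\,dx\,dt\to0$ and $\mathbb{E}\int_0^T\!\int_0^\infty|B_\varepsilon|^2\,dx\,dt\to0$ as $\varepsilon\downarrow0$.

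For the bulk term I would use $(x+y)^2\ge x^2+y^2$ for $x,y\ge0$, so that on the range $y\ge\varepsilon^\gamma$,
\[
p_\varepsilon(x+y)\le(2\pi\varepsilon)^{-1/2}e^{-x^2/2\varepsilon}e^{-y^2/2\varepsilon}\le(2\pi\varepsilon)^{-1/2}e^{-\varepsilon^{2\gamma-1}/4}\,e^{-x^2/2\varepsilon}e^{-y^2/4\varepsilon}.
\]
Since $\mu_t$ has mass at most $1$ and $\sup_{y\ge0}(1+y)e^{-y^2/4\varepsilon}$ is bounded by an absolute constant for $\varepsilon\le1$, integrating $|g_t(y)|\le C\,G_t(1+y)$ against $\mu_t$ leaves $|B_\varepsilon(x)|\lesssim G_t\,\varepsilon^{-1/2}e^{-\varepsilon^{2\gamma-1}/4}e^{-x^2/2\varepsilon}$; squaring, integrating in $x$, and taking $\mathbb{E}\int_0^T\cdot\,dt$ gives a bound $\lesssim\varepsilon^{-1/2}e^{-\varepsilon^{2\gamma-1}/2}\,\mathbb{E}\int_0^T G_t^2\,dt$, which tends to $0$ because $2\gamma-1<0$ forces $\varepsilon^{2\gamma-1}\to\infty$. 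Here the requirement $\gamma<\tfrac12$ is what is used.

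The boundary term $A_\varepsilon$ is the delicate one. On $(0,\varepsilon^\gamma)$ we have $|g_t(y)|\le C\,G_t$, so Cauchy--Schwarz against $\mathbf{1}_{(0,\varepsilon^\gamma)}\,d\mu_t$ (of total mass $\mu_t(0,\varepsilon^\gamma)$) gives
\[
|A_\varepsilon(x)|^2\le C^2G_t^2\,\mu_t(0,\varepsilon^\gamma)\int_0^{\varepsilon^\gamma}p_\varepsilon(x+y)^2\,d\mu_t(y).
\]
Integrating over $x$ and using $\int_0^\infty p_\varepsilon(x+y)^2\,dx\le\Vert p_\varepsilon\Vert_{L^2(\mathbb{R})}^2=(2\sqrt{\pi\varepsilon})^{-1}$, together with $\mu_t(0,\varepsilon^\gamma)^2\le\mu_t(0,\varepsilon^\gamma)$, yields $\int_0^\infty|A_\varepsilon(x)|^2\,dx\lesssim\varepsilon^{-1/2}\,G_t^2\,\mu_t(0,\varepsilon^\gamma)$. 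Now apply H\"older in $(\omega,t)$ with a large exponent $p$ on $G_t^2$ and the conjugate exponent $q=p/(p-1)$ on $\mu_t(0,\varepsilon^\gamma)$: since $\mu_t(0,\varepsilon^\gamma)\in[0,1]$ we have $\mu_t(0,\varepsilon^\gamma)^q\le\mu_t(0,\varepsilon^\gamma)$, so Assumption~\ref{Assumption 2.3}(iv) gives $\mathbb{E}\int_0^T\mu_t(0,\varepsilon^\gamma)^q\,dt\le C\varepsilon^{\gamma(1+\beta)}$ for $\varepsilon$ small, and hence
\[
\mathbb{E}\int_0^T\!\int_0^\infty|A_\varepsilon(x)|^2\,dx\,dt\;\lesssim\;\varepsilon^{-1/2}\Bigl(\mathbb{E}\!\int_0^T\!G_t^{2p}\,dt\Bigr)^{1/p}\varepsilon^{\gamma(1+\beta)/q}\;\lesssim\;\varepsilon^{\gamma(1+\beta)/q-1/2}.
\]
Because $\gamma(1+\beta)>\tfrac12$ strictly, choosing $q$ close enough to $1$ (equivalently $p$ large, which is permitted as all moments of $G_t$ are integrable in $(\omega,t)$) makes this exponent positive, so the term vanishes. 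Combining the two pieces via $|A_\varepsilon+B_\varepsilon|^2\le2|A_\varepsilon|^2+2|B_\varepsilon|^2$ completes the proof.

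The crux of the argument --- and the step I expect to be the main obstacle --- is precisely this last balancing: the $L^2$-mass of the kernel, $\Vert p_\varepsilon\Vert_{L^2(\mathbb{R})}^2=(2\sqrt{\pi\varepsilon})^{-1}$, blows up as $\varepsilon\downarrow0$, and the only quantity available to absorb it is the boundary decay $\mu_t(0,\varepsilon^\gamma)=O(\varepsilon^{\gamma(1+\beta)})$ from Assumption~\ref{Assumption 2.3}(iv), which forces $\gamma$ to be chosen close to $\tfrac12$. Reconciling this with the linear growth of $g_t$ is what compels the full use of Assumption~\ref{Assumption 2.3}(iii): the exponential tails are what give $G_t$ integrability to arbitrarily high order, hence allow the H\"older step with $q\downarrow1$. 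A cruder argument that simply pulls $G_t$ out by Cauchy--Schwarz would cost a square root on the $\mu$-factor (leaving only $\varepsilon^{\gamma(1+\beta)/2}$) and would go through only under the stronger hypothesis $\beta>1$.
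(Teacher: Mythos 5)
Your proof is correct and follows essentially the same route as the paper: a split at the scale $\varepsilon^{\gamma}$ with $\gamma$ chosen strictly between $\tfrac{1}{2(1+\beta)}$ and $\tfrac12$, the boundary decay of Assumption \ref{Assumption 2.3}(iv) absorbing the $\varepsilon^{-1/2}$ from the kernel, Gaussian suppression in the bulk, and a H\"older step that trades arbitrarily high moments of $M_t$ (available through the exponential tails of Assumption \ref{Assumption 2.3}(iii)) against an exponent close to $1$ on the boundary mass. The only differences are cosmetic: you split the measure before squaring and invoke Cauchy--Schwarz with $\Vert p_{\varepsilon}\Vert_{L^{2}}$, while the paper squares via Jensen, integrates out $x$ first, and then splits, fixing the H\"older exponent at $p=1+\beta/2$ instead of sending the conjugate exponent to $1$ --- the same balancing in a slightly different order.
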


\begin{proof}
Notice first that, by Jensen's inequality,
\[
\bigl|\bigl\langle\mu_{t},g_{t}(\cdot)p_{\varepsilon}(x+\cdot)\bigr\rangle\bigr|^{2}\leq C\varepsilon^{-1}e^{-x^{2}/\varepsilon}\int_{0}^{\infty}(1+y^{2}+M_{t}^{2})e^{-y^{2}/\varepsilon}d\mu_{t}(y).
\]
and hence
\[
\int_{0}^{\infty}\bigl|\bigl\langle\mu_{t},g_{t}(\cdot)p_{\varepsilon}(x+\cdot)\bigr\rangle\bigr|^{2}dx\leq C\int_{0}^{\infty}(1+y^{2}+M_{t}^{2})\varepsilon^{-\frac{1}{2}}e^{-y^{2}/\varepsilon}d\mu_{t}(y)
\]
We divide the proof into three cases, where either $1$,
$y^{2}$ or $M_{t}^{2}$ appears in the integrand. The first case follows as in \cite[Lem.~7.6]{HL2016}. For the second
case, fix $0<\varepsilon<1$, and let $\eta\in(0,1)$ be a free parameter.
Splitting the integral on $y\leq\varepsilon^{\eta}$ and its complement gives
\begin{equation}
\varepsilon^{-\frac{1}{2}}\int_{0}^{\infty}y^{2}e^{-y^{2}/\varepsilon}d\mu_{t}(y)\leq\varepsilon^{-\frac{1}{2}}\left(\mu_{t}(0,\varepsilon^{\eta})+\left\langle \mu_{t},y^{2}\right\rangle \exp\left\{ -\varepsilon^{2\eta-1}\right\} \right).\label{eq: Boundary lemma estimate}
\end{equation}
By (iii)-(iv) of Assumption \ref{Assumption 2.3}, appealing
also to Lemma \ref{lem:Tails for centre of mass}, it follows that
\begin{equation}
\mathbb{E}\int_{0}^{T}\int_{0}^{\infty}y^{2}\varepsilon^{-\frac{1}{2}}e^{-y^{2}/\varepsilon}d\mu_{t}(y)dt\leq C\varepsilon^{-\frac{1}{2}}\left(\varepsilon^{\eta(1+\beta)}+\exp\left\{ -\varepsilon^{2\eta-1}\right\} \right)\label{eq: Boundary lemma estimate 2}
\end{equation}
for a constant $\beta>0$, where the right-hand side of (\ref{eq: Boundary lemma estimate 2})
converges to zero as long as
\[
(2+2\beta)^{-1}<\eta<2^{-1}.
\]
For the final case, we can rely on Hölder's inequality to see that
\[
\mathbb{E}\int_{0}^{T}\!M_{t}^{2}\int_{0}^{\infty}\varepsilon^{-\frac{1}{2}}e^{-y^{2}/\varepsilon}d\mu_{t}dt\leq\mathbb{E}\biggl[\int_{0}^{T}\!M_{t}^{2q}dt\biggr]^{\frac{1}{q}}\mathbb{E}\biggl[\int_{0}^{T}\Bigl|\int_{0}^{\infty}\varepsilon^{-\frac{1}{2}}e^{-y^{2}/\varepsilon}d\mu_{t}\Bigr|^{p}dt\biggr]^{\frac{1}{p}}.
\]
Since $M_{t}^{2q}\apprle\mu_{t}(0,\infty)+\langle\mu_{t},y^{2q}\rangle$
by the Lipschitzness of $\psi$, we get  $\mathbb{E}\!\int_{0}^{T}\!\!M_{t}^{2q}dt<\infty$ for all $q>1$ by Assumption
\ref{Assumption 2.3}(iii) and Lemma \ref{lem:Tails for centre of mass}. Moreover, arguing as
in (\ref{eq: Boundary lemma estimate})\textendash (\ref{eq: Boundary lemma estimate 2}),
\[
\mathbb{E}\int_{0}^{T}\Bigl|\int_{0}^{\infty}\varepsilon^{-\frac{1}{2}}e^{-y^{2}/\varepsilon}d\mu_{t}(y)\Bigr|^{p}dt\leq C\varepsilon^{-\frac{p}{2}}\left(\varepsilon^{\eta(1+\beta)}+\exp\left\{ -p\varepsilon^{2\eta-1}\right\} \right),
\]
so the claim follows by taking $p:=1+\beta/2$ and choosing $\eta$
in the range
\[
(1+\beta/2)(2+2\beta)^{-1}<\eta<2^{-1}.
\]
This finishes the proof.
\end{proof}

\subsection{The smoothed $H^{-1}$ estimate --- proof of Proposition \ref{smooth_energy_est}}\label{subsec:The H^-1 Estimate}
In this section we finalize the proof of the smoothed $H^{-1}$ estimate from Proposition \ref{smooth_energy_est}.
\begin{rem}
[`$o_{\text{sq}}(1)$'--notation] As in \cite{HL2016}, we denote by
$o_{\text{sq}}(1)$ any family of $L^{2}$ valued processes $\{ (\zeta^\varepsilon_t)_{t\leq T}\}_{\varepsilon>0}$
that satisfy
$\mathbb{E}{\textstyle\int_{0}^{T}}\Vert \zeta^\varepsilon_t\Vert _{2}^{2}dt\rightarrow0$ as $\varepsilon\downarrow0$.
For example, Lemma~\ref{lem:Boundary Estimate} and Lemma~\ref{lem:Error terms half-space}
ensure that the `boundary effects' and error terms will be
$o_{\text{sq}}(1)$.
\end{rem}

As outlined at the start of Section \ref{subsec:Energy-Estimates},
we will derive an equation for $\partial_{x}^{-1}\mathcal{T}_{\varepsilon}\Delta_{t}$
by testing the SPDE for $\nu$ and $\tilde{\nu}$ with the kernel
$y\mapsto G_{\varepsilon}(\cdot,y)$ and then integrating the difference
of the resulting expressions over $(x,\infty)$ for $x>0$. As in the proof of Proposition \ref{prop: Energy bound on particle limit}, we set $\mathfrak{b}_t:=b_t-\alpha_t{\mathfrak{L}}'_t$ and $\tilde{\mathfrak{b}}_t:=\tilde{b}_t-\tilde{\alpha}_t\tilde{\mathfrak{L}}'_t$. Then we can argue as in \cite[Sec.~7]{HL2016}, using Lemma \ref{lem:Boundary Estimate} and
Lemma \ref{lem:Error terms half-space} in place of their counterparts
in \cite{HL2016}, to conclude that
\begin{align*}
d\partial_{x}^{-1}\mathcal{T}_{\varepsilon}\Delta_{t}= & -\bigl(\tilde{\mathfrak{b}}_{t}\mathcal{T}_{\varepsilon}\Delta_{t}+\delta_{t}^{\mathfrak{b}}\mathcal{T}_{\varepsilon}\nu_{t}\bigr)dt+{\textstyle \frac{1}{2}}\partial_{x}\bigl(\sigma_{t}^{2}\mathcal{T}_{\varepsilon}\Delta_{t}+\mathcal{E}_{t,\varepsilon}^{\sigma^{2}}-\tilde{\mathcal{E}}_{t,\varepsilon}^{\sigma^{2}}\bigr)dt  \\
 & -\sigma_{t}(\tilde{\rho}\mathcal{T}_{\varepsilon}\Delta_{t}+\delta_{t}^{\rho}\mathcal{T}_{\varepsilon}\nu_{t})dW^0_{t}+o_{\text{sq}}(1)dt+o_{\text{sq}}(1)dW^0_{t},
\end{align*}
where ${\mathcal{E}}^{g}$ is an error term defined
in Lemma \ref{lem:Error terms half-space}, with $\tilde{\mathcal{E}}^{g}$ defined analogously only for $\tilde{\nu}$, and
\[
\delta_{t}^{g}:=g(t,x,\nu_{t},L_{t})-g(t,x,\tilde{\nu}_{t},\tilde{L}_{t}).
\]
Applying Itô's formula, it follows that
\begin{align}
d(\partial_{x}^{-1}\mathcal{T}_{\varepsilon}\Delta_{t})^{2}= & -2(\partial_{x}^{-1}\mathcal{T}_{\varepsilon}\Delta_{t})\bigl(\tilde{\mathfrak{b}}_{t}\mathcal{T}_{\varepsilon}\Delta_{t}+\delta_{t}^{\mathfrak{b}}\mathcal{T}_{\varepsilon}\nu_{t}\bigr)dt
\label{eq: square of diff}\\
 & +(\partial_{x}^{-1}\mathcal{T}_{\varepsilon}\Delta_{t})\partial_{x}\bigl(\sigma_{t}^{2}\mathcal{T}_{\varepsilon}\Delta_{t}+\mathcal{E}_{t,\varepsilon}^{\sigma^{2}}-\tilde{\mathcal{E}}_{t,\varepsilon}^{\sigma^{2}}\bigr)dt\nonumber \\
  & +\sigma_{t}^{2}\bigl(\tilde{\rho}\mathcal{T}_{\varepsilon}\Delta_{t}+\delta_{t}^{\rho}\mathcal{T}_{\varepsilon}\nu_{t}\bigr){}^{2}dt\nonumber \\
 & -2\alpha(\partial_{x}^{-1}\mathcal{T}_{\varepsilon}\Delta_{t})\sigma_{t}\bigl(\tilde{\rho}\mathcal{T}_{\varepsilon}\Delta_{t}+\delta_{t}^{\rho}\mathcal{T}_{\varepsilon}\nu_{t}\bigr)dW_{t}\nonumber \\
 & +(\partial_{x}^{-1}\mathcal{T}_{\varepsilon}\Delta_{t})o_{\text{sq}}(1)dt+(\partial_{x}^{-1}\mathcal{T}_{\varepsilon}\Delta_{t})o_{\text{sq}}(1)dW_{t}+o_{\text{sq}}(1)^{2}dt.\nonumber 
\end{align}
 We can now integrate the above in $x\in \mathbb{R}_+$ to arrive at the $L^2$ norm of $\partial_{x}^{-1}\mathcal{T}_{\varepsilon}\Delta_{t}$. Then it remains to estimate the resulting integrals on the right-hand side --- a procedure we split into five short steps (deviating substantially from \cite{HL2016}). For convenience, we define
\[
|M|_{t,\star}:=\textstyle{\sup_{s\leq t}}\{1+|M_t|+|\tilde{M}_t|\} \;\; \text{and} \;\; \Vert \mathcal{T}\!\nu \Vert_{t,\star}:=\textstyle{\sup_{s\leq t}\sup_{\varepsilon>0}}\Vert (1+x)\mathcal{T}_\varepsilon \nu_s \Vert_{2}.
\]
\textbf{Step 1. }We start by considering the first line on the right-hand side of (\ref{eq: square of diff}). Note that we can write $2(\partial_{x}^{-1}\mathcal{T}_{\varepsilon}\Delta_{t})(\mathcal{T}_{\varepsilon}\Delta_{t})=\partial_{x}(\partial_{x}^{-1}\mathcal{T}_{\varepsilon}\Delta_{t})^{2}$ in the first term. Hence, we can integrate by parts in $x$ and use Young's inequality
with free parameter $\theta$ to get
\begin{multline*}
-\int_{0}^{\infty}2\tilde{\mathfrak{b}}_{t}(\partial_{x}^{-1}\mathcal{T}_{\varepsilon}\Delta_{t})(\mathcal{T}_{\varepsilon}\Delta_{t})dx=-\int_{0}^{\infty}(\partial_{x}\tilde{\mathfrak{b}}_{t})(\partial_{x}^{-1}\mathcal{T}_{\varepsilon}\Delta_{t})^{2}dx\\
\qquad\qquad\qquad\qquad+2\tilde{\mathfrak{b}}_{t}(0)\int_{0}^{\infty}(\partial_{x}^{-1}\mathcal{T}_{\varepsilon}\Delta_{t})(\mathcal{T}_{\varepsilon}\Delta_{t})dx\\
\leq C_{\mathfrak{b}}\Vert\partial_{x}^{-1}\mathcal{T}_{\varepsilon}\Delta_{t}\Vert_{2}^{2}+C_{\theta}C_{\mathfrak{b}}|M|_{t,\star}^2\Vert\partial_{x}^{-1}\mathcal{T}_{\varepsilon}\Delta_{t}\Vert_{2}^{2}+\theta\Vert \mathcal{T}_{\varepsilon}\Delta_{t}\Vert_{2}^{2}.
\end{multline*}
For the second integral in the first line of (\ref{eq: square of diff}), we recall that
\[
|\delta^{\mathfrak{b}}_t| \apprle (|x|+|M|_{t,\star})\bigl( d_0(\nu_t,\tilde{\nu}_t) +|L_t-\tilde{L}_t| + |\mathfrak{L}_{t}^{\prime}-\tilde{\mathfrak{L}}_{t}^{\prime}|  \bigr).
\]
Thus, using Cauchy-Schwarz on the $d_0$ term and Young's inequality on the others,
\begin{align*}
\Bigl|\int_0^{\infty}\!\delta^{\mathfrak{b}}_t & (\partial_{x}^{-1}\mathcal{T}_{\varepsilon}\Delta_{t}) \mathcal{T}_{\varepsilon}\nu_{t}dx \Bigr| \apprle  |M|_{t,\star}  \Vert \mathcal{T}\!\nu \Vert_{t,\star} \Vert\partial_{x}^{-1}\mathcal{T}_{\varepsilon}\Delta_{t}\Vert_{2}d_0(\nu_t,\tilde{\nu}_t)\\
& + |M|_{t,\star}^2 \Vert\partial_{x}^{-1}\mathcal{T}_{\varepsilon}\Delta_{t}\Vert_{2}^{2} 
+ |M|_{t,\star}^2 \Vert \mathcal{T}\!\nu \Vert_{t,\star}^2 (|L_t-\tilde{L}_t|^2+|\mathfrak{L}_{t}^{\prime}-\tilde{\mathfrak{L}}_{t}^{\prime}|^2 ),
\end{align*}
where we can note that
\begin{align*}
\int_{0}^{t} |M|_{s,\star}^2 \Vert \mathcal{T}\!\nu \Vert_{s,\star}^2 |\mathfrak{L}_{s}^{\prime}-\tilde{\mathfrak{L}}'_{s}|^{2}ds & \leq |M|_{t,\star}^2 \Vert \mathcal{T}\!\nu \Vert_{t,\star}^2 \! \int_{0}^{t}\Bigl|\int_{0}^{s}\!\mathfrak{K}^{\prime}(s-r)(L_{r}-\tilde{L}_{r})dr\Bigr|^{2}ds\\
& \leq |M|_{t,\star}^2 \Vert \mathcal{T}\!\nu \Vert_{t,\star}^2 \left\Vert \mathfrak{K}^{\prime}\right\Vert_{1}^{2} \int_{0}^{t}\bigl|L_{r}-\tilde{L}_{r}\bigr|^{2}dr.
\end{align*}

\textbf{Step 2. }Now consider the second line on the right-hand-side of (\ref{eq: square of diff}). By performing an integration by parts and using Young's inequality with free parameter $\theta$, we get
\begin{multline*}
\int_{0}^{\infty}\!(\partial_{x}^{-1}\mathcal{T}_{\varepsilon}\Delta_{t})\partial_{x}\bigl(\sigma_{t}^{2}\mathcal{T}_{\varepsilon}\Delta_{t}+\mathcal{E}_{t,\varepsilon}^{\sigma^{2}}-\tilde{\mathcal{E}}_{t,\varepsilon}^{\sigma^{2}}\bigr)dx = -\bigl\Vert \sigma_{t}\mathcal{T}_{\varepsilon}\Delta_{t}\bigr\Vert_{2}^{2}-\int_{0}^{\infty}\!(\mathcal{T}_{\varepsilon}\Delta_{t})(\mathcal{E}_{t,\varepsilon}^{\sigma^{2}}-\tilde{\mathcal{E}}_{t,\varepsilon}^{\sigma^{2}})dx\\
\leq
-\bigl\Vert\sigma_{t}\mathcal{T}_{\varepsilon}\Delta_{t}\bigr\Vert_{2}^{2}+C_{\theta}\bigl\Vert\mathcal{E}_{t,\varepsilon}^{\sigma^{2}}-\tilde{\mathcal{E}}_{t,\varepsilon}^{\sigma^{2}}\bigr\Vert_{2}^{2}+\theta\bigl\Vert \mathcal{T}_{\varepsilon}\Delta_{t}\bigr\Vert_{2}^{2},
\end{multline*}
where we have used that $\mathcal{T}_{\varepsilon}\Delta_{t}$, $\mathcal{E}_{t,\varepsilon}^{g}$, and $\tilde{\mathcal{E}}_{t,\varepsilon}^{g}$
are zero at zero and vanish at infinity.

\textbf{Step 3.} In the third line on the right-hand side of (\ref{eq: square of diff}), we expand the square and apply Young's inequality with free parameter $\theta$ to the lower order term. This yields
\begin{align*}
\int_0^\infty \!\sigma_{t}^{2}\bigl(\tilde{\rho}\mathcal{T}_{\varepsilon}\Delta_{t}+\delta_{t}^{\rho}\mathcal{T}_{\varepsilon}\nu_{t}\bigr){}^{2} dx
\leq &
 \, \bigl\Vert \sigma_{t}\tilde{\rho}_t\mathcal{T}_{\varepsilon}\Delta_{t} \bigr\Vert_{2}^2
+ C_\sigma|\delta^{\rho}_t|^2\bigl\Vert \mathcal{T}_{\varepsilon}\nu_{t} \bigr\Vert_{2}^2  \\
 & \;+ C_{\theta}|\delta^{\rho}|^2\bigl\Vert \mathcal{T}_{\varepsilon}\nu_{t} \bigr\Vert_{2}^2 + \theta  \bigl\Vert \mathcal{T}_{\varepsilon}\Delta_{t} \bigr\Vert_{2}^2.
\end{align*}
Also, we recall here that $|\delta^{\rho}_t|^2\apprle |M|_{t,\star}^2(|L_{t}-\tilde{L}_{t}|^{2}+d_1(\nu_t,\tilde{\nu}_t)^{2})$.

\textbf{Step 4.} When taking expectation in (\ref{eq: square of diff}),
the stochastic integrals vanish. Thus, by taking expectation and integrating over $x>0$, it follows from Steps 1--3 and Young's inequality with
free parameter $\theta>0$ that
\begin{multline}\label{eq: L2 norm of antiderivative}
\mathbb{E}\left\Vert \partial_{x}^{-1}\mathcal{T}_{\varepsilon}\Delta_{t}\right\Vert _{2}^{2}
\leq 
C_{\theta}\mathbb{E} \int_{0}^{t}\! \Vert \mathcal{T}\!\nu \Vert_{s,\star}^{2}|M|_{s,\star}^2 \bigl(|L_{s}-\tilde{L}_{s}|^{2}+d_1(\nu_s,\tilde{\nu}_s)^{2}\bigr)ds
\\
+C_{\theta}\mathbb{E}\int_{0}^{t}\!|M|_{s,\star}^2\left\Vert \partial_{x}^{-1}\mathcal{T}_{\varepsilon}\Delta_{s}\right\Vert_2^2 ds
+ C_\theta \mathbb{E} \int_0^t \!|M|_{s,\star}  \Vert \mathcal{T}\!\nu \Vert_{s,\star} \bigl\Vert\partial_{x}^{-1}\mathcal{T}_{\varepsilon}\Delta_{s}\bigr\Vert_{2}d_0(\nu_s,\tilde{\nu}_s) ds
\\
+ \mathbb{E}\int_{0}^{t} \int_0^\infty \! \{ \sigma_{s}^{2}\tilde{\rho}_{s}^{2}-\sigma_{s}^{2}+2\theta \} | \mathcal{T}_{\varepsilon}\Delta_{s}|^{2}dxds+o(1),
\end{multline}
as $\varepsilon\downarrow0$, for a constant $C_{\theta}$ which only depends  on the free parameter $\theta$.

\textbf{Step 5.} Since $\rho$ is bounded away from $1$ and $\sigma$ is bounded away from $0$, we can take the
free parameter $\theta$ sufficiently small so that (for all $x$ and $t$)
\begin{equation}
 \sigma(s,x)^{2}\rho(t,\tilde{\nu}_t)^{2}-\sigma(s,x)^{2}+2\theta \leq -c_0
\label{eq: small theta trick}
\end{equation}
for a fixed constant $c_0>0$.
Next, we can consider the stopping times
\[
t_{n}:=\inf\bigl\{ t>0: \Vert \mathcal{T}\!\nu\Vert_{t,\star}^{2}>n\;\;\text{or}\;\;|M|_{t,\star}^2>n\bigr\}\wedge T,
\]
for $n\geq1$, and notice that, by Proposition \ref{prop: Energy bound on particle limit},
we have $t_{n}\uparrow T$ as $n\rightarrow\infty$. Evaluating the
estimate (\ref{eq: L2 norm of antiderivative}) at
$t\land t_{n}$ and using (\ref{eq: small theta trick}), we get
\begin{align*}
&\mathbb{E}\left\Vert \partial_{x}^{-1}\mathcal{T}_{\varepsilon}\Delta_{t\wedge t_{n}}\right\Vert _{2}^{2} + c_0\mathbb{E} \int_0^{t\wedge t_{n}}\!\left\Vert \mathcal{T}_\varepsilon\Delta_{s}\right\Vert _{2}^{2}ds
\leq
Cn\!\int_{0}^{t}\!\mathbb{E}\left\Vert \partial_{x}^{-1}\mathcal{T}_{\varepsilon}\Delta_{s\land t_{n}}\right\Vert_{2}^{2}ds +o(1)\\
 & + Cn^{2}\mathbb{E}\int_{0}^{t\land t_{n}}\!|L_{s}-\tilde{L}_{s}|^{2}+d_1(\nu_s,\tilde{\nu}_s)^{2}ds
+Cn^2\mathbb{E} \int_0^{t\land t_{n}} \! d_0(\nu_s,\tilde{\nu}_s)\bigl\Vert\partial_{x}^{-1}\mathcal{T}_{\varepsilon}\Delta_{s}\bigr\Vert_{2}ds.
\end{align*}
Finally, by applying the integrating factor $\exp\{ -Cnt\}$ to the first term on the right-hand side, we obtain the estimate from Proposition \ref{smooth_energy_est} with $c_n:=n^2(e^{CT}-1)$.

\section{Density Estimates\label{sec:Proofs Density and Prob}}

The purpose of this section is to prove the density estimates stated in Proposition~\ref{prop: Aronson estimate}. Our approach will rely on techniques that are entirely probabilistic in nature and we follow a simple intuitive procedure.

The first step is to ensure sufficiently fast decay of the tails of the particles. This is achieved in Section 6.1, where we show that the sub-Gaussianity of the initial law propagates nicely for all positive times. In Section 6.2, we transform the particles into Brownian motions with drift that have the same hitting times of the origin, and then we use the sub-Gaussianity to introduce a change of measure related to the drift.

Finally, we derive the density estimates in Section 6.3, by comparing
any given transformed particle at time $t>0$ with an independent absorbed
Brownian motion (under the original measure) started from the transformed particle's position at earlier times
$s<t$ and run for the remaining time $t-s$.

\subsection{Sub-Gaussianity}

For notational convenience, we define the dominating processes
\begin{equation}\label{eq: dominating process}
\Lambda_{t}^{i,{\scriptscriptstyle N}}:=|X_{t}^{i}|+{\textstyle \sum_{j=1}^{{\scriptscriptstyle N}}}a_{j}^{{\scriptscriptstyle N}}|X_{t}^{j}|, \quad	\Gamma_{t}^{i,{\scriptscriptstyle N}}:=|X_{t}^{i}|^{2}+{\textstyle \sum_{j=1}^{{\scriptscriptstyle N}}}a_{j}^{{\scriptscriptstyle N}}|X_{t}^{j}|^{2}.
\end{equation}
Notice that, by Assumption \ref{Assumption 1 - finite system}, we have
\[
|X_{t}^{i}|,|M_{t}^{{\scriptscriptstyle N}}|,|b(t,X_{t}^{i},\nu_{t}^{{\scriptscriptstyle N}})|\apprle1+\Lambda_{t}^{i,{\scriptscriptstyle N}}.
\]
\begin{prop}
	\label{prop:SUBGAUSSIAN} Let Assumption \ref{Assumption 1 - finite system}
	be satisfied. For every $\epsilon>0$,
	there is a (smooth) decreasing function $t\mapsto\eta_t$ with $\eta_0=\epsilon/2$ and $\eta_t>0$ such that, for all $t>0$ and $N\geq1$,
	\begin{equation}
	\label{eq: Gamma_t SubGaussian}
	\mathbb{E}e^{\eta_{t}{\Gamma}_t^{i,{\scriptscriptstyle N}}}\leq e^{c\int_{0}^{t}\!\eta_{s}ds}\mathbb{E}\Bigl[e^{\epsilon|X_{0}^{i}|{}^{2}}\Bigr]^{\frac{1}{2}} \mathbb{E}\Bigl[e^{\epsilon\sum_{j=1}^{N}a_{j}^{N}|X_{0}^{j}|{}^{2}}\Bigr]^{\frac{1}{2}}.
	\end{equation}
	In particular, $X_t^i$, $M_t^{\scriptscriptstyle N}$, and $\Lambda_{t}^{i,{\scriptscriptstyle N}}$ \hspace{-5pt} are all sub-Gaussian uniformly in $N\geq1$.
\end{prop}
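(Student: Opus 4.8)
The plan is to apply It\^o's formula to $e^{\eta_{t}\Gamma_{t}^{i,{\scriptscriptstyle N}}}$ for a suitable time-dependent exponent $\eta_{t}$ and to close the estimate by Gr\"onwall. Throughout, write $\Gamma_{t}=\Gamma_{t}^{i,{\scriptscriptstyle N}}$, use that the particles stay nonnegative until absorption (so $|X_{t\wedge\tau_{j}}^{j}|=X_{t\wedge\tau_{j}}^{j}\ge0$), and recall the linear growth bounds $|b_{t}^{j}|,|\alpha_{t}^{j}|\apprle1+\Lambda_{t}^{j,{\scriptscriptstyle N}}$ from Assumption \ref{Assumption 1 - finite system} together with $(\mathfrak{L}^{{\scriptscriptstyle N}})'\in L^{\infty}$. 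Applying It\^o to $(X_{t\wedge\tau_{j}}^{j})^{2}$ and summing against the weights (the $j=i$ term carrying coefficient $1+a_{i}^{{\scriptscriptstyle N}}$) gives $d\Gamma_{t}=\beta_{t}\,dt+dN_{t}$, where $dN_{t}=\sum_{j}(a_{j}^{{\scriptscriptstyle N}}+\mathbf{1}_{j=i})\mathbf{1}_{t<\tau_{j}}2X_{t}^{j}\sigma_{t}^{j}\,dB_{t}^{j}$ with $B_{t}^{j}=\sqrt{1-\rho_{t}^{2}}\,W_{t}^{j}+\rho_{t}W_{t}^{0}$, and where $\beta_{t}\,dt$ collects the drift, the It\^o corrections $(\sigma_{t}^{j})^{2}$, and the contagion terms $-2X_{t}^{j}\alpha_{t}^{j}(\mathfrak{L}^{{\scriptscriptstyle N}})'_{t}\,dt$. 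Using $|X_{t}^{j}|\,M_{t}^{{\scriptscriptstyle N}}\le\tfrac12(|X_{t}^{j}|^{2}+(M_{t}^{{\scriptscriptstyle N}})^{2})$ together with the Jensen-type bound $(M_{t}^{{\scriptscriptstyle N}})^{2}\le\sum_{j}a_{j}^{{\scriptscriptstyle N}}\mathbf{1}_{t<\tau_{j}}|X_{t}^{j}|^{2}\le\Gamma_{t}$, and noting $\sum_{j}(a_{j}^{{\scriptscriptstyle N}}+\mathbf{1}_{j=i})|X_{t}^{j}|^{2}=\Gamma_{t}$, one checks that $\beta_{t}\le\bar{C}(1+\Gamma_{t})$ with $\bar{C}$ independent of $N$. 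For the martingale part, since $d\langle B^{j},B^{k}\rangle_{t}=(\rho_{t}^{2}+(1-\rho_{t}^{2})\delta_{jk})\,dt\le dt$, boundedness of $\sigma$ gives $d\langle N\rangle_{t}\le\bar{C}\,(X_{t}^{i}+M_{t}^{{\scriptscriptstyle N}})^{2}\,dt\le\bar{C}\,\Gamma_{t}\,dt$, enlarging $\bar{C}$ if needed; the common noise $W^{0}$ is harmless here.

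Given these two estimates, It\^o's formula shows that the $dt$-drift of $e^{\eta_{t}\Gamma_{t}}$ is at most $e^{\eta_{t}\Gamma_{t}}\bigl(\bigl[\dot{\eta}_{t}+\bar{C}\eta_{t}+\tfrac12\bar{C}\eta_{t}^{2}\bigr]\Gamma_{t}+\bar{C}\eta_{t}\bigr)$ whenever $\eta_{t}>0$. Hence it suffices to take $\eta_{t}$ with $\eta_{0}=\epsilon/2$ solving the Riccati equation $\dot{\eta}_{t}=-\bar{C}\eta_{t}-\tfrac12\bar{C}\eta_{t}^{2}$; integrating explicitly via $\eta_{t}/(2+\eta_{t})=\tfrac{\epsilon/2}{2+\epsilon/2}\,e^{-\bar{C}t}$ yields a smooth, strictly decreasing, strictly positive function on $[0,\infty)$ with the required initial value, and kills the $\Gamma_{t}$-coefficient, leaving drift $\le\bar{C}\eta_{t}e^{\eta_{t}\Gamma_{t}}$. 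Localizing at $\sigma_{m}:=\inf\{t:\Gamma_{t}\ge m\}$ so the stochastic integral is a true martingale, taking expectations, and using $\mathbf{1}_{s<\sigma_{m}}e^{\eta_{s}\Gamma_{s}}\le e^{\eta_{s\wedge\sigma_{m}}\Gamma_{s\wedge\sigma_{m}}}$, Gr\"onwall gives $\mathbb{E}e^{\eta_{t\wedge\sigma_{m}}\Gamma_{t\wedge\sigma_{m}}}\le e^{\bar{C}\int_{0}^{t}\eta_{s}ds}\,\mathbb{E}e^{(\epsilon/2)\Gamma_{0}}$. Since the finite system is non-explosive with finite polynomial moments, $\sigma_{m}\uparrow\infty$ a.s., so Fatou's lemma passes to the limit and yields $\mathbb{E}e^{\eta_{t}\Gamma_{t}}\le e^{\bar{C}\int_{0}^{t}\eta_{s}ds}\,\mathbb{E}e^{(\epsilon/2)(|X_{0}^{i}|^{2}+\sum_{j}a_{j}^{{\scriptscriptstyle N}}|X_{0}^{j}|^{2})}$. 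A final application of Cauchy--Schwarz to split the initial exponential gives exactly \eqref{eq: Gamma_t SubGaussian} with $c=\bar{C}$.

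The ``in particular'' assertion is then immediate: $|X_{t}^{i}|^{2}$, $|M_{t}^{{\scriptscriptstyle N}}|^{2}$ and $(\Lambda_{t}^{i,{\scriptscriptstyle N}})^{2}$ are all $\apprle1+\Gamma_{t}^{i,{\scriptscriptstyle N}}$, so $\mathbb{E}e^{c'\eta_{t}|X_{t}^{i}|^{2}}$ etc. are controlled by $\mathbb{E}e^{\eta_{t}\Gamma_{t}^{i,{\scriptscriptstyle N}}}$, while Assumption \ref{Assumption 1 - finite system}(v) gives $\mathbb{E}e^{\epsilon|X_{0}^{i}|^{2}}<\infty$ for $\epsilon$ small, and Jensen's inequality ($e^{\epsilon\sum_{j}a_{j}^{{\scriptscriptstyle N}}|X_{0}^{j}|^{2}}\le\sum_{j}a_{j}^{{\scriptscriptstyle N}}e^{\epsilon|X_{0}^{j}|^{2}}$) together with the i.i.d.\ structure bounds $\mathbb{E}e^{\epsilon\sum_{j}a_{j}^{{\scriptscriptstyle N}}|X_{0}^{j}|^{2}}$ uniformly in $N$. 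As $\eta_{t}$ is itself $N$-independent, the resulting sub-Gaussianity is uniform in $N\ge1$.

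The step I expect to be the main obstacle is verifying that the mean-field coupling does not spoil the self-improving structure of the estimate: one must keep the cross terms $|X_{t}^{j}|\,M_{t}^{{\scriptscriptstyle N}}$ under control and, crucially, exploit $(M_{t}^{{\scriptscriptstyle N}})^{2}\le\Gamma_{t}$ so that \emph{both} the drift of $\Gamma_{t}$ and the quadratic variation of its martingale part are dominated by $\bar{C}(1+\Gamma_{t})$ with a single constant independent of $N$. The contagion term is absolutely continuous with bounded density (and in fact has a favourable sign, as $X^{j},\alpha^{j}\ge0$ and $(\mathfrak{L}^{{\scriptscriptstyle N}})'\ge0$), so it causes no difficulty; the remaining care lies in choosing $\eta_{t}$ to stay positive for all $t$, handled by the explicit Riccati solution, and in the routine localization needed to legitimize the exponential moment bound.
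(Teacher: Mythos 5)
Your argument follows the paper's proof in all essentials: It\^o's formula applied to $e^{\eta_t\Gamma_t}$ (you first derive $d\Gamma_t$ and then exponentiate, which amounts to the same computation with the generator), the observation that both the drift and the quadratic variation are dominated by a constant multiple of $1+\Gamma_t$ uniformly in $N$ thanks to $\bigl(\sum_j a_j^{\scriptscriptstyle N} x_j\bigr)^2\le\sum_j a_j^{\scriptscriptstyle N} x_j^2$, the Riccati choice of $\eta_t$ that kills the $\Gamma_t$-coefficient, localization plus Gr\"onwall, Cauchy--Schwarz to split the initial exponential, and Fatou to remove the localization. The constants differ but the structure is identical; the paper simply absorbs the contagion term into the drift at the outset (using $|(\mathfrak{L}^{\scriptscriptstyle N})'_t|\le\Vert\mathfrak{K}'\Vert_{1}$, so that w.l.o.g.\ $\alpha\equiv0$) rather than tracking it separately as you do.

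One point needs correcting, though the fix is immediate. You apply It\^o to the stopped squares $(X^j_{t\wedge\tau_j})^2$ and invoke nonnegativity up to absorption, so what you actually bound is the stopped analogue of $\Gamma_t$. But in (\ref{eq: dominating process}) the process $\Gamma_t^{i,\scriptscriptstyle N}$ is built from the \emph{unstopped} particles: the SDE in (\ref{eq: Particle System}) runs past $\tau_j$ (default only removes particle $j$ from $\nu^{\scriptscriptstyle N}$), and it is the unstopped $\Lambda_t^{i,\scriptscriptstyle N}$ whose sub-Gaussianity is used later, e.g.\ in the Novikov/change-of-measure arguments of Lemma \ref{lem:Stochastic exp and equivalnet measu} and Lemma \ref{lem: Radon-Nik estimate} and in Corollary \ref{cor: Sup is subgaussian}, all of which concern the unstopped transformed particle. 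A bound on the stopped quantity does not dominate the unstopped one, since after $\tau_j$ the particle can wander away from zero. To repair this, apply It\^o to $(X_t^j)^2$ without stopping, drop the indicators in the martingale part, and bound the drift and contagion terms by absolute values: you never need the sign of $X^j$, only $|b_t^j|,|\alpha_t^j|\apprle 1+|X_t^j|+\sum_k a_k^{\scriptscriptstyle N}|X_t^k|$, which holds because $\langle\nu_t^{\scriptscriptstyle N},|\cdot|\rangle\le\sum_k a_k^{\scriptscriptstyle N}|X_t^k|$. With these cosmetic changes your estimates, the Riccati solution, and the uniform-in-$N$ conclusion go through exactly as written.
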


\begin{proof}
First of all, the only fact we will use about the drift is $|b_t^i|\apprle 1+\Lambda_t^{i,\scriptscriptstyle N}$, so noting that $|b_t^i-(\mathfrak{L}^{\scriptscriptstyle N})'_t\alpha^i_t|\apprle1+\Lambda_t^{i,\scriptscriptstyle N}$, we can assume without loss of generality that $\alpha_t^i\equiv0$.
	
	Fix a strictly positive (deterministic) function $\eta\in\mathcal{C}^{1}(\mathbb{R}_+)$
	with $\eta_{0}=\epsilon/2$ and define $\Psi\in\mathcal{C}^{1,2}(\mathbb{R}_+\times\mathbb{R}^{N})$
	by
	\[
	\Psi(t,x):=e^{\eta_{t}(x_{i}^{2}+\sum_{j}a_{j}^{{\scriptscriptstyle N}}x_{j}^{2})}.
	\]
	For simplicity of notation, we write $\Gamma_{t}$ in place of $\Gamma_{t}^{i,{\scriptscriptstyle N}}$. Then $\Psi(t,X_{t})=e^{\eta_{t}\Gamma_{t}}$, where we have defined $X_{t}:=(X_{t}^{1},\ldots,X_{t}^{\scriptscriptstyle N})$.
Introducing the stopping times
\[
\tau_{n}:=\inf\left\{ t\geq0:\left|X_{t}\right|>n\right\} \quad\text{for}\quad n\geq1,
\]
and applying Itô's formula, we get
\begin{equation}
e^{\eta_{t\wedge\tau_{n}}\Gamma_{t\wedge\tau_{n}}}=e^{\eta_{0}\Gamma_{0}}+\int_{0}^{t\land\tau_{n}}\!(\partial_{t}+\mathcal{L})e^{\eta_{s}\Gamma_{s}}ds+\sum_{j=1}^{N}\int_{0}^{t\land\tau_{n}}\!\sigma_{s}^{j}\partial_{x^{j}}e^{\eta_{s}\Gamma_{s}}dB_{s}^{j},\label{eq: gronwall start}
\end{equation}
where $dB_{s}^{j}=\rho_{t}dW_{t}^{0}+\sqrt{1-\rho_{t}^{2}}dW_{t}^{j}$
and
\[
\mathcal{L}\Psi=\sum_{j=1}^{N}b_{t}^{j}\partial_{x_{j}}\Psi+\frac{1}{2}\sum_{j=1}^{N}(\sigma_{t}^{j})^{2}\partial_{x_{j}}^{2}\Psi+\frac{1}{2}\sum_{k\neq j}\sigma_{t}^{j}\sigma_{t}^{k}\rho_{t}^{2}\partial_{x_{j}x_{k}}\Psi.
\]
Computing the derivatives, we see that
\begin{align*}
(\partial_{t}+\mathcal{L})e^{\eta_{t}\Gamma_{t}}= & e^{\eta_{t}\Gamma_{t}}\biggl(\dot{\eta}_{t}\Gamma_{t}+2\eta_{t}\sum_{j=1}^{N}b_{t}^{j}(\delta_{ij}+a_{j}^{{\scriptscriptstyle N}})X_{t}^{j}\:+\\
 & \qquad+\,\eta_{t}\sum_{j=1}^{N}(\sigma_{t}^{j})^{2}(\delta_{ij}+a_{j})+2\eta_{t}^{2}\sum_{j=1}^{N}(\sigma_{t}^{j})^{2}(\delta_{ij}+a_{j}^{{\scriptscriptstyle N}})^{2}(X_{t}^{j})^{2}\\
 & \qquad+\,2\eta_{t}^{2}\rho_{t}^{2}\sum_{k\neq j}\sigma_{t}^{j}\sigma_{t}^{k}(\delta_{ij}+a_{j}^{{\scriptscriptstyle N}})(\delta_{ik}+a_{k}^{{\scriptscriptstyle N}})X_{t}^{j}X_{t}^{k}\biggr).
\end{align*}
Using the bounds $|\sigma^{j}|\leq C_{\sigma}$ and $|b^{j}|\leq C_{b}(1+|X_{t}^{j}|+\sum_{\ell}a_{\ell}^{{\scriptscriptstyle N}}|X_{t}^{\ell}|)$
as well as the basic inequalities
\[
x\leq1+x^{2},\quad(x+y)^{2}\leq2(x^{2}+y^{2}),\quad\text{and}\quad\bigl({\textstyle \sum}_{j=1}^{N}a_{j}^{{\scriptscriptstyle N}}x_{j}\bigr)^{2}\leq{\textstyle \sum}_{j=1}^{N}a_{j}^{{\scriptscriptstyle N}}x_{j}^{2},
\]
one then easily verifies that
\[
(\partial_{t}+\mathcal{L})e^{\eta_{t}\Gamma_{t}}\leq\Bigl(2\eta_{t}(C_{b}+C_{\sigma}^{2})+(\dot{\eta}_{t}+10C_{b}\eta_{t}+8C_{\sigma}^{2}\eta_{t}^{2})\Gamma_{t}\Bigr)e^{\eta_{t}\Gamma_{t}}.
\]
Consequently, if only we can choose $\eta\in\mathcal{C}^{1}(\mathbb{R}_+)$
such that
\[
\dot{\eta}_{t}+10C_{b}\eta_{t}+8C_{\sigma}^{2}\eta_{t}^{2}=0,\quad\eta_{0}=\epsilon/2,
\]
then we have
\begin{equation}
(\partial_{t}+\mathcal{L})e^{\eta_{t}\Gamma_{t}}\leq2\eta_{t}(C_{\sigma}^{2}+C_{b})e^{\eta_{t}\Gamma_{t}}.\label{eq: d_t + L}
\end{equation}
This is achieved by taking
\begin{equation}
\eta_{t}:=\frac{5C_{b}\epsilon}{10C_{b}e^{10C_{b}t}+4C_{\sigma}^{2}\epsilon(e^{10C_{b}t}-1)}.\label{eq: definition of eta_t}
\end{equation}
With this choice for $\eta$, the inequality (\ref{eq: d_t + L}) is satisfied and
thus (\ref{eq: gronwall start}) gives that
\begin{align}
e^{\eta_{t\wedge\tau_{n}}\Gamma_{t\wedge\tau_{n}}} & \leq e^{\eta_{0}\Gamma_{0}}+2(C_{\sigma}^{2}+C_{b})\int_{0}^{t\land\tau_{n}}\!\eta_{s}e^{\eta_{s}\Gamma_{s}}ds+\sum_{j=1}^{N}\int_{0}^{t\land\tau_{n}}\!\sigma_{s}^{j}\partial_{x_{j}}e^{\eta_{s}\Gamma_{s}}dB_{s}^{j}.\label{eq: Key estimate for Gronwall}
\end{align}
Taking expectations and noting that the stochastic integral is a
true martingale,
\[
\mathbb{E}\hspace{-1pt}\left[e^{\eta_{t\land\tau_{n}}\Gamma_{t\wedge\tau_{n}}}\right]\leq\mathbb{E}\hspace{-1pt}\left[e^{\eta_{0}\Gamma_{0}}\right]+2(C_{\sigma}^{2}+C_{b})\int_{0}^{t}\eta_{s}\mathbb{E}\hspace{-1pt}\left[e^{\eta_{s\land\tau_{n}}\Gamma_{s\wedge\tau_{n}}}\right]ds.
\]
By Cauchy\textendash Schwarz (recall $\eta_{0}=\epsilon/2$),
we have
\[
\mathbb{E}\hspace{-1pt}\left[e^{\eta_{0}\Gamma_{0}}\right]\leq\mathbb{E}\Bigl[e^{\epsilon|X_{0}^{i}|{}^{2}}\Bigr]^{\frac{1}{2}}\mathbb{E}\Bigl[e^{\epsilon\sum_{j}a_{j}^{{\scriptscriptstyle N}}|X_{0}^{j}|{}^{2}}\Bigr]^{\frac{1}{2}}
\]
and hence it follows from Gronwall's inequality that
\begin{align}
\mathbb{E}\hspace{-1pt}\left[e^{\eta_{t\land\tau_{n}}\Gamma_{t\wedge\tau_{n}}}\right] & \leq e^{2(C_{\sigma}^{2}+C_{b})\int_{0}^{t}\eta_{s}ds}\mathbb{E}\Bigl[e^{\epsilon|X_{0}^{i}|{}^{2}}\Bigr]^{\frac{1}{2}}\mathbb{E}\Bigl[e^{\epsilon\sum_{j}a_{j}^{{\scriptscriptstyle N}}|X_{0}^{j}|{}^{2}}\Bigr]^{\frac{1}{2}},\label{eq: gronwall estimate}
\end{align}
Noting that $\tau_{n}\uparrow\infty$ as $n\rightarrow\infty$ , we have $e^{\eta_{t\land\tau_{n}}\Gamma_{t\wedge\tau_{n}}}\rightarrow e^{\eta_{t}\Gamma_{t}}$ as $n\rightarrow\infty$, by the continuity of $t\mapsto\eta_{t}$ and $t\mapsto X_{t}^{j}$,
for $j=1,\ldots,N$. Consequently, Fatou's lemma yields
\[
\mathbb{E}\hspace{-1pt}\left[e^{\eta_{t}\Gamma_{t}}\right]\leq\liminf_{n\rightarrow\infty}\mathbb{E}\hspace{-1pt}\left[e^{\eta_{t\land\tau_{n}}\Gamma_{t\wedge\tau_{n}}}\right]\leq e^{2(C_{\sigma}^{2}+C_{b})\int_{0}^{t}\eta_{s}ds}\mathbb{E}\Bigl[e^{\epsilon|X_{0}^{i}|{}^{2}}\Bigr]^{\frac{1}{2}}\mathbb{E}\Bigl[e^{\epsilon\sum_{j}a_{j}^{{\scriptscriptstyle N}}|X_{0}^{j}|{}^{2}}\Bigr]^{\frac{1}{2}}.
\]
Since $\eta_{t}$ does not depend on $N$, the estimate is uniform
in $N\geq1$.
\end{proof}
The above result also yields control over the running maxima of the
processes.
\begin{cor}
\label{cor: Sup is subgaussian}Given $\eta_{T}$ from Proposition
\ref{prop:SUBGAUSSIAN}, fix any $\epsilon<\eta_{T}/2$. Then we have

\[
\mathbb{E}\Bigl[e^{\epsilon\sup_{t \leq T}\Gamma_{t}^{i,N}}\Bigr]\leq C_{T,\epsilon},
\]
for some constant $C_{T,\epsilon}>0$ that is uniform in $N\geq1$.
\end{cor}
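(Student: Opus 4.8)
The plan is to combine the It\^o expansion used in the proof of Proposition~\ref{prop:SUBGAUSSIAN} with a Burkholder--Davis--Gundy estimate, exploiting that the factor $\tfrac12$ in the condition $\epsilon<\eta_{T}/2$ leaves just enough room to absorb the quadratic variation. Set $W_{t}:=e^{\epsilon\Gamma_{t}^{i,{\scriptscriptstyle N}}}$ and recall the localising times $\tau_{n}:=\inf\{t\geq0:|X_{t}|>n\}$, which increase to $\infty$. Since $t\mapsto\eta_{t}$ is decreasing, $\eta_{s}\geq\eta_{T}>2\epsilon$ for all $s\in[0,T]$, so we may fix $\delta:=(\eta_{T}-2\epsilon)/2>0$, which satisfies both $\epsilon+\delta<\eta_{T}$ and $2\epsilon+\delta<\eta_{T}$. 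Running It\^o's formula on $W_{t\wedge\tau_{n}}$ exactly as in the cited proof (now with the constant exponent $\epsilon$, so that there is no $\partial_{t}$-term) gives
\[
W_{t\wedge\tau_{n}}=W_{0}+\int_{0}^{t\wedge\tau_{n}}\mathcal{L}e^{\epsilon\Gamma_{s}}ds+\widetilde{M}_{t\wedge\tau_{n}},\qquad\widetilde{M}_{t}:=\sum_{j=1}^{N}\int_{0}^{t}\sigma_{s}^{j}\partial_{x_{j}}e^{\epsilon\Gamma_{s}}dB_{s}^{j},
\]
with $dB_{s}^{j}=\rho_{s}dW_{s}^{0}+\sqrt{1-\rho_{s}^{2}}dW_{s}^{j}$, and the same elementary bounds as there yield $\mathcal{L}e^{\epsilon\Gamma_{s}}\leq C(1+\Gamma_{s})e^{\epsilon\Gamma_{s}}\leq C_{\delta}e^{(\epsilon+\delta)\Gamma_{s}}$. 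On $[0,\tau_{n}]$ the integrand of $\widetilde{M}$ is bounded, so $\widetilde{M}_{\cdot\wedge\tau_{n}}$ is a true martingale.

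Taking $\sup_{t\leq T}$ in the display and then expectations, the drift term is $\leq C_{\delta}\int_{0}^{T}e^{(\epsilon+\delta)\Gamma_{s}}ds$, whose expectation is finite and uniform in $N$ by Proposition~\ref{prop:SUBGAUSSIAN} (using $\epsilon+\delta<\eta_{s}$, hence $e^{(\epsilon+\delta)\Gamma_{s}}\leq e^{\eta_{s}\Gamma_{s}}$). The initial term $\mathbb{E}W_{0}=\mathbb{E}[e^{\epsilon|X_{0}^{i}|^{2}}e^{\epsilon\sum_{j}a_{j}^{{\scriptscriptstyle N}}|X_{0}^{j}|^{2}}]$ is bounded uniformly in $N$ by the Cauchy--Schwarz/Jensen argument in the proof of Proposition~\ref{prop:SUBGAUSSIAN} together with the sub-Gaussianity of $\mu_{0}$, since $2\epsilon<\eta_{T}\leq\eta_{0}$. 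It only remains to control $\mathbb{E}\sup_{t\leq T}|\widetilde{M}_{t\wedge\tau_{n}}|$.

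This is the crux, and the only place the factor $2$ in the hypothesis is genuinely needed. From $\partial_{x_{j}}e^{\epsilon\Gamma_{s}}=2\epsilon(\delta_{ij}+a_{j}^{{\scriptscriptstyle N}})X_{s}^{j}e^{\epsilon\Gamma_{s}}$, the bounds $|\sigma^{j}|\leq C_{\sigma}$, $a_{j}^{{\scriptscriptstyle N}}\leq C/N$, and $(\sum_{j}a_{j}^{{\scriptscriptstyle N}}x_{j})^{2}\leq\sum_{j}a_{j}^{{\scriptscriptstyle N}}x_{j}^{2}$, together with $(\Lambda_{s}^{i,{\scriptscriptstyle N}})^{2}\leq2\Gamma_{s}$, one computes (handling the common-noise and idiosyncratic parts of $\langle\widetilde{M}\rangle$ by the same inequalities as in Proposition~\ref{prop:SUBGAUSSIAN}) that $d\langle\widetilde{M}\rangle_{s}\apprle\epsilon^{2}\Gamma_{s}e^{2\epsilon\Gamma_{s}}ds$. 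Hence, by Burkholder--Davis--Gundy, Jensen, and $\Gamma_{s}e^{2\epsilon\Gamma_{s}}\leq C_{\delta}e^{(2\epsilon+\delta)\Gamma_{s}}$,
\[
\mathbb{E}\sup_{t\leq T}|\widetilde{M}_{t\wedge\tau_{n}}|\apprle\mathbb{E}\Bigl(\int_{0}^{T\wedge\tau_{n}}\epsilon^{2}\Gamma_{s}e^{2\epsilon\Gamma_{s}}ds\Bigr)^{1/2}\apprle\Bigl(\int_{0}^{T}\mathbb{E}e^{(2\epsilon+\delta)\Gamma_{s}}ds\Bigr)^{1/2}\leq\Bigl(\int_{0}^{T}\mathbb{E}e^{\eta_{s}\Gamma_{s}}ds\Bigr)^{1/2},
\]
where the last step uses precisely $2\epsilon+\delta<\eta_{T}\leq\eta_{s}$; the right-hand side is finite and uniform in $N$ by Proposition~\ref{prop:SUBGAUSSIAN}. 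Combining the three bounds gives $\mathbb{E}\sup_{t\leq T}W_{t\wedge\tau_{n}}\leq C_{T,\epsilon}$ uniformly in $N$ and $n$, and since $\tau_{n}\uparrow\infty$ implies $W_{\cdot\wedge\tau_{n}}\to W_{\cdot}$ uniformly on $[0,T]$ pathwise, Fatou's lemma yields $\mathbb{E}e^{\epsilon\sup_{t\leq T}\Gamma_{t}^{i,{\scriptscriptstyle N}}}=\mathbb{E}\sup_{t\leq T}W_{t}\leq C_{T,\epsilon}$. The main obstacle is thus simply recognising the $e^{2\epsilon\Gamma_{s}}$ scaling of the quadratic variation of $W$, which forces an exponential moment at level $2\epsilon$ (with a sliver $\delta$ to spare) --- exactly what $\epsilon<\eta_{T}/2$ delivers.
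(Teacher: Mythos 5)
Your argument is correct and follows essentially the same route as the paper: rerun the It\^o expansion from Proposition \ref{prop:SUBGAUSSIAN} with a strictly smaller exponent, observe that the quadratic variation of the martingale part scales like $\Gamma_s e^{2\epsilon\Gamma_s}$ and is therefore integrable by Proposition \ref{prop:SUBGAUSSIAN} precisely because $\epsilon<\eta_T/2$, and then control the running maximum via Burkholder--Davis--Gundy before passing to the limit in the localisation. The only cosmetic difference is that the paper works with the time-dependent exponent $\xi_t=p\eta_t/2$, $p<1$, where you use a constant exponent $\epsilon$ with a $\delta$-sliver to absorb the polynomial factors; the substance is the same.
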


\begin{proof} Set $\xi_{t}:=p\eta_{t}/2$ for $p<1$ and derive the estimate (\ref{eq: Key estimate for Gronwall}) for $\xi$. As $p<1$, $|\partial_{x_j}e^{\xi_s\Gamma_s}|^2$ is integrable by Proposition \ref{prop:SUBGAUSSIAN}, so we can apply Burkholder--Davis--Gundy to control the running max of the stochastic integral. Thus, taking expectation and using monotone convergence, we get
	\[
	\mathbb{E}\Bigl[e^{\frac{p\eta_T}{2}\sup_{t\leq T}\Gamma_{t}}\Bigr]\leq\mathbb{E}\Bigl[\sup_{t\leq T}e^{\xi_{t}\Gamma_{t}}\Bigr]\leq C_{T,p}.
	\]
	As $p<1$ was arbitrary, the claim follows.
\end{proof}

\subsection{Change of measure}

In this section we first transform each particle into a Brownian motion with drift in a way that
preserves the hitting times of the origin. Next, we then use the sub-Gaussianity to introduce a change-of-measure that can remove this drift and finally we obtain an
important estimate for the associated Radon-Nikodym derivative.
\begin{lem}
\label{lem:=00005BRestatement-of-Lemma4.1 from hambly ledger}Define
the transformation $\Upsilon\in C^{1,2}([0,T]\times\mathbb{R})$ by
\[
(t,x)\mapsto\Upsilon_{t}(x):=\int_{0}^{x}\frac{1}{\sigma(t,y)}dy.
\]
Fixing an arbitrary index $i\in\left\{ 1,\ldots,N\right\} $, we let
$Z_{t}:=\Upsilon_{t}(X_{t}^{i})$. Then
\[
dZ_{t}=\hat{b}_{t}^{i}dt+dB_{t}^{i}\quad\text{with}\quad Z_{0}=\Upsilon_{0}(X_{0}^{i}),
\]
where $B^{i}$ is a Brownian motion and the (stochastic) drift $\hat{b}^{i}_t$
obeys the growth condition
\begin{equation}
|\hat{b}_{t}^{i}|\apprle1+|X_{t}^{i}|+|M_{t}^{{\scriptscriptstyle N}}|.\label{eq:bound on D_t growth}
\end{equation}
Furthermore, the transformed process $Z$ satisfies
\begin{equation}
\text{\emph{sgn}}(Z_{t})=\text{\emph{sgn}}(X_{t}^{i})\quad\text{\emph{and}}\quad|Z_{t}|\apprle|X_{t}^{i}|.\label{eq: bound on Z_t growth}
\end{equation}
\end{lem}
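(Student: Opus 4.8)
The plan is to apply It\^o's formula to the Lamperti-type change of variables $Z_t=\Upsilon_t(X_t^i)$, drawing only on the non-degeneracy and the space/time regularity of $\sigma$ in Assumption \ref{Assumption 1 - finite system}, and then to read the sign and size of $Z_t$ off the strict monotonicity of $x\mapsto\Upsilon_t(x)$. First I would rewrite the $i$-th particle's equation as $dX_t^i=(b_t^i-\alpha_t^i(\mathfrak{L}^{{\scriptscriptstyle N}})_t')\,dt+\sigma_t^i\,dB_t^i$, where $B^i$ is the Brownian motion driving particle $i$, i.e.\ $dB_t^i=\sqrt{1-\rho_t^2}\,dW_t^i+\rho_t\,dW_t^0$; since $W^i$ and $W^0$ are independent we have $\langle B^i\rangle_t=t$, so L\'evy's characterisation confirms $B^i$ is Brownian. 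Here one uses that $\mathfrak{K}\in\mathcal{W}_0^{1,1}(\mathbb{R}_+)$ makes $\mathfrak{L}^{{\scriptscriptstyle N}}$ absolutely continuous with bounded density $(\mathfrak{L}^{{\scriptscriptstyle N}})'=\mathfrak{K}'\ast L^{{\scriptscriptstyle N}}$, so the contagion term is of finite variation and does not contribute to $d\langle X^i\rangle_t=(\sigma_t^i)^2\,dt$. Since $\sigma\in\mathcal{C}^{1,2}$ and $\sigma\geq\epsilon>0$, the map $\Upsilon$ is $\mathcal{C}^{1,2}$ with $\partial_x\Upsilon_t(x)=1/\sigma(t,x)$, $\partial_{xx}\Upsilon_t(x)=-\partial_x\sigma(t,x)/\sigma(t,x)^2$, and $\partial_t\Upsilon_t(x)=-\int_0^x\partial_t\sigma(t,y)/\sigma(t,y)^2\,dy$ (differentiation under the integral sign being legitimate since $|\partial_t\sigma|\leq C$ and $\sigma\geq\epsilon$). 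It\^o's formula then yields $dZ_t=\hat b_t^i\,dt+dB_t^i$ with
\[
\hat b_t^i=\partial_t\Upsilon_t(X_t^i)+\frac{b_t^i-\alpha_t^i(\mathfrak{L}^{{\scriptscriptstyle N}})_t'}{\sigma_t^i}-\tfrac12\,\partial_x\sigma(t,X_t^i).
\]

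The growth bound (\ref{eq:bound on D_t growth}) then follows term by term: $|\partial_t\Upsilon_t(X_t^i)|\apprle|X_t^i|$ since $|\partial_t\sigma|\leq C$ and $\sigma\geq\epsilon$; the middle term is $\apprle 1+|X_t^i|+|M_t^{{\scriptscriptstyle N}}|$ because $|b_t^i|,|\alpha_t^i|\apprle 1+|X_t^i|+\langle\nu_t^{{\scriptscriptstyle N}},|\cdot|\rangle$ with $\langle\nu_t^{{\scriptscriptstyle N}},|\cdot|\rangle=M_t^{{\scriptscriptstyle N}}$ (as $\nu_t^{{\scriptscriptstyle N}}$ sits on $\mathbb{R}_+$), $|(\mathfrak{L}^{{\scriptscriptstyle N}})_t'|\leq\|\mathfrak{K}'\|_{1}$, and $\sigma_t^i\geq\epsilon$; and the last term is $\leq C/2$ since $|\partial_x\sigma|\leq C$. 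For (\ref{eq: bound on Z_t growth}), observe that $1/\sigma(t,\cdot)>0$ makes $x\mapsto\Upsilon_t(x)$ strictly increasing with $\Upsilon_t(0)=0$, whence $\mathrm{sgn}\,\Upsilon_t(x)=\mathrm{sgn}\,x$, so that $\mathrm{sgn}(Z_t)=\mathrm{sgn}(X_t^i)$; and $|Z_t|=\bigl|\int_0^{X_t^i}\sigma(t,y)^{-1}\,dy\bigr|\leq\epsilon^{-1}|X_t^i|$, giving $|Z_t|\apprle|X_t^i|$.

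I do not anticipate a genuine obstacle here. The only steps deserving a little care are the verification that $\Upsilon$ is $\mathcal{C}^{1,2}$ with the claimed time derivative --- exactly the place where the hypotheses $\sigma\geq\epsilon$ and $|\partial_t\sigma|\leq C$ are used --- and the bookkeeping that the linear growth of $b$ and $\alpha$ reduces to growth in $|X_t^i|$ and $|M_t^{{\scriptscriptstyle N}}|$ only, which uses $\langle\nu_t^{{\scriptscriptstyle N}},|\cdot|\rangle=M_t^{{\scriptscriptstyle N}}$ on $\mathbb{R}_+$ together with the boundedness of $L^{{\scriptscriptstyle N}}$ and $(\mathfrak{L}^{{\scriptscriptstyle N}})'$.
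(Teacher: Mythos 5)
Your argument is correct and follows essentially the same route as the paper: apply It\^o's formula to $Z_t=\Upsilon_t(X_t^i)$ using the derivatives $\partial_x\Upsilon_t=1/\sigma$, $\partial_{xx}\Upsilon_t=-\partial_x\sigma/\sigma^2$, $\partial_t\Upsilon_t(x)=-\int_0^x\partial_t\sigma(t,y)\sigma(t,y)^{-2}dy$, identify the same drift $\hat b_t^i$, and then read off the growth bound from Assumption \ref{Assumption 1 - finite system} together with $|(\mathfrak{L}^{{\scriptscriptstyle N}})'_t|\leq\Vert\mathfrak{K}'\Vert_{L^1}$, and the sign and size of $Z_t$ from $\epsilon\leq\sigma$. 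The only additions beyond the paper's proof are harmless elaborations (L\'evy's characterisation of $B^i$ and the observation $\langle\nu_t^{{\scriptscriptstyle N}},|\cdot|\rangle=M_t^{{\scriptscriptstyle N}}$ on $\mathbb{R}_+$), so there is nothing to correct.
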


\begin{proof}
Note that
\[
\partial_{t}\Upsilon_{t}(x)=-\int_{0}^{x}\frac{\partial_{t}\sigma(t,y)}{\sigma(t,y)^{2}}dy,\quad\partial_{x}\Upsilon_{t}(x)=\frac{1}{\sigma(t,x)},\quad\partial_{xx}^{2}\Upsilon_{t}(x)=-\frac{\partial_{x}\sigma(t,x)}{\sigma(t,x)^{2}}.
\]
Defining $B_{t}^{i}:=\int_{0}^{t}\rho_{s}dW_{s}^{0}+\int_{0}^{t}\!\sqrt{1-\rho_{s}^2}dW_{s}^{i}$,
we have $dX_{t}=(b^i_{t}-\alpha^i_{t}(\mathfrak{L}^{{\scriptscriptstyle N}})_{t}^{\prime})dt+\sigma_{t}dB_{t}$
with $d\langle X\rangle _{t}=\sigma_{t}^{2}dt$. Hence
an application of Itô's formula yields $dZ_{t}=\hat{b}_{t}^{i}dt+dB_{t}^{i}$,
where
\[
\hat{b}_{t}^{i}:=\frac{b_t(X_{t}^{i})-\alpha_{t}(X_{t}^{i})(\mathfrak{L}^{{\scriptscriptstyle N}})_{t}^{\prime}}{\sigma(t,X_{t}^{i})}-\frac{1}{2}\partial_{x}\sigma(t,X_{t}^{i})-\int_{0}^{X_{t}^{i}}\frac{\partial_{t}\sigma(t,y)}{\sigma(t,y)^{2}}dy.
\]
Now, the bound on $Z_{t}$ and the statement about its sign in (\ref{eq: bound on Z_t growth})
follow directly from the definition of $\Upsilon$, since $\sigma$
is strictly positive and bounded away from zero. Similarly, the growth
condition in (\ref{eq:bound on D_t growth}) follows from the properties of the coefficients
(Assumption~\ref{Assumption 1 - finite system}) and the fact that
$|(\mathfrak{L}^{{\scriptscriptstyle N}})_{t}^{\prime}|\leq\int_{0}^{t}|\mathfrak{K}^{\prime}(t-s)|L_{s}^{{\scriptscriptstyle N}}ds\leq\left\Vert \mathfrak{K}^{\prime}\right\Vert _{L^{1}}$.
\end{proof}
\begin{lem}
\label{lem:Stochastic exp and equivalnet measu}Fix $i\in\left\{ 1,\ldots,N\right\} $
and define the stochastic exponential
\[
\mathcal{E}_{t}:=\exp\left\{ -\int_{0}^{t}\hat{b}_{s}^{i}dB_{s}^{i}-\frac{1}{2}\int_{0}^{t}(\hat{b}_{s}^{i})^{2}ds\right\} ,
\]
where $\hat{b}^{i}$ is the drift of $Z$ as defined in Lemma
\ref{lem:=00005BRestatement-of-Lemma4.1 from hambly ledger}. Then
$Z$ is a Brownian motion under the probability measure $\mathbb{Q}$
given by the Radon-Nikodym derivative
\[
\frac{d\mathbb{Q}}{d\mathbb{P}}\Bigr|_{\mathcal{F}_{t}}=\mathcal{E}_{t}
\]
with initial value $Z_{0}$ distributed according to $\varpi_{0}:=\mu_{0}\circ\Upsilon_{0}^{-1}$.
\end{lem}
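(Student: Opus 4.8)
The plan is to obtain the statement as a direct instance of Girsanov's theorem, so the only substantive point to verify is that the stochastic exponential $(\mathcal{E}_t)_{t\leq T}$ is a \emph{true} martingale rather than merely a local martingale. Observe first that $\mathcal{E}$ is well-defined as a continuous local martingale, since $\hat{b}^i$ is adapted and $\int_0^t(\hat{b}_s^i)^2ds<\infty$ almost surely by the growth bound (\ref{eq:bound on D_t growth}) and path continuity. Once the martingale property is established, one defines $\mathbb{Q}$ consistently on $\mathcal{F}_T$ via $\tfrac{d\mathbb{Q}}{d\mathbb{P}}\big|_{\mathcal{F}_t}=\mathcal{E}_t$, and Girsanov's theorem shows that $\tilde{B}_t:=B_t^i+\int_0^t\hat{b}_s^ids$ is a $\mathbb{Q}$-Brownian motion with respect to the original filtration. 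By Lemma \ref{lem:=00005BRestatement-of-Lemma4.1 from hambly ledger} we have $Z_t=Z_0+\tilde{B}_t$, so $Z$ is a Brownian motion under $\mathbb{Q}$; moreover $\mathcal{E}_0=1$ gives $\mathbb{Q}=\mathbb{P}$ on $\mathcal{F}_0$, so the law of $Z_0=\Upsilon_0(X_0^i)$ is unchanged and equals $\mu_0\circ\Upsilon_0^{-1}=\varpi_0$, with $Z_0$ independent of $\tilde{B}$ under $\mathbb{Q}$ because $Z_0$ is $\mathcal{F}_0$-measurable while $\tilde{B}$ has $\mathbb{Q}$-independent increments relative to the original filtration.

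To prove that $\mathcal{E}$ is a true martingale I would invoke Novikov's criterion, but only after localising in time: the exponential moment control at our disposal comes from Corollary \ref{cor: Sup is subgaussian}, and the constant $\eta_T$ appearing there (via Proposition \ref{prop:SUBGAUSSIAN}) degrades as $T$ increases, so a direct application of Novikov on $[0,T]$ is not guaranteed to work. Instead I would appeal to the standard stepwise refinement of Novikov's condition, namely that $\mathcal{E}$ is a martingale on $[0,T]$ as soon as there is a partition $0=t_0<t_1<\cdots<t_m=T$ with
\[
\mathbb{E}\exp\Bigl\{\tfrac12\int_{t_{k-1}}^{t_k}(\hat{b}_s^i)^2ds\Bigr\}<\infty\quad\text{for every}\quad k=1,\ldots,m.
\]

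To verify this, I would combine (\ref{eq:bound on D_t growth}) with $|M_s^{{\scriptscriptstyle N}}|\apprle 1+\Lambda_s^{i,{\scriptscriptstyle N}}$ and $(\Lambda_s^{i,{\scriptscriptstyle N}})^2\apprle\Gamma_s^{i,{\scriptscriptstyle N}}$ (using the notation (\ref{eq: dominating process})) to get $(\hat{b}_s^i)^2\apprle 1+\Gamma_s^{i,{\scriptscriptstyle N}}$, so that for any subinterval of length $\delta$,
\[
\tfrac12\int_{t_{k-1}}^{t_k}(\hat{b}_s^i)^2ds\leq C_1\delta\bigl(1+{\textstyle\sup_{s\leq T}}\Gamma_s^{i,{\scriptscriptstyle N}}\bigr)
\]
for a universal constant $C_1>0$. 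Choosing the mesh $\delta>0$ small enough that $C_1\delta<\eta_T/2$ (possible since $\eta_T>0$) and setting $m=\lceil T/\delta\rceil$, Corollary \ref{cor: Sup is subgaussian} gives $\mathbb{E}\exp\{C_1\delta\sup_{s\leq T}\Gamma_s^{i,{\scriptscriptstyle N}}\}<\infty$, hence the required finiteness on each piece. The main obstacle is precisely this $T$-dependence of the exponential integrability: one cannot simply quote Novikov globally, and the piecewise argument — calibrated to the size of $\eta_T$ delivered by the sub-Gaussianity estimates — is what makes the proof go through. The remaining items (consistency of $\mathbb{Q}$, the $\mathbb{Q}$-dynamics of $Z$, and the law of $Z_0$) are then routine and follow as indicated in the first paragraph.
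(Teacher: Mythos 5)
Your proof is correct and its skeleton coincides with the paper's: both recognise that Novikov's criterion cannot be applied on all of $[0,T]$ because the exponential integrability coming from Proposition \ref{prop:SUBGAUSSIAN} degrades with the horizon, and both repair this by partitioning $[0,T]$ into pieces whose length is calibrated to $\eta_T$ and applying Novikov piecewise, then concluding with Girsanov. The differences are in two details. First, to verify the piecewise Novikov condition the paper does not use the running supremum: it applies Jensen's inequality to move the exponential inside the $ds$-integral, reducing matters to $\sup_{t\leq T}\mathbb{E}\exp\{(C^{2}T/n)(1+\Lambda_{t}^{i,{\scriptscriptstyle N}})^{2}\}$, which is finite by the fixed-time estimate of Proposition \ref{prop:SUBGAUSSIAN} once $n$ is large enough that $2C^{2}T/n\leq\eta_{T}$; you instead bound $\tfrac12\int_{t_{k-1}}^{t_k}(\hat{b}^{i}_{s})^{2}ds\leq C_{1}\delta(1+\sup_{s\leq T}\Gamma_{s}^{i,{\scriptscriptstyle N}})$ and invoke Corollary \ref{cor: Sup is subgaussian}. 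Your route is valid --- Corollary \ref{cor: Sup is subgaussian} rests only on Proposition \ref{prop:SUBGAUSSIAN} and Burkholder--Davis--Gundy, not on the change of measure, so there is no circularity --- but it uses a strictly stronger input (control of the running maximum) than the Jensen trick, which gets by with fixed-time moments. Second, you quote the stepwise refinement of Novikov's criterion as a standard fact, whereas the paper proves it inline via the inductive computation $\mathbb{E}\mathcal{E}_{t}=\mathbb{E}\bigl[\mathcal{E}_{t_{1}}\mathbb{E}[\mathcal{E}_{t_{1},t}\mid\mathcal{F}_{t_{1}}]\bigr]=1$ over successive subintervals; either is acceptable. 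Your concluding remarks on the $\mathbb{Q}$-dynamics of $Z$ and on the law of $Z_{0}$ (unchanged since $\mathcal{E}_{0}=1$) match the paper's conclusion.
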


\begin{proof}
For notational convenience we drop the superscript $i$ and define
\[
\mathcal{E}_{s,t}:=\exp\left\{ -\int_{s}^{t}\hat{b}_{u}dB_{u}-\frac{1}{2}\int_{s}^{t}\hat{b}_{u}^{2}du\right\} ,\quad\mathcal{E}_{0,t}=\mathcal{E}_{t}.
\]
By standard arguments, $\mathcal{E}_{t}$ is a positive continuous
local martingale and hence also a supermartingale with $\mathcal{E}_{0}=1$.
The claim is now that $\mathcal{E}_{t}$ is in fact a true martingale
on $[0,T]$, which amounts to showing that $\mathbb{E}\mathcal{E}_{t}=1$
for all $t\in[0,T]$.

Recall that $|\hat{b}_{s}|\leq C(1+\varLambda_{s}^{i,\scriptscriptstyle N})$. While we cannot appeal to Novikov's condition directly, the sub-Gaussianity
of $\varLambda_{s}^{i,{\scriptscriptstyle N}}$ will allows us to
apply it on every interval of a fine enough partition of $[0,T]$.
To see this, we fix an arbitrary $n\geq1$ and partition the
interval $[0,T]$ by $t_{0}<\cdots<t_{n}$ where $t_{k}:=kT/n$.
An application of Jensen's inequality then yields
\begin{align*}
\mathbb{E}\exp\Bigl\{\frac{1}{2}\int_{t_{k-1}}^{t_{k}}\hat{b}_{u}^{2}du\Bigr\} & \leq \frac{n}{T}\mathbb{E}\int_{t_{k-1}}^{t_{k}}\exp\Bigl\{\frac{T}{2n}\hat{b}_{u}^{2}\Bigr\} du=\frac{n}{T}\int_{t_{k-1}}^{t_{k}}\mathbb{E}\exp\Bigl\{\frac{T}{2n}\hat{b}_{u}^{2}\Bigr\} du\\
 & \leq\sup_{t\in[t_{k-1},t_k]}\mathbb{E}\exp\Bigl\{\frac{C^{2}T}{n}(1+\varLambda_{t}^{i,{\scriptscriptstyle N}})^{2}\Bigr\}.
\end{align*}
Choosing $n\geq1$ sufficiently large so that $2C^{2}T/n\leq\eta_{T}$,
we deduce from Proposition \ref{prop:SUBGAUSSIAN} that
\[
\mathbb{E}\exp\Bigl\{\frac{1}{2}\int_{t_{k-1}}^{t_{k}}\hat{b}_{u}^{2}du\Bigr\}<\infty,\quad\text{for each}\quad k=1,\ldots,n.
\]
In particular, Novikov's condition now implies that $(\mathcal{E}_{t})_{t\in[0,t_{1}]}$
is a true martingale and hence
\[
\mathbb{E}\mathcal{E}_{t}=1,\quad\text{for all}\quad t\in[0,t_{1}].
\]
Noting that $(\mathcal{E}_{t_{1},t})_{t\in[t_{1},t_{2}]}$ is again
a stochastic exponential with $\mathcal{E}_{t_{1},t_{1}}=1$, another
application of Novikov's condition shows that $(\mathcal{E}_{t_{1},t})_{t\in[t_{1},t_{2}]}$
is a martingale. Consequently, we have
\[
\mathbb{E}\left[\mathcal{E}_{t_{1},t}\mid\mathcal{F}_{t_{1}}\right]=\mathcal{E}_{t_{1},t_{1}}=1
\]
and hence
\[
\mathbb{E}\mathcal{E}_{t}=\mathbb{E}\left[\mathcal{E}_{t_{1}}\mathcal{E}_{t_{1},t}\right]=\mathbb{E}\left[\mathcal{E}_{t_{1}}\mathbb{E}\left[\mathcal{E}_{t_{1},t}\mid\mathcal{F}_{t_{1}}\right]\right]=\mathbb{E}\left[\mathcal{E}_{t_{1}}\right]=1
\]
for all $t\in[t_{1},t_{2}]$. Considering inductively the intervals
$[t_{k-1},t_{k}]$ for $k=3,\ldots,n$, it follows by the same reasoning
that $\mathbb{E}\mathcal{E}_{t}=1$ for all  $t\in[0,T]$. Thus;, $\mathcal{E}_{t}$ is a true martingale on $[0,T]$ and hence
Girsanov's theorem implies that the process
\[
dZ_{t}=\hat{b}_{t}dt+dB_{t},\quad Z_{0}=\Upsilon_{0}(X_{0}^{1}),
\]
is a Brownian motion under $\mathbb{Q}$ with initial distribution
$\varpi_{0}=\mu_{0}\circ\Upsilon_{0}^{-1}$.
\end{proof}
Other than $Z$ being a Brownian motion under $\mathbb{Q}$, we shall
also need a specific estimate on the Radon\textendash Nikodym derivative,
$\mathcal{E}_{t}$, from the previous lemma.
\begin{lem}
[Radon--Nikodym estimate]\label{lem: Radon-Nik estimate}Let $\mathcal{E}_{t}$
be the stochastic exponential defined in Lemma \ref{lem:Stochastic exp and equivalnet measu}.
Given any $\epsilon>0$ such that $\mathbb{E}\exp\{\epsilon|X_{0}^{j}|{}^{2}\}<\infty$,
for $j=1,\ldots,N$, there exists $p>1$ close enough to $1$ such
that
\[
\mathbb{E}\left[\mathcal{E}_{t}^{1-p}\mid X_{0}^{i}=x_{0}\right]\leq C\exp\{ \epsilon x_{0}^{2}\}, \quad\text{where}\quad C=C(p,\epsilon,T).
\]
Moreover, for any $q>1$, we have
\[
\mathbb{E}\left[\bigl|\hat{b}_{t}^{i}\bigr|^{q}\mid X_{0}^{i}=x_{0}\right]\leq C(1+x_{0}^{q}),\quad\text{where}\quad C=C(q,T).
\]
\end{lem}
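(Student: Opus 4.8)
The plan is to derive both estimates from a \emph{conditional} version of Proposition~\ref{prop:SUBGAUSSIAN}, obtained by running its proof conditionally on $\mathcal{F}_0$. Concretely, conditioning the Itô identity in that proof on $\mathcal{F}_0$ and using that the stochastic integral has vanishing conditional expectation given $\mathcal{F}_0$, the same Gronwall argument (now for $t\mapsto\mathbb{E}[e^{\eta_{t\wedge\tau_n} \Gamma_{t\wedge\tau_n}^{i,{\scriptscriptstyle N}}}\mid\mathcal{F}_0]$) together with Fatou's lemma yields, for $\eta$ as in Proposition~\ref{prop:SUBGAUSSIAN} with $\eta_0=\epsilon/2$,
\[
\mathbb{E}\bigl[e^{\eta_t \Gamma_t^{i,{\scriptscriptstyle N}}}\,\big|\,\mathcal{F}_0\bigr]\;\apprle\; e^{\frac{\epsilon}{2}\Gamma_0^{i,{\scriptscriptstyle N}}}\qquad\text{uniformly in }N\geq1,\ t\in[0,T].
\]
Since $\Gamma_0^{i,{\scriptscriptstyle N}}=|X_0^i|^2+\sum_{j}a_j^{{\scriptscriptstyle N}}|X_0^j|^2$ with $a_j^{{\scriptscriptstyle N}}\leq C/N$ and the $X_0^j$ i.i.d., convexity of $\exp$ (using $\sum_j a_j^{{\scriptscriptstyle N}}=1$) gives $e^{\frac{\epsilon}{2}\sum_j a_j^{{\scriptscriptstyle N}}|X_0^j|^2}\leq\sum_j a_j^{{\scriptscriptstyle N}}e^{\frac{\epsilon}{2}|X_0^j|^2}$, so conditioning on $X_0^i=x_0$ and using $\mathbb{E}\,e^{\epsilon|X_0^1|^2}<\infty$ (hence $\mathbb{E}\,e^{\frac{\epsilon}{2}|X_0^1|^2}<\infty$ and $e^{\frac{\epsilon}{2}x_0^2}\geq1$) leads to
\[
\mathbb{E}\bigl[e^{\eta_t \Gamma_t^{i,{\scriptscriptstyle N}}}\,\big|\,X_0^i=x_0\bigr]\;\apprle\; e^{\epsilon x_0^2}\qquad\text{uniformly in }N\geq1,\ t\in[0,T].
\]
The identical bookkeeping gives $\mathbb{E}[(\Lambda_0^{i,{\scriptscriptstyle N}})^q\mid X_0^i=x_0]\apprle 1+x_0^q$ for any $q\geq1$, now using $\mathbb{E}|X_0^1|^q<\infty$ in place of the exponential moment. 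This conditional sub-Gaussianity is the engine for both parts.

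For the Radon--Nikodym bound, the plan is to split $\mathcal{E}_t^{1-p}=A_tB_t$, where
\[
A_t:=\exp\Bigl\{(p-1)\!\int_0^t\hat{b}_s^i\,dB_s^i-(p-1)^2\!\int_0^t(\hat{b}_s^i)^2\,ds\Bigr\},\quad B_t:=\exp\Bigl\{\bigl((p-1)^2+\tfrac{p-1}{2}\bigr)\!\int_0^t(\hat{b}_s^i)^2\,ds\Bigr\}.
\]
Then $A_t^2$ is precisely the stochastic exponential of $2(p-1)\!\int_0^\cdot\hat{b}_s^i\,dB_s^i$, which is a true martingale on $[0,T]$ by the same Novikov-on-a-fine-partition argument as in the proof of Lemma~\ref{lem:Stochastic exp and equivalnet measu} (valid for the partition fine enough, because of the sub-Gaussianity of $\Lambda_s^{i,{\scriptscriptstyle N}}$), so $\mathbb{E}[A_t^2\mid X_0^i=x_0]=1$ by the tower property since $X_0^i$ is $\mathcal{F}_0$-measurable. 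By Cauchy--Schwarz, $\mathbb{E}[\mathcal{E}_t^{1-p}\mid X_0^i=x_0]\leq \mathbb{E}[B_t^2\mid X_0^i=x_0]^{1/2}$, and $B_t^2=\exp\{\lambda_p\!\int_0^t(\hat{b}_s^i)^2\,ds\}$ with $\lambda_p:=(p-1)(2p-1)\downarrow0$ as $p\downarrow1$. Using $(\hat{b}_s^i)^2\apprle 1+\Gamma_s^{i,{\scriptscriptstyle N}}$ — which follows from the growth bound (\ref{eq:bound on D_t growth}) together with $|M_s^{{\scriptscriptstyle N}}|^2\leq\sum_j a_j^{{\scriptscriptstyle N}}|X_s^j|^2\leq\Gamma_s^{i,{\scriptscriptstyle N}}$ — and Jensen's inequality in the time integral, one bounds $\mathbb{E}[B_t^2\mid X_0^i=x_0]$ by $C\sup_{s\leq T}\mathbb{E}[e^{\lambda_p CT\,\Gamma_s^{i,{\scriptscriptstyle N}}}\mid X_0^i=x_0]$. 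Choosing $p$ close enough to $1$ that $\lambda_p CT\leq\eta_T$ (recall $\eta_s\geq\eta_T$ for $s\leq T$, as $\eta$ is decreasing, so $e^{\lambda_p CT\,\Gamma_s}\leq e^{\eta_s\Gamma_s}$) and invoking the conditional sub-Gaussian bound above gives $\mathbb{E}[B_t^2\mid X_0^i=x_0]\apprle e^{\epsilon x_0^2}$, hence the first claim.

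For the moment bound on $\hat{b}_t^i$, recall $|\hat{b}_t^i|\apprle 1+|X_t^i|+|M_t^{{\scriptscriptstyle N}}|$, so it suffices to control $\mathbb{E}[\sup_{s\leq T}|X_s^i|^q\mid\mathcal{F}_0]$ and $\mathbb{E}[\sup_{s\leq T}|M_s^{{\scriptscriptstyle N}}|^q\mid\mathcal{F}_0]$ uniformly in $N$, and then take $\mathbb{E}[\,\cdot\mid X_0^i=x_0\,]$. This is a standard a priori estimate: introducing the weighted running maximum $R_t:=\sum_j a_j^{{\scriptscriptstyle N}}\sup_{s\leq t}|X_s^j|$ (so that $\sup_{s\leq t}|M_s^{{\scriptscriptstyle N}}|\leq R_t$), the coefficient bounds of Assumption~\ref{Assumption 1 - finite system} close the system, giving $R_t\apprle\sum_j a_j^{{\scriptscriptstyle N}}|X_0^j|+\int_0^t(1+R_s)\,ds+\sum_j a_j^{{\scriptscriptstyle N}}\sup_{s\leq t}|\!\int_0^s\sigma_u^j\,dB_u^j|$ and a parallel inequality for $\sup_{s\leq t}|X_s^i|$; taking $q$-th powers, conditioning on $\mathcal{F}_0$, applying Burkholder--Davis--Gundy (with $\sigma$ bounded) and Gronwall then yields $\mathbb{E}[\sup_{s\leq T}|X_s^i|^q\mid\mathcal{F}_0]+\mathbb{E}[R_T^q\mid\mathcal{F}_0]\apprle 1+(\Lambda_0^{i,{\scriptscriptstyle N}})^q$ uniformly in $N$. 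Combining with $\mathbb{E}[(\Lambda_0^{i,{\scriptscriptstyle N}})^q\mid X_0^i=x_0]\apprle 1+x_0^q$ from the first paragraph completes the proof.

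The main obstacle is the calibration of $p$ in the Radon--Nikodym bound: the factor $B_t^2$ carries an exponent that is quadratic in the particle positions, so the only way to keep its conditional expectation finite — and, crucially, with exponential rate exactly $\epsilon$ in $x_0$ — is to absorb it into the $\eta_T$-level conditional sub-Gaussianity of $\Gamma^{i,{\scriptscriptstyle N}}$, which forces $p-1$ to be small relative to $\eta_T/(CT)$. Everything else (the martingale property of $A_t^2$, the conditional Gronwall estimates, and the handling of the interacting initial data) is routine once the conditional sub-Gaussian estimate is in hand.
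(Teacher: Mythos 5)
Your proof is correct and follows essentially the same route as the paper: both reduce $\mathbb{E}[\mathcal{E}_t^{1-p}\mid X_0^i=x_0]$ to an exponential moment of $\int_0^t(\hat b_s^i)^2\,ds$ whose prefactor (your $\lambda_p=(p-1)(2p-1)$, the paper's $\tfrac{(p+1)}{2}(p-1)p$) vanishes as $p\downarrow1$, and then absorb it into a conditional-on-$X_0^i$ version of the sub-Gaussian estimate of Proposition \ref{prop:SUBGAUSSIAN} by choosing $p$ close enough to $1$ relative to $\eta_T$. The only cosmetic differences are that the paper uses H\"older with exponents $(\tfrac{p}{p-1},p)$ so that the martingale-type factor is a stochastic exponential needing only the supermartingale bound $\leq1$ (no Novikov-on-partition argument, unlike your $A_t^2$), and that for the second claim the paper merely invokes a ``standard Gronwall argument'', which your a priori moment estimate via the weighted running maximum spells out explicitly.
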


\begin{proof}
We begin by defining
\[
Y_{t}:=-\int_{0}^{t}\hat{b}_{s}^{i}dB_{s}^{i}.
\]
Using Hölder's inequality with $p>1$ , we then have
\begin{align*}
\mathbb{E}\left[\mathcal{E}_{t}^{1-p}\mid X_{0}^{i}=x_{0}\right] & =\mathbb{E}\left[\left(e^{pY_{t}-\frac{1}{2}\left\langle pY\right\rangle _{t}}\right)^{\frac{p-1}{p}}\left(e^{\frac{(p+1)}{2}(p-1)p\left\langle Y\right\rangle _{t}}\right)^{\frac{1}{p}}\mid X_{0}^{i}=x_{0}\right]\\
 & \leq\mathbb{E}\left[e^{pY_{t}-\frac{1}{2}\left\langle pY\right\rangle _{t}}\mid X_{0}^{i}=x_{0}\right]^{\frac{p-1}{p}}\mathbb{E}\left[e^{\frac{(p+1)}{2}(p-1)p\left\langle Y\right\rangle _{t}}\mid X_{0}^{i}=x_{0}\right]^{\frac{1}{p}}
\end{align*}
Noting that the first term on the right-hand side is bounded by $1$,
we conclude that
\[
\mathbb{E}\left[\mathcal{E}_{t}^{1-p}\mid X_{0}^{i}=x_{0}\right]\leq\mathbb{E}\left[e^{C_{p}\int_{0}^{t}(\hat{b}_{s}^{i})^{2}ds}\mid X_{0}^{i}=x_{0}\right]^{\frac{1}{p}}\quad\text{with}\quad C_{p}:=\frac{(p+1)}{2}(p-1)p.
\]
The crucial observation here is that $C_{p}\downarrow0$ as $p\downarrow1$.
Recalling the bound $\bigl|\hat{b}_{s}^{i}\bigr|\apprle1+\varLambda_{s}^{i,\scriptscriptstyle N}$,
we can apply Jensen's inequality to see that
\begin{align}
\mathbb{E}\left[\mathcal{E}_{t}^{1-p}\mid X_{0}^{i}=x_{0}\right] & \apprle \biggl(\frac{1}{T}\int_{0}^{T}\mathbb{E}\left[e^{TCC_{p}(\varLambda_{s}^{i,N})^{2}}\mid X_{0}^{i}=x_{0}\right]ds\biggr)^{\frac{1}{p}}\label{eq: prob X < prob BM}
\end{align}
Fix a power $p_{0}$ close enough to $1$ such that $TCC_{p_{0}}\leq\eta_{T}/2$,
with $\eta$ as in Proposition \ref{prop:SUBGAUSSIAN}, so
that
\[
\mathbb{E}\left[\mathcal{E}_{t}^{1-p_{0}}\mid X_{0}^{i}=x_{0}\right]\apprle\sup_{s\in\left[0,T\right]}\mathbb{E}\left[e^{\eta_{s}(\Lambda_{s}^{i,N})^{2}/2}\mid X_{0}^{i}=x_{0}\right]^{\frac{1}{p_{0}}}.
\]
Now, recalling the form of the estimate (\ref{eq: gronwall estimate})
in the proof of Proposition \ref{prop:SUBGAUSSIAN}, the fact that
$X_{0}^{i}$ is independent of $X_{0}^{j}$ for $j\neq i$ implies that
\[
\sup_{s\in\left[0,T\right]}\mathbb{E}\left[e^{\eta_{T}(\Lambda_{s}^{i,N})^{2}/2}\mid X_{0}^{i}=x_{0}\right]\leq Ce^{\epsilon x_{0}^{2}/p_{0}}\mathbb{E}\left[e^{\epsilon\sum_{j\neq i}a_{j}^{{\scriptscriptstyle N}}(X_{0}^{j})^{2}}\right]^{\frac{1}{2p_{0}}}.
\]
Combining this with the estimate (\ref{eq: prob X < prob BM}),
the first claim of the proposition follows.

Finally, using the bound $|\hat{b}_{t}^{i}|\apprle1+\Lambda_{t}^{i,{\scriptscriptstyle N}}$,
the second claim follows by a standard Gronwall argument, so we leave out the proof.
\end{proof}

\subsection{The density estimates}

The purpose of this section is to derive the desired density estimates for $X_t^i$ by controlling the probability $\mathbb{P}(X_{t}^{i}\in S,\,t<\tau_{i})$, for any given $i\in \left\{ 1,\ldots,N\right\}$ and $S\in \mathcal{B}(0,\infty)$.

Recall the transformation $\Upsilon$ from Lemma \ref{lem:=00005BRestatement-of-Lemma4.1 from hambly ledger} and note that $x\mapsto\Upsilon_{t}(x)$ is bijective with $\Upsilon_{t}(x)\leq0$
if and only if $x\leq0$. Hence, conditioning on the initial
value $X_{0}^{i}=x_{0}$, 
\[
\mathbb{P}^{x_{0}}(X_{t}^i\in S,\,t<\tau_{i})=\mathbb{P}^{z}(Z_{t}\in S_{t},\,t<\tau),
\]
where $\tau=\inf\{t>0:Z_{t}\leq0\}$, $Z_t=\Upsilon(X_t^i)$, $S_{t}=\Upsilon_{t}(S)$, and $z=\Upsilon_{0}(x_{0})$.

From here, the idea is to approximate the transformed particle $Z_{t\land\tau}$
by running it up to time $s$, for $s<t$, and then running an
independent absorbed Brownian motion $W$ for the remaining time $t-s$.
More precisely, given $z\in(0,\infty)$, we are interested in the
map
\begin{equation}
s\mapsto\mathbb{E}^{z}\!\left[\mathbb{P}^{Z_{s\land\tau}}(W_{t-s}\in S_{t},\;t-s<\tau_{{\scriptscriptstyle W}})\right],
\label{eq: Approx Zt}
\end{equation}
where $\tau_{\scriptscriptstyle W}=\inf\{t>0:W_{t}\leq0\}$. For a fixed time $t\in[0,T]$, we therefore define, for every $s<t$, the function
\[
u(s,x):=\mathbb{P}^{x}\bigl(W_{t-s}\in S_{t},\;t-s<\tau_{{\scriptscriptstyle W}}\bigr)=\int_{S_{t}}G_{t-s}(y,x)dy,
\]
where $G_{t}(y,x)=p_{t}(x-y)-p_{t}(x+y)$ with $p_{t}(x)=(2\pi t)^{-\frac{1}{2}}\exp\{-x^{2}/2t\}$.
Note that $u$ is a classical solution of the terminal-boundary value problem
\begin{equation}
\begin{cases}
\partial_{s}u(s,x)+\frac{1}{2}\Delta u(s,x)=0 & \text{on}\quad[0,t)\times(0,\infty)\\
u(t,x)=\mathbf{1}_{S_{t}}(x) & \text{on}\quad\left\{ t\right\} \times(0,\infty)\\
u(s,0)=0 & \text{on}\quad[0,t)\times\left\{ 0\right\} 
\end{cases}\label{eq:Backward Heat}
\end{equation}
We can write (\ref{eq: Approx Zt}) more succinctly as
\begin{equation}
s\mapsto v(s,z):=\mathbb{E}^{z}[u(s,Z_{s\land\tau})],
\label{v_function}
\end{equation}
and note that
\[
v(0,z)=\mathbb{P}^{z}(W_{t\land{\tau}_{W}}\in S_{t})\quad\text{and}\quad v(t,z)=\mathbb{P}^{z}(Z_{t\land\tau}\in S_{t}).
\]
By bounded convergence, it is immediate that $s\mapsto v(s,z)$ is
continuous. Additionally, we show in Lemma \ref{lem: derivative} below
that it is in fact absolutely continuous on $[0,t_{0}]$ for any $t_{0}<t$.
Consequently, if only we can show that the (a.e.) derivative $\partial_{s}v$
extends to $L^{1}(0,t)$, then we will have absolute continuity on
all of $[0,t]$ with

\begin{equation}
\mathbb{P}^{z}(Z_{t\land\tau}\in S_{t})=\mathbb{P}^{z}(W_{t\land\tau_{{\scriptscriptstyle W}}}\in S_{t})+\int_{0}^{t}\partial_{s}v(s,z)ds.\label{eq: FTC}
\end{equation}
Therefore, the key is simply to establish the right control over $s\mapsto\partial_{s}v(s,z)$.
We embark on this in the next section, but first we prove the previous
claim about the absolute continuity.
\begin{lem}
\label{lem: derivative}Fix an arbitrary $z\in(0,\infty)$. Then the
mapping $s\mapsto v(s,z)$ from (\ref{v_function}) is in $AC[0,t_{0}]$ for every $t_{0}<t$
with (a.e.) derivative
\[
\partial_{s}v(s,z)=\mathbb{E}^{z}[\mathbf{1}_{s<\tau}\hat{b}_{s}\partial_{x}u(s,Z_{s})].
\]
\end{lem}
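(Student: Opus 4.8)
The plan is to apply It\^o's formula to $u(s\wedge\tau,Z_{s\wedge\tau})$ and read off the absolutely continuous part, exploiting that the function $u$ from \eqref{eq:Backward Heat} solves a backward heat equation and vanishes on the boundary. First I would record the regularity of $u$ away from the terminal time: for $s\le t_{0}<t$ we have $t-s\ge t-t_{0}>0$, so $u(s,x)=\int_{S_{t}}G_{t-s}(y,x)dy$ is smooth in $(s,x)$ on $[0,t_{0}]\times\mathbb{R}$, with $\partial_{s}u+\frac12\partial_{xx}u=0$ there and $u(\cdot,0)=0$. Moreover, directly from the Gaussian form of $p_{t-s}$ and the elementary bound $\int_{\mathbb{R}}|\partial_{x}^{k}p_{t-s}(w)|dw\apprle(t-s)^{-k/2}$, the derivatives $\partial_{s}u,\partial_{x}u,\partial_{xx}u$ are bounded on $[0,t_{0}]\times\mathbb{R}$, uniformly in $s\le t_{0}$; in particular $|\partial_{x}u(r,x)|\apprle(t-t_{0})^{-1/2}$ for all $r\le t_{0}$.

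Next I would apply It\^o's formula. Recalling from Lemma \ref{lem:=00005BRestatement-of-Lemma4.1 from hambly ledger} that $dZ_{r}=\hat b_{r}dr+dB_{r}$ and that $Z_{r}>0$ on $[0,\tau)$, so that the PDE cancels the $\partial_{s}u+\frac12\partial_{xx}u$ term along the stopped path,
\[
u(s\wedge\tau,Z_{s\wedge\tau})=u(0,z)+\int_{0}^{s\wedge\tau}\hat b_{r}\,\partial_{x}u(r,Z_{r})dr+\int_{0}^{s\wedge\tau}\partial_{x}u(r,Z_{r})dB_{r}.
\]
Since $Z$ is continuous with $Z_{\tau}=0$ and $u(r,0)=0$ for $r<t$, on $\{s\ge\tau\}$ both $u(s\wedge\tau,Z_{s\wedge\tau})$ and $u(s,Z_{s\wedge\tau})$ vanish, while on $\{s<\tau\}$ they both equal $u(s,Z_{s})$; hence $u(s\wedge\tau,Z_{s\wedge\tau})=u(s,Z_{s\wedge\tau})$ a.s., and the expectation of the left-hand side is exactly $v(s,z)$. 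The boundedness of $\partial_{x}u$ on $[0,t_{0}]$ makes the stochastic integral a true $L^{2}$-martingale, which therefore vanishes in expectation, leaving
\[
v(s,z)=v(0,z)+\mathbb{E}^{z}\!\int_{0}^{s}\mathbf{1}_{r<\tau}\,\hat b_{r}\,\partial_{x}u(r,Z_{r})dr.
\]

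Finally I would justify Fubini and conclude. By the second estimate of Lemma \ref{lem: Radon-Nik estimate} we have $\mathbb{E}^{z}|\hat b_{r}|\apprle1+|x_{0}|$ uniformly in $r\le T$, where $x_{0}=\Upsilon_{0}^{-1}(z)$; combined with $|\partial_{x}u(r,Z_{r})|\apprle(t-t_{0})^{-1/2}$, the integrand $(r,\omega)\mapsto\mathbf{1}_{r<\tau}\hat b_{r}\partial_{x}u(r,Z_{r})$ lies in $L^{1}(\Omega\times[0,t_{0}])$. Tonelli then gives $v(s,z)=v(0,z)+\int_{0}^{s}\phi(r)dr$ with $\phi(r):=\mathbb{E}^{z}[\mathbf{1}_{r<\tau}\hat b_{r}\partial_{x}u(r,Z_{r})]\in L^{1}(0,t_{0})$, which is precisely the claim that $s\mapsto v(s,z)$ is in $AC[0,t_{0}]$ with $\partial_{s}v(s,z)=\phi(s)$ for a.e.\ $s$.

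The main obstacle is the careful handling of It\^o's formula at the absorbing boundary together with the up-to-the-boundary regularity and boundedness of $u$ and its $x$-derivative for $s$ bounded away from $t$; once these are secured, the identity $u(s\wedge\tau,Z_{s\wedge\tau})=u(s,Z_{s\wedge\tau})$ (which is where the Dirichlet condition $u(\cdot,0)=0$ enters) and the $L^{1}$ control of the drift supplied by Lemma \ref{lem: Radon-Nik estimate} close the argument routinely.
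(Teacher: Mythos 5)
Your proposal is correct and follows essentially the same route as the paper: apply It\^o's formula to $u$ along the stopped process, use the backward heat equation to kill the $\partial_s u+\frac12\Delta u$ term, note the stochastic integral is a true martingale since $\partial_x u$ is bounded for $s$ bounded away from $t$, and conclude by Fubini with the drift integrability supplied by Lemma \ref{lem: Radon-Nik estimate} (the paper instead cites the running-max bound from Corollary \ref{cor: Sup is subgaussian}). The only cosmetic difference is that the paper estimates the increment $v(s+h,z)-v(s,z)$ directly rather than integrating from $0$, and your explicit justification of $u(s\wedge\tau,Z_{s\wedge\tau})=u(s,Z_{s\wedge\tau})$ via the Dirichlet condition is a point the paper leaves implicit.
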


\begin{proof}
Fix $s\in[0,t)$ and $h>0$ such that $\left|h\right|<t-s$. Recalling
Lemma \ref{lem:=00005BRestatement-of-Lemma4.1 from hambly ledger},
we have $dZ_{t\land\tau}=\mathbf{1}_{t<\tau}\hat{b}_{t}dt+\mathbf{1}_{t<\tau}dB_{t}$,
so an application of Itô's formula yields
\begin{multline*}
v(s+h,z)-v(s,z)=\mathbb{E}^{z}\Bigl[u(s+h,Z_{(s+h)\land\tau})-u(s,Z_{s\land\tau})\Bigr]\\
=\mathbb{E}^{z}\biggl[\int_{s}^{s+h}(\partial_{s}+\frac{1}{2}\triangle)u_{r}dr+\int_{s}^{s+h}\mathbf{1}_{r<\tau}\hat{b}_{r}\partial_{x}u_{r}dr+\int_{s}^{s+h}\mathbf{1}_{r<\tau}\partial_{x}u_{r}dB_{r}\biggr].
\end{multline*}
The first term on the right-hand side vanishes by (\ref{eq:Backward Heat})
and, since $\partial_{x}u(r,x)$ is bounded on the interval $[s,s+h]\subseteq[0,t)$,
the stochastic integral is a true martingale. Noting also that $\mathbb{E}|\sup_{r\leq T}\hat{b}_{r}|<\infty$
(see e.g.~Corollary \ref{cor: Sup is subgaussian}), Fubini's theorem thus
implies that
\[
v(s+h,z)-v(s,z)=\int_{s}^{s+h}\mathbb{E}^{z}[\mathbf{1}_{r<\tau}\hat{b}_{r}\partial_{x}u(r,Z_{r})]dr.
\]
This proves the claim. 
\end{proof}

\subsubsection{Estimates on the half-line}

Given the expression for $\partial_{s}v(s,z)$ in Lemma \ref{lem: derivative},
an application of Hölder's inequality with $q>1$ yields
\begin{align*}
\bigl|\partial_{s}v(s,z)\bigr| & \leq\mathbb{E}^{z}\bigl[\mathbf{1}_{s<\tau}|\hat{b}_{s}|\bigl|\partial_{x}u(s,Z_{s})\bigr|\bigr]=\mathbb{E}^{z}\Bigl[\mathbf{1}_{s<\tau}|\hat{b}_{s}|\int_{S_{t}}|\partial_{x}G_{t-s}(y,Z_{s})|dy\Bigr].\\
 & \leq\mathbb{E}\hspace{-1pt}\left[|\hat{b}_{s}|^{\frac{q}{q-1}}\:\bigr|\,X_{0}^{i}=x_{0}\right]^{\frac{q-1}{q}}\int_{S_{t}}\mathbb{E}^{z}\!\bigl[\mathbf{1}_{s<\tau}|\partial_{x}G_{t-s}(y,Z_{s})|^{q}\bigr]^{\frac{1}{q}}dy\\
 & \leq C(1+x_{0})\int_{S_{t}}\mathbb{E}^{z}\!\bigl[\mathbf{1}_{s<\tau}|\partial_{x}G_{t-s}(y,Z_{s})|^{q}\bigr]^{\frac{1}{q}}dy.
\end{align*}
Here the last inequality follows from the second claim in Lemma \ref{lem: Radon-Nik estimate}
and we emphasize that $q$ can be taken arbitrarily close to $1$.
By introducing the Radon-Nikodym derivative $\mathcal{E}$ from Lemma
\ref{lem:Stochastic exp and equivalnet measu}, Hölder's inequality
with $p>1$ yields
\begin{multline*}
\int_{S_{t}}\mathbb{E}^{z}\!\left[\mathbf{1}_{s<\tau}\bigl|\partial_{x}G_{t-s}(y,Z_{s})\bigr|^{q}\right]^{\frac{1}{q}}\hspace{-1pt}dy=\int_{S_{t}}\mathbb{E}_{\mathbb{Q}}^{z}\hspace{-1pt}\left[\mathcal{E}_{s}^{-\text{1}}\mathbf{1}_{s<\tau}\bigl|\partial_{x}G_{t-s}(y,Z_{s})\bigr|^{q}\right]^{\frac{1}{q}}\hspace{-1pt}dy\\
\leq\:\mathbb{E}\hspace{-1pt}\left[\mathcal{E}_{s}^{1-p}\,\mid\,X_{0}^{i}=x_{0}\right]^{\frac{1}{pq}}\int_{S_{t}}\mathbb{E}_{\mathbb{Q}}^{z}\!\left[\mathbf{1}_{s<\tau}\bigl|\partial_{x}G_{t-s}(y,Z_{s})\bigr|^{a}\right]^{\frac{1}{a}}\hspace{-1pt}dy,
\end{multline*}
where $a=a(p,q):=q\frac{p}{p-1}>1$. For any $\delta>0$, we can take
$p$ close enough to $1$ so that the first estimate from Lemma \ref{lem: Radon-Nik estimate}
applies. Consequently, there exists $a>1$ large enough such that
\begin{align}
\bigl|\partial_{s}v(s,z)\bigr| & \apprle e^{\delta x_{0}^{2}}\int_{S_{t}}\mathbb{E}_{\mathbb{Q}}^{z}\hspace{-1pt}\left[\mathbf{1}_{s<\tau}\bigl|\partial_{x}G_{t-s}(y,Z_{s\wedge\tau})\bigr|^{a}\right]^{\frac{1}{a}}\hspace{-1pt}dy\nonumber \\
 & =e^{\delta x_{0}^{2}}\int_{S_{t}}\left(\int_{0}^{\infty}\bigl|\partial_{x}G_{t-s}(y,x)\bigr|^{a}G_{s}(x,z)dx\right)^{\hspace{-1pt}\frac{1}{a}}\hspace{-1pt}dy,\label{eq: Derivative of v(s,z)}
\end{align}
where the last line follows from the fact that $Z_{s\wedge\tau}$
is an absorbing Brownian motion under $\mathbb{Q}$, as shown in Lemma \ref{lem:Stochastic exp and equivalnet measu}. Before
proceeding, we collect some useful bounds for exponential functions.
\begin{lem}
\label{lem: exp lemma}Fix any $x,y\in\mathbb{R}$ and $t>s$. Then
it holds for all powers $a\geq1$ that
\begin{align}
e^{-\frac{a(y-x)^{2}}{2(t-s)}}e^{-\frac{(x-z)^{2}}{2s}} &\leq e^{-\frac{(y-z)^{2}}{2t}}e^{-\frac{t}{2s}\left(\frac{(x-y)}{\sqrt{t-s}}+\frac{\sqrt{t-s}(y-z)}{t-s+sa}\right)^{2}},\label{eq:y-x formula} \\
e^{-\frac{a(y+x)^{2}}{2(t-s)}}e^{-\frac{(x-z)^{2}}{2s}} &\leq e^{-\frac{(y+z)^{2}}{2t}}e^{-\frac{t}{2s}\left(\frac{(y+x)}{\sqrt{t-s}}+\frac{\sqrt{t-s}(y+z)}{t-s+sa}\right)^{2}}.\label{eq: x+y formula}
\end{align}

\end{lem}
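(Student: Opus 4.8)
The plan is to reduce both inequalities to a single completion of the square in the variable $x$, exploiting that the left-hand exponent is a quadratic in $x$ and that the right-hand exponent is precisely what the classical Gaussian convolution identity produces in the case $a=1$. For (\ref{eq:y-x formula}) I would first write, with $A:=\frac{a}{t-s}$, $B:=\frac1s$ and $x_\star:=\frac{Ay+Bz}{A+B}$,
\[
\frac{a(y-x)^{2}}{2(t-s)}+\frac{(x-z)^{2}}{2s}=\frac{A+B}{2}\,(x-x_\star)^{2}+\frac{AB}{2(A+B)}\,(y-z)^{2}.
\]
The useful algebraic simplifications are that, setting $c:=t-s+sa$, one has $A+B=\frac{c}{s(t-s)}$, $\frac{AB}{A+B}=\frac{a}{c}$, and --- the identity that makes the two sides line up --- $c-as=t-s$, so that $x-x_\star=(x-y)+\frac{(t-s)(y-z)}{c}$. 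Hence the left-hand exponent equals exactly
\[
\frac{c}{2s(t-s)}\Bigl((x-y)+\tfrac{(t-s)(y-z)}{c}\Bigr)^{2}+\frac{a}{2c}(y-z)^{2},
\]
whereas the exponent on the right of (\ref{eq:y-x formula}) is this same expression with the two prefactors $\frac{c}{2s(t-s)}$ and $\frac{a}{2c}$ replaced by $\frac{t}{2s(t-s)}$ and $\frac{1}{2t}$ respectively (the bracket being unchanged).

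Subtracting, the difference of the two exponents equals
\[
\frac{c-t}{2s(t-s)}\Bigl((x-y)+\tfrac{(t-s)(y-z)}{c}\Bigr)^{2}+\Bigl(\frac{a}{2c}-\frac{1}{2t}\Bigr)(y-z)^{2},
\]
and the hypothesis $a\ge1$ enters exactly here: it gives $c-t=s(a-1)\ge0$ and $\frac{a}{c}-\frac1t=\frac{(a-1)(t-s)}{tc}\ge0$ (both using $t>s>0$), so the difference is a sum of two non-negative terms. This proves (\ref{eq:y-x formula}), with equality when $a=1$. For (\ref{eq: x+y formula}) I would run the identical computation with the image term $x+y$ in place of $x-y$ --- equivalently, substitute $y\mapsto-y$ in (\ref{eq:y-x formula}) --- which replaces $(y-z)$ by $(y+z)$ throughout and leaves the argument verbatim.

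I do not anticipate a genuine obstacle: the entire content is bookkeeping. The one point needing a little care is to organise the completion of the square so that the two Gaussians being compared are centred at the \emph{same} point $x_\star$; the identity $c-as=t-s$ delivers this, and it is also the step that pins down precisely where the assumption $a\ge1$ must be invoked.
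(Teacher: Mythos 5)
Your completion-of-the-square argument for (\ref{eq:y-x formula}) is correct, and since the paper states Lemma \ref{lem: exp lemma} without proof, this is precisely the computation one would supply: the identity $c-as=t-s$ (with $c=t-s+sa$) centres both quadratics at the same point, and the two prefactor differences $\tfrac{c-t}{2s(t-s)}=\tfrac{a-1}{2(t-s)}\ge 0$ and $\tfrac{a}{2c}-\tfrac{1}{2t}=\tfrac{(a-1)(t-s)}{2tc}\ge 0$ (using $0<s<t$, which is implicit in the lemma's setting) give the bound, with equality at $a=1$.

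The problem is the final reduction. Substituting $y\mapsto -y$ in (\ref{eq:y-x formula}) does \emph{not} yield (\ref{eq: x+y formula}) as displayed: the terms $(y-x)^2$ and $(y-z)^2$ do become $(y+x)^2$ and $(y+z)^2$, but the second summand inside the bracket changes sign, so what you actually obtain has $\frac{y+x}{\sqrt{t-s}}-\frac{\sqrt{t-s}(y+z)}{t-s+sa}$ inside the square, not a plus. This is not cosmetic, because the plus-sign version printed in the lemma is false: take $a=1$, $t=2$, $s=1$, $x=y=z=1$; the left-hand side of (\ref{eq: x+y formula}) is $e^{-2}$ while the right-hand side is $e^{-1}e^{-9}=e^{-10}$. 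So your substitution proves the corrected (minus-sign) variant --- which is the best that can be true --- and claiming the argument goes through ``verbatim'' for the stated display conceals a genuine sign discrepancy rather than resolving it. You should state and prove the minus-sign form explicitly, and note that this has downstream consequences: Step 2 of the proof of Proposition \ref{prop:Half-space density} uses the plus-sign form to conclude $\frac{x+y}{\sqrt{t-s}}\le w$ after the change of variables, and with the correct sign one instead picks up additional $(y+z)$-factors that must be absorbed in the style of Steps 3 and 4.
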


Recalling (\ref{eq: FTC}), the desired density estimates will follow
if we can obtain suitable bounds on the right-hand side of (\ref{eq: Derivative of v(s,z)}).
Our first result is the following.
\begin{prop}
\label{prop:Half-space density}For any $\delta>0$ there exists $a>1$
such that, for every $S\in\mathcal{B}((0,\infty))$, it holds uniformly
in $N\geq1$ that
\begin{multline*}
\quad\mathbb{P}^{x_{0}}\bigl(X_{t\land\tau_{i}}^{i,{\scriptscriptstyle N}}\in S\bigr)\leq\int_{S}G_{t}\bigl(\Upsilon_{t}(x),\Upsilon_{0}(x_{0})\bigr)\partial_{x}\Upsilon_{t}(x)dx\\
+C_{a}e^{\delta x_{0}^{2}}\int_{S}\bigl(t^{-\frac{1}{a}}\Upsilon_{0}(x_{0})^{\frac{1}{a}}\Upsilon_{t}(x)^{\frac{1}{a}}\land1\bigr)e^{-\frac{(\Upsilon_{t}(x)-\Upsilon_{0}(x_{0}))^{2}}{4at}}\partial_{x}\Upsilon_{t}(x)dx.\quad
\end{multline*}
\end{prop}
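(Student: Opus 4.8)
## Proof Plan for Proposition \ref{prop:Half-space density}

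The plan is to start from the fundamental-theorem-of-calculus identity (\ref{eq: FTC}), namely
\[
\mathbb{P}^{z}(Z_{t\land\tau}\in S_{t})=\mathbb{P}^{z}(W_{t\land\tau_{{\scriptscriptstyle W}}}\in S_{t})+\int_{0}^{t}\partial_{s}v(s,z)\,ds,
\]
so the task reduces to bounding $\int_0^t \partial_s v(s,z)\,ds$ by the second integral in the stated estimate. The first term $\mathbb{P}^{z}(W_{t\land\tau_{{\scriptscriptstyle W}}}\in S_{t})$ is exactly $\int_{S_t} G_t(y,z)\,dy$, which after the change of variables $y=\Upsilon_t(x)$, $z=\Upsilon_0(x_0)$ (recall $dy = \partial_x\Upsilon_t(x)\,dx$ on $S_t=\Upsilon_t(S)$) becomes the first integral in the proposition. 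So everything hinges on the correction term.

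For that, I would take the bound (\ref{eq: Derivative of v(s,z)}) on $|\partial_s v(s,z)|$ as the starting point, i.e.
\[
\bigl|\partial_{s}v(s,z)\bigr|\apprle e^{\delta x_{0}^{2}}\int_{S_{t}}\Bigl(\int_{0}^{\infty}\bigl|\partial_{x}G_{t-s}(y,x)\bigr|^{a}G_{s}(x,z)\,dx\Bigr)^{\frac{1}{a}}dy,
\]
and estimate the inner $x$-integral. Here $\partial_x G_{t-s}(y,x)=\partial_x p_{t-s}(x-y)-\partial_x p_{t-s}(x+y)$, so $|\partial_x G_{t-s}(y,x)|\apprle (t-s)^{-1}\bigl(|x-y|e^{-\frac{(x-y)^2}{2(t-s)}}+|x+y|e^{-\frac{(x+y)^2}{2(t-s)}}\bigr)$; raising to the power $a$ and using $|x\pm y|^a e^{-\frac{a(x\pm y)^2}{2(t-s)}}\apprle (t-s)^{a/2}e^{-\frac{a(x\pm y)^2}{2(t-s)}}$ up to absorbing a further factor by shrinking the exponential rate slightly, I reduce to controlling $(t-s)^{-a/2}\int_0^\infty \bigl(e^{-\frac{a(y-x)^2}{2(t-s)}}+e^{-\frac{a(y+x)^2}{2(t-s)}}\bigr)G_s(x,z)\,dx$. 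Now I apply Lemma \ref{lem: exp lemma}: inequalities (\ref{eq:y-x formula}) and (\ref{eq: x+y formula}) factor out the "macroscopic" Gaussian $e^{-\frac{(y\mp z)^2}{2t}}$ (which after taking the $1/a$-th power gives the $e^{-(\Upsilon_t(x)-\Upsilon_0(x_0))^2/(4at)}$-type decay, with the constant $4a$ coming from the interplay of the power $1/a$ and the factor-of-two in $p$) and leaves a residual Gaussian in $x$ centered away from the boundary; integrating that residual Gaussian against $G_s(x,z)\le p_s(x-z)$ and keeping track of the normalizations should produce, after the $ds$-integration, the factor $t^{-1/a}\Upsilon_0(x_0)^{1/a}\Upsilon_t(x)^{1/a}\wedge 1$ — the power-law decay at the boundary coming from $G_s(x,z)$ and $G_{t-s}(y,\cdot)$ both vanishing linearly at $0$, smeared over $s\in(0,t)$.

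The main obstacle I anticipate is the $ds$-integration producing the right boundary factor uniformly: one must split $\int_0^t ds$ according to whether $s$ is close to $0$ or to $t$ (so that either $G_s$ or $G_{t-s}$ contributes the vanishing-at-the-boundary gain), and carefully balance the singular prefactors $(t-s)^{-a/2}$, which are integrable only because the $1/a$-th power tames them, against the decay supplied by Lemma \ref{lem: exp lemma}. Getting a clean bound of the form $t^{-1/a}(\cdots)^{1/a}\wedge 1$ — rather than something with a worse power of $t$ or an extra logarithm — requires choosing $a$ appropriately (it can be taken close to $1$ at the cost of enlarging $\delta$, matching the statement) and is where the computation is most delicate; the rest is bookkeeping with Gaussian integrals and the change of variables through $\Upsilon$. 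One should also note that Lemma \ref{lem: Radon-Nik estimate} has already been invoked to produce the $e^{\delta x_0^2}$ factor and the $(1+x_0)$ factor from the drift moments, and that Fubini (justified by the integrability in Lemma \ref{lem: derivative} and Corollary \ref{cor: Sup is subgaussian}) is what lets us move the $ds$-integral inside.
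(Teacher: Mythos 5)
Your overall frame matches the paper's: start from the decomposition (\ref{eq: FTC}), identify the first term with $\int_{S_t}G_t(y,z)dy$ and convert it by the substitution $y=\Upsilon_t(x)$, $z=\Upsilon_0(x_0)$, and bound the correction $\int_0^t\partial_s v(s,z)ds$ starting from (\ref{eq: Derivative of v(s,z)}), with the $e^{\delta x_0^2}$ factor supplied by Lemma \ref{lem: Radon-Nik estimate}. However, your treatment of the inner integral has a genuine gap: the manipulations you propose discard exactly the structure that produces the boundary-decay factor $t^{-\frac{1}{a}}\Upsilon_0(x_0)^{\frac{1}{a}}\Upsilon_t(x)^{\frac{1}{a}}$, which is the substantive half of the stated minimum. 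Once you bound $|\partial_x G_{t-s}(y,x)|$ by the sum of the absolute values of the two Gaussian terms (triangle inequality) and then bound $G_s(x,z)\le p_s(x-z)$, every factor vanishing at the boundary is gone: the only sources of the $y^{\frac{1}{a}}$ and $z^{\frac{1}{a}}$ decay are the difference structures, namely $e^{-(x-z)^2/2s}-e^{-(x+z)^2/2s}\le\bigl(\frac{2xz}{s}\land1\bigr)e^{-(x-z)^2/2s}$ for $G_s$ (the paper's (\ref{eq: simple exp bound (absorbing)})) and the analogous cancellation inside $\partial_xG_{t-s}$, which the paper preserves by writing $\frac{y-x}{t-s}u+\frac{y+x}{t-s}v=\frac{y-x}{t-s}(u-v)+\frac{2y}{t-s}v$. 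What your reductions actually yield is the crude bound with $1$ in place of the decay factor --- in the paper this is the easy Step 5 (the terms $J_1,J_2$), giving only the other half of the minimum. Your closing remark that the decay comes from ``$G_s$ and $G_{t-s}$ both vanishing linearly at $0$, smeared over $s$'' contradicts the bounds you propose to use, and restricting the $dx$-integral to $(0,\infty)$ does not recover it: for $y,z$ near $0$ the half-line Gaussian convolution is comparable to the whole-line one, so no smallness appears.

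Even after restoring those factors, passing from $\bigl(\frac{2xz}{s}\land1\bigr)$ and $\bigl(\frac{2xy}{t-s}\land1\bigr)$ --- which depend on the integration variable $x$ --- to a factor $z^{\frac{1}{a}}y^{\frac{1}{a}}$ outside the $dx$-integral is not bookkeeping: the paper needs the inequality $x\bigl(\frac{2xy}{t-s}\land1\bigr)^a\le 4y\frac{|x-y|^2}{t-s}+2y$ (its (\ref{eq: inequality})) together with $y^{1-\frac{1}{a}}\le(x+y)^{1-\frac{1}{a}}$, and only then do Lemma \ref{lem: exp lemma} and the change of variables produce $s$-integrable prefactors such as $\int_0^t s^{-\frac{1}{a}}(t-s)^{\frac{1}{2a}-1}ds=c_a t^{-\frac{1}{2a}}$; no splitting of the $ds$-integral near $0$ and $t$ is needed. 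Finally, your parenthetical that $a$ ``can be taken close to $1$ at the cost of enlarging $\delta$'' is inverted: small $\delta$ forces $p$ close to $1$ in Lemma \ref{lem: Radon-Nik estimate}, hence $a=qp/(p-1)$ large and a weak boundary exponent $\frac{1}{a}$, which is consistent with the statement but not with your reading of it.
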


\begin{proof}
Given $\delta>0$, we can choose $a>1$ such that (\ref{eq: Derivative of v(s,z)})
holds. Writing out the expressions for $\partial_{x}G_{t-s}$ and
$G_{s}$, and noting that 
\begin{equation}
e^{-(x-z)^{2}/2s}-e^{-(x+z)^{2}/2s}\leq({\textstyle \frac{2xz}{s}}\land1)e^{-(x-z)^{2}/2s},\label{eq: simple exp bound (absorbing)}
\end{equation}
 we get $|\partial_{s}v(s,z)|\apprle e^{\delta x_{0}^{2}}I(s)$,
where
\begin{multline*}
I(s):=s^{-\frac{1}{2a}}(t-s)^{-\frac{1}{2}}\int_{S_{t}}\left(\int_{0}^{\infty}\Bigl|{\textstyle \frac{(y-x)}{t-s}}e^{-\frac{(y-x)^{2}}{2(t-s)}}+{\textstyle \frac{(y+x)}{t-s}}e^{-\frac{(y+x)^{2}}{2(t-s)}}\Bigr|^{a}{\textstyle \frac{xz}{s}}e^{-\frac{(x-z)^{2}}{2s}}dx\!\right)^{\!\frac{1}{a}}\!dy.
\end{multline*}
Recalling (\ref{eq: FTC}), we thus have
\begin{equation}
\mathbb{P}^{x}\bigl(X_{t\land\tau_{i}}^{i,{\scriptscriptstyle N}}\in S\bigr)\leq\mathbb{P}^{z}(W_{t}\in S_{t},\:t<\tau_{{\scriptscriptstyle W}})+Ce^{\delta x^{2}}\!\int_{0}^{t}I(s)ds\label{eq:Representation}
\end{equation}
for some $C>0$, so the claim amounts to controlling the integrand
$I(s)$. We split the work involved in this endeavour into six steps.

\textbf{Step 1.} To estimate $I(s)$, we begin by observing that
\begin{align*}
\Bigl|{\textstyle \frac{(y-x)}{t-s}}e^{-\frac{(y-x)^{2}}{2(t-s)}}+{\textstyle \frac{(y+x)}{t-s}}e^{-\frac{(y+x)^{2}}{2(t-s)}}\Bigr| & \leq{\textstyle \frac{\left|y-x\right|}{t-s}}\Bigl(e^{-\frac{(y-x)^{2}}{2(t-s)}}-e^{-\frac{(y+x)^{2}}{2(t-s)}}\Bigr)+{\textstyle \frac{2y}{t-s}}e^{-\frac{(y+x)^{2}}{2(t-s)}}.
\end{align*}
For the first term on the right-hand side, we can use (\ref{eq: simple exp bound (absorbing)})
to see that
\begin{align}
{\textstyle \frac{\left|y-x\right|}{t-s}}\Bigl(e^{-\frac{(y-x)^{2}}{2(t-s)}}-e^{-\frac{(y+x)^{2}}{2(t-s)}}\Bigr) & \leq{\textstyle \frac{\left|y-x\right|}{t-s}}\bigl({\textstyle \frac{2xy}{t-s}}\land1\bigr)e^{-\frac{(y-x)^{2}}{2(t-s)}}=:f_{1}(s).\label{eq:f_1}
\end{align}
For the second term, it will prove useful to observe that, for $x,y>0$,
\begin{align}
{\textstyle \frac{y}{t-s}}e^{-\frac{(y+x)^{2}}{2(t-s)}} \leq \bigl(\textstyle{\frac{x+y}{x}}\bigr)^{\frac{1}{a}} {\textstyle \frac{y}{t-s}}e^{-\frac{(y+x)^{2}}{2(t-s)}}\leq y^{\frac{1}{a}}x^{-\frac{1}{a}}{\textstyle \frac{(x+y)}{t-s}}e^{-\frac{(y+x)^{2}}{2(t-s)}}=:f_{2}(s)\label{f_2},
\end{align}
where we have used that $y^{1-\frac{1}{a}}\leq(x+y)^{1-\frac{1}{a}}$ since $a>1$. 

Based on (\ref{eq:f_1}) and (\ref{f_2}), we have $I(s)\leq I_{1}(s)+2I_{2}(s)$,
where
\[
I_{k}(s):=s^{-\frac{3}{2a}}(t-s)^{-\frac{1}{2}}z^{\frac{1}{a}}\int_{S_{t}}\hspace{-1pt}\biggl(\int_{0}^{\infty}f_{i}(s)^{a}{\textstyle x}e^{-\frac{(x-z)^{2}}{2s}}dx\!\biggr)^{\!\frac{1}{a}}\!dy,\quad k=1,2.
\]

\textbf{Step 2.} We begin with the second term, $I_{2}$. Using (\ref{eq: x+y formula})
of Lemma \ref{lem: exp lemma}, we have
\begin{align*}
I_{2}(s)= & \:s^{-\frac{3}{2a}}{\textstyle \frac{1}{t-s}}\int_{S_{t}}z^{\frac{1}{a}}y^{\frac{1}{a}}\left(\int_{0}^{\infty}\bigl({\textstyle \frac{x+y}{\sqrt{t-s}}}\bigr)^{a}e^{-\frac{a(y+x)^{2}}{2(t-s)}}e^{-\frac{(x-z)^{2}}{2s}}dx\!\right)^{\!\frac{1}{a}}\!dy\\
\leq\: & s^{-\frac{3}{2a}}{\textstyle \frac{1}{t-s}}\int_{S_{t}}z^{\frac{1}{a}}y^{\frac{1}{a}}e^{\frac{-(y+z)^{2}}{2t}}\left(\int_{0}^{\infty}\bigl({\textstyle \frac{x+y}{\sqrt{t-s}}}\bigr)^{a}e^{-\frac{t}{2s}\left(\frac{(y+x)}{\sqrt{t-s}}+\frac{\sqrt{t-s}(y+z)}{t-s+sa}\right)^{2}}dx\!\right)^{\!\frac{1}{a}}\!dy
\end{align*}
To evaluate the inner integral, we perform a change of variables with
\[
w=t^{\frac{1}{2}}s^{-\frac{1}{2}}{\textstyle \Bigl(\frac{(x+y)}{\sqrt{t-s}}+\frac{\sqrt{t-s}(y+z)}{t-s+\sigma s}\Bigr)},\quad dx=(t-s)^{\frac{1}{2}}t^{-\frac{1}{2}}s^{\frac{1}{2}}dw,
\]
where we note that
\[
0\leq{\textstyle \frac{(x+y)}{\sqrt{t-s}}}=t^{-\frac{1}{2}}s^{\frac{1}{2}}w-{\textstyle \frac{\sqrt{t-s}(y+z)}{t-s+\sigma s}}\leq w
\]
and hence
\[
I_{2}(s)\leq s^{-\frac{1}{a}}(t-s)^{\frac{1}{2a}-1}t^{-\frac{1}{2a}}\left(\int_{0}^{\infty}w^{a}e^{-\frac{w^{2}}{2}}dw\!\right)^{\!\frac{1}{a}}\!\int_{S_{t}}z^{\frac{1}{a}}y^{\frac{1}{a}}e^{\frac{-(y+z)^{2}}{2at}}dy.
\]
Since $a>1$, we have $\int_{0}^{t}s^{-\frac{1}{a}}(t-s)^{\frac{1}{2a}-1}ds=c_{a}t^{-\frac{1}{2a}}$
for some $c_{a}>0$, so $I_{2}$ is in $L^{1}(0,t)$ with
\begin{equation}
\int_{0}^{t}I_{2}(s)ds\leq C_{a}t^{-\frac{1}{a}}\int_{S_{t}}z^{\frac{1}{a}}y^{\frac{1}{a}}e^{\frac{-(y+z)^{2}}{2at}}dy,\label{eq: I2}
\end{equation}
where $C_{a}>0$ is some numerical constant depending only on $a$.

\textbf{Step 3.} In order to estimate the first term,\textbf{ $I_{1}$},\textbf{
}we rely on the inequality
\begin{equation}
x\bigl({\textstyle \frac{2xy}{t-s}}\land1\bigr)^{a}\leq4y{\textstyle \frac{\left|x-y\right|^{2}}{t-s}}+2y\quad\text{for any}\;a>1.\label{eq: inequality}
\end{equation}
To see that this inequality is true, simply note that, when $x\leq y$,
we have
\[
x\bigl({\textstyle \frac{2xy}{t-s}}\land1\bigr)^{a}\leq x\leq y
\]
while, for $y\leq x$, we can write $x=(x-y)+y$ to obtain
\[
x\bigl({\textstyle \frac{2xy}{t-s}}\land1\bigr)^{a}\leq2(\left|x-y\right|^{2}+y^{2})\bigl({\textstyle \frac{2y}{t-s}}\land{\textstyle \frac{1}{y}}\bigr)\leq2\left|x-y\right|^{2}{\textstyle \frac{2y}{t-s}}+2y.
\]
Now, using the inequality (\ref{eq: inequality}) as well as (\ref{eq:y-x formula})
from Lemma \ref{lem: exp lemma}, it follows that
\begin{align*}
I_{1}(s):= & \:s^{-\frac{1}{2a}}(t-s)^{-\frac{1}{2}}\int_{S_{t}}\hspace{-1pt}\left(\int_{0}^{\infty}\Bigl|\bigl({\textstyle \frac{2xy}{t-s}}\land1\bigr){\textstyle \frac{\left|y-x\right|}{t-s}}e^{-\frac{(y-x)^{2}}{2(t-s)}}\Bigr|^{a}\textstyle{\frac{xz}{s}}e^{-\frac{(x-z)^{2}}{2s}}dx\!\right)^{\!\frac{1}{a}}\!dy\\
= & \:{\textstyle \frac{1}{t-s}}s^{-\frac{3}{2a}}\int_{S_{t}}\hspace{-1pt}\left(\int_{0}^{\infty}xz\bigl({\textstyle \frac{2xy}{t-s}}\land1\bigr)^{a}{\textstyle |\frac{y-x}{\sqrt{t-s}}|}^{a}e^{-\frac{a(y-x)^{2}}{2(t-s)}}e^{-\frac{(x-z)^{2}}{2s}}dx\!\right)^{\!\frac{1}{a}}\!dy\\
\apprle & \:{\textstyle \frac{1}{t-s}}s^{-\frac{3}{2a}}\int_{S_{t}}z^{\frac{1}{a}}y^{\frac{1}{a}}e^{-\frac{(y-z)^{2}}{2at}}\left(\int_{0}^{\infty}F(s,x,y,z)dx\!\right)^{\!\frac{1}{a}}\!dy,
\end{align*}
where
\[
F(s,x,y,z):=\left({\textstyle |\frac{y-x}{\sqrt{t-s}}|}^{a}+{\textstyle |\frac{y-x}{\sqrt{t-s}}|}^{a+2}\right)e^{-\frac{t}{2s}\left(\frac{(y-x)}{\sqrt{t-s}}+\frac{\sqrt{t-s}(y-z)}{t-s+sa}\right)^{2}}.
\]
For the inner integral, we make the change of variables
\[
w=t^{\frac{1}{2}}s^{-\frac{1}{2}}\left({\textstyle \frac{(x-y)}{\sqrt{t-s}}}+{\textstyle \frac{\sqrt{t-s}(y-z)}{t-s+as}}\right),\quad dx=(t-s)^{\frac{1}{2}}t^{-\frac{1}{2}}s^{\frac{1}{2}}dw,
\]
and set
\begin{align*}
f(w) & =f(s,w,y,z):=\bigl|t^{-\frac{1}{2}}s^{\frac{1}{2}}w-{\textstyle \frac{\sqrt{t-s}(y-z)}{t-s+as}}\bigr|.
\end{align*}
This yields
\begin{align*}
I_{1}(s)\apprle & \:s^{-\frac{1}{a}}(t-s)^{\frac{1}{2a}-1}t^{-\frac{1}{2a}}\int_{S_{t}}z^{\frac{1}{a}}y^{\frac{1}{a}}e^{-\frac{(y-z)^{2}}{2at}}\left(\int_{\mathbb{R}}\bigl(f(w)^{a}+f(w)^{a+2}\bigr)e^{-\frac{w^{2}}{2}}dw\!\right)^{\!\frac{1}{a}}\!dy.
\end{align*}
Noting that $f(w)\leq\left|w\right|+t^{-1}(t-s)^{\frac{1}{2}}\left|y-z\right|,$
we can split up $I_{1}$ accordingly. Since
\begin{gather*}
t^{-\frac{1}{2a}}\int_{0}^{t}s^{-\frac{1}{a}}(t-s)^{\frac{1}{2a}-1}ds=c_{a}t^{-\frac{1}{a}},\quad t^{-1-\frac{1}{2a}}\int_{0}^{t}s^{-\frac{1}{a}}(t-s)^{\frac{1}{2a}-\frac{1}{2}}ds=c_{a}^{{\scriptscriptstyle \prime}}t^{-\frac{1}{2}-\frac{1}{a}},\\
\text{and}\quad t^{-1-\frac{5}{2a}}\int_{0}^{t}s^{-\frac{1}{a}}(t-s)^{\frac{3}{2a}-\frac{1}{2}}ds=c_{a}^{{\scriptscriptstyle \prime\prime}}t^{-\frac{1}{2}-\frac{2}{a}},
\end{gather*}
we thus obtain that
\begin{align}
\int_{0}^{t}I_{1}(s)ds\apprle & \:C_{a}t^{-\frac{1}{a}}\int_{S_{t}}z^{\frac{1}{a}}y^{\frac{1}{a}}e^{-\frac{(y-z)^{2}}{2at}}dy+C_{a}^{\prime}t^{-\frac{1}{2}-\frac{1}{a}}\int_{S_{t}}z^{\frac{1}{a}}y^{\frac{1}{a}}e^{-\frac{(y-z)^{2}}{2at}}\left|y-z\right|dy\nonumber \\
 & \qquad\qquad+C_{a}^{\prime\prime}t^{-\frac{1}{2}-\frac{2}{a}}\int_{S_{t}}z^{\frac{1}{a}}y^{\frac{1}{a}}e^{-\frac{(y-z)^{2}}{2at}}\left|y-z\right|^{1+\frac{2}{a}}dy,\label{eq: I1}
\end{align}
where
\begin{gather*}
C_{a}=c_{a}\left(\int_{\mathbb{R}}\bigl(\left|w\right|^{a}+\left|w\right|^{a+2}\bigr)e^{-\frac{w^{2}}{2}}dw\!\right)^{\!\frac{1}{a}},\quad C_{a}^{\prime}=c_{a}^{{\scriptscriptstyle \prime}}\left(\int_{\mathbb{R}}e^{-\frac{w^{2}}{2}}dw\!\right)^{\!\frac{1}{a}}=c_{a}^{{\scriptscriptstyle \prime}}2^{\frac{1}{2a}}\pi^{\frac{1}{2a}},\\
\text{and}\quad C_{a}^{\prime\prime}=c_{a}^{{\scriptscriptstyle \prime\prime}}\left(\int_{\mathbb{R}}e^{-\frac{w^{2}}{2}}dw\!\right)^{\!\frac{1}{a}}=c_{a}^{{\scriptscriptstyle \prime\prime}}2^{\frac{1}{2a}}\pi^{\frac{1}{2a}}.
\end{gather*}

\textbf{Step 4.} Given the above, we can now combine the estimates
for $I_{1}$ and $I_{2}$. However, we first recall the elementary
inequalities
\begin{gather*}
\left|y-z\right|e^{-\frac{(y-z)^{2}}{2at}}\leq C_{a}t^{\frac{1}{2}}e^{-\frac{(y-z)^{2}}{4at}},\quad(y+z)e^{\frac{-(y+z)^{2}}{2\sigma t}}\leq C_{a}t^{\frac{1}{2}}e^{-\frac{(y+z)^{2}}{4at}},\\
\text{and}\quad t^{-\frac{1}{a}}\left|y-z\right|^{1+\frac{2}{\sigma}}e^{-\frac{(y-z)^{2}}{2at}}\leq C_{a}t^{\frac{1}{2}}e^{-\frac{(y-z)^{2}}{4at}},
\end{gather*}
Using these, and recalling also (\ref{eq:Representation}), it follows
from (\ref{eq: I2}) and (\ref{eq: I1}) that there exists a constant
$C_{a}>0$ such that
\begin{align}
\mathbb{P}^{x_{0}}\bigl(X_{t\land\tau_{i}}^{i,{\scriptscriptstyle N}}\in S\bigr) & \leq\int_{S_{t}}G_{t}\bigl(y,z\bigr)dy+C_{a}e^{\delta x_{0}^{2}}\int_{S_{t}}t^{-\frac{1}{a}}z{}^{\frac{1}{a}}y{}^{\frac{1}{a}}e^{-\frac{(y-z)^{2}}{4at}}dy.\label{eq:bound_1}
\end{align}

\textbf{Step 5.} It remains to observe that (\ref{eq:bound_1}) also
holds with $1$ in place of $t^{-\frac{1}{a}}z{}^{\frac{1}{a}}y{}^{\frac{1}{a}}$.
To see this, note that in Step 1 we also have $|\partial_{s}v(s,z)|\apprle e^{\delta x_{0}^{2}}(J_{1}(s)+J_{2}(s))$,
where
\[
\begin{cases}
J_{1}(s):=s^{-\frac{1}{2a}}(t-s)^{-1}\int_{S_{t}}\hspace{-1pt}\Bigl(\int_{0}^{\infty}{\textstyle \bigl|\frac{y-x}{\sqrt{t-s}}\bigr|^{a}}e^{-\frac{a(y-x)^{2}}{2(t-s)}}e^{-\frac{(x-z)^{2}}{2s}}dx\!\Bigr)^{\!\frac{1}{a}}\!dy\\[5pt]
J_{2}(s):=s^{-\frac{1}{2a}}(t-s)^{-1}\int_{S_{t}}\hspace{-1pt}\Bigl(\int_{0}^{\infty}{\textstyle \bigl|\frac{y+x}{\sqrt{t-s}}\bigr|^{a}}e^{-\frac{a(y+x)^{2}}{2(t-s)}}e^{-\frac{(x-z)^{2}}{2s}}dx\!\Bigr)^{\!\frac{1}{a}}\!dy.
\end{cases}
\]
Using Lemma \ref{lem: exp lemma} and performing the same changes
of variables as in Steps 2 and 3, the computations simplify significantly
and we obtain the desired bound.

\textbf{Step 6.} Recall that $S_{t}=\Upsilon_{t}(S)$ and $z=\Upsilon_{0}(x_{0})$.
In view of Step 4 and Step 5, the proof is therefore finished by performing
the change of variables $y=\Upsilon_{t}(x)$.
\end{proof}

\subsubsection{Estimates on the whole space}

If we ignore the absorption at the boundary, then the estimates are
much simpler, and we get the following bound.
\begin{prop}
\label{prop:Whole space density}For every $\delta>0$ there exists
$a>1$ such that, for every $S\in\mathcal{B}(\mathbb{R})$, it holds
uniformly in $N\geq1$ that
\[
\mathbb{P}^{x_{0}}\hspace{-1pt}\bigl(X_{t}^{i,{\scriptscriptstyle N}}\!\in S\bigr)\leq\int_{S}p_{t}\bigl(\bar{\Upsilon}_{\!t}(x,x_{0})\bigr)\partial_{x}\bar{\Upsilon}_{t}(x,x_{0})dx+C_{a}\!\int_{S}e^{-\frac{(\bar{\Upsilon}_{t}(x,x_{0}))^{2}}{4at}}\partial_{x}\bar{\Upsilon}_{\!t}(x,x_{0})dx,
\]
where $p_{t}(x)=(2\pi t)^{-\frac{1}{2}}\exp\{x^{2}/2t\}$ and $\bar{\Upsilon}_{t}(x,x_{0})=\int_{x_{0}}^{x}\sigma(t,y)^{-1}dy$.
\end{prop}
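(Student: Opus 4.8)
The plan is to repeat the argument of Proposition~\ref{prop:Half-space density} with the half-line transformation $\Upsilon_t$ replaced by the whole-space transformation $\bar{\Upsilon}_t(\cdot,x_0)$ and with the absorbed Brownian motion replaced throughout by an ordinary one; the point is that, without a boundary, every correction term in Steps~1--6 of Proposition~\ref{prop:Half-space density} either disappears or simplifies. Fixing the index $i$ and conditioning on $X_0^i=x_0$, I would set $Z_t:=\bar{\Upsilon}_t(X_t^i,x_0)=\int_{x_0}^{X_t^i}\sigma(t,y)^{-1}dy$, so that $Z_0=0$. Arguing exactly as in Lemma~\ref{lem:=00005BRestatement-of-Lemma4.1 from hambly ledger}, the only change being that the $\partial_t$-term in the drift now runs from $x_0$ rather than from $0$, one gets $dZ_t=\hat{b}_t^idt+dB_t^i$ for a Brownian motion $B^i$ and a drift still satisfying $|\hat{b}_t^i|\apprle 1+|X_t^i|+|M_t^{{\scriptscriptstyle N}}|+|x_0|$, where the extra $|x_0|$ is deterministic and harmless once we condition on $X_0^i=x_0$. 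Lemmas~\ref{lem:Stochastic exp and equivalnet measu} and~\ref{lem: Radon-Nik estimate} then go through verbatim (with constants absorbing the deterministic shift in the drift): there is an equivalent measure $\mathbb{Q}$ under which $Z$ is a Brownian motion started at $0$, and for every $\delta>0$ one can pick $p>1$ close to $1$ with $\mathbb{E}[\mathcal{E}_s^{1-p}\mid X_0^i=x_0]\leq Ce^{\delta x_0^2}$ and $\mathbb{E}[|\hat{b}_s^i|^q\mid X_0^i=x_0]\leq C(1+x_0^q)$ for any $q>1$.

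Next I would run the approximation scheme of Section~6.3 without absorption. For fixed $t\in(0,T]$ set $\bar{S}_t:=\bar{\Upsilon}_t(S,x_0)$ and
\[
u(s,x):=\mathbb{P}^x(W_{t-s}\in\bar{S}_t)=\int_{\bar{S}_t}p_{t-s}(y-x)dy,
\]
which is a classical solution of $\partial_su+\tfrac12\Delta u=0$ on $[0,t)\times\mathbb{R}$ with $u(t,\cdot)=\mathbf{1}_{\bar{S}_t}$ and now no Dirichlet condition. Writing $v(s,z):=\mathbb{E}^z[u(s,Z_s)]$, the argument of Lemma~\ref{lem: derivative} with the stopping at $\tau$ removed shows $s\mapsto v(s,0)$ is absolutely continuous on $[0,t]$ with $\partial_sv(s,0)=\mathbb{E}^0[\hat{b}_s\partial_xu(s,Z_s)]$, so that
\[
\mathbb{P}^{x_0}(X_t^{i,{\scriptscriptstyle N}}\in S)=\mathbb{P}^0(W_t\in\bar{S}_t)+\int_0^t\partial_sv(s,0)\,ds.
\]
Applying H\"older in the $\hat{b}_s$-variable with $q$ close to $1$, then passing to $\mathbb{Q}$ and applying H\"older again with $p$ close to $1$ exactly as in the derivation of~(\ref{eq: Derivative of v(s,z)}), and using Lemma~\ref{lem: Radon-Nik estimate}, I would obtain, for a suitable $a>1$ depending on $\delta$,
\[
|\partial_sv(s,0)|\apprle e^{\delta x_0^2}\int_{\bar{S}_t}\Bigl(\int_{\mathbb{R}}|\partial_xp_{t-s}(y-x)|^a\,p_s(x)\,dx\Bigr)^{1/a}dy.
\]

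It then remains to control the inner double integral, and this is strictly easier than Steps~1--6 of Proposition~\ref{prop:Half-space density}: there is no factor $(\tfrac{2xz}{s}\wedge1)$, hence no split into $I_1$ and $I_2$ and no need for inequality~(\ref{eq: inequality}). Using $\partial_xp_{t-s}(y-x)=\tfrac{y-x}{t-s}p_{t-s}(y-x)$, inequality~(\ref{eq:y-x formula}) of Lemma~\ref{lem: exp lemma} with $z=0$, and the change of variables $w=\sqrt{t/s}\bigl(\tfrac{x-y}{\sqrt{t-s}}+\tfrac{\sqrt{t-s}\,y}{t-s+as}\bigr)$, the $x$-integral becomes a Gaussian integral against a polynomial in $|w|+t^{-1}\sqrt{t-s}\,|y|$; integrating in $s$ by means of $\int_0^ts^{-1/a}(t-s)^{\beta}ds=c_at^{\beta+1-1/a}$ and absorbing polynomial factors via $|y|^ke^{-y^2/2at}\leq C_at^{k/2}e^{-y^2/4at}$ reduces everything to $\int_0^t|\partial_sv(s,0)|ds\apprle e^{\delta x_0^2}\int_{\bar{S}_t}e^{-y^2/4at}dy$. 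As in Step~5 of Proposition~\ref{prop:Half-space density}, bounding $|\partial_sv|$ without the polynomial prefactor in $x$ yields the same bound, so no extra power of $y$ appears. Combining this with the representation above, recalling $p_t(y)=p_t(y-0)$, and changing variables back via $y=\bar{\Upsilon}_t(x,x_0)$ (so $\partial_x\bar{\Upsilon}_t(x,x_0)=\sigma(t,x)^{-1}>0$) delivers the stated estimate. I do not expect a genuine obstacle: this is essentially the prototype of Proposition~\ref{prop:Half-space density}, and the only point requiring a little care is checking that the $x_0$-dependence introduced by centering $\bar{\Upsilon}_t$ at $x_0$ stays inside the tolerated $e^{\delta x_0^2}$ factor, which it does since that dependence is purely deterministic.
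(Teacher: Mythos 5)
Your proposal is correct and follows essentially the same route as the paper: transform via $\bar{\Upsilon}_t(\cdot,x_0)$ so the process starts at $0$, invoke the Girsanov change of measure of Lemmas \ref{lem:Stochastic exp and equivalnet measu}--\ref{lem: Radon-Nik estimate}, run the $u,v$ approximation of Section 6.3 without absorption, and reduce to the $J_1$-type computation of Step 5 of Proposition \ref{prop:Half-space density} with $z=0$. Your careful retention of the factor $e^{\delta x_0^2}$ (coming from the Radon--Nikodym estimate, with the deterministic $|x_0|$ shift in the drift absorbed by taking $p$ close to $1$) is consistent with how the bound is actually used in (\ref{eq:finite whole-space density estimate}), so this is not a deviation of substance.
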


\begin{proof}
Consider Lemma \ref{lem:=00005BRestatement-of-Lemma4.1 from hambly ledger}
with $\bar{Z}_{t}:=\bar{\Upsilon}_{t}(X_{t}^{i},x_{0})$ in place
of $Z_{t}=\Upsilon_{t}(X_{t}^{i})=\bar{\Upsilon}_{t}(X_{t}^{i},0)$
and let $\bar{\mathbb{Q}}$ denote the corresponding measure from
Lemma \ref{lem:Stochastic exp and equivalnet measu} such that $\bar{Z}_{t}$
is a standard Brownian motion under $\bar{\mathbb{Q}}$, started at
$0$ when $X_{0}^{i}=x_{0}$. Setting $\bar{S}_{t}:=\bar{\Upsilon}_{t}(S,x_{0})$,
we have
\[
\mathbb{P}^{x_{0}}\hspace{-1pt}\bigl(X_{t}^{i,{\scriptscriptstyle N}}\!\in S\bigr)=\mathbb{P}^{0}\hspace{-1pt}\bigl(\bar{Z}_{t}\in\bar{S}_{t}\bigr).
\]
From here, the bound follows by analogy with Proposition \ref{prop:Half-space density}
for the new measure $\bar{\mathbb{Q}}$ and
\[
\bar{v}(s):=\mathbb{E}^{0}\hspace{-1pt}\left[\bar{u}(s,\bar{Z}_{s})\right],\quad\bar{u}(s,x):=\mathbb{P}^{x}\bigl(W_{t-s}\in\bar{S}_{t}\bigr)=\int_{\bar{S}_{t}}p_{t-s}(y-x)dy.
\]
Since there are no boundary effects, the estimates simplify and, in fact, the
work is the same as for $J_{1}$ in Step 5 of the proof of Proposition
\ref{prop:Half-space density} with $z=0$.
\end{proof}

\subsection{Proof of Proposition \ref{prop: Aronson estimate}\label{subsec:Proof-of-Prop3.2}}

In view of Propositions \ref{prop:Half-space density} and \ref{prop:Whole space density}, only a few observations remain before we can deduce the density estimates (\ref{eq:finite half-line density estimate}) and (\ref{eq:finite whole-space density estimate}), thus proving
Proposition \ref{prop: Aronson estimate}. Given
Assumption \ref{Assumption 1 - finite system}, it holds by construction of $\bar{\Upsilon}$ that there are constants $C_{1},C_{2}>0$
such that $C_{1}|x-x_{0}|\leq|\bar{\Upsilon}_{t}(x,x_{0})|\leq C_{2}|x-x_{0}|$
and $|\partial_{x}\Upsilon_{t}(x)|\leq C_{2}$ for all $x\in\mathbb{R}$
and $t\in[0,T]$. Consequently, the whole-space estimate (\ref{eq:finite whole-space density estimate})
is an immediate consequence of Proposition \ref{prop:Whole space density}.

For the estimate with boundary decay, we begin by recalling the standard
bound
\[
G_{t}(x,x_{0})\leq C\frac{1}{\sqrt{t}}\bigl(\frac{x}{\sqrt{t}}\land1\bigr)\bigl(\frac{x_{0}}{\sqrt{t}}\land1\bigr)\exp\Bigl\{\frac{-(x-x_{0})^{2}}{4t}\Bigr\}\quad\text{for}\quad x,x_{0}\geq0.
\]
Next, we notice that, by definition of $\Upsilon$, there exist $C>0$
such that $|\Upsilon_{t}(x)|\leq Cx$ and $|\partial_{x}\Upsilon_{t}(x)|\leq C$
for all $x\geq0$ and $t\in[0,T]$. Finally, we can observe that
\begin{align*}
\bigl(\Upsilon_{t}(x)-\Upsilon_{0}(x_{0})\bigr)^{2} & \geq\left(\int_{x\land x_{0}}^{x\lor x_{0}}\!\frac{dy}{\sigma(t,y)}\right)^{\!2}+2\int_{x\land x_{0}}^{x\lor x_{0}}\!\frac{dy}{\sigma(t,y)}\int_{0}^{x\land x_{0}}\!\frac{1}{\sigma(t,y)}-\frac{1}{\sigma(0,y)}dy\\
 & \geq C(x-x_{0})^{2}-C^{\prime}t|x-x_{0}|(x\land x_{0}),
\end{align*}
by using the bounds on $\sigma$ and $\partial_{t}\sigma$ from Assumption
\ref{Assumption 1 - finite system}. Combining these observations,
the density estimate (\ref{eq:finite half-line density estimate})
follows from Proposition \ref{prop:Half-space density} by taking
$c_{x,y}:=4aC^{\prime}|x-y|(x\wedge y)$ and $\kappa:=1/a$.

It remains to observe that we can take $c_{x,y}\equiv0$ when $\sigma(t,x)=\sigma_{1}(t)\sigma_{2}(x)$
and that, in this case, we do not need any smoothness of $t\mapsto\sigma(t,x)$,
as mentioned in Remark \ref{rem:weaker_assumptions}.

To see this, the point is that it suffices to scale away the spatial
component of the volatility. Specifically, we can consider the analogue
of Lemma \ref{lem: Lemma8.8 from hambly-ledger} with
\[
\tilde{\Upsilon}(x):=\int_{0}^{x}\hspace{-1pt}\frac{dy}{\sigma_{2}(y)}\quad\text{and}\quad\tilde{Z}_{t}^{i}:=\tilde{\Upsilon}(X_{t}^{i})=\tilde{\Upsilon}(X_{0}^{i})+\int_{0}^{t}\tilde{b}_{s}^{i}ds+\int_{0}^{t}\hspace{-1pt}\sigma_{1}(s)dB_{s}.
\]
Fix $S\in\mathcal{B}(0,\infty)$ and set $\tilde{S}:=\tilde{\Upsilon}(S)$.
Fixing also $t\in(0,T]$, we can replicate Section 6.3 with $\tilde{v}(s,z):=\mathbb{E}^{z}[\tilde{u}(s,\tilde{Z}_{s\land\tilde{\tau}})]$
and
\[
\tilde{u}(s,x):=\mathbb{P}^{x}\hspace{-1pt}\Bigl(W_{\int_{s}^{t}\sigma_{1}(r)^{2}dr}\in\tilde{S},\;{\textstyle \int_{s}^{t}}\sigma_{1}(r)^{2}dr<\tau_{{\scriptscriptstyle W}}\Bigr)=\int_{\tilde{S}}G_{s,t}^{\sigma_{1}}(y,x)dy,
\]
where $G_{s,t}^{\sigma_{1}}$ is the Green's function for $\partial_{s}f(s,x)+\frac{1}{2}\sigma_{1}(s)^{2}\Delta f(s,x)=0$
as a terminal-boundary value problem on $[0,t)\times(0,\infty)$ with $f(s,0)=0$.
That is,
\[
G_{s,t}^{\sigma_{1}}(y,x)={\textstyle \bigl(2\pi\int_{s}^{t}\sigma_{1}(r)^{2}dr\bigr)^{-\frac{1}{2}}}\biggl(\exp\biggl\{\frac{-(x-y)^{2}}{2\int_{s}^{t}\sigma_{1}(r)^{2}dr}\biggr\}-\exp\biggl\{\frac{-(x+y)^{2}}{2\int_{s}^{t}\sigma_{1}(r)^{2}dr}\biggr\}\biggr).
\]
Since there are constants $c_{1},c_{2}>0$ such that $c_{1}(t-s)\leq\int_{s}^{t}\sigma_{1}(r)^{2}dr\leq c_{2}(t-s)$,
the same estimates as in the proof of Proposition \ref{prop:Half-space density}
yield
\[
\mathbb{P}^{x_{0}}\hspace{-1pt}\bigl(X_{t\land\tau_{i}}^{i}\in S\bigr)=\int_{\tilde{S}}G_{s,t}^{\sigma_{1}}(y,\tilde{\Upsilon}(x_{0}))dy+\int_{0}^{t}\partial_{s}\tilde{v}(s,\tilde{\Upsilon}(x_{0}))ds
\]
and, using also the bounds $|\tilde{\Upsilon}(x)|\leq cx$ and $|\partial_{x}\tilde{\Upsilon}|\leq c$,
we get
\[
\mathbb{P}^{x_{0}}\hspace{-1pt}\bigl(X_{t\land\tau_{i}}^{i}\in S\bigr)\apprle\int_{S}G_{0,t}^{\sigma_{1}}\bigl(\tilde{\Upsilon}(x),\tilde{\Upsilon}(x_{0})\bigr)dx+e^{\delta x_{0}^{2}}\int_{S}\bigl(t^{-\frac{1}{a}}x^{\frac{1}{a}}x_{0}^{\frac{1}{a}}\land1\bigr)e^{-\frac{(\tilde{\Upsilon}(x)-\tilde{\Upsilon}(x_{0}))^{2}}{4at}}dx.
\]
As $\tilde{\Upsilon}$ does not depend on time, we have $|\tilde{\Upsilon}(x)-\tilde{\Upsilon}(x_{0})|=|\int_{x_{0}}^{x}\sigma_{2}(y)^{-1}dy|$,
so there exist $c_{1}^{\prime},c_{2}^{\prime}>0$ such that $c_{1}^{\prime}|x-x_{0}|\leq|\tilde{\Upsilon}(x)-\tilde{\Upsilon}(x_{0})|\leq c_{2}^{\prime}|x-x_{0}|$.
Also, $G_{0,t}^{\sigma_{1}}$ satisfies an analogous bound to that
of $G_{t}(x,y)$ above, so we conclude that the density estimate (\ref{eq:finite half-line density estimate})
holds with $c_{x,y}\equiv0$, as desired. This finishes the proof
of Proposition \ref{prop: Aronson estimate}.

\appendix

\section{Appendix\label{sec:Appendix}}

\subsection{Technical lemmas}
\begin{lem}
\label{lem:Error terms half-space}Suppose $\nu$ satisfies Assumption
\ref{Assumption 2.3} and let $g_{s}(x)=g(s,x,\nu_{s},L_{s})$, where
$g$ is any of $b$, $\mathfrak{b}$, $\sigma$, or $\sigma^{2}$, with  $L_{s}=1-\nu_{s}(0,\infty)$. Define the error term
\[
\mathcal{E}_{t,\varepsilon}^{g}(x):=\left\langle \nu_{t},g_{t}(\cdot)G_{\varepsilon}(x,\cdot)\right\rangle -g_{t}(x)(\mathcal{T}_{\varepsilon}\nu_{t})(x).
\]
Then we have
\[
\mathbb{E}\int_{0}^{T}\left\Vert \mathcal{E}_{t,\varepsilon}^{g}\right\Vert _{L^{2}(\mathbb{R})}^{2}dt\rightarrow0\quad\text{as}\quad\varepsilon\rightarrow0.
\]
\end{lem}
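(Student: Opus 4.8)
The plan is to exploit the regularity of $x\mapsto g(t,x,\nu_t,L_t)$ — which by Assumption~\ref{Assumption 1 - finite system}(i) is $\mathcal{C}^2$ in $x$ with bounded first and second spatial derivatives and at most linear growth — together with the fact that $G_\varepsilon(x,\cdot)$ concentrates near $y=x$ as $\varepsilon\downarrow0$. Writing $\mathcal{E}_{t,\varepsilon}^g(x)=\int_0^\infty\bigl(g_t(y)-g_t(x)\bigr)G_\varepsilon(x,y)\,d\nu_t(y)$, I would split the factor $g_t(y)-g_t(x)$ using a first-order Taylor expansion: $g_t(y)-g_t(x)=g_t'(x)(y-x)+R_t(x,y)$ with $|R_t(x,y)|\le \tfrac12\|g_t''\|_\infty|y-x|^2\apprle|y-x|^2$. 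The key point is that $G_\varepsilon(x,y)=p_\varepsilon(x-y)-p_\varepsilon(x+y)$ and that, after integration, $|y-x|G_\varepsilon(x,y)$ and $|y-x|^2 G_\varepsilon(x,y)$ both pick up extra powers of $\sqrt\varepsilon$. Since $p_\varepsilon$ is a Gaussian of variance $\varepsilon$, one has pointwise bounds such as $|y-x|\,p_\varepsilon(x-y)\apprle \varepsilon^{1/2}\varepsilon^{-1/2}e^{-(x-y)^2/4\varepsilon}\apprle \varepsilon^{1/2}\,p_{2\varepsilon}(x-y)$ and $|y-x|^2 p_\varepsilon(x-y)\apprle \varepsilon\, p_{2\varepsilon}(x-y)$, and analogous bounds with $x+y$. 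Hence each of the three resulting pieces is controlled (up to a factor $\varepsilon^{1/2}$ or $\varepsilon$) by a $\nu_t$-integral of $p_{2\varepsilon}(x\mp\cdot)$ weighted by $g_t'(x)$ or a constant.

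The next step is to pass to the $L^2(\mathbb{R})$ norm in $x$ and take expectations. For the term with the Gaussian difference $p_{2\varepsilon}(x-y)$, I would use Jensen/Cauchy--Schwarz in $d\nu_t$ exactly as in the proof of Lemma~\ref{lem:Boundary Estimate}, obtaining (after integrating in $x$) a bound of the form $\varepsilon^{1/2}$ or $\varepsilon$ times $\mathbb{E}\int_0^T\langle\nu_t,(1+|\cdot|^2+M_t^2)\rangle\,dt$, which is finite by Assumption~\ref{Assumption 2.3}(iii) together with Lemma~\ref{lem:Tails for centre of mass} (so that $\mathbb{E}\int_0^T M_t^{2q}\,dt<\infty$ for all $q>1$, and likewise $\mathbb{E}\int_0^T\langle\nu_t,|\cdot|^2\rangle\,dt<\infty$). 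The factor $g_t'(x)$ is bounded, so it causes no trouble for these pieces; the linear growth only enters through the additive constant $\langle\nu_t,|\cdot|\rangle$ and $M_t$, which is harmless because it multiplies a positive power of $\varepsilon$. Thus the contribution of the "near-diagonal" part $p_{2\varepsilon}(x-y)$ is $O(\varepsilon^{1/2})$ in the $L^2$-in-$x$, $L^1$-in-$t$, expectation sense, and vanishes.

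The genuinely delicate piece is the one coming from the \emph{image} term $p_\varepsilon(x+y)$ in $G_\varepsilon$, i.e. the part $-\langle\nu_t,g_t(\cdot)p_\varepsilon(x+\cdot)\rangle + g_t(x)(\mathcal{T}_\varepsilon\nu_t$-image term$)$. This does not benefit from a Taylor cancellation since $x+y$ is not small, so here I cannot gain a power of $\varepsilon$ from $|y-x|$. Instead this is precisely the "boundary effect" already handled: the term $\langle\nu_t,g_t(\cdot)p_\varepsilon(x+\cdot)\rangle$ is exactly of the form treated in Lemma~\ref{lem:Boundary Estimate} (with $|g_t(y)|\apprle 1+|y|+M_t$), so its $L^2(\mathbb{R}_+)$-in-$x$, $L^1$-in-$t$, expectation norm tends to $0$ as $\varepsilon\to0$ by the boundary decay in Assumption~\ref{Assumption 2.3}(iv) and the exponential tails (iii); and the remaining image contribution $g_t(x)\int_0^\infty p_\varepsilon(x+y)\,d\nu_t(y)$ is dominated by $(1+|x|+M_t)$ times $\int_0^\infty p_\varepsilon(x+y)\,d\nu_t(y)$, which is again of exactly the same shape and handled identically. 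Therefore the image part is $o(1)$ as well, and summing the two contributions gives $\mathbb{E}\int_0^T\|\mathcal{E}_{t,\varepsilon}^g\|_{L^2(\mathbb{R})}^2\,dt\to0$, completing the proof. The main obstacle is bookkeeping: keeping track of the linear-growth weights $(1+|y|+M_t)$ through all the Gaussian manipulations and verifying at each stage that the relevant $\nu_t$-moments are finite in expectation uniformly on $[0,T]$ — but all of these follow from Assumption~\ref{Assumption 2.3} and Lemma~\ref{lem:Tails for centre of mass}, exactly as in Lemma~\ref{lem:Boundary Estimate}.
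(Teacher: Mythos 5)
Your proposal is correct and follows essentially the same route as the paper, whose proof simply adapts Lemma 8.1 of \cite{HL2016} by replacing the crude bound $|g_{t}(x)-g_{t}(y)|\leq2\Vert g_{t}\Vert_{\infty}$ with the Lipschitz estimate $|g_{t}(x)-g_{t}(y)|\leq\Vert\partial_{x}g_{t}\Vert_{\infty}|x-y|$ --- precisely the mechanism you exploit (the $|x-y|$ factor pairing with the Gaussian kernel to gain powers of $\sqrt{\varepsilon}$ near the diagonal, and the image term $p_{\varepsilon}(x+\cdot)$ controlled via the boundary-decay argument as in Lemma \ref{lem:Boundary Estimate}). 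Your second-order Taylor remainder is harmless but unnecessary: the first-order Lipschitz bound alone already yields the required smallness, exactly as the paper's remark indicates.
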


\begin{proof}
This follows by a straightforward modification of Lemma 8.1 in \cite{HL2016}.
The only thing to note is that we can no longer use the crude
bound $\left|g_{t}(x)-g_{t}(y)\right|\leq2\left\Vert g_{t}\right\Vert _{\infty}$,
as $g_{t}$ need not be bounded. However, the arguments from \cite{HL2016}
are easily extended to the present case if we instead rely on $\left|g_{t}(x)-g_{t}(y)\right|\leq\left\Vert \partial_{x}g_{t}\right\Vert _{\infty}\left|x-y\right|$.
\end{proof}
\begin{lem}
\label{lem:Error Terms Whole Space}Suppose $\bar{\nu}$ satisfies
the whole-space analogues of (iii)-(iv) in Assumption~\ref{Assumption 2.3}.
Let $g_{s}=g(s,\cdot,\nu_{s},L_{s})$, where $g$ is any
of $b$, $\mathfrak{b}$, $\sigma$, or $\sigma^{2}$, and define the error terms
\begin{align*}
\mathcal{\bar{E}}_{t,\varepsilon}^{g}(x) & :=\left\langle \bar{\nu}_{t},g_{t}(\cdot)p_{\varepsilon}(x,\cdot)\right\rangle -g_{t}(x)\partial_{x}(\bar{\mathcal{T}}_{\varepsilon}\bar{\nu}_{t})(x)+\partial_{x}g_{t}(x)\mathcal{\bar{H}}_{t,\varepsilon}^{g}(x),\\
\mathcal{\bar{H}}_{t,\varepsilon}^{g}(x) & :=\left\langle \bar{\nu}_{t},(x-\cdot)\partial_{x}p_{\varepsilon}(x-\cdot)\right\rangle .
\end{align*}
Then, for $k=1,2$, we have
\[
\mathbb{E}\int_{0}^{T}\bigl\Vert\mathcal{\bar{E}}_{t,\varepsilon}^{g}\bigr\Vert_{L^{2}(\mathbb{R})}^{2k}dt \rightarrow0,\quad \mathbb{E}\int_{0}^{T}\bigl\Vert x\mathcal{\bar{E}}_{t,\varepsilon}^{g}\bigr\Vert_{L^{2}(\mathbb{R})}^{2}dt\rightarrow0,\quad\text{as}\quad\varepsilon\rightarrow0.
\]
\end{lem}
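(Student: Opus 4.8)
The plan is to recognise that each error term $\mathcal{\bar{E}}_{t,\varepsilon}^{g}$ is precisely the $\bar{\nu}_{t}$-average of the \emph{second-order Taylor remainder} of $g_{t}$ about the base point $x$, weighted by $\partial_{x}p_{\varepsilon}(x-\cdot)$. Writing $g_{t}(y)=g_{t}(x)+\partial_{x}g_{t}(x)(y-x)+R_{t}(x,y)$ and unwinding the definitions of $\bar{\mathcal{T}}_{\varepsilon}$ and $\mathcal{\bar{H}}_{t,\varepsilon}^{g}$, the zeroth- and first-order contributions cancel exactly (the first because of the definition of $\partial_{x}(\bar{\mathcal{T}}_{\varepsilon}\bar{\nu}_{t})$, the second because $\langle\bar{\nu}_{t},(y-x)\partial_{x}p_{\varepsilon}(x-\cdot)\rangle=-\mathcal{\bar{H}}_{t,\varepsilon}^{g}$), leaving
\[
\mathcal{\bar{E}}_{t,\varepsilon}^{g}(x)=\bigl\langle\bar{\nu}_{t},\,R_{t}(x,\cdot)\,\partial_{x}p_{\varepsilon}(x-\cdot)\bigr\rangle .
\]
This is the structure exploited for the analogous lemma in \cite{HL2016}; the only genuine deviation here is that the coefficients now have linear growth, so in place of $L^{\infty}$-bounds on $g$ one uses the $\mathcal{C}^{2}$-bounds of Assumption \ref{Assumption 1 - finite system}(i). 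Indeed, for each of $g=b,\mathfrak{b},\sigma,\sigma^{2}$ one has $\sup_{t\le T}\Vert\partial_{xx}g_{t}\Vert_{\infty}\le C$ deterministically (for $g=\mathfrak{b}=b-\alpha\mathfrak{L}'$ one uses $|\mathfrak{L}'_{t}|\le\Vert\mathfrak{K}'\Vert_{1}$ together with the bound on $\partial_{xx}\alpha$), whence $|R_{t}(x,y)|\le\tfrac{1}{2}\Vert\partial_{xx}g_{t}\Vert_{\infty}|x-y|^{2}$.

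Next I would record the elementary Gaussian bounds $|z|^{n}|\partial_{x}p_{\varepsilon}(z)|=\varepsilon^{-1}|z|^{n+1}p_{\varepsilon}(z)\le C_{n}\varepsilon^{(n-1)/2}p_{2\varepsilon}(z)$, which follow from $u^{m}e^{-u^{2}/2}\lesssim e^{-u^{2}/4}$ with $u=z/\sqrt{\varepsilon}$. Applied with $n=2$ this gives the pointwise bound $|\mathcal{\bar{E}}_{t,\varepsilon}^{g}(x)|\le C\sqrt{\varepsilon}\,(\bar{\mathcal{T}}_{2\varepsilon}\bar{\nu}_{t})(x)$. For the first family of estimates I would then invoke only the crude Young-type bound $\Vert\bar{\mathcal{T}}_{2\varepsilon}\bar{\nu}_{t}\Vert_{L^{2}(\mathbb{R})}\le\Vert p_{2\varepsilon}\Vert_{L^{2}}\,\bar{\nu}_{t}(\mathbb{R})\le C\varepsilon^{-1/4}$, so that $\Vert\mathcal{\bar{E}}_{t,\varepsilon}^{g}\Vert_{2}^{2k}\le C\varepsilon^{k/2}$ and hence $\mathbb{E}\int_{0}^{T}\Vert\mathcal{\bar{E}}_{t,\varepsilon}^{g}\Vert_{2}^{2k}dt\le CT\varepsilon^{k/2}\to0$. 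It is worth noting that this argument does not presuppose an $L^{2}$-density for $\bar{\nu}_{t}$, so there is no circularity with the energy estimate of Proposition \ref{prop: Energy bound on particle limit} in which the lemma is used.

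For the weighted estimate I would split $|x|\le|x-y|+|y|$ inside the remainder integral and apply the Gaussian bounds with $n=3$ and $n=4$, obtaining $|x\,\mathcal{\bar{E}}_{t,\varepsilon}^{g}(x)|\le C\bigl(\varepsilon\,(\bar{\mathcal{T}}_{2\varepsilon}\bar{\nu}_{t})(x)+\sqrt{\varepsilon}\,(\bar{\mathcal{T}}_{2\varepsilon}(|\cdot|\bar{\nu}_{t}))(x)\bigr)$. Taking $L^{2}$-norms in $x$, $\Vert x\,\mathcal{\bar{E}}_{t,\varepsilon}^{g}\Vert_{2}\le C\bigl(\varepsilon^{3/4}+\varepsilon^{1/4}\langle\bar{\nu}_{t},|\cdot|\rangle\bigr)$; squaring, using $\langle\bar{\nu}_{t},|\cdot|\rangle^{2}\le\langle\bar{\nu}_{t},|\cdot|^{2}\rangle$ (Cauchy--Schwarz, as $\bar{\nu}_{t}(\mathbb{R})\le1$), and integrating gives $\mathbb{E}\int_{0}^{T}\Vert x\,\mathcal{\bar{E}}_{t,\varepsilon}^{g}\Vert_{2}^{2}dt\le CT\varepsilon^{3/2}+C\varepsilon^{1/2}\,\mathbb{E}\int_{0}^{T}\langle\bar{\nu}_{t},|\cdot|^{2}\rangle dt$. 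The last integral is finite because the whole-space analogue of the exponential tails in Assumption \ref{Assumption 2.3}, together with Lemma \ref{lem:Tails for centre of mass}, controls all polynomial moments, so the right-hand side tends to $0$ as $\varepsilon\downarrow0$. The genuine content is thus minimal: the two points requiring care are the Taylor-remainder identification and the substitution of the $\mathcal{C}^{2}$-bounds of Assumption \ref{Assumption 1 - finite system} for the boundedness used in \cite{HL2016} to absorb the linear growth; the rest is power-counting showing that the $\varepsilon$-gain from the remainder beats the $\varepsilon^{-1/4}$ blow-up of the mollified total mass even after raising to the $2k$-th power, exactly as for the whole-space counterpart in \cite{HL2016}.
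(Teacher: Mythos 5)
Your proof is correct and is in substance the argument the paper intends: the paper's own proof is a one-line deferral to \cite[Lem.~8.2]{HL2016}, and the advertised ``simple modification'' is exactly what you carry out --- reading the first term of $\mathcal{\bar{E}}^{g}_{t,\varepsilon}$ as $\langle\bar{\nu}_{t},g_{t}(\cdot)\partial_{x}p_{\varepsilon}(x-\cdot)\rangle$ (which is what the statement intends, as the use in the proof of Proposition \ref{prop: Energy bound on particle limit} confirms), identifying the error as the second-order Taylor remainder of $g_t$ tested against $\partial_{x}p_{\varepsilon}$, and replacing the sup-norm bounds of \cite{HL2016} by the uniform bounds on $\partial_{x}g_{t},\partial_{xx}g_{t}$ from Assumption \ref{Assumption 1 - finite system}(i) together with $|\mathfrak{L}'_{t}|\leq\Vert\mathfrak{K}'\Vert_{L^{1}}$ for $g=\mathfrak{b}$. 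The power counting is sound --- the Gaussian moment bound, the Minkowski estimate $\Vert\bar{\mathcal{T}}_{2\varepsilon}\bar{\nu}_{t}\Vert_{2}\apprle\varepsilon^{-1/4}$ (which nicely gives a deterministic rate covering both $k=1,2$ and avoids any circularity with the $L^2$ energy estimate), and the whole-space exponential tails yielding $\mathbb{E}\int_{0}^{T}\langle\bar{\nu}_{t},|\cdot|^{2}\rangle dt<\infty$ --- up to the cosmetic slip that the weighted estimate applies your bound with $|x-y|$-powers $3$ and $2$ (not $3$ and $4$), which is what your displayed inequality in fact reflects.
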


\begin{proof}
This follows by a simple modification of Lemma 8.2 in \cite{HL2016}.
\end{proof}
\begin{lem}
\label{lem:Tails for centre of mass}
Let $\nu_{t}^{{\scriptscriptstyle N}}$
be as defined in (\ref{eq: Particle System}) and let $\tilde{\nu}_t$
be any measure valued process satisfying Assumption \ref{Assumption 2.3}.
Then, for every $a>0$, we have
\begin{equation}
\mathbb{E}\int_{0}^{T}\bigl\langle\tilde{\nu}_{t},x^{k}\mathbf{1}_{[\lambda,\infty)}(x)\bigr\rangle dt=o(\lambda^{k}e^{-a\lambda})\quad\text{as}\quad\lambda\rightarrow\infty,\label{eq:LemmaA3_limit}
\end{equation}
and, likewise, it holds uniformly in $N\geq1$ and $t\in[0,T]$ that
\begin{equation}
\mathbb{E}\left\langle \nu_{t}^{{\scriptscriptstyle N}},x^{k}\mathbf{1}_{[\lambda,\infty)}(x)\right\rangle =o(\lambda^{k}e^{-a\lambda})\quad\text{as}\quad\lambda\rightarrow\infty\label{eq:LemmaA3_emp}
\end{equation}
\end{lem}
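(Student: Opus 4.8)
The plan is to reduce both claims to tail estimates already established — Assumption \ref{Assumption 2.3}(iii) for the limiting measure $\tilde{\nu}$, and Corollary \ref{Cor:Regularity of the empirical measures} for the empirical measures $\nu^{{\scriptscriptstyle N}}$ — by means of a layer-cake representation followed by Tonelli's theorem. Since $\tilde{\nu}_{t}$ and $\nu_{t}^{{\scriptscriptstyle N}}$ are supported on $\mathbb{R}_{+}$ (Assumption \ref{Assumption 2.3}(i), respectively by construction), for any non-negative finite measure $\mu$ on $\mathbb{R}_{+}$ and any $\lambda>0$ one has, writing $x^{k}=\lambda^{k}+k\int_{\lambda}^{x}y^{k-1}dy$ for $x\geq\lambda$ and applying Fubini,
\[
\bigl\langle\mu,x^{k}\mathbf{1}_{[\lambda,\infty)}\bigr\rangle=\lambda^{k}\mu(\lambda,\infty)+k\int_{\lambda}^{\infty}y^{k-1}\mu(y,\infty)\,dy.
\]
First I would apply this pointwise in $\omega$ (and in $t$ for the $\tilde{\nu}$ case) and then take expectations, using Tonelli to move $\mathbb{E}$ and $\int_{0}^{T}dt$ inside the $dy$-integral; as every integrand is non-negative this presents no difficulty.

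For (\ref{eq:LemmaA3_limit}), fix the rate $a>0$ from the statement and apply Assumption \ref{Assumption 2.3}(iii) with the choice $\epsilon:=2a$ (renaming the dummy there to avoid the clash): there is a function $\psi$ with $\psi(\lambda)\downarrow0$ such that $\mathbb{E}\int_{0}^{T}\tilde{\nu}_{t}(r,\infty)\,dt\leq\psi(\lambda)e^{-2ar}$ for all $r\geq\lambda$. Hence the first term above contributes at most $\lambda^{k}\psi(\lambda)e^{-2a\lambda}$, and for the second, using the elementary bound $\int_{\lambda}^{\infty}y^{k-1}e^{-2ay}dy\leq C_{k,a}\lambda^{k-1}e^{-2a\lambda}$ for $\lambda\geq1$ (repeated integration by parts), it contributes at most $kC_{k,a}\psi(\lambda)\lambda^{k-1}e^{-2a\lambda}$. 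Dividing by $\lambda^{k}e^{-a\lambda}$ yields a quantity of order $\psi(\lambda)e^{-a\lambda}+\psi(\lambda)\lambda^{-1}e^{-a\lambda}\to0$, which is precisely the assertion $o(\lambda^{k}e^{-a\lambda})$.

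For (\ref{eq:LemmaA3_emp}), the argument is identical but shorter, since Corollary \ref{Cor:Regularity of the empirical measures} supplies the Gaussian bound $\mathbb{E}\nu_{t}^{{\scriptscriptstyle N}}(r,\infty)\leq Ce^{-\epsilon r^{2}}$ with $C,\epsilon$ uniform in $N\geq1$ and $t\in[0,T]$. Plugging this in, the first term is $\lambda^{k}Ce^{-\epsilon\lambda^{2}}$ and the second is dominated by $kC\int_{\lambda}^{\infty}y^{k-1}e^{-\epsilon y^{2}}dy\leq C'\lambda^{k-1}e^{-\epsilon\lambda^{2}/2}$ (bounding $e^{-\epsilon y^{2}}\leq e^{-\epsilon\lambda^{2}/2}e^{-\epsilon y^{2}/2}$ for $y\geq\lambda$); since $e^{-c\lambda^{2}}/(\lambda^{k}e^{-a\lambda})\to0$ for every $c>0$, both the stated decay and its uniformity in $N$ and $t$ follow immediately.

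There is no genuine obstacle in this lemma: the only points needing a little care are (i) the notational overload of $a$, handled by renaming the dummy in Assumption \ref{Assumption 2.3}(iii), and (ii) upgrading $O$-type tail control to the sharper $o$-type conclusion, which is exactly why I extract the vanishing envelope $\psi$ rather than a mere constant. The interchange of $\mathbb{E}$, $\int_{0}^{T}dt$, and the layer-cake $dy$-integral is licensed throughout by Tonelli because all integrands are non-negative.
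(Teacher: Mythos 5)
Your proof is correct, and it rests on exactly the same inputs as the paper's argument: the uniform Gaussian tail bound of Corollary \ref{Cor:Regularity of the empirical measures} for $\nu^{\scriptscriptstyle N}$ and the exponential tail condition in Assumption \ref{Assumption 2.3}(iii) for $\tilde{\nu}$. The implementation, however, is genuinely different. The paper converts the first-moment tail into a sum over a partition of $[\lambda,\infty)$ of mesh $1/a$, bounds $\mathbb{E}\langle\nu_{t}^{\scriptscriptstyle N},x\mathbf{1}_{[\lambda,\infty)}\rangle\leq\sum_{i}s_{i}\,\mathbb{E}\nu_{t}^{\scriptscriptstyle N}(s_{i-1},\infty)$, compares this sum with $\int_{\lambda}^{\infty}xe^{-ax}dx$ using that $x\mapsto xe^{-ax}$ is decreasing past $1/a$, and gets the little-$o$ by first extracting an $\varepsilon e^{-a\lambda}$ tail bound \emph{at the same rate} $a$; the case $k\geq2$ is only said to be analogous. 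You instead use the exact layer-cake identity $\langle\mu,x^{k}\mathbf{1}_{[\lambda,\infty)}\rangle=\lambda^{k}\mu(\lambda,\infty)+k\int_{\lambda}^{\infty}y^{k-1}\mu(y,\infty)dy$ (Tonelli, all integrands non-negative), plug in the tail bounds at a strictly larger rate ($2a$, respectively the Gaussian rate), and divide by $\lambda^{k}e^{-a\lambda}$. This treats all $k$ at once and, as a small remark, makes your vanishing envelope $\psi$ superfluous: working at rate $2a$, even an $O(e^{-2a\lambda})$ tail leaves the surplus factor $e^{-a\lambda}$ after division, so the little-$o$ conclusion is automatic, whereas the paper works at rate exactly $a$ and therefore genuinely needs the $\varepsilon$-quantified form of the tail estimate. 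Both routes deliver the claimed uniformity in $N\geq1$ and $t\in[0,T]$, inherited from the uniformity of Corollary \ref{Cor:Regularity of the empirical measures}, so the two proofs are interchangeable; yours is arguably the cleaner bookkeeping.
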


\begin{proof}
Fix an arbitrary $a>0$ and let $\varepsilon>0$ be given. By Corollary
\ref{Cor:Regularity of the empirical measures}, there exists $\lambda_{0}\geq0$
such that
\begin{equation}
\mathbb{E}\left[\nu_{t}^{{\scriptscriptstyle N}}(\lambda,\infty)\right]\leq\varepsilon e^{-a\lambda}\quad\forall\lambda\geq\lambda_{0},\label{eq:lemmaA3}
\end{equation}
uniformly in $N\geq1$ and $t\in[0,T\text{]}$. Given any $\lambda\geq\lambda_{0}$
we let $\left\{ s_{0},s_{1},\ldots\right\} $ denote the partition
of $[\lambda,\infty)$ with $s_{i}-s_{i-1}=1/a$. By (\ref{eq:lemmaA3})
and monotone convergence, we get

\begin{align*}
\mathbb{E}\left\langle \nu_{t}^{{\scriptscriptstyle N}},x\mathbf{1}_{[\lambda,\infty)}(x)\right\rangle  & \leq\sum_{i=1}^{\infty}s_{i}\mathbb{E}\nu_{t}^{{\scriptscriptstyle N}}(s_{i-1},\infty)\leq\varepsilon\sum_{i=1}^{\infty}s_{i}e^{-as_{i-1}}=\varepsilon e\sum_{i=1}^{\infty}s_{i}e^{-as_{i}}
\end{align*}
Now, $x\mapsto xe^{-ax}$ is decreasing for $x\geq1/a$, so taking
$\lambda_{0}^{\prime}:=\max\left\{ 1/a,\lambda_{0}\right\} $ and
noting that $s_{0}=\lambda$, it holds for all $\lambda>\lambda_{0}^{\prime}$
that
\begin{align*}
\mathbb{E}\left\langle \nu_{t}^{{\scriptscriptstyle N}},x\mathbf{1}_{[\lambda,\infty)}(x)\right\rangle  & \leq\varepsilon e\int_{\lambda}^{\infty}xe^{-ax}dx=\varepsilon e\bigl(\frac{\lambda}{a}+\frac{1}{a^{2}}\bigr)e^{-a\lambda}.
\end{align*}
This proves (\ref{eq:LemmaA3_emp}) for $k=1$ and the work for
$k\geq2$ is analogous. The claim in (\ref{eq:LemmaA3_limit}) follows
similarly, by relying on the exponential tail property from Assumption
\ref{Assumption 2.3}.
\end{proof}
\begin{lem}
\label{lem:BOREL-CANTELLI-ARGUMENTS}Let $\nu$ be a limit point from
Theorem \ref{thm:EXISTENCE} and let $\tilde{\nu}$ be any measure valued
process satisfying Assumption \ref{Assumption 2.3}. Then it holds
with probability 1 that (as $\lambda\rightarrow\infty$),
\[
\lim_{\varepsilon\downarrow0}\frac{\nu_{t}(0,\varepsilon)}{\varepsilon}=0,\quad\nu_{t}(\lambda,\infty)=O(e^{-\lambda}),\quad\text{and}\quad\int_{0}^{T}\!\tilde{\nu}_{t}(\lambda,\infty)dt=O(e^{-\lambda}).
\]
\end{lem}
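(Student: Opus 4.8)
\textbf{Proof plan for Lemma \ref{lem:BOREL-CANTELLI-ARGUMENTS}.} The three statements are all routine Borel--Cantelli type arguments built directly on the $L^1$-in-expectation bounds already established, so the plan is to treat each claim by showing the relevant quantity, integrated over $[0,T]$ or summed along a suitable sequence, has finite expectation after multiplying by a diverging weight.

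For the third claim I would first recall from Corollary \ref{Cor:Regularity of the empirical measures} and Proposition \ref{prop:limit_regularity} (or directly from part (iii) of Assumption \ref{Assumption 2.3}, which $\tilde{\nu}$ satisfies by hypothesis) that $\mathbb{E}\int_0^T \tilde{\nu}_t(\lambda,\infty)dt = o(e^{-\epsilon\lambda})$ for some $\epsilon>0$, and in fact for every $a>0$ by Lemma \ref{lem:Tails for centre of mass} with $k=0$. Fix $a=2$, say. Then $\mathbb{E}\big[e^{\lambda}\int_0^T\tilde{\nu}_t(\lambda,\infty)dt\big] = o(e^{-\lambda})$, so along the integer sequence $\lambda = n$ the series $\sum_{n\geq1}\mathbb{E}\big[e^{n}\int_0^T\tilde{\nu}_t(n,\infty)dt\big]$ converges; by monotone convergence (Tonelli) the random variable $\sum_{n\geq1}e^{n}\int_0^T\tilde{\nu}_t(n,\infty)dt$ is a.s.\ finite, whence $\int_0^T\tilde{\nu}_t(n,\infty)dt = O(e^{-n})$ a.s. Monotonicity in $\lambda$ of $\lambda\mapsto\int_0^T\tilde{\nu}_t(\lambda,\infty)dt$ then upgrades this to $\int_0^T\tilde{\nu}_t(\lambda,\infty)dt = O(e^{-\lambda})$ as $\lambda\to\infty$ along the reals. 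The second claim is proved the same way, except one needs the estimate $\mathbb{E}\nu_t(\lambda,\infty) = o(e^{-a\lambda})$ for each fixed $t$ (not integrated in time), which is exactly the pointwise tail bound in Proposition \ref{prop:limit_regularity}; again take $a=2$, sum over integers $n$, apply Borel--Cantelli / monotone convergence, and use monotonicity in $\lambda$.

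For the first claim I would use the boundary-decay bound from Proposition \ref{prop:limit_regularity}, namely $\mathbb{E}\nu_t(0,\varepsilon) = t^{-\delta/2}O(\varepsilon^{1+\beta})$ for some $\delta\in(0,1]$ and $\beta>0$ (valid for each fixed $t>0$). Fix $t>0$ and test the sequence $\varepsilon_n := 2^{-n}$: then $\mathbb{E}\big[\varepsilon_n^{-1}\nu_t(0,\varepsilon_n)\big] = t^{-\delta/2}O(2^{-n\beta})$, which is summable in $n$, so by Borel--Cantelli (or monotone convergence of the sum) $\varepsilon_n^{-1}\nu_t(0,\varepsilon_n)\to0$ a.s. To pass from the geometric sequence to the full limit, note that for $\varepsilon\in(\varepsilon_{n+1},\varepsilon_n]$ one has $\varepsilon^{-1}\nu_t(0,\varepsilon)\leq \varepsilon_{n+1}^{-1}\nu_t(0,\varepsilon_n) = 2\varepsilon_n^{-1}\nu_t(0,\varepsilon_n)$, so the limit along $(\varepsilon_n)$ controls the full limit superior. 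Hence $\lim_{\varepsilon\downarrow0}\varepsilon^{-1}\nu_t(0,\varepsilon) = 0$ a.s.\ for each fixed $t$; if a statement valid simultaneously in $t$ is wanted one intersects the null sets over rational $t$ and uses that $\nu$ is continuous in $t$ (as a limit point from Theorem \ref{thm:EXISTENCE}).

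The only mild subtlety --- and the step I would be most careful about --- is the interchange of expectation with the infinite sum / limit, which is handled cleanly by Tonelli's theorem since all integrands are non-negative, and the upgrade from a countable sequence of $\varepsilon$'s (or $\lambda$'s) to the continuous limit, which relies on the obvious monotonicity of $\varepsilon\mapsto\nu_t(0,\varepsilon)$ and $\lambda\mapsto\nu_t(\lambda,\infty)$ together with a comparison of consecutive dyadic scales. No genuinely hard estimate is needed here: all the analytic content is already contained in Corollary \ref{Cor:Regularity of the empirical measures}, Proposition \ref{prop:limit_regularity}, and Lemma \ref{lem:Tails for centre of mass}.
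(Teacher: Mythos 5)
Your proposal is correct and is essentially the paper's own argument: the paper likewise applies the in-expectation bounds from Proposition \ref{prop:limit_regularity} and Assumption \ref{Assumption 2.3} along a summable sequence (it uses $\varepsilon_n=n^{-2/\beta}$ with Markov plus Borel--Cantelli where you use dyadic $\varepsilon_n=2^{-n}$ with first-moment summability, which is the same mechanism) and then upgrades to the continuous parameter by the same monotonicity-between-consecutive-scales comparison, treating the two tail claims as the analogous computation. The only caveat is your closing aside about getting the boundary statement simultaneously for all $t$ by intersecting over rational $t$ and invoking continuity of $\nu$: that step is not justified as stated (continuity of $t\mapsto\langle\nu_t,\phi\rangle$ does not immediately transfer the limit $\varepsilon^{-1}\nu_t(0,\varepsilon)\to0$ from rational to all $t$), but it is inessential since the lemma, like the paper's proof, is for fixed $t$.
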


\begin{proof}
For the first claim, we recall from Proposition \ref{prop:limit_regularity}
that there exists $\delta\in(0,1]$ and $\beta>0$ such that $\mathbb{E}\nu_{t}(0,\varepsilon)=t^{-\frac{\delta}{2}}O(\varepsilon^{1+\beta})$
as $\varepsilon\rightarrow0$. Using Markov's inequality, we thus
deduce that, for any $\theta>0$ and $n$ sufficiently large,
\[
\mathbb{P}\bigl(n^{2/\beta}\nu_{t}(0,n^{-2/\beta})>\theta\bigr)\leq\theta^{-1}n^{2/\beta}\mathbb{E}\nu_{t}(0,n^{-2/\beta})\leq Ct^{-\frac{\delta}{2}}\theta^{-1}n^{-2}.
\]
Hence the Borel\textendash Cantelli lemma gives $\limsup_{n}n^{2/\beta}\nu_{t}(0,n^{-2/\beta})=0$
with probability~1. Now, given $\varepsilon>0$, we have $(n+1)^{-2/\beta}<\varepsilon\leq n^{-2/\beta}$
for some $n\geq1$, so we deduce that
\[
\limsup_{\varepsilon\downarrow0}\frac{\nu_{t}(0,\varepsilon)}{\varepsilon}\leq\limsup_{n\geq1}\frac{\nu_{t}(0,n^{-2/\beta})}{(n+1)^{-2/\beta}}\leq\limsup_{n\geq1}\frac{\nu_{t}(0,n^{-2/\beta})}{n^{-2/\beta}}\Bigl(\frac{n+1}{n}\Bigr)^{\frac{2}{\beta}}.
\]
Since the latter is zero with probability 1, this proves the first
claim. The two remaining results follow by analogous considerations
for the tail probabilities, using the exponential tail properties
from Proposition \ref{prop:limit_regularity} and Assumption \ref{Assumption 2.3},
respectively.
\end{proof}

\subsection{Proofs of Propositions \ref{prop: 1_loss increments} and \ref{prop: 2_loss increments}}

\begin{lem}
\label{lem:sup-Tails for Loss increments}Let $\Lambda_{t}^{i,{\scriptscriptstyle N}}=|X_{t}^{i}|+{\textstyle \sum_{j=1}^{{\scriptscriptstyle N}}}a_{j}^{{\scriptscriptstyle N}}|X_{t}^{j}|$,
 as in (\ref{eq: dominating process}). Then it holds uniformly
in $N\geq1$ that
\[
\mathbb{P}\Bigl(\,\sup_{t\leq T}\Lambda_{t}^{i,{\scriptscriptstyle N}}\geq\lambda\Bigr)=o(1)\quad\text{as}\quad\lambda\rightarrow\infty.
\]
\end{lem}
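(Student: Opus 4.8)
The plan is to deduce the claim directly from the uniform exponential moment bound on $\sup_{t\le T}\Gamma_t^{i,{\scriptscriptstyle N}}$ obtained in Corollary \ref{cor: Sup is subgaussian}, after relating $\Lambda^{i,{\scriptscriptstyle N}}$ to $\Gamma^{i,{\scriptscriptstyle N}}$ by an elementary convexity estimate. First I would note that, since $\sum_{j=1}^{{\scriptscriptstyle N}}a_j^{{\scriptscriptstyle N}}=1$, Jensen's inequality gives $\bigl(\sum_{j=1}^{{\scriptscriptstyle N}}a_j^{{\scriptscriptstyle N}}|X_t^j|\bigr)^2\le \sum_{j=1}^{{\scriptscriptstyle N}}a_j^{{\scriptscriptstyle N}}|X_t^j|^2$, and hence, using $(a+b)^2\le 2a^2+2b^2$,
\[
\bigl(\Lambda_t^{i,{\scriptscriptstyle N}}\bigr)^2\le 2|X_t^i|^2+2\Bigl({\textstyle\sum_{j=1}^{{\scriptscriptstyle N}}}a_j^{{\scriptscriptstyle N}}|X_t^j|\Bigr)^{\!2}\le 2\Bigl(|X_t^i|^2+{\textstyle\sum_{j=1}^{{\scriptscriptstyle N}}}a_j^{{\scriptscriptstyle N}}|X_t^j|^2\Bigr)=2\Gamma_t^{i,{\scriptscriptstyle N}}.
\]
Taking suprema over $t\le T$ on both sides yields $\sup_{t\le T}(\Lambda_t^{i,{\scriptscriptstyle N}})^2\le 2\sup_{t\le T}\Gamma_t^{i,{\scriptscriptstyle N}}$.

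Next I would fix $\epsilon<\eta_T/2$ as in Corollary \ref{cor: Sup is subgaussian}, so that $\mathbb{E}[e^{\epsilon\sup_{t\le T}\Gamma_t^{i,{\scriptscriptstyle N}}}]\le C_{T,\epsilon}$ with $C_{T,\epsilon}$ independent of $N$. Then, by the above display and the exponential Chebyshev inequality,
\[
\mathbb{P}\Bigl(\sup_{t\le T}\Lambda_t^{i,{\scriptscriptstyle N}}\ge\lambda\Bigr)
\le \mathbb{P}\Bigl(2\sup_{t\le T}\Gamma_t^{i,{\scriptscriptstyle N}}\ge\lambda^2\Bigr)
\le e^{-\epsilon\lambda^2/2}\,\mathbb{E}\Bigl[e^{\epsilon\sup_{t\le T}\Gamma_t^{i,{\scriptscriptstyle N}}}\Bigr]
\le C_{T,\epsilon}\,e^{-\epsilon\lambda^2/2},
\]
which is $o(1)$ (indeed, sub-Gaussian) as $\lambda\to\infty$, uniformly in $N\ge1$.

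Since every step is a routine inequality and the only nontrivial input (Corollary \ref{cor: Sup is subgaussian}) is already available, there is no real obstacle here; the one point to be careful about is that the exponential moment in Corollary \ref{cor: Sup is subgaussian} is stated for $\sup_{t\le T}\Gamma_t^{i,{\scriptscriptstyle N}}$ with a constant uniform in $N$, so the conclusion is genuinely uniform in $N$ as required.
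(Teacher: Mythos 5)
Your proof is correct and follows the same route as the paper, whose proof simply states that the lemma is an immediate consequence of Corollary \ref{cor: Sup is subgaussian}; your argument spells out exactly the intended steps (the elementary bound $(\Lambda_t^{i,\scriptscriptstyle N})^2\le 2\Gamma_t^{i,\scriptscriptstyle N}$ via Jensen, followed by an exponential Chebyshev bound), yielding the uniform sub-Gaussian tail and hence the claimed $o(1)$ uniformly in $N$.
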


\begin{proof}
This is an immediate consequence of Corollary \ref{cor: Sup is subgaussian}.
\end{proof}
Based on this lemma, we can adapt the arguments from Section 4 of \cite{HL2016}
to prove Proposition \ref{prop: 1_loss increments} and Proposition
\ref{prop: 2_loss increments}.
\begin{proof}
[Proof of Proposition \ref{prop: 1_loss increments}]Arguing as in
the proof of Proposition 4.6 in \cite{HL2016}, for all $a>0$ and
$\theta:=\frac{1}{2}(1-r)$, we have
\[
\mathbb{P}(L_{t+h}^{{\scriptscriptstyle N}}-L_{t}^{{\scriptscriptstyle N}}<\delta,L_{t}^{{\scriptscriptstyle N}}<r)\leq\mathbb{P}(L_{t+h}^{{\scriptscriptstyle N}}-L_{t}^{{\scriptscriptstyle N}}<\delta,\;\nu_{t}^{{\scriptscriptstyle N}}(0,a)>\theta)+o(e^{-a}).
\]
Let $E:=\left\{ L_{t+h}^{{\scriptscriptstyle N}}-L_{t}^{{\scriptscriptstyle N}}<\delta,\;\nu_{t}^{{\scriptscriptstyle N}}(0,a)>\theta\right\}$
and define a random index set $\mathcal{I}$ by
\[
\mathcal{I}:=\bigl\{1\leq i\leq N:X_{t}^{i}<a,\;t<\tau_{i},\;a_{i}^{{\scriptscriptstyle N}}>\theta/2N\bigr\}.
\]
Note that the particles with index in $\left\{ 1\leq i\leq N:a_{i}^{{\scriptscriptstyle N}}\leq{\textstyle \frac{1}{2}}\theta N^{-1}\right\} $
can contribute at most ${\textstyle \frac{1}{2}}\theta$ towards $\nu_{t}^{{\scriptscriptstyle N}}(0,a)$.
Recalling also that $a_{i}^{{\scriptscriptstyle N}}\leq m/N$ for
some $m>0$, see (\ref{eq: a_i coefficients}), it follows that on the event $E$ we must have $\left|\mathcal{I}\right|>N\theta/2m$.
Hence
\begin{equation}
\mathbb{P}(E)\leq\sum_{\mathcal{I}_{0}:\left|\mathcal{I}_{0}\right|>\frac{\theta N}{2m}}\mathbb{P}(E\mid\mathcal{I}=\mathcal{I}_{0})\mathbb{P}(\mathcal{I}=\mathcal{I}_{0}).
\label{P(E)}
\end{equation}
Moreover, since $L_{t+h}^{{\scriptscriptstyle N}}-L_{t}^{{\scriptscriptstyle N}}<\delta$
on $E$ while $a_{i}^{{\scriptscriptstyle N}}>\theta/2N$ for $i\in\mathcal{I}$,
we deduce that
\begin{equation}
\mathbb{P}(E\mid\mathcal{I}=\mathcal{I}_{0})\leq\mathbb{P}\Bigl(\#\bigl\{ i\in\mathcal{I}_{0}:\inf_{u\leq h}X_{t+u}^{i}\leq0\bigr\}<2\delta N/\theta \mid \mathcal{I}=\mathcal{I}_{0}\Bigr).\label{eq: (4.6) in Hambly-Ledger}
\end{equation}
In order to estimate (\ref{eq: (4.6) in Hambly-Ledger}), we let $Z_{t}^{i}=\Upsilon_{t}(X_{t}^{i})$
as in Lemma \ref{lem:=00005BRestatement-of-Lemma4.1 from hambly ledger}
and recall that $Z_{t}^{i}$ then satisfies
\begin{equation}
dZ_{t}^{i}=\hat{b}_{t}^{i}dt+dB_{t}^{i},\quad|\hat{b}_{t}^{i}|\leq c_{1}(1+\Lambda_{t}^{i,{\scriptscriptstyle N}}),\quad |Z_{t}^{i}|\leq c(1+|X_{t}^{i}|).
\label{eq: Z_t and drift bound}
\end{equation}
Using that $Z_{t}^{i}\leq0$ if and only if $X_{t}^{i}\leq0$, the estimate
in (\ref{eq: (4.6) in Hambly-Ledger}) implies
\[
\mathbb{P}(E \mid \mathcal{I}_{0}=\mathcal{I})\leq\mathbb{P}\Bigl(\#\bigl\{ i\in\mathcal{I}:\inf_{u\leq h}Z_{t+u}^{i}\leq0\bigr\}<2\delta N/\theta \mid\mathcal{I}_{0}=\mathcal{I}\Bigr).
\]
Moreover, it is immediate from (\ref{eq: Z_t and drift bound}) that
\[
Z_{t+u}^{i}\leq Z_{t}^{i}+c_{1}h+\sup_{u\leq h}\Lambda_{t+u}^{i,{\scriptscriptstyle N}}c_{1}h+\tilde{B}_{u}^{i},\quad\tilde{B}_{u}^{i}:=B_{t+u}^{i}-B_{t}^{i},\quad\tilde{\Lambda}_{h}^{i,{\scriptscriptstyle N}}:=\sup_{u\leq h}\Lambda_{t+u}^{i,{\scriptscriptstyle N}}.
\]
In particular, if $\inf_{u\leq h}\tilde{B}_{u}^{i}\leq-Z_{t}^{i}-c_{1}h(1+\tilde{\Lambda}_{h}^{i,{\scriptscriptstyle N}})$,
then $\inf_{u\leq h}Z_{t+u}^{i}\leq0$, and hence
\[
\mathbb{P}(E \mid\mathcal{I}_{0}=\mathcal{I})\leq\mathbb{P}\Bigl(\#\bigl\{ i\in\mathcal{I}:\inf_{u\leq h}\tilde{B}_{u}^{i}\leq-Z_{t}^{i}-c_{1}h(1+\tilde{\Lambda}_{h}^{i,{\scriptscriptstyle N}})\bigr\}\!<\! {\textstyle \frac{2}{\theta}}\delta N \mid\mathcal{I}_{0}=\mathcal{I}\Bigr).
\]
Using the bound $|Z_{t}^{i}| \leq c |X_{t}^{i}|$
from (\ref{eq: Z_t and drift bound}), the definition of $\mathcal{I}$
implies that there exists $c_{2}>0$ such that
\[
\mathbb{P}(E \mid \mathcal{I}_{0}=\mathcal{I})\leq\mathbb{P}\Bigl(\#\bigl\{ i\in\mathcal{I}:\inf_{u\leq h}\tilde{B}_{u}^{i}\leq-c_{2}a-c_{2}(1+\tilde{\Lambda}_{h}^{i,{\scriptscriptstyle N}})\bigr\}\!<\! {\textstyle \frac{2}{\theta}}\delta N/ \mid \mathcal{I}_{0}=\mathcal{I}\Bigr).
\]
Recalling the fact that
\[
\tilde{B}_{u}^{i}=\int_{t}^{t+u}\rho_{s}dW_{s}^{0}+\int_{t}^{t+u}\sqrt{1-\rho_{s}^{2}}dW_{s}^{i}=:I_{u}+J_{u}^{i},
\]
we split the above probability on the event $\{\sup_{u\leq h}|I_{u}|<c_{2}a\}\cap\{\sup_{u\leq h}\Lambda_{t+u}^{i,{\scriptscriptstyle N}}<a\}$
and its complement. In this way, we get
\begin{align*}
\mathbb{P}(E & \mid\mathcal{I}_{0}=\mathcal{I})\leq\mathbb{P}\Bigl(\#\bigl\{ i\in\mathcal{I}:\inf_{u\leq h}J_{u}^{i}\leq-3c_{2}a-c_{2}\bigr\}< {\textstyle \frac{2}{\theta}} \delta N \mid\mathcal{I}_{0}=\mathcal{I}\Bigr)\\
 & \qquad\qquad\qquad\qquad+\mathbb{P}\Bigl(\,\sup_{u\leq h}\left|I_{u}\right|\geq c_{2}a\Bigr)+\mathbb{P}\Bigl(\,\sup_{u\leq h}\Lambda_{t+u}^{i,{\scriptscriptstyle N}}\geq a\Bigr).
\end{align*}
From Lemma \ref{lem:sup-Tails for Loss increments} we know that the
last term is $o(1)$ as $a\rightarrow\infty$ uniformly in $N\geq1$.
Similarly, $I_{u}$ is a martingale, and hence the second term is
also $o(1)$ as $a\rightarrow\infty$ by Doob's Maximal Inequality.
Concerning the first term, we can introduce a time-change to make each $J^i$ an independent Brownian motion. Recalling (\ref{P(E)}), the proof can then be completed via a law of large numbers argument as in Proposition
4.6 of \cite{HL2016}, by carefully choosing the free parameter $a$ as a function of $\delta$. 
\end{proof}
\begin{proof}
[Proof of Proposition \ref{prop: 2_loss increments}]Arguing as in
Proposition 4.7 of \cite{HL2016}, it suffices to show that
\[
\lim_{\delta\rightarrow0}\lim_{N\rightarrow\infty}\mathbb{P}\left(L_{t+\delta}^{{\scriptscriptstyle N}}-L_{t}^{{\scriptscriptstyle N}}\geq\eta,\;\nu_{t}^{{\scriptscriptstyle N}}(0,\varepsilon)<\eta/2\right)=0.
\]
Let $E:=\left\{ L_{t+\delta}^{{\scriptscriptstyle N}}-L_{t}^{{\scriptscriptstyle N}}\geq\eta,\;\nu_{t}^{{\scriptscriptstyle N}}(0,\varepsilon)<\eta/2\right\} $
and define the random index set
\[
\mathcal{I}:=\left\{ 1\leq i\leq N:X_{t}^{i}\geq\varepsilon\;\;\text{or}\;\;t\geq\tau_{i}\right\} .
\]
By (\ref{eq: a_i coefficients}), there exists $m$ such that $a_{i}^{{\scriptscriptstyle N}}\leq m/N$,
so noting that $\left\{ \nu_{t}^{{\scriptscriptstyle N}}(0,\varepsilon)<\eta/2\right\} $
is contained in $\left\{ \left|\mathcal{I}\right|\geq\frac{N}{m}(1-\frac{\eta}{2})\right\} $,
we have
\[
\mathbb{P}\left(E\right)\leq\sum_{\mathcal{I}_{0}:\left|\mathcal{I}_{0}\right|\geq\frac{N}{m}(1-\frac{\eta}{2})}\mathbb{P}\left(E\mid\mathcal{I}=\mathcal{I}_{0}\right)\mathbb{P}(\mathcal{I}=\mathcal{I}_{0}).
\]
The conditional probabilities can be estimated by
\begin{align}
\mathbb{P}\left(E\mid\mathcal{I}=\mathcal{I}_{0}\right) & \leq\mathbb{P}\Bigl(\#\bigl\{ i\in\mathcal{I}_{0}:\inf_{s\in[t,t+\delta]}X_{s}^{i}\leq0,\;\;X_{t}^{i}\geq\varepsilon\bigr\}\geq\frac{N\eta}{2m}\mid\mathcal{I}=\mathcal{I}_{0}\Bigr)\nonumber \\
 & \leq\mathbb{P}\Bigl(\#\bigl\{ i\in\mathcal{I}_{0}:\inf_{s\in[t,t+\delta]}(X_{s}^{i}-X_{t}^{i})\leq-\varepsilon\bigr\}\geq\frac{N\eta}{2m}\mid\mathcal{I}=\mathcal{I}_{0}\Bigr)\label{eq: Prop4.7 first estimate}
\end{align}
Using the scale transform $\Upsilon$ from Lemma \ref{lem:=00005BRestatement-of-Lemma4.1 from hambly ledger},
we introduce $U_{s}^{i}:=\Upsilon_{t+s}(X_{t+s}^{i}-X_{t}^{i})$ and note that, as in Lemma \ref{lem:=00005BRestatement-of-Lemma4.1 from hambly ledger},
\begin{align}
dU_{s}^{i}= & u_{s}^{i}ds+\rho_{t+s}dW_{t+s}^{0}+(1-\rho_{t+s}^{2})^{\frac{1}{2}}dW_{t+s}^{i}=:u_{s}^{i}ds+dI_{s}+dJ_{s}^{i}\label{eq: decomposition of U}
\end{align}
where the drift satisfies $|u_{s}^{i}|\leq c_{1}(1+\Lambda_{t+s}^{i,{\scriptscriptstyle N}})$.
By construction of $\Upsilon$ and the boundedness from below of $1/\sigma$,
say $1/\sigma\geq c_{2}>0$, it follows from (\ref{eq: Prop4.7 first estimate})
that
\[
\mathbb{P}\left(E\mid\mathcal{I}=\mathcal{I}_{0}\right)\leq\mathbb{P}\Bigl(\#\bigl\{ i\in\mathcal{I}_{0}:\inf_{s\leq\delta}U_{s}^{i}\leq-c_{2}\varepsilon\bigr\}\geq\frac{N\eta}{2m}\mid\mathcal{I}=\mathcal{I}_{0}\Bigr).
\]

Given $\delta>0$, fix a constant $a=a(\delta)>0$ to be specified
later. Using the decomposition (\ref{eq: decomposition of U}) of
$U_{s}^{i}$ and the growth estimate for its drift $u_{s}^{i}$, we
see that, on the event
\begin{equation}
\Bigl\{\,\sup_{s\leq\delta}\Lambda_{t+s}^{i,{\scriptscriptstyle N}}<a/\delta\Bigr\}\cap\Bigl\{\,\sup_{s\leq\delta\text{}}|I_{s}|<a\Bigr\},\label{eq: event for splitting up probability}
\end{equation}
if $i\in\mathcal{I}_{0}$ is such that $\inf_{s\leq\delta}U_{s}^{i}\leq-c_{2}\varepsilon$,
then
\begin{align*}
\inf_{s\leq\delta}J_{s}^{i} & \leq-c_{2}\varepsilon+\delta c_{1}(1+\sup_{s\leq\delta}\Lambda_{t+s}^{i,{\scriptscriptstyle N}})+\sup_{s\leq\delta}\left|I_{s}\right|\\
 & \leq-c_{2}\varepsilon+c_{1}\delta+(1+c_{1})a\:\leq-c_{3}(\varepsilon-\delta-a)
\end{align*}
Consequently, splitting up the desired probability on the event (\ref{eq: event for splitting up probability})
and its complement,
\begin{align*}
\mathbb{P}\left(E\mid\mathcal{I}=\mathcal{I}_{0}\right) & \leq\mathbb{P}\Bigl(\#\bigl\{ i\in\mathcal{I}_{0}:\inf_{s\leq\delta}J_{s}^{i}\leq-c_{3}(\varepsilon-\delta-a)\bigr\}\geq\frac{N\eta}{2m}\mid\mathcal{I}=\mathcal{I}_{0}\Bigr)\\
 & \qquad\qquad\qquad+\mathbb{P}\Bigl(\,\sup_{s\leq\delta\text{}}\Lambda_{t+s}^{i,{\scriptscriptstyle N}}\geq a/\delta\Bigr)+\mathbb{P}\Bigl(\,\sup_{s\leq\delta}\left|I_{s}\right|\geq a\Bigr).
\end{align*}
By Doob's maximal inequality, the last term is bounded by $\delta a^{-2}$, so choosing $a=a(\delta)$ s.t. $\delta a^{-2}\rightarrow0$ as
$\delta\rightarrow0$, we get
\[
\mathbb{P}\Bigl(\,\sup_{s\leq\delta}\left|I_{s}\right|\geq a\Bigr)=o(1)\quad\text{as}\quad\delta\rightarrow0.
\]
Moreover, ensuring also that $\delta/a\rightarrow0$ as $\delta\rightarrow0$,
it follows from Lemma \ref{lem:sup-Tails for Loss increments} that
\[
\mathbb{P}\Bigl(\,\sup_{s\leq\delta}\Lambda_{t+s}^{i,{\scriptscriptstyle N}}\geq a/\delta\Bigr)=o(1)\quad\text{as}\quad\delta\rightarrow0.
\]
The above requirements are satisfied by $a=a(\delta):=\delta^{1/2}\log\log(1/\delta)$. With this choice for $a$, the proof can now be finished by the same
arguments as in Proposition 4.7 of \cite{HL2016}.
\end{proof}


\begin{thebibliography}{10}
\bibitem{Acharya Pedersen}Acharya, V.V., Pedersen, L.H.,
Philippon, T., Richardson, M.: Measuring Systemic Risk. Rev. Finan. Stud. \textbf{30}, 2--47 (2017)

\bibitem{AG2000}Allen, F., Gale, D.: Financial
	Contagion. J. Polit. Economy \textbf{108}, 1--33 (2000)

\bibitem{Azizpour Giesescke Schwenkler}Azizpour, S., Giesecke,
K., Schwenkler, G.: Exploring the Sources of Default
Clustering. J. Finan. Econ., forthcoming (2017). Available at SSRN:1127792.

\bibitem{Feinstein}Banerjee, T., Bernstein, A., Feinstein, Z.: Dynamic clearing and contagion in financial networks. Preprint (2018). Available at ArXiv:1801.02091.

\bibitem{Benoit et al}Benoit, S., Colliard, J.-E., Hurlin,
C., Pérignon, C.: Where the Risks Lie: A Survey on
Systemic Risk. Review of Finance, \textbf{21}, 109--152 (2017)

\bibitem{berg gider}Berg, T., Gider, J.: What
Explains the Difference in Leverage between Banks and Non-Banks? J. Finan. Quant. Anal. \textbf{52}, 2677--2702 (2017)

\bibitem{Bru2009}Brunnermeier, M.K.: Deciphering the Liquidity and Credit Crunch 2007-2008. J. Econ. Perspect. \textbf{23}, 77--100 (2009)

\bibitem{BGK2012}Brunnermeier, M.K., Gorton, G., Krishnamurthy, A.: Risk Topography. NBER Macroeconomics Annual
\textbf{26}, 149--176 (2012)

\bibitem{hambly reisinger etc}Bush, N., Hambly B.M., Haworth,
H., Jin, L., Reisinger, C.: Stochastic evolution equations
in portfolio credit modelling. SIAM J. Fin. Math. \textbf{2}, 627\textendash 664 (2011)

\bibitem{capponi}Capponi, A., Bo, L.: Systemic
Risk in Interbanking Networks. SIAM J. Fin. Math.
\textbf{6}, 386\textendash 424 (2015)

\bibitem{carmona fouque}Carmona, R., Fouque, J.-P., Sun,
L.-H.: Mean Field Games and Systemic Risk. Comm. Math. Sci. \textbf{13}, 911\textendash 933 (2015)

\bibitem{Cho 2012}Cho, S., Kim, P., and Park, H.: Two-sided
estimates on Dirichlet heat kernels for time-dependent parabolic operators
with singular drifts in $C^{1,\alpha}$-domains. J. Differential Equations \textbf{252}, 1101\textendash 1145 (2012)

\bibitem{cochrane}Cochrane, J.H.: Toward a Run-Free
Financial System. In: Baily, M.N., Taylor, J.B. (eds.) Across the Great Divide, 197--249. Hoover Press (2014)

\bibitem{Cont Moussa}Cont, R, Moussa, A., Santos, E.B.:
Network structure and systemic risk in banking systems.
In: Fouque, J.-P., Langsam, J.~(eds.) Handbook of Systemic Risk, 327\textendash 368. CUP (2013)

\bibitem{CW13}Cont, R., Wagalath, L.: Running
for the exit: distressed selling and endogenous correlation in financial
markets. Math. Finance \textbf{23}, 718\textendash 741 (2013)

\bibitem{Cont fire sale forensics}Cont, R., Wagalath, L.: Fire sale forensics: measuring endogenous risk. Math.
Finance \textbf{26}, 835\textendash 866 (2016)

\bibitem{CMZ2012}Cvitanic, J., Ma, J., Zhang, J.:
The law of large numbers for self-exciting correlated defaults.
Stoch. Proc. Appl. \textbf{122}, 2781\textendash 2810 (2012)

\bibitem{DSZ2009}Danielsson, J., Shin, H.S., Zigrand, J.-P.: Endogenous Extreme Events and the Dual Role of Prices. Annu. Rev. Econ. \textbf{4}, 111--129 (2012)

\bibitem{Jerome Henry STAMPE}Dees, S., Henry, J., Martin,
R.: STAMPE: Stress-Test Analytics for Macroprudential
Purposes in the Euro Area. ECB (2017)

\bibitem{delarue global solv}Delarue, F., Inglis, J., Rubenthaler,
S., Tanré, E.: Global solvability of a networked
integrate-and-fire model of McKean\textendash Vlasov type. Ann. Appl.
Probab. \textbf{25}, 2096\textendash 2133 (2015)

\bibitem{delarue approx}Delarue, F., Inglis, J., Rubenthaler,
S., Tanré, E.: Particle systems with a singular mean-field
self-excitation. Application to neuronal networks. Stoch. Proc.
Appl. \textbf{125}, 2451\textendash 2492 (2015)

\bibitem{Duffie reform}Duffie, D.: Financial
Regulatory Reform After the Crisis: An Assessment. Management Science,
Articles in Advance (2017)

\bibitem{Duffie Frailty}Duffie, D., Eckner, A., Horel, G., Saita, L.: Frailty correlated default. J.
Finance \textbf{64}, 2089\textendash 2123 (2009)

\bibitem{EN2001}Eisenberg, L., Noe, T.H.: Systemic
risk in financial systems. Management Science \textbf{47}, 236\textendash 249 (2001)

\bibitem{fouque ichiba}Fouque, J.-P., Ichiba, T.:
Stability in a model of interbank lending. SIAM J. Fin.
Math. \textbf{4}, 784\textendash 803 (2013)

\bibitem{FS2013}Fouque, J.-P., Sun, L.-H. Systemic
risk illustrated. In: Fouque, J.-P., Langsam, J. (eds.) Handbook on Systemic Risk, 444--452. CUP (2013)

\bibitem{GPW2013}Garnier, J., Papanicolaou, G., Yang,.: Large deviations for a mean field model of systemic
risk. SIAM J. Fin. Math. \textbf{4}, 151\textendash 184 (2013)

\bibitem{GPW2017}Garnier, J., Papanicolaou, G., Yang,
T.-W.: A risk analysis for a system stabilized
by a central agent. Risk and Decision Analysis \textbf{6}, 97\textendash 120 (2017)

\bibitem{GSS2017}Giesecke, K., Schwenkler, G.,
Sirignano, J.: Inference for Large Financial Systems.
Questrom School of Business Research Paper
(2017). Available at SSRN:3012751.

\bibitem{giesecke and spil}Giesecke, K., Spiliopoulos, K.,
Sowers, R.: Default clustering in large
portfolios: Typical events. Ann. Appl. Probab. \textbf{23}, 348\textendash 385 (2013)

\bibitem{GSSS2015}Giesecke, K., Spiliopoulos, K., Sowers,
R., Sirignano, J.: Large portfolio asymptotics for
loss from default. Math. Finance \textbf{25}, 77\textendash 114 (2015)

\bibitem{GY2016}Glasserman, P., Young, H.P.: Contagion
in Financial Networks. J. Econ. Literature \textbf{54}, 779\textendash 831 (2016)

\bibitem{Stebulaev}Gornall, W., Strebulaev, I.A.: Financing as a Supply Chain: The Capital Structure of Banks
and Borrowers. Working Paper (2017). Available at SSRN:2347107.

\bibitem{saloff-coste}Gyrya, P., Saloff-Coste L.:
Neumann and Dirichlet Heat Kernels in Inner Uniform Domains.
Astérisque \textbf{336}. Société Mathématique de France (2011)

\bibitem{HL2016}Hambly, B.M., Ledger, S.: A
	Stochastic McKean\textendash Vlasov Equation for Absorbing Diffusions on the Half-Line. Ann. Appl. Probab. \textbf{27},  2698\textendash 2752 (2017)

\bibitem{HLS2018}Hambly, B.M., Ledger, S., S{\o}jmark,
A.: A McKean\textendash Vlasov Equation with Positive
	Feedback and Blow-ups. Preprint (2018). Available at arXiv:1801.07703.

\bibitem{Hellwig}Hellwig, M. F.: Systemic Risk
in the Financial Sector: An Analysis of the Subprime-Mortgage Financial Crisis. DE Economist \textbf{157}, 129--207 (2009)

\bibitem{Inglis Talay}Inglis, J., Talay, D.: Mean-Field
Limit of a Stochastic Particle System Smoothly Interacting Through Threshold Hitting-Times and Applications to Neural Networks with Dendritic Component. SIAM J. Math. Anal. \textbf{47}, 3884\textendash 3916 (2015)

\bibitem{Jakubowski} Jakubowski, A.: The Almost Sure Skorokhod Representation for Subsequences in Nonmetric Spaces. Theory Probab.~Appl. \textbf{42}, 167--175 (1998)

\bibitem{Kallianpur_Xiong} Kallianpur, G.~and Xiong, J.: Stochastic Differential Equations in Infinite Dimensional Spaces. Institute of Mathematical Statistics: Lecture notes, monograph series (1995)

\bibitem{Lando and Nielsen}Lando, D., Nielsen, M.: Correlation in corporate defaults: Contagion or conditional independence? J. of Finan. Intermediation \textbf{19}, 355\textendash 372 (2010)

\bibitem{Led2014}Ledger, S.: Sharp regularity
near an absorbing boundary for solutions to second order SPDEs in a half-line with constant coefficients. Stoch. PDE: Anal. Comp. \textbf{2}, 1\textendash 26 (2014)

\bibitem{Led2016}Ledger, S.: Skorokhod's
M1 topology for distribution-valued processes. Electron. Commun. Probab. \textbf{21}, 1--11 (2016)

\bibitem{Moreno-Bote and Parga}Moreno-Bote, R., Parga, N.:
Response of integrate-and-fire neurons to noisy inputs
filtered by synapses with arbitrary time scales: firing rate and correlations. Neural Comput. \textbf{22}, 1528\textendash 1572 (2010)

\bibitem{Misha}Nadtochiy, S., Shkolnikov, M.: Particle
systems with singular interaction through hitting times: application in systemic risk modeling. Preprint (2017). Available at arXiv:1705.00691.

\bibitem{Brunel Hakim}Ostojic, S., Brunel, N., Hakim, V.: Synchronization properties of networks of electrically
coupled neurons in the presence of noise and heterogeneities. J.
Comput. Neurosci. \textbf{26}, 369\textendash 392 (2009)

\bibitem{Ped2009}Pedersen, L.H.: When Everyone
Runs for the Exit. Int. J. Central Banking \textbf{5}, 177--199 (2009)

\bibitem{Schwartz} Schwartz, L.: Radon Measures on Arbitrary Topological Spaces and Cylindrical Measures. Tata Institute of Fundamental Research, Oxford University Press, London (1973)

\bibitem{Spiliopoulos clustering}Sowers, R., Spiliopoulos,
K.: Default clustering in large pools: Large deviations.
SIAM J. Fin. Math. \textbf{6}, 86\textendash 116 (2015)

\bibitem{Spi2015}Spiliopoulos, K.: Systemic Risk
and Default Clustering for Large Financial Systems. In: Friz, P., Gatheral, J., Gulisashvili, A., Jacqier, A., Teichmann, J.~(eds.) Large Deviations and Asymptotic Methods in Finance, 529--557. Springer (2015)

\bibitem{Whi2002}Whitt, W.: Stochastic-Process
Limits: An Introduction to Stochastic-Process Limits and Their Application to Queues. Springer (2002)
\end{thebibliography}
\end{document}